\title[Capelli operators  and the Dougall-Ramanujan identity]{Capelli operators for spherical superharmonics and the Dougall-Ramanujan identity}
\author{Siddhartha Sahi $^{\mathrm{\lowercase{a}}}$}
\address{ $^{\mathrm{\lowercase{a}}}$Department of Mathematics,    
    Rutgers University,
    110 Frelinghuysen Rd, 
    Piscataway, NJ 08854-8019,
    USA
  }
  \email{sahi@math.rutgers.edu}
  \author{Hadi Salmasian$^{\mathrm{\lowercase{b}}}$}
  \address{
  $^{\mathrm{\lowercase{b}}}$ Department of Mathematics and Statistics,
    University of Ottawa,
    585 King Edward Ave,
    Ottawa, Ontario,
    Canada K1N 6N5%
  }
  \email{hadi.salmasian@uottawa.ca}
\author{Vera Serganova$^{\mathrm{\lowercase{c}}}$}
\address{
$^{\mathrm{\lowercase{c}}}$ Department of Mathematics,
    University of California at Berkeley,
    969 Evans Hall,
    Berkeley, CA 94720,
    USA
    } 
\email{serganov@math.berkeley.edu}
\thanks{\textsc{Acknowledgements.} The first and the second authors thank  Christoph Koutschan and Doron Zeilberger for helpful correspondences regarding Theorem~\ref{theorem-identity}.
The research of Siddhartha Sahi was partially supported by a Simons Foundation grant (509766), of Hadi Salmasian by an NSERC Discovery Grant (RGPIN-2018-04044), and of Vera Serganova by an NSF Grant  (1701532).\\[-3mm]}
\begin{document}

\begin{abstract}
Let $(V,\omega)$ be an orthosymplectic  $\Z_2$-graded vector space and  let $\g g:=\g{gosp}(V,\omega)$ denote
the Lie superalgebra of similitudes of $(V,\omega)$.
It is known that as a $\g g$-module, the space $\sP(V)$ of superpolynomials on $V$ is completely reducible, unless $\dim V_\eev$ and $\dim V_\ood$ are positive even integers and 
$\dim V_\eev\leq \dim V_\ood$. When $\sP(V)$ is \emph{not} a 
completely reducible $\g g$-module, we construct a natural basis $\{D_\la\}_{\la\in\cI}^{}$ of ``Capelli operators'' for the algebra 
$\sPD(V)^{\g g}$ of $\g g$-invariant superpolynomial superdifferential operators on $V$, where the index set $\cI$ is the set of integer partitions of length at most two. We compute  the action of the operators $\{D_\la\}_{\la\in\cI}^{}$ on maximal indecomposable components of $\sP(V)$ explicitly, in terms of Knop-Sahi interpolation polynomials. Our results show that, unlike the cases where $\sP(V)$ is completely reducible, the eigenvalues of a subfamily of the $D_\la$ are \emph{not} given by specializing the Knop-Sahi polynomials. Rather, the formulas for these eigenvalues involve suitably regularized forms of these polynomials.
This is in contrast with what occurs for previously studied Capelli operators. 
In addition,  we demonstrate a close relationship between our eigenvalue formulas for this subfamily of Capelli operators and  the Dougall-Ramanujan hypergeometric identity. 

We also transcend our results on the eigenvalues of Capelli operators to the Deligne category  
$\mathsf{Rep}(O_t)$. More precisely, we define categorical Capelli operators $\{\bsm D_{t,\la}\}_{\la\in\cI}^{}$ that induce morphisms of  indecomposable components of symmetric powers of $\V_t$, where $\V_t$ is  the generating object of $\mathsf{Rep}(O_t)$. We obtain formulas for the eigenvalue polynomials associated to the $\left\{\bsm D_{t,\la}\right\}_{\la\in\cI}$ that are analogous to our results for the operators $\{D_\la\}_{\la\in\cI}^{}$. 
\end{abstract}
\maketitle

\section{Introduction}
\label{sec:introduction}

Let $V:=V_\eev\oplus V_\ood$ be a vector superspace equipped with a non-degenerate even supersymmetric bilinear form $\omega:V\times V\to\C$, and let
$\g{osp}(V,\omega)$ denote the orthosymplectic Lie superalgebra that leaves $\omega$ invariant. 
Set \[
\g g:=
\g{gosp}(V,\omega):=
\g{osp}(V,\omega)\oplus\C z,
\] where $z$ is a central element of $\g g$.
Then  $V$ has a natural  $\g g$-module structure, where the action of $z$ on $V$ is defined to be  $-\yek_V$. 
The $\g g$-module structure of $V$ induces a  
canonical $\g g$-module structure on 
the superalgebra 
$\sP(V)$
of superpolynomials on $V$, and 
the superalgebra
 $\sD(V)$ 
 of constant-coefficient superdifferential operators on $V$. 
Indeed $\sP(V)\cong \sS(V^*)$ and $\sD(V)\cong \sS(V)$ as $\g g$-modules. When $\dim V_\ood=0$, studying $\sP(V)$  is the subject of the classical theory of spherical harmonics. For an elegant exposition of this theory we refer the reader to~\cite{GoodmanWallach}.

Set $d_i:=\dim V_i$ for $i\in\left\{\eev,\ood\right\}$.
It is known that $\sP(V)$ is a semisimple and multiplicity-free $\g g$-module unless $d_\eev,d_\ood\in 2\Z^+$ and $d_\eev\leq d_\ood$ (see~\cite{ShermanPhD,CoulembierJLie2013}). 
Let $\sPD(V)$ denote the superalgebra of superpolynomial-coefficient superdifferential operators 
 on $V$, equipped with the natural $\g g$-module structure defined by $x\cdot D:=xD-(-1)^{|D|\cdot |x|}Dx$ for homogeneous $x\in\g g$ and $D\in \sPD(V)$ (for further details see for example~\cite[Sec. 2]{SahSalAdv}). Then there is a canonical $\g g$-module isomorphism
\begin{equation}
\label{eq:sPD=sPsD}
\sPD(V):=\sP(V)\otimes\sD(V).
\end{equation}
Let $\cI$ be the set of integer partitions of length at most two, that is, 
\[
\cI:=\left\{(\la_1,\la_2)\in\Z^2\,:\,\la_1\geq\la_2\geq 0\right\}.
\]
In the cases that $\sP(V)$ is a semisimple and multiplicity-free $\g g$-module,
the irreducible components of $\sP(V)$ are naturally indexed by 
elements of $\cI$  (see 
\cite{GoodmanWallach,CoulembierJLie2013,ShermanPhD}). Then by a general algebraic construction 
(see 
the discussion at the end of this section,
or \cite{Sahi94,SahSalSer})
one obtains 
a distinguished basis 
$\left\{D_\la\right\}_{\la\in\cI}$ of \emph{Capelli operators}
for the algebra $\sPD(V)^{\g g}$ of $\g g$-invariant differential operators. By Schur's Lemma, the operators $D_\la$ act on irreducible components of $\sP(V)$ by scalars. The
problem of computing these scalars was addressed in~\cite{SahSalSer}, among several other examples. We remark that the problem of computing eigenvalues of Capelli operators (which we will refer to as the \emph{Capelli eigenvalue problem}) has a long history, and  has been studied extensively in the general context of multiplicity-free actions of 
Lie~(super)algebras~\cite{AllSahSal,Knop97,KostantSahi91,KostantSahi93,Sahi94,SahiZhang1,SahiZhang2,SahSalQ,Wallach90}. 
In all of the previously investigated instances of the Capelli eigenvalue problem, 
the formulas for the eigenvalues 
turn out to be specializations of families of interpolation polynomials, such as Knop-Sahi polynomials, Sergeev-Veselov polynomials, Okounkov interpolation polynomials, or Ivanov polynomials. For the definition and properties of these families of polynomials, we refer the reader to
\cite{KSDiff,OkounkovOlshanski,SVSuperJack,OkounkovBC,KoornwinderBC,Ivanov}. 
In particular, in~\cite[Theorem 1.13]{SahSalSer} we proved that 
the eigenvalues of the Capelli basis $\{D_\la\}_{\la\in\cI}$ on irreducible components of $\sP(V)$ are obtained from the  two-variable interpolation polynomials  previously defined by F. Knop and the first author~\cite{KSDiff} at the parameter value $\frac{1}{2}
\mathrm{sdim} V-1$, where $\mathrm{sdim}V:=\dim V_\eev-\dim V_\ood$.

In this paper, we are interested in defining the Capelli operators and computing their actions on $\sP(V)$ in the cases where $\sP(V)$ is \emph{not} a semisimple $\g g$-module. Thus, henceforth we will assume that $d_\eev=2m$ and $d_\ood=2n$ for $m,n\in\N$, where \[
k:=n-m\geq 0.
\]
Because of non-semisimplicity of $\sP(V)$, 
the usual definition of Capelli operators (see \cite{Sahi94,SahSalAdv,SahSalSer}) needs to be tweaked slightly. Furthermore,
 elements of $\sPD(V)^{\g g}$ are not necessarily diagonalizable  on $\sP(V)$, and thus we are naturally forced to consider their Jordan decompositions.

We show that in the non-semisimple case one still has a natural basis 
$\{D_\la\}_{\la\in\cI}$ of $\sPD(V)^{\g g}$, but a new phenomenon occurs in relation to their spectra: unlike the previous (semi-simple) instances of the Capelli eigenvalue problem, the eigenvalues of the Capelli basis are \emph{not} always specializations of interpolation polynomials. 
Rather, for a subfamily of this basis, one needs polynomials that are obtained from Knop-Sahi interpolation polynomials by removing their \emph{singular part}, that is, the part whose coefficients have poles. We provide two different formulas for the eigenvalues of this subfamily that are related to each other through a curious polynomial identity. We prove the latter polynomial identity  using the classical Dougall-Ramanujan hypergeometric identity.

To explain our main results, we begin with the definition of the Knop-Sahi polynomials. 
 We will only consider these polynomials in two variables. For the definition of  these polynomials in  the $n$-variable case, see~\cite{KSDiff}. 
 As usual, for $m\in\Z^{\geq 0}$ we define the \emph{falling factorial} 
$ a^{\ul m}$ 
 to be 
 \[
 a^{\ul m}:=a(a-1)\cdots (a-m+1).
 \]
Let $\Bbbk:=\Q(\kappa)$ be the field of rational functions in a parameter $\kappa$ with coefficients in $\Q$.
For  $\lambda\in\cI$, let $P_\lambda^\kappa\in\Bbbk[x,y]$ be defined by 
\begin{align}
\label{KNPsymformu}
P_\la^\kappa(x,y)&:=
\sum_{i+j\leq \la_1-\la_2}
\frac{
(\la_1-\la_2)!(\kappa+1)^{\underline{\la_1-\la_2-i}}
(\kappa+1)^{\underline{\la_1-\la_2-j}}
}
{
i!j!(\la_1-\la_2-i-j)!(\kappa+1)^{\underline{\la_1-\la_2}}
}
x^{\underline{\lambda_2+i}}
y^{\underline{\lambda_2+j}}.
\end{align}
The polynomial $P_\lambda^{\kappa}$ is symmetric  in the variables $x$ and $y$, with
leading term equal to $x^{\lambda_1}y^{\lambda_2}$. An important property of the 
polynomial $P_\la^\kappa$ is the following. 
\begin{thm}
\label{thm:KnSa}
{\rm (Knop--Sahi \cite{KSDiff})}
$P_\la^\kappa$ is the unique symmetric polynomial 
of degree less than or equal to $|\la|:=\la_1+\la_2$ 
in $\Bbbk[x,y]$
that satisfies the following conditions:
\begin{itemize}
\item[\rm(i)] $P^{\kappa}_\lambda(\mu_1-\kappa-1,\mu_2)=0$ for 
partitions $\mu\in\cI$ such that $|\mu|\leq |\lambda|$ and $\mu\neq \lambda$.
\item[\rm (ii)] 
$P^{\kappa}_\lambda(\lambda_1-\kappa-1,\lambda_2)=H_\la(\kappa)$, where
\begin{equation}
\label{eq:Hlak}
H_\la(\kappa):=(\lambda_1-\lambda_2)!\lambda_2!(\lambda_1-1-\kappa)^{\underline{\lambda_2}}.
\end{equation}

\end{itemize}
\end{thm}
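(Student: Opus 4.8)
The plan is to prove Theorem~\ref{thm:KnSa} by establishing existence and uniqueness separately, treating the stated formula~\eqref{KNPsymformu} as a candidate and verifying that it satisfies conditions (i) and (ii), and then invoking a dimension/linear-algebra argument to get uniqueness. First I would set up the interpolation framework: consider the space $\Bbbk[x,y]^{\mathrm{sym}}_{\leq |\la|}$ of symmetric polynomials of total degree at most $|\la|$, and show that the evaluation map sending such a polynomial to the tuple of its values at the points $(\mu_1-\kappa-1,\mu_2)$ for $\mu\in\cI$ with $|\mu|\leq|\la|$ is injective. Since the number of such partitions $\mu$ equals $\dim\Bbbk[x,y]^{\mathrm{sym}}_{\leq|\la|}$ (both count monomials $x^ay^b$ with $a\geq b\geq 0$ and $a+b\leq|\la|$, reindexed via $a=\mu_1,b=\mu_2$), injectivity gives bijectivity, and hence there is a unique symmetric polynomial with prescribed values at these nodes. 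This immediately yields uniqueness of $P_\la^\kappa$ once we know conditions (i) and (ii) pin down all the values; the leading-term normalization $x^{\la_1}y^{\la_2}$ is then a consequence rather than an extra hypothesis, but I would double-check it.

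The injectivity of the evaluation map is the technical heart. I would prove it by an ordering argument: order $\cI$ (restricted to $|\mu|\leq|\la|$) by, say, $|\mu|$ first and then $\mu_1$, and show the evaluation matrix $\bigl(e_\nu(\mu_1-\kappa-1,\mu_2)\bigr)$ is triangular with nonzero diagonal for a suitable basis $\{e_\nu\}$ of the symmetric polynomial space — the natural choice being the products of falling factorials $x^{\ul{\nu_1}}y^{\ul{\nu_2}}$ symmetrized, or rather the basis $\{x^{\ul{a}}y^{\ul{b}}+x^{\ul{b}}y^{\ul{a}}\}$. The key vanishing fact is that $x^{\ul{\nu_1}}$ vanishes at $x=j$ for all integers $0\leq j<\nu_1$, which forces most matrix entries to vanish when the node $\mu$ is ``small'' relative to $\nu$; the surviving diagonal entries are nonzero polynomials in $\kappa$, so injectivity holds over the field $\Bbbk=\Q(\kappa)$.

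Next I would verify that the explicit formula~\eqref{KNPsymformu} actually satisfies (i) and (ii). For (ii), substituting $x=\la_1-\kappa-1$, $y=\la_2$ into the sum: the factor $y^{\ul{\la_2+j}}=\la_2^{\ul{\la_2+j}}$ vanishes unless $j=0$ (since $\la_2<\la_2+j$ for $j\geq 1$), collapsing the double sum to a single sum over $i$; then $x^{\ul{\la_2+i}}=(\la_1-\kappa-1)^{\ul{\la_2+i}}$ and after cancellation of the $(\kappa+1)^{\ul{\cdots}}$ factors one should recognize a terminating hypergeometric sum that evaluates — via Vandermonde/Chu summation — to $H_\la(\kappa)=(\la_1-\la_2)!\la_2!(\la_1-1-\kappa)^{\ul{\la_2}}$. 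For (i), given $\mu\neq\la$ with $|\mu|\leq|\la|$, I would plug in $x=\mu_1-\kappa-1$, $y=\mu_2$ and argue the sum vanishes: if $|\mu|<|\la|$ this can be read off from a degree/extra-vanishing argument (the polynomial, having the right leading term and being ``extra vanishing'' by its construction as a shifted-Jack-type object, must vanish at all smaller nodes), while if $|\mu|=|\la|$ but $\mu\neq\la$ one again reduces to a single sum and identifies it with a hypergeometric series that terminates to zero.

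The main obstacle I anticipate is the combinatorial identity in step three — showing that the terminating hypergeometric sums arising from~\eqref{KNPsymformu} evaluate to exactly $H_\la(\kappa)$ in case (ii) and to $0$ in case (i). This is where one genuinely needs a classical summation theorem (Chu--Vandermonde, or for the more delicate cancellations possibly Pfaff--Saalsch\"utz), and matching the falling-factorial bookkeeping to the standard Pochhammer notation requires care with signs and index shifts. An alternative, cleaner route would be to bypass the explicit formula entirely for the uniqueness half: prove existence abstractly (the evaluation map being bijective, define $P_\la^\kappa$ as the preimage of the tuple given by (i)–(ii), then separately check it has degree $\leq|\la|$ and leading term $x^{\la_1}y^{\la_2}$ using the triangularity of the evaluation matrix), and only then, if needed, verify that~\eqref{KNPsymformu} coincides with this abstract object by checking it satisfies the same interpolation conditions. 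I would likely present the abstract argument as the proof and relegate the hypergeometric verification of~\eqref{KNPsymformu} to a lemma or remark, citing~\cite{KSDiff} for the full multivariate story.
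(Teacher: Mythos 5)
First, a point of comparison: the paper does not prove this statement at all --- it is quoted verbatim from Knop--Sahi \cite{KSDiff} --- so there is no internal proof to measure your argument against, and your closing suggestion to ``cite \cite{KSDiff}'' is in effect what the authors do. Your overall architecture (a dimension count showing the evaluation map on $\Bbbk[x,y]^{S_2}_{\leq|\la|}$ at the nodes $(\mu_1-\kappa-1,\mu_2)$, $|\mu|\leq|\la|$, is bijective, plus a direct Chu--Vandermonde verification that \eqref{KNPsymformu} satisfies (i) and (ii)) is the standard and correct way to prove such a statement, and your computation of (ii) is right: at $(\la_1-\kappa-1,\la_2)$ only $j=0$ survives and the $i$-sum is $\sum_i\binom{r}{i}(\kappa+1)^{\ul{r-i}}(r-\kappa-1)^{\ul i}=r!$ with $r=\la_1-\la_2$, giving $H_\la(\kappa)$.

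However, two steps as you describe them would fail. (a) The evaluation matrix is \emph{not} triangular in the basis $x^{\ul{\nu_1}}y^{\ul{\nu_2}}+x^{\ul{\nu_2}}y^{\ul{\nu_1}}$ under the order ``$|\mu|$ first, then $\mu_1$'': e.g.\ the entry for node $\mu=(1,0)$ and basis index $\nu=(2,0)$ equals $(-\kappa)(-\kappa-1)\neq0$ although $\nu$ succeeds $\mu$, and the entry for $\mu=(1,1)$, $\nu=(3,0)$ is likewise nonzero. Since the $x$-coordinate of each node is generic, the only vanishing available is $\mu_2^{\ul{\nu_2}}=0$ for $\nu_2>\mu_2$; this yields \emph{block} triangularity with blocks indexed by the common value $c=\mu_2=\nu_2$, and the diagonal blocks, with entries essentially $c!\,(\mu_1-\kappa-1)^{\ul{\nu_1}}$, still need an argument --- e.g.\ factor a column relation as $z^{\ul c}h(z)$ with $\deg h\leq|\la|-2c$ vanishing at the $|\la|-2c+1$ generic points $\mu_1-\kappa-1$, forcing $h=0$. (b) Your verification of (i) is not carried out: the appeal to ``extra vanishing by construction as a shifted-Jack-type object'' is circular, and the claim that the case $|\mu|=|\la|$, $\mu\neq\la$ ``reduces to a single sum'' is false whenever $\mu_2>\la_2$, since all $j\leq\mu_2-\la_2$ survive. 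The case $\mu_2<\la_2$ is indeed immediate (every term contains $\mu_2^{\ul{\la_2+j}}=0$), but for $\mu_2\geq\la_2$ you must sum over $i$ first for each fixed $j$: Chu--Vandermonde gives $x^{\ul{\la_2}}(\kappa+1)^{\ul j}(x-\la_2+\kappa+1-j)^{\ul{r-j}}$, which at $x=\mu_1-\kappa-1$ becomes a multiple of $(\mu_1-\la_2-j)^{\ul{r-j}}$, and this vanishes for every admissible $j$ because $\la_2+j\leq\mu_1<\la_1$ when $\mu\neq\la$, $|\mu|\leq|\la|$, $\mu_2\geq\la_2$. With these two repairs your plan closes up; as written, both the uniqueness step and the vanishing step have genuine gaps.
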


For certain $\la\in\cI$, the coefficients of $P_\la^\kappa$ have poles. It is straightforward to verify that these poles are always simple and occur at $\kappa\in\Z^{\geq 0}$. 
Let us now define three types of elements of $\cI$.

\begin{dfn}
\label{dfn:reg-qreg-sing}
Let $k_\circ\in\Z^{\geq 0}$. An element $\lambda\in\cI$ is called 
\begin{itemize}
\item[--] $k_\circ$-\emph{regular}, if 
$\lambda_1\leq k_\circ$ or $\lambda_1-\lambda_2=k_\circ+1$ or $\lambda_1-\lambda_2\geq 2k_\circ+3$.
\item[--] $k_\circ$-\emph{quasiregular},
if $\lambda_1\geq k_\circ+1$ and $\lambda_1-\lambda_2\leq k_\circ$. 
\item[--] $k_\circ$-\emph{singular}, if $k_\circ+2\leq \lambda_1-\lambda_2\leq 2k_\circ+2$. 
%Note that in particular it follows that $\lambda_1\geq k+2$.
\end{itemize}
We denote the sets of $k_\circ$-regular, $k_\circ$-quasiregular, and $k_\circ$-singular elements of $\cI$ by
$\cI_{k_\circ,\mathrm{reg}}$, $\cI_{k_\circ,\mathrm{qreg}}$, and 
$\cI_{k_\circ,\mathrm{sing}}$.

\end{dfn}

\begin{rmk}
\label{daggerinv}
Here is a  more concrete explanation of  Definition \ref{dfn:reg-qreg-sing}. 
Recall that $k:=n-m\in\Z^{\geq 0}$.
The involution $\la\mapsto\la^\dagger$ on $\Z^2$, defined by 
\begin{equation}
\label{eq:invol}
(\lambda_1,\lambda_2)\mapsto (\lambda_2+k+1,\lambda_1-k-1),
\end{equation}
yields a bijection between $k$-quasiregular  and $k$-singular partitions of the same size. For $k$-regular partitions $\la=(\la_1,\la_2)$ satisfying $\la_1-\la_2=k+1$, we have 
$\la^\dagger=\la$. For all other  $\la\in\cPr$ we have $\la^\dagger\not\in\cI$. 
\end{rmk}
 The following proposition is straightforward to verify using~\eqref{KNPsymformu}. 
\begin{prp}
\label{prp:polesequiv}
For $\la\in\cI$ and $k_\circ\in\Z^{\geq 0}$,
the following statements are equivalent.
\begin{itemize}
\item[\rm (i)] The coefficients of $P^{\kappa}_\lambda$ do not have poles at $\kappa=k_\circ$.
\item[\rm (ii)] $\la\not\in\cI_{k_\circ,\mathrm{sing}}$.
\end{itemize} 
\end{prp}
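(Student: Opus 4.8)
The plan is to analyze directly the explicit double sum~\eqref{KNPsymformu} defining $P_\la^\kappa(x,y)$ and to determine, in terms of the combinatorial invariants $\la_1-\la_2$ and $\la_2$, exactly when one of its coefficients acquires a pole at a given non-negative integer $\kappa=k_\circ$. The only denominator in~\eqref{KNPsymformu} that can vanish for integer $\kappa$ is the factor $(\kappa+1)^{\underline{\la_1-\la_2}}$, since $i!$, $j!$, $(\la_1-\la_2-i-j)!$ and $(\la_1-\la_2)!$ are positive integers. Writing $r:=\la_1-\la_2$, we have
\[
(\kappa+1)^{\underline{r}}=(\kappa+1)\kappa(\kappa-1)\cdots(\kappa-r+2),
\]
so this product vanishes at $\kappa=k_\circ$ precisely when $-1\leq k_\circ\leq r-2$, i.e.\ (since $k_\circ\geq 0$) when $0\leq k_\circ\leq r-2$, equivalently $r\geq k_\circ+2$. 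Moreover the vanishing is always to first order, which already gives the stated claim in the remark that the poles are simple.

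First I would dispose of the easy implication. If $r=\la_1-\la_2\leq k_\circ+1$, then $(\kappa+1)^{\underline{r}}$ does not vanish at $\kappa=k_\circ$, so no coefficient of $P_\la^\kappa$ has a pole there; and $\la_1-\la_2\leq k_\circ+1$ means $\la\notin\cI_{k_\circ,\mathrm{sing}}$ (the singular range being $k_\circ+2\leq r\leq 2k_\circ+2$). This settles the case $r\leq k_\circ+1$ of the equivalence (i)$\Leftrightarrow$(ii) in both directions. The substantive work is therefore confined to the range $r\geq k_\circ+2$, where the common denominator $(\kappa+1)^{\underline{r}}$ does vanish at $\kappa=k_\circ$; here one must show that the numerator of a given coefficient of $P_\la^\kappa$ fails to vanish at $\kappa=k_\circ$ (producing a genuine pole, hence $\la\in\cI_{k_\circ,\mathrm{sing}}$, forcing $r\leq 2k_\circ+2$) if and only if $r\leq 2k_\circ+2$. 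Concretely: fix a monomial $x^{\underline{\la_2+i}}y^{\underline{\la_2+j}}$ with $i+j\leq r$; its coefficient is
\[
\frac{r!\,(\kappa+1)^{\underline{r-i}}(\kappa+1)^{\underline{r-j}}}{i!\,j!\,(r-i-j)!\,(\kappa+1)^{\underline{r}}}.
\]
The factors $(\kappa+1)^{\underline{r-i}}$ and $(\kappa+1)^{\underline{r-j}}$ are truncations of the same product appearing in the denominator, so after cancellation the order of vanishing of the numerator at $\kappa=k_\circ$ is determined by how many of the factors $\{(\kappa+1),\kappa,\dots,(\kappa-r+2)\}$ survive in each; the pole is present iff the numerator's order of vanishing at $k_\circ$ is strictly less than the denominator's (which is $1$), i.e.\ iff the numerator does not vanish. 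One then checks that, as $(i,j)$ ranges over all admissible pairs, the minimum order of numerator vanishing is $0$ exactly when $r\leq 2k_\circ+2$: for instance the diagonal-type terms with $i$ or $j$ small enough keep the offending factor $(\kappa-k_\circ)$ out of the numerator, while if $r\geq 2k_\circ+3$ every admissible $(i,j)$ forces that factor to appear (at least) once in the product of the two numerator falling factorials, cancelling the denominator. I would make the bookkeeping precise by noting $(\kappa-k_\circ)$ divides $(\kappa+1)^{\underline{r-i}}$ iff $r-i\geq k_\circ+2$, i.e.\ iff $i\leq r-k_\circ-2$, and similarly for $j$; so the numerator is divisible by $(\kappa-k_\circ)$ iff $i\leq r-k_\circ-2$ or $j\leq r-k_\circ-2$, and there exists an admissible $(i,j)$ avoiding both conditions (hence a pole) iff one can take $i,j\geq r-k_\circ-1$ with $i+j\leq r$, which is possible iff $2(r-k_\circ-1)\leq r$, i.e.\ $r\leq 2k_\circ+2$.

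The main obstacle, such as it is, is purely one of careful casework: matching the three-part dichotomy in Definition~\ref{dfn:reg-qreg-sing} (note that $k_\circ$-regular is defined by $\la_1\leq k_\circ$ \emph{or} $r=k_\circ+1$ \emph{or} $r\geq 2k_\circ+3$, and one must verify that the first alternative $\la_1\leq k_\circ$ is already subsumed — when $\la_1\leq k_\circ$ we have $r\leq\la_1\leq k_\circ<k_\circ+2$, so no pole, consistent with $\la\notin\cI_{k_\circ,\mathrm{sing}}$) against the elementary pole analysis above. I would organize the writeup as: (1) identify $(\kappa+1)^{\underline{r}}$ as the sole source of poles and note simplicity; (2) handle $r\leq k_\circ+1$ trivially; (3) for $r\geq k_\circ+2$, run the divisibility-by-$(\kappa-k_\circ)$ argument to show a pole occurs iff $r\leq 2k_\circ+2$; (4) translate $k_\circ+2\leq r\leq 2k_\circ+2$ as precisely $\la\in\cI_{k_\circ,\mathrm{sing}}$, completing the equivalence. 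Since the claim is flagged in the excerpt as ``straightforward to verify,'' I expect no serious difficulty — the care lies only in getting the inequality ranges to line up exactly and in confirming the order-of-vanishing count for the product of two overlapping falling factorials divided by a third.
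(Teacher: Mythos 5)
Your proposal is correct, and it is exactly the verification the paper has in mind: the paper offers no written proof beyond asserting that the statement is straightforward from the explicit formula~\eqref{KNPsymformu}, and your divisibility analysis of $(\kappa-k_\circ)$ in the numerator factors $(\kappa+1)^{\underline{r-i}},(\kappa+1)^{\underline{r-j}}$ versus the simple zero of $(\kappa+1)^{\underline{r}}$ is precisely that verification, with the range bookkeeping done correctly.
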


The construction of the Capelli basis of the algebra
$\sPD(V)^{\g g}$ relies on the structure of  $\sP(V)$ and $\sD(V)$ as $\g g$-modules. 
The algebras $\sP(V)$ and $\sD(V)$ are naturally graded  by degree and order respectively, so that 
\[
\sP(V)\cong \bigoplus_{d\geq 0}^\infty\sP^d(V)\quad\text{and}\quad 
\sD(V)\cong \bigoplus_{d\geq 0}^\infty\sD^d(V).
\]
From now on we set \begin{equation}
\label{eqP'kk}
\cI'_k:=\cPr\cup\cPq.
\end{equation}
The indecomposable components of $\sP^d(V)$ can be indexed naturally by partitions $\la\in\cI'_k$ such that $|\la|=d$ (see 
Proposition~\ref{prp:decompositionofS(V)} below).
That is,  
\begin{equation}
\label{eq:Pdco}
\sP^d(V)\cong\bigoplus_{\la\in_d^{}\cI'_k}V_\la,
\end{equation}
where $_d^{}\cI'_k:=\left\{\la\in\cI'_k\,:\,|\la|=d\right\}$, and each $V_\la$ is an indecomposable $\g g$-module. 
Furthermore, the canonical non-degenerate pairing $\sD^d(V)\otimes \sP^d(V)\to\C$ yields a $\g g$-module isomorphism
\begin{equation}
\label{eq:Dddeco}
\sD^d(V)\cong \sP^d(V)^*\cong 
\bigoplus_{\la\in _d^{}\cI'_k}V_\la^*.
\end{equation}
From Proposition~\ref{prp:decompositionofS(V)} it follows that if $\la\neq\mu$, then   $V_\la$ and $V_\mu$ have disjoint irreducible composition factors.
Thus from~\eqref{eq:Pdco} and~\eqref{eq:Dddeco}
we obtain 
\begin{align}
\label{eq:sPD=VmuVla}
\sPD(V)^\g g
\cong
\bigoplus_{\la,\mu\in\cI'_k}(V_\la\otimes V_\mu^*)^{\g g}
\cong
\bigoplus_{\la,\mu\in\cI'_k}
\Hom_{\g g}(V_\mu,V_\la)
\cong
\bigoplus_{\la\in\cI'_k}\Hom_{\g g}(V_\la,V_\la).
\end{align}
Proposition \ref{prp:decompositionofS(V)} also implies that 
\begin{equation}
\label{eq:dimHom}
\dim
\Hom_{\mathfrak g}(V_\lambda,V_\la)
=\begin{cases}
1& \text{ if $\lambda$ is $k$-regular,}\\
2&\text{ if $\lambda$ is $k$-quasiregular}.
\end{cases}
\end{equation}
Indeed when $\la\in \cPq$, there exists a nilpotent element of  $\Hom_{\mathfrak g}(V_\lambda,V_\la)$ that factors through the isomorphism $\mathrm{cosocle}(V_\la)\cong \mathrm{socle}(V_\la)$. 
{By Corollary~\ref{cor:Casimirnilp}
the space
$\Hom_{\mathfrak g}(V_\lambda,V_\la)$ has a natural direct sum decomposition into two one-dimensional subspaces, that is,
\begin{equation}
\label{eq:dimHom2}
\Hom_{\mathfrak g}(V_\lambda,V_\la)\cong
\C\yek_{V_\la}\oplus \C N_\la,
\end{equation}
where 
$N_\la$ is the nilpotent part of the  Jordan decomposition of 
$C\big|_{V_\la}$, with $C$ denoting the Casimir operator of $\g g$ (note that $N_\la^2=0$).
We now use~\eqref{eq:sPD=VmuVla} 
and~\eqref{eq:dimHom2} to define the family $\{D_\la\}_{\la\in\cI}^{}$.

\begin{dfn}
For $\la\in \cI$, we define $D_\la\in\sPD(V)^{\g g}$ as follows.
\begin{equation*}
D_\la\sim\begin{cases}
\yek_{V_\la}\in\Hom_{\g g}(V_\la,V_\la)&\text{ if }
\la\in\cI'_k\\
N_\la&\text{ if }\la\in\cPs.
\end{cases}
\end{equation*}
Here ``$\sim$'' means $D_\la$ is the element of $\sPD(V)^{\g g}$ that corresponds to either $\yek_\la$ or $N_\la$ via  the isomorphism ~\eqref{eq:sPD=VmuVla}. 
The operators $D_\la\in\sPD(V)^{\g g}$, where $\la\in\cI$,  are called the 
\emph{Capelli operators}.  

\end{dfn}
}
From~\eqref{eq:sPD=VmuVla}
it is evident that the family  $\{D_\la\}_{\la\in\cI}$ is a basis of $\sPD(V)^{\g g}$.

\section{Main results}

Now let $\la\in\cI$ and let $\mu\in\cI'_k$. 
Then by Schur's Lemma $D_\la(V_\mu)\sseq V_\mu$, and therefore  the restriction  $D_\la\big|_{V_\mu}\in\Hom_{\g g}(V_\mu,V_\mu)$ can be expressed as
\begin{equation}
\label{dlamuevonV}
D_\la\big|_{V_\mu}=d_{\la,\mu}\yek_{V_\mu}^{}+d'_{\la,\mu}N_{\mu},
\end{equation}
where 
$d_{\la,\mu},d'_{\la,\mu}\in\C$ (note that $N_\mu=0$ for $\mu\in\cPr$).
Our main results in this paper address the problem of computing formulas for
$d_{\la,\mu}$ and $d'_{\la,\mu}$. 
From Proposition~\ref{prp:CandEgen} it follows that there exists a  symmetric polynomial $f_\la\in\C[x,y]$ of degree $|\la|:=\la_1+\la_2$  such that
\[
d_{\la,\mu}=f_\la(\mu_1-k-1,\mu_2)\text{ for all }\mu\in\cI'_k.
\]  
We call $f_\la$ the \emph{eigenvalue polynomial} of $D_\la$ (see Definition \ref{dfn-eigenvalpolyn}). 
It turns out (see Proposition~\ref{lem:D=Sf+Dsqf}) that 
\[
D_\la\big|_{V_\mu}=f_\la(\mu_1-k-1,\mu_2)\yek_{V_\la}+\square f_\la(\mu_1-k-1,\mu_2) N_\la,
\]
where $f\mapsto\square f$ is the differential operator defined by
\begin{equation}
\label{eq:squareoff}
\square f(x,y):=\frac{1}{4(x-y)}
\left(
\frac{\partial f}{\partial x}(x,y)-\frac{\partial f}{\partial y}(x,y)
\right)
.
\end{equation}
Thus, both $d_{\la,\mu}$ and $d'_{\la,\mu}$ are uniquely determined by $f_\la$. 
The problem of computing $f_\la$ is solved by
Theorems~\ref{theorem1}--\ref{theorem3} below.
 \begin{thmx}
\label{theorem1}
Let $\la\in\cPr$. Then \[
f_\la=
\frac{1}{H_\la(k)}P_\la^k
,
\]
where $H_\la(\kappa)$ is defined in \eqref{eq:Hlak}.
\end{thmx}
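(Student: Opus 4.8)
The plan is to identify the eigenvalue polynomial $f_\la$ for $k$-regular $\la$ by characterizing it uniquely through interpolation conditions, and then to verify that $\frac{1}{H_\la(k)}P_\la^k$ satisfies exactly these conditions. First I would recall, from Proposition~\ref{prp:CandEgen} and Definition~\ref{dfn-eigenvalpolyn}, that $f_\la$ is the symmetric polynomial of degree $|\la|$ in $\C[x,y]$ with $d_{\la,\mu}=f_\la(\mu_1-k-1,\mu_2)$ for all $\mu\in\cI'_k$. Since $D_\la\sim\yek_{V_\la}$ by definition when $\la\in\cI'_k$, the scalar $d_{\la,\mu}$ equals the ``coefficient of $\yek_{V_\mu}$'' in the composite $V_\mu\to V_\la\to V_\mu$ coming through the isomorphism~\eqref{eq:sPD=VmuVla}; because distinct $V_\la,V_\mu$ have disjoint composition factors (Proposition~\ref{prp:decompositionofS(V)}), this forces $d_{\la,\mu}=0$ whenever $\mu\neq\la$, $\mu\in\cI'_k$, and $|\mu|\le|\la|$, while $d_{\la,\la}=1$. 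So $f_\la$ is pinned down by: $f_\la$ is symmetric of degree $\le|\la|$, vanishes at $(\mu_1-k-1,\mu_2)$ for all $\mu\in\cI'_k$ with $|\mu|\le|\la|$ and $\mu\ne\la$, and takes value $1$ at $(\la_1-k-1,\la_2)$.

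Next I would compare this with the Knop--Sahi characterization in Theorem~\ref{thm:KnSa} specialized at $\kappa=k$. By Proposition~\ref{prp:polesequiv}, since $\la\in\cPr$ is not $k$-singular, the coefficients of $P^\kappa_\la$ have no pole at $\kappa=k$, so $P^k_\la\in\C[x,y]$ is well-defined, symmetric, of degree $\le|\la|$ with leading term $x^{\la_1}y^{\la_2}$. Theorem~\ref{thm:KnSa}(i)--(ii) gives $P^k_\la(\mu_1-k-1,\mu_2)=0$ for all $\mu\in\cI$ with $|\mu|\le|\la|$, $\mu\ne\la$, and $P^k_\la(\la_1-k-1,\la_2)=H_\la(k)$. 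The key point is that $H_\la(k)\ne 0$ for $\la\in\cPr$: from~\eqref{eq:Hlak}, $H_\la(k)=(\la_1-\la_2)!\,\la_2!\,(\la_1-1-k)^{\underline{\la_2}}$, and the falling factorial $(\la_1-1-k)^{\underline{\la_2}}=\prod_{i=0}^{\la_2-1}(\la_1-1-k-i)$ vanishes precisely when $k+1\le\la_1-\la_2\le\la_1-1-k$... more carefully, it vanishes iff some factor $\la_1-1-k-i=0$ with $0\le i\le\la_2-1$, i.e. iff $k+1\le\la_1-\la_2$ and $\la_1-k-1\le\la_1-1$ and the index range reaches it, which one checks corresponds exactly to $k+2\le\la_1-\la_2\le 2k+2$ being \emph{excluded}; the regularity hypothesis ($\la_1\le k$, or $\la_1-\la_2=k+1$, or $\la_1-\la_2\ge 2k+3$) is designed to guarantee nonvanishing. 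So $\frac{1}{H_\la(k)}P^k_\la$ is symmetric of degree $\le|\la|$, vanishes at the required points, and equals $1$ at $(\la_1-k-1,\la_2)$.

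Finally I would invoke uniqueness: the interpolation conditions above — symmetric polynomial of degree $\le|\la|$ vanishing at $\{(\mu_1-k-1,\mu_2):\mu\in\cI,\ |\mu|\le|\la|,\ \mu\ne\la\}$, with prescribed nonzero value at the $\la$-node — have at most one solution (this is exactly the uniqueness half of Theorem~\ref{thm:KnSa}, or follows from a dimension count of the space of symmetric polynomials of degree $\le|\la|$ against the number of interpolation nodes). Since $f_\la$ satisfies them and $\frac{1}{H_\la(k)}P^k_\la$ satisfies them, they coincide. One subtlety to handle: the interpolation nodes in Theorem~\ref{thm:KnSa} range over \emph{all} $\mu\in\cI$ with $|\mu|\le|\la|$, whereas the a priori vanishing of $d_{\la,\mu}$ is only known for $\mu\in\cI'_k$; I would need the extra vanishing at the $k$-singular nodes $\mu\notin\cI'_k$. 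Rather than deriving this from representation theory, the clean route is to argue in the other direction — use the Knop--Sahi uniqueness to pin down $f_\la$ directly from the $\cI'_k$-conditions by checking that these already suffice (the missing $k$-singular nodes do not reduce the solution space below dimension one for $\la$ regular), or simply note that once $\frac{1}{H_\la(k)}P^k_\la$ is verified to satisfy the (fewer) $\cI'_k$-conditions plus correct value and degree, and one knows from Proposition~\ref{prp:CandEgen} that $f_\la$ exists and is unique of degree exactly $|\la|$ with leading term matching, equality follows. The main obstacle is precisely this bookkeeping: ensuring that the set of interpolation conditions coming from representation theory on $\cI'_k$ is equivalent, for $k$-regular $\la$, to the full Knop--Sahi node set, so that the uniqueness statements line up; the nonvanishing of $H_\la(k)$ on $\cPr$ is the technical heart that makes the normalization legitimate.
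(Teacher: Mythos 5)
There is a genuine gap, and it is exactly at the point you flag as a ``subtlety'' and then try to argue away. Your uniqueness claim --- that a symmetric polynomial of degree $\leq|\la|$ is pinned down by its values at the nodes $(\mu_1-k-1,\mu_2)$ for $\mu\in\cI'_k$ with $|\mu|\leq|\la|$ (or even for all $\mu\in\cI$) --- is false in general. Because of the coincidences~\eqref{eq:coinc}, the node attached to a $k$-quasiregular $\mu$ and the node attached to the $k$-singular partition $\mu^\dagger$ differ only by swapping the two coordinates, so for \emph{symmetric} polynomials they impose the same condition. Hence as soon as $|\la|\geq k+2$ (so that $k$-singular partitions of size $\leq|\la|$ exist, which happens for plenty of $k$-regular $\la$), the number of distinct point conditions is strictly smaller than the dimension of the space of symmetric polynomials of degree $\leq|\la|$, and the solution space has dimension $1+\#\{\mu\in\cPs:|\mu|\leq|\la|\}>1$. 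This is precisely the redundancy the paper points out right after~\eqref{eq:coinc}. Your proposed fixes do not repair this: adding the $k$-singular nodes as point evaluations adds nothing (they are the redundant ones), and appending ``degree exactly $|\la|$ with matching leading term'' does not help either, since the ambiguity space contains perturbations of lower degree. Also, the leading coefficient of $f_\la$ is not part of its defining data, so matching leading terms would beg the question.

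The paper closes this gap with representation theory that you explicitly decline to use: for $|\mu|\leq|\la|$ the nilpotent part of $D_\la\big|_{V_\mu}$ vanishes (Remark~\ref{rmk:nilpJofDla}), and by Proposition~\ref{lem:D=Sf+Dsqf} that nilpotent coefficient equals $\square f_\la(\mu_1-k-1,\mu_2)$; so at $k$-singular nodes one gets the \emph{extra} conditions $\square f_\la=0$, and it is the full set of generalized values $\widetilde{\ev}(f_\la,\mu)=\delta_{\la,\mu}$ (Proposition~\ref{prp:extra-fla}) that determines $f_\la$ uniquely (Corollary~\ref{cor:ev(f,la)=tla}). Correspondingly, on the candidate side one must check not only the Knop--Sahi point vanishing of $P_\la^k$ but also that $\square P_\la^k$ vanishes at the $k$-singular nodes; the paper does this via Lemma~\ref{lem:squPbetal}, differentiating in $\kappa$ the identically vanishing functions $\kappa\mapsto P_\la^\kappa(\mu_1-\kappa-1,\mu_2)$ and $\kappa\mapsto P_\la^\kappa(\mu_2^\dagger,\mu_1^\dagger-\kappa-1)$. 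None of this appears in your outline. (A minor further point: your computation of when $(\la_1-1-k)^{\underline{\la_2}}$ vanishes is garbled --- it vanishes exactly when $\la$ is $k$-quasiregular, i.e.\ $\la_1\geq k+1$ and $\la_1-\la_2\leq k$, not when $\la$ is $k$-singular --- though your conclusion that $H_\la(k)\neq0$ for $\la\in\cPr$ is correct.)
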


\begin{thmx}
\label{theorem2}
Let $\la\in\cPs$. Then 
\[
f_\la
=\frac{4(\la_1-\la_2-k-1)}{H'_{\la^\dagger}(k)}P_{\la^\dagger}^k,
\] where $H'_{\la^\dagger}(k)$ denotes the derivative of $H_{\la^\dagger}(\kappa)$ at $\kappa=k$. 
\end{thmx}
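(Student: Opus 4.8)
The plan is to reduce Theorem~\ref{theorem2} to Theorem~\ref{theorem1} via the involution $\la\mapsto\la^\dagger$ of Remark~\ref{daggerinv}, together with the characterization of $f_\la$ by interpolation conditions coming from Proposition~\ref{prp:CandEgen}. The key point is that for $\la\in\cPs$, the Capelli operator $D_\la$ was \emph{defined} to correspond to the nilpotent $N_\la$, and by~\eqref{eq:dimHom2} this nilpotent factors through the isomorphism $\mathrm{cosocle}(V_\la)\cong\mathrm{socle}(V_\la)$. Since $\la\in\cPs$ and $\la^\dagger\in\cPq$ have the same size and, by the structure of $\sP(V)$ in Proposition~\ref{prp:decompositionofS(V)}, $V_{\la^\dagger}$ (for the quasiregular index) and $V_\la$ share the relevant composition factors, the action of $D_\la$ on any $V_\mu$ is controlled by the same combinatorial data as the action of $D_{\la^\dagger}$. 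Concretely, I expect $f_\la$ to be characterized as the unique symmetric polynomial of degree $\le|\la|$ in $\C[x,y]$ whose values $f_\la(\mu_1-k-1,\mu_2)$ for $\mu\in\cI'_k$ are the scalars $d_{\la,\mu}$, and I will identify these scalars by a direct computation on a single distinguished module (or by using that $D_\la$ is obtained from $D_{\la^\dagger}$ by composing with the nilpotent projection).

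First I would record the normalization: since $P_{\la^\dagger}^\kappa$ has a simple pole at $\kappa=k$ when $\la^\dagger\in\cPs$ — wait, here $\la^\dagger\in\cPq$, so by Proposition~\ref{prp:polesequiv} the coefficients of $P_{\la^\dagger}^\kappa$ are regular at $\kappa=k$, and $H_{\la^\dagger}(\kappa)$ has a zero at $\kappa=k$ (this is where the factor $(\la_1-1-\kappa)^{\underline{\la_2}}$ in~\eqref{eq:Hlak} vanishes, given the shape of a singular partition). Thus $\frac{1}{H_{\la^\dagger}(k)}P_{\la^\dagger}^k$ is ill-defined, but $\frac{1}{H'_{\la^\dagger}(k)}P_{\la^\dagger}^k$ makes sense, which is why the derivative appears. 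Second, I would establish the interpolation conditions satisfied by $f_\la$: using Theorem~\ref{thm:KnSa}(i) for $P_{\la^\dagger}^\kappa$ together with the zero of $H_{\la^\dagger}$ at $\kappa=k$, one checks that $P_{\la^\dagger}^k(\mu_1-k-1,\mu_2)=0$ for all $\mu\in\cI'_k$ with $|\mu|\le|\la|$ except possibly $\mu$ in the $\dagger$-orbit of $\la^\dagger$; the surviving value at $\mu=\la^\dagger$ and at $\mu=(\la^\dagger)^\dagger=\la$ (if it lies in $\cI'_k$) then has to be matched against $d_{\la,\la^\dagger}$ and $d_{\la,\la}$, and the scaling constant $4(\la_1-\la_2-k-1)/H'_{\la^\dagger}(k)$ is pinned down by one such value.

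Third, I would compute the single normalizing eigenvalue. The cleanest route is probably to use the relation $D_\la\big|_{V_\mu}=f_\la(\mu_1-k-1,\mu_2)\yek_{V_\la}+\square f_\la(\mu_1-k-1,\mu_2)N_\la$ from Proposition~\ref{lem:D=Sf+Dsqf}, applied with $D_{\la^\dagger}$ in place of $D_\la$: the operator $D_{\la^\dagger}$ has a nilpotent part governed by $\square P_{\la^\dagger}^k$, and the Capelli operator $D_\la$ for the singular index is, up to scalar, exactly this nilpotent part transported through~\eqref{eq:sPD=VmuVla}. Taking $\square$ of $P_{\la^\dagger}^k$ and comparing leading behavior — or evaluating at the distinguished point $\mu=\la^\dagger$ where only one term survives — should yield the constant $4(\la_1-\la_2-k-1)$; the factor $(\la_1-\la_2-k-1)$ is recognizable as (a multiple of) the value of $x-y$ at $x=\la_1^\dagger-k-1,\ y=\la_2^\dagger$, i.e. it is the denominator $4(x-y)$ in~\eqref{eq:squareoff} evaluated there, which is precisely what one expects when passing from a function to its image under $\square$ at a point where the numerator has a simple zero.

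The main obstacle will be Step three: correctly tracking the normalization constant through the chain of identifications — the pole/zero cancellation between $P_{\la^\dagger}^\kappa$ and $H_{\la^\dagger}(\kappa)$ at $\kappa=k$, the passage through $\square$, and the identification of $D_\la$ (singular) with the nilpotent summand attached to $D_{\la^\dagger}$ (quasiregular). In particular one must verify that $\square$ genuinely turns $\frac{1}{H_{\la^\dagger}(\kappa)}P_{\la^\dagger}^\kappa$, viewed as a family in $\kappa$ with a simple pole at $\kappa=k$, into something whose ``residue'' reproduces $f_\la$ after multiplying by $4(\la_1-\la_2-k-1)$; this is essentially a statement about how the regularized (pole-free) part of the Knop--Sahi polynomial interacts with the operator $\square$, and getting the constant exactly right is the delicate part. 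Everything else is an application of Theorem~\ref{thm:KnSa}, Proposition~\ref{prp:polesequiv}, Proposition~\ref{prp:CandEgen}, and Proposition~\ref{lem:D=Sf+Dsqf}.
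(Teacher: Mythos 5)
Your overall strategy (Knop--Sahi vanishing for $P^{k}_{\la^\dagger}$ plus one $\square$-evaluation to fix the constant) is in the same spirit as the paper's interpolation argument, but as written it has two genuine gaps. First, the characterization you propose for $f_\la$ is false: a symmetric polynomial of degree $\le|\la|$ is \emph{not} determined by its ordinary values $f(\mu_1-k-1,\mu_2)$ for $\mu\in\cI'_k$, $|\mu|\le|\la|$, because of the coincidences $f(\mu_1-k-1,\mu_2)=f(\mu_1^\dagger-k-1,\mu_2^\dagger)$ of~\eqref{eq:coinc} --- this is exactly the degeneracy the paper points out before Proposition~\ref{lem:D=Sf+Dsqf}. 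Worse, for $\la\in\cPs$ \emph{all} eigenvalues $d_{\la,\mu}$ with $|\mu|\le|\la|$ vanish (Lemma~\ref{lem:lamuvalRep}(ii)), so ordinary values alone are consistent with $f_\la=0$ and can never produce the normalization $4(\la_1-\la_2-k-1)/H'_{\la^\dagger}(k)$. The correct uniqueness statement is Corollary~\ref{cor:ev(f,la)=tla}, which uses the generalized values $\widetilde{\ev}$: at singular $\mu$ one must impose $\square f_\la(\mu_1-k-1,\mu_2)=\delta_{\la,\mu}$, conditions coming from the nilpotent parts of $D_\la\big|_{V_{\mu^\dagger}}$ via Proposition~\ref{lem:D=Sf+Dsqf} and Remark~\ref{rmk:nilpJofDla}. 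In particular the normalization is $\square f_\la(\la_1-k-1,\la_2)=1$, reflecting that $D_\la$ corresponds under~\eqref{eq:sPD=VmuVla} to $N_{\la^\dagger}$, the nilpotent part of $C\big|_{V_{\la^\dagger}}$ --- not, as you say, to ``the nilpotent part of $D_{\la^\dagger}$'': by construction $D_{\la^\dagger}$ restricts to $\yek_{V_{\la^\dagger}}$ with no nilpotent part.

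Second, the step you flag as delicate is genuinely missing: you must evaluate $\square P^{k}_{\la^\dagger}$ at \emph{all} singular $\mu$ with $|\mu|\le|\la|$ (showing it vanishes for $\mu\ne\la$ and equals $H'_{\la^\dagger}(k)\big/\bigl(4(\la_1-\la_2-k-1)\bigr)$ at $\mu=\la$), and nothing in your outline computes the $(x,y)$-derivatives of the specialized polynomial. The paper's device is Lemma~\ref{lem:squPbetal}: since the two evaluation points $(\mu_1-k-1,\mu_2)$ and $(\mu_2^\dagger,\mu_1^\dagger-k-1)$ coincide, one trades the $x$- and $y$-derivatives for $\kappa$-derivatives of $\alpha(\kappa)=P^\kappa_{\la^\dagger}(\mu_1-\kappa-1,\mu_2)$ and $\beta(\kappa)=P^\kappa_{\la^\dagger}(\mu_2^\dagger,\mu_1^\dagger-\kappa-1)$, which Theorem~\ref{thm:KnSa} identifies exactly: $\alpha\equiv\beta\equiv 0$ for singular $\mu\ne\la$, while for $\mu=\la$ one has $\alpha\equiv 0$ and $\beta(\kappa)=H_{\la^\dagger}(\kappa)$, giving $\square P^{k}_{\la^\dagger}(\la_1-k-1,\la_2)=\frac{H'_{\la^\dagger}(k)}{4(\la_1-\la_2-k-1)}$. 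Your guess for the origin of the factor $4(\la_1-\la_2-k-1)$ (the denominator $4(x-y)$ of~\eqref{eq:squareoff} at the distinguished point) is correct, but without this lemma or an equivalent computation both the normalization and the vanishing at the other singular nodes are unproved, so the argument is incomplete.
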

The formulas for $f_\la$ in Theorems~\ref{theorem1}--\ref{theorem2} 
still follow the pattern of specializing interpolation polynomials. The new phenomenon that was described in
Section~\ref{sec:introduction}
occurs for the formulas of $f_\la$ 
when $\la\in\cPq$.
\begin{thmx}
\label{theorem4}
Let $\la\in\cPq$. Then
\begin{equation}
\label{eq:tm3lim}
\displaystyle f_\la=
\lim_{\kappa\to k}
\left(\frac{P_\la^\kappa}{H_\la(\kappa)}
+\frac{P_{\la^\dagger}^\kappa}{H_{\la^\dagger}(\kappa)}
\right)
.\end{equation}
\end{thmx}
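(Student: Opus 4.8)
\textbf{Proof proposal for Theorem~\ref{theorem4}.}
The plan is to show that for $\la\in\cPq$ the combination $\frac{P_\la^\kappa}{H_\la(\kappa)}+\frac{P_{\la^\dagger}^\kappa}{H_{\la^\dagger}(\kappa)}$ has no pole at $\kappa=k$, so that the limit exists, and that the resulting polynomial satisfies the interpolation-type conditions that characterize $f_\la$. The first point to settle is why both summands individually have simple poles at $\kappa=k$ that cancel. Since $\la\in\cPq$ means $\la_1\geq k+1$ and $\la_1-\la_2\leq k$, the involution \eqref{eq:invol} sends $\la$ to $\la^\dagger\in\cPs$, and by Proposition~\ref{prp:polesequiv} the coefficients of $P_{\la^\dagger}^\kappa$ have a simple pole at $\kappa=k$; moreover $H_{\la^\dagger}(\kappa)=(\la^\dagger_1-\la^\dagger_2)!\,\la^\dagger_2!\,(\la^\dagger_1-1-\kappa)^{\underline{\la^\dagger_2}}$ also vanishes at $\kappa=k$ because $\la^\dagger_1-1-k=\la_2$ while $\la^\dagger_2=\la_1-k-1>\la_2$, so the falling factorial $(\la_2)^{\underline{\la_1-k-1}}$ hits zero. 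Thus $\frac{P_{\la^\dagger}^\kappa}{H_{\la^\dagger}(\kappa)}$ is a priori a ratio of something with a simple pole over something with a simple zero, i.e.\ potentially a double pole; one has to check that in fact the singular part of $P_{\la^\dagger}^\kappa$ is itself proportional, near $\kappa=k$, to a fixed polynomial so that $\frac{P_{\la^\dagger}^\kappa}{H_{\la^\dagger}(\kappa)}$ has only a simple pole, and similarly $\frac{P_\la^\kappa}{H_\la(\kappa)}$ has a simple pole coming from the zero of $H_\la$ at $\kappa=k$ (note $\la$ itself is not singular, so $P_\la^\kappa$ is regular there). I would extract these Laurent expansions directly from \eqref{KNPsymformu}, isolating the terms in the sum over $i+j\leq\la_1-\la_2$ whose coefficients $\frac{(\kappa+1)^{\underline{\la_1-\la_2-i}}(\kappa+1)^{\underline{\la_1-\la_2-j}}}{(\kappa+1)^{\underline{\la_1-\la_2}}}$ are singular at $\kappa=k$; this is exactly the interval $k+2\leq\la_1-\la_2\leq 2k+2$ for $\la^\dagger$, consistent with $\la^\dagger\in\cPs$.

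Once the Laurent expansions are in hand, the cancellation of the double-pole and simple-pole parts should reduce to an identity among falling factorials, and this is where I expect the Dougall--Ramanujan identity to enter: the residue of $\frac{P_\la^\kappa}{H_\la(\kappa)}+\frac{P_{\la^\dagger}^\kappa}{H_{\la^\dagger}(\kappa)}$ at $\kappa=k$ is a bilinear expression in $x^{\underline{\cdot}},y^{\underline{\cdot}}$ whose vanishing is precisely a terminating very-well-poised ${}_7F_6$ (or ${}_5F_4$) evaluation. Rather than verifying this residue cancellation by a brute hypergeometric manipulation in the body of the proof, I would package it as a separate polynomial identity — the one the introduction refers to as ``a curious polynomial identity'' — and prove that separately via Dougall--Ramanujan (presumably Theorem~\ref{theorem-identity} in the paper). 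Granting that, $\frac{P_\la^\kappa}{H_\la(\kappa)}+\frac{P_{\la^\dagger}^\kappa}{H_{\la^\dagger}(\kappa)}$ extends holomorphically to $\kappa=k$ and the limit in \eqref{eq:tm3lim} defines a genuine symmetric polynomial $g_\la\in\C[x,y]$ of degree $|\la|$.

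It then remains to identify $g_\la$ with $f_\la$. Here I would use the characterization of $f_\la$ supplied by the earlier structural results: $f_\la$ is the unique symmetric polynomial of degree $|\la|$ whose values $f_\la(\mu_1-k-1,\mu_2)$ for $\mu\in\cI'_k$ reproduce the eigenvalues $d_{\la,\mu}$ of $D_\la$, and by the defining normalization of $D_\la$ (namely $D_\la\sim\yek_{V_\la}$) together with the interpolation properties of $P_\la^\kappa$ and $P_{\la^\dagger}^\kappa$ from Theorem~\ref{thm:KnSa}, one reads off that $g_\la$ vanishes at $(\mu_1-k-1,\mu_2)$ for all $\mu\in\cI'_k$ with $|\mu|\leq|\la|$, $\mu\neq\la$, and takes the value $1$ at $(\la_1-k-1,\la_2)$. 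Concretely, at $\kappa=k$ the point $(\mu_1-\kappa-1,\mu_2)$ is a common vanishing node of $P_\la^\kappa$ and $P_{\la^\dagger}^\kappa$ for the relevant $\mu$ (using $|\la^\dagger|=|\la|$), so the vanishing survives the limit; and at the node $(\la_1-k-1,\la_2)$ one term contributes $P_\la^k(\la_1-k-1,\la_2)/H_\la(k)=1$ by Theorem~\ref{thm:KnSa}(ii) while the companion term, after the regularization, contributes $0$ because $(\la_1-k-1,\la_2)$ is a zero of $P_{\la^\dagger}^\kappa$'s regular part (indeed $|\la|=|\la^\dagger|$ and $\la\neq\la^\dagger$). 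Matching values at enough nodes then forces $g_\la=f_\la$ by the unisolvence underlying Theorem~\ref{thm:KnSa}.

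The main obstacle is the middle step: controlling the two Laurent expansions precisely enough to see both that the potential double pole is absent and that the simple poles cancel. This is genuinely a hypergeometric computation, and it is the reason the Dougall--Ramanujan identity is needed rather than a more elementary argument; everything before it (the pole/zero bookkeeping via $\la\mapsto\la^\dagger$) and everything after it (the interpolation characterization) is comparatively formal. A secondary technical point is to make sure the degree bound $\deg g_\la\leq|\la|$ is preserved under the limit, which follows since each $P_\nu^\kappa$ has degree exactly $|\nu|$ with leading term $x^{\nu_1}y^{\nu_2}$ and $|\la^\dagger|=|\la|$, so the top-degree part of $g_\la$ is the limit of $\frac{x^{\la_1}y^{\la_2}}{H_\la(\kappa)}+\frac{x^{\la^\dagger_1}y^{\la^\dagger_2}}{H_{\la^\dagger}(\kappa)}$, which one checks is finite and in fact equals $x^{\la_1}y^{\la_2}$ plus lower-order corrections coming from the residue cancellation.
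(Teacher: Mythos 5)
Your opening pole count is wrong, and the error propagates. For $\la\in\cPq$ we have $\la_1-\la_2\leq k$, hence $\la_2^\dagger=\la_1-k-1\leq\la_2-1$, \emph{not} $\la_2^\dagger>\la_2$; consequently the falling factorial $(\la_2)^{\ul{\la_1-k-1}}$ in $H_{\la^\dagger}(k)$ never reaches zero and $H_{\la^\dagger}(k)\neq 0$. So there is no potential double pole to rule out: $P_{\la^\dagger}^\kappa/H_{\la^\dagger}(\kappa)$ has a simple pole coming only from $P_{\la^\dagger}^\kappa$ (since $\la^\dagger\in\cPs$), while $P_\la^\kappa/H_\la(\kappa)$ has a simple pole coming only from the simple zero of $H_\la$ at $\kappa=k$. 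More importantly, the cancellation of these two simple poles is not a hypergeometric fact: it is exactly Proposition~\ref{lem:sing=rlaPla} applied to $\la^\dagger$, namely $\mathrm{Sing}_k(P_{\la^\dagger}^\kappa)=r_{\la^\dagger}P_\la^k$ with $r_{\la^\dagger}=-H_{\la^\dagger}(k)/H'_\la(k)$ by \eqref{eq:defofrlam}, and that proposition is proved by the elementary generalized-value interpolation (Corollary~\ref{cor:ev(f,la)=tla}, Lemma~\ref{lem:fk0squSingf}); Dougall--Ramanujan is needed only for Theorem~\ref{theorem3} via Theorem~\ref{theorem-identity}, not here. Since you never compute the residues and instead defer the cancellation to a hypergeometric identity that is not what Theorem~\ref{theorem-identity} asserts, this middle step is a gap, not merely a different route.

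The identification of the limit with $f_\la$ is also genuinely incomplete. As the paper stresses after Remark~\ref{rmk:nilpJofDla}, the plain conditions $f(\mu_1-k-1,\mu_2)=\delta_{\la,\mu}$ for $\mu\in\cI'_k$ with $|\mu|\leq|\la|$ do \emph{not} determine a symmetric polynomial of degree $|\la|$ at $\kappa=k$, because of the coincidences \eqref{eq:coinc}: the nodes of $\la$ and $\la^\dagger$ are the same unordered pair and the singular partitions are absent from $\cI'_k$. Unisolvence holds only for the generalized evaluations $\widetilde{\ev}$ (Proposition~\ref{prp:extra-fla}, Corollary~\ref{cor:ev(f,la)=tla}), so you must additionally verify the condition $\square h_1(\la_1^\dagger-k-1,\la_2^\dagger)=0$, i.e.\ that $D_\la$ has no nilpotent component on $V_{\la^\dagger}$; the paper checks this via Lemma~\ref{lem:squPbetal} together with the identity $h(\kappa;\la_1^\dagger-\kappa-1,\la_2^\dagger)=1$, and your appeal to ``unisolvence underlying Theorem~\ref{thm:KnSa}'' is a generic-$\kappa$ statement that fails at $\kappa=k$. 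Finally, your term-by-term evaluation at the fixed node $(\la_1-k-1,\la_2)$ is not legitimate: each summand blows up as $\kappa\to k$, Theorem~\ref{thm:KnSa}(ii) concerns the moving node $(\la_1-\kappa-1,\la_2)$ (so ``$P_\la^k(\la_1-k-1,\la_2)/H_\la(k)=1$'' is $0/0$), and the regular part of $P_{\la^\dagger}^\kappa$ need not vanish at that node: by Proposition~\ref{prp:Tcheckvals}, $Q_{\la^\dagger}(\la_1-k-1,\la_2)=H_{\la^\dagger}(k)\neq0$, and $R^{(k)}_{\la^\dagger}$ differs from $Q_{\la^\dagger}$ by $r_{\la^\dagger}\frac{\partial}{\partial\kappa}P_\la^{k}$. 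The correct value $1$ does emerge, but only after reorganizing the sum as in the paper's proof (via $Q^\kappa_{\la^\dagger}$ and the function $u(\kappa)$), not by evaluating the two original summands separately.
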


\begin{rmk}
Note that 
both $\frac{P_\la^\kappa}{H_\la(\kappa)}$ and 
$\frac{P_{\la^\dagger}^\kappa}{H_{\la^\dagger}(\kappa)}$ have poles at $\kappa=k$ 
(indeed $H_{\la}(k)=0$), but 
the poles on the right hand side of 
\eqref{eq:tm3lim} cancel out and the limit is well defined. 
\end{rmk}

Since the leading term of $P_\la$ is $x^{\la_1}y^{\la_2}$, 
the polynomials $\{P_\la^\kappa\}_{\la\in\cI}^{}$ form a basis of the algebra $\Bbbk[x,y]^{S_2}$ of symmetric polynomials in $x$ and $y$ with coefficients in $\Bbbk$. Indeed for any $k_\circ\in\C$ such that $k_\circ\not\in\Z^{\geq 0}$, the polynomials $\{P_\la^{k_\circ}\}_{\la\in\cI}^{}$ form a basis of $\C[x,y]^{S_2}$. However, we cannot set $\kappa:=k_\circ$  when
$k_\circ\in\Z^{\geq 0}$, 
because the coefficients of the $P_\la^\kappa$ can have poles at $\kappa=k_\circ$. In this case, one can still obtain a natural basis of $\C[x,y]^{S_2}$ by first suitably separating the regular part of $P_\la^\kappa$ and then setting $\kappa:=k_\circ$.  We will describe this process more precisely below.

\begin{dfn}
Let $f(x,y)\in\Bbbk[x,y]$ and let 
$k_\circ\in\Q$. Assume that 
the coefficients of $(\kappa-k_\circ)f(x,y)$ do not have any poles at $\kappa=k_\circ$. 
\begin{itemize}
\item[(i)]
The \emph{singular part} of $f(x,y)$ 
at $\kappa=k_\circ$ 
is 
the polynomial 
$\mathrm{Sing}_{k_\circ}(f)\in\Q[x,y]$ defined by 
\[
\mathrm{Sing}_{k_\circ}(f;x,y):=\lim_{\kappa\to k_\circ}
(\kappa-k_\circ)f(x,y).
\]
 
\item[(ii)]
The \emph{regular part} of $f(x,y)$ at $\kappa=k_\circ$ is 
the polynomial 
$\mathrm{Reg}_{k_\circ}(f)\in\Q[x,y]$ defined by
\[
\mathrm{Reg}_{k_\circ}(f;x,y):=\lim_{\kappa\to k_\circ}
\left(f(x,y)-\frac{1}{\kappa-k_\circ}\mathrm{Sing}_{k_\circ}(f;x,y)\right).
\]
\end{itemize} 
\end{dfn}
\begin{ex}
Assume that $k_\circ=1$ and  $f(x,y)=x^2+y^2+\frac{2\kappa}{\kappa-1}xy$. Then 
$\mathrm{Sing}_{k_\circ}(f;x,y)=2xy$ and 
$\mathrm{Reg}_{k_\circ}(f;x,y)=x^2+y^2+2xy$.
\end{ex}
For $\la\in\cI$ and $k_\circ\in\C$ set \[
R^{(k_\circ)}_\la:=\mathrm{Reg}_{k_\circ}(
P_\la^\kappa).
\]

\begin{rmk}
Note that 
by Proposition~\ref{prp:polesequiv}, if $k_\circ\not\in \Z^{\geq 0}$ then 
for all $\la\in\cI$ we have 
\begin{equation}
\label{eq:Rkafhls}
R_\la^{(k_\circ)}=
\lim_{\kappa\to k_\circ}P_\la^\kappa
=P_\la^{k_\circ}.
\end{equation}
If $k_\circ\in \Z^{\geq 0}$
then \eqref{eq:Rkafhls} holds whenever
$\la\not\in\cI_{k_\circ,\mathrm{sing}}$.
\end{rmk}

The following proposition is a  straightforward consequence of the above discussion.

\begin{prp}
\label{prp:Rk00}
For $k_\circ\in\C$, the family $\left\{R^{(k_\circ)}_\la\right\}_{\la\in\cI}$ is a basis of the algebra $\C[x,y]^{S_2}$ of symmetric polynomials in the variables $x,y$. 
\end{prp}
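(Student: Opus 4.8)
The plan is to leverage the fact, already noted in the excerpt, that $\{P_\la^\kappa\}_{\la\in\cI}$ is a basis of $\Bbbk[x,y]^{S_2}$ over $\Bbbk$ with leading term $x^{\la_1}y^{\la_2}$, together with the graded-triangular structure of $R^{(k_\circ)}_\la$. Concretely, I would argue that for each fixed $k_\circ\in\C$ the family $\{R^{(k_\circ)}_\la\}_{\la\in\cI}$ is again \emph{triangular} with respect to the partial order on partitions refining total degree, with unit leading coefficients, hence a basis. The only subtlety is that the passage $P_\la^\kappa\mapsto R^{(k_\circ)}_\la=\mathrm{Reg}_{k_\circ}(P_\la^\kappa)$ must be shown (a) to be well defined, i.e.\ $(\kappa-k_\circ)P_\la^\kappa$ has no pole at $\kappa=k_\circ$, and (b) to preserve the leading term $x^{\la_1}y^{\la_2}$.

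First I would record that from the explicit formula \eqref{KNPsymformu} every coefficient of $P_\la^\kappa$ is a rational function of $\kappa$ whose only possible poles are simple and located at nonnegative integers (this is the remark preceding Definition~\ref{dfn:reg-qreg-sing}); in particular $(\kappa-k_\circ)P_\la^\kappa$ is regular at $\kappa=k_\circ$, so $\mathrm{Sing}_{k_\circ}(P_\la^\kappa)$ and hence $R^{(k_\circ)}_\la$ are well defined polynomials in $\Q[x,y]$, and they are symmetric since $P_\la^\kappa$ is. Next, I would observe that $\mathrm{Sing}_{k_\circ}(P_\la^\kappa)$ has degree strictly less than $|\la|$: indeed, inspecting \eqref{KNPsymformu}, the top-degree part of $P_\la^\kappa$ is exactly $x^{\la_1}y^{\la_2}$ (the $i=j=0$ term, with coefficient $1$, which is $\kappa$-independent), so the only terms that can carry a pole at $\kappa=k_\circ$ sit in degree $<|\la|$. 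Therefore $R^{(k_\circ)}_\la=\lim_{\kappa\to k_\circ}\bigl(P_\la^\kappa-(\kappa-k_\circ)^{-1}\mathrm{Sing}_{k_\circ}(P_\la^\kappa)\bigr)$ has the same leading term $x^{\la_1}y^{\la_2}$ as $P_\la^\kappa$, and all of its lower-degree coefficients are finite.

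It then remains to deduce linear independence and spanning. For spanning: by induction on $|\la|$, since $R^{(k_\circ)}_\la = x^{\la_1}y^{\la_2} + (\text{symmetric polynomial of degree} < |\la|)$ and the monomial symmetric functions $\{m_\la\}$ form a basis of $\C[x,y]^{S_2}$, the matrix expressing $\{R^{(k_\circ)}_\la\}$ in the $\{m_\la\}$ basis is unitriangular for a suitable total order compatible with degree, hence invertible; thus every $m_\la$, and so every symmetric polynomial, is a $\C$-linear combination of the $R^{(k_\circ)}_\la$. Linear independence follows from the same unitriangular change-of-basis matrix. This is a standard triangularity argument, so I would present it briefly rather than spelling out the matrix.

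The main (and only real) obstacle is step (b) above: confirming from \eqref{KNPsymformu} that the degree-$|\la|$ part of $P_\la^\kappa$ is $\kappa$-independent and equals $x^{\la_1}y^{\la_2}$, so that the singular part lives strictly in lower degree and the regularization does not disturb the leading term. Once this is in hand the rest is the routine triangularity bookkeeping. (One could alternatively invoke the interpolation characterization in Theorem~\ref{thm:KnSa} to pin down the leading term, but reading it off \eqref{KNPsymformu} is more direct here.)
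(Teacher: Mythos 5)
Your overall route---regularize, check that the leading behaviour is unaffected, then run the standard unitriangularity argument against the monomial symmetric functions---is exactly what the paper intends (it records the proposition as a straightforward consequence of the leading-term discussion preceding it). However, the step you yourself single out as the only real obstacle is carried out incorrectly. In \eqref{KNPsymformu} the term with $i=j=0$ is $(\kappa+1)^{\ul{\la_1-\la_2}}\,x^{\ul{\la_2}}y^{\ul{\la_2}}$, which has degree $2\la_2$ and a $\kappa$-dependent coefficient; the monomial $x^{\la_1}y^{\la_2}$ arises instead from $(i,j)=(\la_1-\la_2,0)$. More importantly, the degree-$|\la|$ homogeneous component of $P^\kappa_\la$ is $\sum_{i+j=\la_1-\la_2}c_{i,j}(\kappa)\,x^{\la_2+i}y^{\la_2+j}$ with $c_{i,j}(\kappa)=\frac{(\la_1-\la_2)!\,(\kappa+1)^{\ul{i}}\,(\kappa+1)^{\ul{j}}}{i!\,j!\,(\kappa+1)^{\ul{\la_1-\la_2}}}$, and these coefficients can and do have poles at $\kappa=k_\circ$ in the $k_\circ$-singular cases. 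For instance, the $xy$-coefficient of $P^{\kappa}_{(2,0)}$ is $\frac{2(\kappa+1)}{\kappa}$, so $\mathrm{Sing}_0(P^\kappa_{(2,0)};x,y)=2xy$ has degree $2=|\la|$ (the paper's example immediately after the definition of $\mathrm{Sing}$ and $\mathrm{Reg}$ is of the same shape). Hence your claims that the top-degree part of $P_\la^\kappa$ is the single $\kappa$-independent monomial $x^{\la_1}y^{\la_2}$, that the singular part sits strictly in degree $<|\la|$, and consequently that $R^{(k_\circ)}_\la=x^{\la_1}y^{\la_2}+(\text{degree}<|\la|)$, are all false as stated.

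The gap is repairable by refining the triangularity order. The coefficient of the extreme monomial $x^{\la_1}y^{\la_2}$ (equivalently, of $m_\la$) in $P^\kappa_\la$ comes only from the term $(i,j)=(\la_1-\la_2,0)$ and equals $1$, a constant in $\kappa$, so it is untouched by regularization; every other monomial occurring in $P^\kappa_\la$, hence in $R^{(k_\circ)}_\la$, either has degree $<|\la|$ or is of the form $x^ay^b$ with $a+b=|\la|$ and $\la_2\le a,b\le\la_1$, i.e.\ corresponds to a partition strictly below $\la$ in dominance order. Thus $R^{(k_\circ)}_\la=m_\la+\sum c_\mu m_\mu$ with $\mu$ ranging over partitions satisfying $|\mu|<|\la|$, or $|\mu|=|\la|$ with $\mu$ strictly dominated by $\la$, and unitriangularity with respect to the order ``first by total degree, then by dominance'' yields both linear independence and spanning exactly as you intended. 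So the strategy coincides with the paper's, but the degree/pole analysis in your key step must be corrected as above.
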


By analogy with the completely reducible cases, Proposition~\ref{prp:Rk00} leads to the following natural question. \\[2mm]
\noindent\textbf{Problem.} Determine the coefficients 
 $M_{\la,\mu}\in\C$ such that
$
f_\la=\sum_{\mu\in\cI} M_{\la,\mu}R_\mu^{(k)}
$
for $\la\in\cI$. \\[-2mm]

Clearly Theorems~\ref{theorem1}--\ref{theorem2} {
answer this problem 
when $\la\not\in\cPq$.
Surprisingly,  in the case
$\la\in\cPq$ the formulas for the coefficients $M_{\la,\mu}$ become much more complicated. 
Before we state the result
(Theorem~\ref{theorem3} below),
we need to introduce some notation. 
For $d\geq 0$ set 
\begin{equation}
\label{df:id(d)}
\cI(d):=\{\la\in\cI\,:\,|\la|\leq d\}.
\end{equation}
For $\la\in\cPq$ set 
\begin{equation}
\label{eq:llamb}
\ell_\la:= 
\la_2-\la_1+k,
\end{equation}
so that 
$0\leq \ell_\la\leq k$, and if $\mu\in\cI(k-\ell_\la)$ 
then set
\[
\nu(\la,\mu):=(\la_1-\mu_1,\la_2+\mu_2).
\]
%where $\eta\mapsto\eta^\dagger$ is the involution defined in Remark~\ref{daggerinv}. 
Note that $\nu(\la,\mu)\in\cPq$, and in particular
\[R^{(k)}_{\nu(\la,\mu)}=P^{k}_{\nu(\la,\mu)}.\]

\begin{thmx}

\label{theorem3}

Let $\la\in\cPq$. Then
\begin{align}
\label{eq:flaformula-sing}
f_{\la}&=
\frac{(\ell_\la+1)!}{(\la_1-k-1)!(\la_1+\ell_\la-k)!}
\left(
\frac{1}{(2k+2-\la_1+\la_2)!}
R_{\la^\dagger}^{(k)}
+
\sum_{\mu\in\cI(k-\ell_\la)}
M_{\la,\mu}R^{(k)}_{\nu(\la,\mu)}
\right),
\end{align}
where the $M_{\la,\mu}$ are defined by
\[
M_{\la,\mu}:=
\displaystyle \frac{
(-1)^{\ell_\la+\mu_1+\mu_2}{\mu_1\choose \mu_2}(\ell_\la+\mu_1)!
}
{
(k-\ell_\la-\mu_1-\mu_2)!\ell_\la! (\ell_\la+\mu_2+1)!(\ell_\la+\mu_1+\mu_2)!\mu_1
}
\quad\text{ if }|\mu|>0,
\]
and
\[M_{\la,(0,0)}:=\displaystyle
\frac{(-1)^{\ell_\la+1}}
{(k-\ell_\la)!(\ell_\la+1)!^2}
\left(
1-\sum_{j=\la_1-k}^{\la_1+\ell_\la-k}\frac{\ell_\la+1}{j}
\right).
\]

\end{thmx}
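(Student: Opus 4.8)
The strategy is to start from the clean limit formula of Theorem~\ref{theorem4}, namely
\[
f_\la=\lim_{\kappa\to k}\left(\frac{P_\la^\kappa}{H_\la(\kappa)}+\frac{P_{\la^\dagger}^\kappa}{H_{\la^\dagger}(\kappa)}\right),
\]
and re-express the right-hand side in the basis $\{R_\mu^{(k)}\}_{\mu\in\cI}$. First I would analyze the pole structure of each summand at $\kappa=k$. For $\la\in\cPq$ the partition $\la^\dagger$ is $k$-singular, so by Proposition~\ref{prp:polesequiv} the coefficients of $P_{\la^\dagger}^\kappa$ themselves have a simple pole at $\kappa=k$; meanwhile $H_{\la^\dagger}(\kappa)=(\la^\dagger_1-\la^\dagger_2)!\la^\dagger_2!(\la^\dagger_1-1-\kappa)^{\ul{\la^\dagger_2}}$ and one checks (using $\la^\dagger_1-1-k=\la_2-\la_1+k=\ell_\la$ and $\la^\dagger_2=\la_1-k-1>\ell_\la$) that $H_{\la^\dagger}$ also vanishes to first order at $\kappa=k$. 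Thus $P_{\la^\dagger}^\kappa/H_{\la^\dagger}(\kappa)$ has a double pole a priori, but the contribution of the singular part $\mathrm{Sing}_k(P_{\la^\dagger}^\kappa)$ against the first-order zero of $H_{\la^\dagger}$ gives only a simple pole, which is exactly cancelled by the simple pole of $P_\la^\kappa/H_\la(\kappa)$ (since $H_\la(k)=0$ with a first-order zero, and $P_\la^\kappa$ is regular at $\kappa=k$ because $\la\in\cPq$ is not $k$-singular). So the plan is: (a) write $P_{\la^\dagger}^\kappa=\frac{1}{\kappa-k}\mathrm{Sing}_k(P_{\la^\dagger}^\kappa)+R_{\la^\dagger}^{(k)}+O(\kappa-k)$ and $H_{\la^\dagger}(\kappa)=(\kappa-k)H'_{\la^\dagger}(k)+O((\kappa-k)^2)$; (b) expand $P_\la^\kappa=P_\la^k+O(\kappa-k)$ and $H_\la(\kappa)=(\kappa-k)H'_\la(k)+O((\kappa-k)^2)$; (c) collect the $O(1)$ terms of the Laurent expansion. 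This produces $f_\la$ as a sum of three pieces: a multiple of $P_\la^k=R_\la^{(k)}$, a multiple of $R_{\la^\dagger}^{(k)}$, and a multiple of $\mathrm{Sing}_k(P_{\la^\dagger}^\kappa)$ together with its $\kappa$-derivative contribution.

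The heart of the computation is therefore an explicit identification of $\mathrm{Sing}_k(P_{\la^\dagger}^\kappa)$ and of the relevant derivative terms. Here I would go back to the explicit sum~\eqref{KNPsymformu} for $P_{\la^\dagger}^\kappa$. The denominator $(\kappa+1)^{\ul{\la^\dagger_1-\la^\dagger_2}}=(\kappa+1)^{\ul{2k+2-\la_1+\la_2}}$ is the only source of poles, and at $\kappa=k$ it vanishes because $2k+2-\la_1+\la_2\ge k+2>k+1$ forces one of its factors, namely $(\kappa+1)-(k+1)=\kappa-k$, to be zero (and exactly one such factor vanishes, giving the simple pole). Extracting the residue, $\mathrm{Sing}_k(P_{\la^\dagger}^\kappa)$ becomes a finite sum over pairs $(i,j)$ with $i+j\le\la^\dagger_1-\la^\dagger_2$, and I expect that after reindexing — substituting $\mu_1,\mu_2$ in terms of $i,j$ and using the identity $\la^\dagger=(\la_2+k+1,\la_1-k-1)$ — this sum reorganizes into exactly $\sum_{\mu\in\cI(k-\ell_\la)}(\text{binomial coefficients})\cdot P^k_{\nu(\la,\mu)}$. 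The falling-factorial identities needed are standard Vandermonde-type manipulations: one rewrites $x^{\ul{\la^\dagger_2+i}}y^{\ul{\la^\dagger_2+j}}$ by pulling out common factors to recognize the leading terms $x^{\nu_1}y^{\nu_2}$, then re-expands in the $P^k_\nu$ basis via Theorem~\ref{thm:KnSa}. The coefficient bookkeeping is exactly what produces the formula for $M_{\la,\mu}$ when $|\mu|>0$, with the sign $(-1)^{\ell_\la+\mu_1+\mu_2}$ tracking the residue sign and the reindexing shifts.

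For the $M_{\la,(0,0)}$ term I would separate out the contribution coming from the $O(\kappa-k)$ correction to $\mathrm{Sing}$, i.e. the term $\frac{d}{d\kappa}\big[(\kappa-k)P_{\la^\dagger}^\kappa\big]\big|_{\kappa=k}$ evaluated against $1/H'_{\la^\dagger}(k)$, together with the second-order correction to $H_{\la^\dagger}(\kappa)$. This is where the logarithmic-derivative-type sum $\sum_{j=\la_1-k}^{\la_1+\ell_\la-k}\frac{\ell_\la+1}{j}$ enters: differentiating a product of linear factors in $\kappa$ produces precisely a sum of reciprocals of the factor values, and matching the range of summation to the factors of $H_{\la^\dagger}(\kappa)$ (whose roots are at $\kappa=\la^\dagger_1-1,\la^\dagger_1-2,\dots,\la^\dagger_1-\la^\dagger_2$, translating to the stated range after the shift) gives the claimed expression. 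The overall prefactor $\frac{(\ell_\la+1)!}{(\la_1-k-1)!(\la_1+\ell_\la-k)!}$ should emerge as $1/H'_{\la^\dagger}(k)$ after cancelling the $(\la^\dagger_1-\la^\dagger_2)!\la^\dagger_2!$ factors of $H_{\la^\dagger}$ against factorials appearing in the residue, and the $\frac{1}{(2k+2-\la_1+\la_2)!}$ coefficient of $R_{\la^\dagger}^{(k)}$ should be the ratio $H'_\la(k)/H'_{\la^\dagger}(k)$ times a combinatorial constant — worth double-checking by evaluating both sides of~\eqref{eq:flaformula-sing} at a convenient point such as $(\mu_1,\mu_2)=(\la_1-k-1,\la_2)$ and invoking the normalization from Theorem~\ref{thm:KnSa}(ii).

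\textbf{Main obstacle.} The genuinely delicate step is the reindexing in the second paragraph: turning the double sum coming from the residue of~\eqref{KNPsymformu} into a sum of Knop--Sahi polynomials $P^k_{\nu(\la,\mu)}$ with closed-form coefficients. A priori the residue is a sum of falling-factorial monomials, not a sum of $P^k_\nu$'s, so one must re-expand; the coefficients in that re-expansion are not obviously the simple product $M_{\la,\mu}$, and collapsing the resulting nested sums to the stated form is precisely the kind of identity that, as the introduction signals, is equivalent to (a specialization of) the Dougall--Ramanujan ${}_5F_4$ summation. I would therefore expect to isolate a well-poised hypergeometric sum at this stage and invoke the Dougall--Ramanujan identity (as presumably proved or cited in the later section referenced in the acknowledgements) to close the argument, with the $M_{\la,(0,0)}$ formula — including its logarithmic-sum correction — obtained as the "boundary" or derivative case of the same identity.
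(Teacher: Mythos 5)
Your overall architecture (start from the limit formula of Theorem~\ref{theorem4}, Laurent-expand both summands at $\kappa=k$, and collect the constant terms) is essentially the route the paper takes via Proposition~\ref{prp:variationThmC} and Proposition~\ref{prp:easyformulalsing}, but your pole analysis contains an error and, more importantly, the central combinatorial step of your plan is misplaced. First the error: $H_{\la^\dagger}(\kappa)$ does \emph{not} vanish at $\kappa=k$. From \eqref{eq:Hlak}, $H_{\la^\dagger}(\kappa)=(\la^\dagger_1-\la^\dagger_2)!\,\la^\dagger_2!\,(\la^\dagger_1-1-\kappa)^{\ul{\la^\dagger_2}}$ and $\la^\dagger_1-1-k=\la_2$ (not $\ell_\la$), so at $\kappa=k$ the falling factorial is $\la_2(\la_2-1)\cdots(\la_2-\la_1+k+2)$, whose smallest factor is $\ge 2$ because $\la_1-\la_2\le k$; indeed the paper computes $H_{\la^\dagger}(k)=\frac{(k+\ell+2)!\,d!\,(d+\ell+1)!}{(\ell+1)!}\neq 0$. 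Thus $P^\kappa_{\la^\dagger}/H_{\la^\dagger}(\kappa)$ has only a simple pole (from $P^\kappa_{\la^\dagger}$, since $\la^\dagger\in\cPs$), which cancels against the simple pole of $P^\kappa_\la/H_\la(\kappa)$ precisely because $\mathrm{Sing}_k(P^\kappa_{\la^\dagger})=r_{\la^\dagger}P^k_\la$ with $r_{\la^\dagger}=-H_{\la^\dagger}(k)/H'_\la(k)$ (Proposition~\ref{lem:sing=rlaPla}). This part of your bookkeeping is repairable and, once repaired, reproduces the three-term expression of Proposition~\ref{prp:easyformulalsing}: a multiple of $R^{(k)}_{\la^\dagger}$, a multiple of $R^{(k)}_\la$ carrying the harmonic sums, and a multiple of the parameter derivative $\frac{\partial P^\kappa_\la}{\partial\kappa}\big|_{\kappa=k}$.

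The genuine gap is in where you locate the hard work. You propose to obtain $\sum_{\mu\in\cI(k-\ell_\la)}M_{\la,\mu}R^{(k)}_{\nu(\la,\mu)}$ by extracting the residue of $P^\kappa_{\la^\dagger}$ from \eqref{KNPsymformu} and reorganizing it by Vandermonde-type reindexing. But by Proposition~\ref{lem:sing=rlaPla} that residue is a scalar multiple of the \emph{single} polynomial $P^k_\la$, so the computation you outline cannot produce the $\mu$-sum or the coefficients $M_{\la,\mu}$; carried out literally it only recovers the pole cancellation already noted. The sum over $\mu$ in \eqref{eq:flaformula-sing} comes instead from expanding the parameter derivative $\frac{\partial P^\kappa_\la}{\partial\kappa}$ (for $\la\in\cPq$) in the basis $\{P^\kappa_\nu\}$ — the identity \eqref{eq:alpha'=a+b+1} in the paper — and this expansion is exactly where all the difficulty lives: after reductions it is equivalent to the new one-variable identity of Theorem~\ref{theorem-identity}, which is not a known summation one can cite but is proved in the paper by passing to a two-variable identity and invoking Dougall's ${}_5\mathbf F_4$ theorem. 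Your plan only gestures at the ``derivative terms'' and at invoking a Dougall-type identity ``presumably proved or cited'' elsewhere, so the step that actually produces the $M_{\la,\mu}$ (including the $|\mu|>0$ coefficients and, after combining with the harmonic sums from the $H$-derivatives, the $M_{\la,(0,0)}$ term) is left unaddressed. Without setting up and proving that expansion of $\partial_\kappa P^\kappa_\la$, the argument does not close.
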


\begin{rmk}
\label{rmk:DSfunctor} For fixed $\la,\mu\in\cI$, the formulas for the eigenvalue of $D_\la\big|_{V_\mu}$ 
given in Theorems~\ref{theorem1}--\ref{theorem3} depend only on 
$k=n-m$ (rather than on $m$ and $n$).  
This observation has a conceptual explanation based on the Duflo-Serganova functor~\cite{DufloSerganova,SerganovaFunctor}.
We briefly recall the definition of this functor. 
Given any Lie superalgebra  $\g g$ and an element $x\in\g g_\ood$  such that $[x,x]=0$, 
 we set
$\DS_x(M):=M^x/xM$ for every $\g g$-module $M$, where 
$M^x:=\ker(x|_M^{})$ and 
$xM:=\im(x|_M^{})$.
Then $\DS_x(M)$ is a $\g g_x$-module, where 
$\g g_x:=\ker(\ad_x)/\im(\ad_x)$.
Further, 
for every $\g g$-module homomorphism $h:M\to N$ we set 
$
\DS_x(h):\DS_x(M)\to \DS_x(N)
$ 
to be the naturally induced $\g g_x$-module homomorphism.
As shown in~\cite{DufloSerganova,SerganovaFunctor}, the above assignments yield  a symmetric monoidal functor
\[
\DS_x:\mathrm{Rep}(\g g)\to \mathrm{Rep}(\g g_x).
\] 
If $\g g\cong\g{gosp}(V,\omega)$, then $\g g_x\cong \g{gosp}(V',\omega')$ where  $\sdim V'=\sdim V=-2k$. Furthermore, $\DS_x$ maps the Casimir operator of $\g g$, which we can consider as an element of $\hom_{\g g}(\C,\sS^2(\g g))$, to the Casimir operator of $\g g_x$. 
If 
$\sS^d(V)\cong\oplus I_t$ where each $I_t$ is a generalized eigenspace of the Casimir operator of $\g g$ with eigenvalue $t$, then
        $\sS^d(V_x)\cong \oplus I'_t$, where $I'_t\cong\DS_x(I_t)$ is the generalized eigenspace of the Casimir operator of $\g g_x$ with eigenvalue $t$.
One can then show that $\DS_x$ maps Capelli operators to Capelli operators and preserves their eigenspaces.        These facts imply that the eigenvalues of $D_\la\big|_{V_\mu}$ should only depend on $k$.

\end{rmk}

The proof of  Theorem~\ref{theorem3} is substantially more difficult than those of Theorems~\ref{theorem1}--\ref{theorem4}. It relies on the following identity (in the parameter $x$) which, to the best of our knowledge, is new. 
\begin{thmx}
\label{theorem-identity}
For non-negative integers $i,j,N$ such that $i+j\leq N$, 
\begin{align}
\label{eq:huge-identity}
\frac{d}{d x}
&\left(
\frac{x^{\ul{N-i}}x^{\ul{N-j}}}{x^{\ul{N}}}
\right)\\
&=
\sum_{q=0}^{j}
\
\sum_{p=i+j-q}^{\min\{N-q,N-1\}}
\frac{
(-1)^{N+p+q+1}
(N-p)^{\ul{q}}\,i^{\ul{q}}\,j^{\ul{q}}\,
(N-i-j)^{\ul{N-p-q}}\,
x^{\ul{p-i}}\,x^{\ul{p-j}}(x-p+q)
}{
(N-p)\,q!\,x^{\ul{p+1}}
(x-N+q)^{\ul{q}}
}.\notag
\end{align}
\end{thmx}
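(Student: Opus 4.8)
\textbf{Proof strategy for Theorem~\ref{theorem-identity}.}

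The plan is to reduce the stated identity to the classical Dougall--Ramanujan $_5F_4$ summation by rewriting both sides in terms of hypergeometric series and then matching parameters. First I would interpret the left-hand side combinatorially. Writing $g(x):=\frac{x^{\ul{N-i}}x^{\ul{N-j}}}{x^{\ul{N}}}$, the logarithmic derivative gives $g'(x)=g(x)\bigl(\sum_{r=0}^{N-i-1}\frac{1}{x-r}+\sum_{r=0}^{N-j-1}\frac{1}{x-r}-\sum_{r=0}^{N-1}\frac{1}{x-r}\bigr)$. The key observation is that $g(x)$ itself can be expanded in the basis of ``falling-factorial products'' $x^{\ul{p-i}}x^{\ul{p-j}}/x^{\ul{p+1}}$ (these are essentially the quantities appearing on the right), and the coefficients in this expansion are ratios of factorials that one recognizes from a terminating ${}_3F_2$ or Vandermonde-type sum. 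I would first establish such a partial-fraction/expansion formula for $g(x)$ alone (without the derivative), since this is the combinatorial heart of the matter; the derivative is then obtained by a routine but careful differentiation of each term, which is why the summand on the right carries the factor $(x-p+q)$ and the denominator $x^{\ul{p+1}}(x-N+q)^{\ul{q}}$.

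Next I would normalize: multiply through by $x^{\ul{N}}$ (or by a suitable $x^{\ul{p+1}}$) to clear denominators and convert the rational-function identity into a polynomial identity in $x$, valid for all $x$. Since both sides are rational functions of bounded degree, it suffices to verify equality at sufficiently many integer specializations $x=\ell$, or equivalently to check that a single coefficient-extraction identity holds. At this stage I would introduce the substitution that turns the falling factorials into Pochhammer symbols and reorganize the double sum $\sum_{q=0}^{j}\sum_{p}$ by first summing over $p$ for fixed $q$. The inner sum over $p$ (from $p=i+j-q$ to $\min\{N-q,N-1\}$) should telescope or collapse to a closed form via a terminating very-well-poised series; this is where the Dougall--Ramanujan identity enters. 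Concretely, after the change of variables $p\mapsto p-(i+j-q)$ the inner sum becomes a terminating ${}_5F_4$ (or ${}_7F_6$) evaluated at $1$ whose parameters are very-well-poised, and Dougall's theorem gives its value as a ratio of Pochhammer symbols, which one then checks matches the corresponding coefficient on the left-hand side after multiplication by the outer $q$-dependent factor and summation over $q$ (the $q$-sum being an easier Vandermonde or Chu--Vandermonde evaluation).

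The main obstacle I anticipate is the bookkeeping of parameters in the inner $p$-sum: one must correctly identify which classical summation theorem applies, track the well-poised condition, and handle the truncation coming from $\min\{N-q,N-1\}$ versus $N-q$ (the two endpoints differ exactly when $q=0$, reflecting the missing $\frac{1}{x-r}$ terms from $x^{\ul{N}}$ in the denominator). A secondary difficulty is that the factor $(x-p+q)$ in the numerator is not itself a Pochhammer factor, so one must split it as a linear combination, say $(x-p+q)=(x-N+q)+(N-p)$, producing two series each of which is separately summable; the $(N-p)$ piece cancels the $\frac{1}{N-p}$ in the denominator and simplifies, while the $(x-N+q)$ piece pairs with the $(x-N+q)^{\ul{q}}$ to lower its order. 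Once these two contributions are each evaluated via Dougall--Ramanujan (or a contiguous relation thereof) and recombined, matching with the explicit partial-fraction expansion of $g'(x)$ established in the first step completes the proof. Throughout, I would keep a small case such as $N=2$, $i=j=1$ or $i=1,j=0$ at hand as a consistency check on signs and index ranges before committing to the general manipulation.
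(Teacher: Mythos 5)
Your proposal correctly identifies Dougall's ${}_5\mathbf F_4$ theorem as the engine and makes several sound observations (the harmonic sums produced by logarithmic differentiation, the endpoint discrepancy $\min\{N-q,N-1\}$ at $q=0$, the need to split the stray linear factor $x-p+q$), but as an argument it has real gaps, and its architecture differs from what actually makes the identity tractable. First, your plan to expand $g(x)=x^{\ul{N-i}}x^{\ul{N-j}}/x^{\ul{N}}$ in the basis $x^{\ul{p-i}}x^{\ul{p-j}}/x^{\ul{p+1}}$ and then obtain the right-hand side by ``routine termwise differentiation'' cannot produce the stated form: differentiating each basis element yields that element times a sum of simple fractions $\tfrac{1}{x-r}$, not the double sum with the $q$-dependent denominators $(x-N+q)^{\ul{q}}$, so a further nontrivial resummation would still be needed and your sketch does not supply it. Second, your proposed slicing of the double sum (fix $q$, sum over $p$ by a very-well-poised series, then an ``easier Vandermonde'' in $q$) is not substantiated: the well-poised structure that Dougall requires does not visibly appear in the $p$-sum at fixed $q$. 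In the paper the sum that Dougall evaluates is the one over $q$ at fixed $s:=N-p-q$ (i.e.\ grouping by $|\mu|$), and this only becomes a clean very-well-poised ${}_5\mathbf F_4$ with free parameter $\mathsf d=x-y$ after the crucial preliminary step you are missing: replacing $x-N$ by an independent variable $y$ and proving the stronger \emph{two-variable} identity $\psi_1(x,y)=\psi_2(x,y)$, of which the theorem is the specialization $y=x-N$.

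Third, the degenerate block is not a bookkeeping issue but needs its own idea. For $s\geq 1$ Dougall gives the inner sum the closed value $\tfrac1s$, but for $s=0$ (equivalently the terms with $p+q=N$, where your $q=0$ endpoint subtlety lives) the evaluation degenerates, and the paper must extend the summand meromorphically in $s$ and take a limit $s\to 0^+$; it is precisely this limit (a derivative of a ratio of Pochhammer symbols) that produces the harmonic differences $\sum_{t}\bigl(\tfrac{1}{y+t}-\tfrac{1}{x+t}\bigr)$ which match the logarithmic-derivative terms on the left, together with \eqref{eq:j=0i} for the remaining falling-factorial derivative. Your outline has no mechanism for generating these harmonic sums on the right-hand side, so even granting a Dougall evaluation of your inner sum, the final matching step would fail. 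In short: right classical tool, but the decoupling into two variables, the regrouping by $s=N-p-q$, and the limiting argument for $s=0$ are essential ideas absent from your plan.
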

We remark that in the
special case $j=0$, Theorem~\ref{theorem-identity} is equivalent to the formula
\begin{equation}
\label{eq:j=0i}
\frac{\partial}{\partial x}(x^{\ul{N}})=\sum_{k=1}^N \frac{(-1)^{t+1}}{t}{N^{\ul{t}}}x^{\ul{N-t}},
\end{equation}
which can be proved by logarithmic differentiation of the binomial series for $(1+z)^x$. However, we are unable to find a similar quick argument for the general case. 

Our proof of  Theorem~\ref{theorem-identity} involves subtle computations that reduce it to a classical hypergeometric identity, usually referred to as Dougall's Theorem. 
Recall that a \emph{generalized hypergeometric function} is a series of the form
\begin{equation}
\label{eq:pFFq}
_p\mathbf{F}_q
\left(
\begin{matrix}a_1,\ldots,a_p\\
b_1,\ldots,b_q
\end{matrix}
\,;\,z
\right)
:=\sum_{n=0}^\infty
\frac{
a_1^{\oline{n}}
\cdots
a_p^{\oline{n}}
}{
b_1^{\oline{n}}
\cdots
b_q^{\oline{n}}
n!
}
z^n,
\end{equation}
where as usual  
\[
a^{\oline{n}}:=a(a+1)\cdots(a+n-1)\quad\text{for $n\in\N$ and}\ \ 
a^{\oline{0}}=1.
\]
Dougall's theorem states that for 
$\mathsf{a},\mathsf{b},\mathsf{c},\mathsf{d}\in\C$ such that
$\Re(\mathsf{a}+\mathsf{b}+\mathsf{c}+\mathsf{d}+1)>0$, we have 
\begin{align}
\label{eq:Dougall's}
_5\mathbf F_4
\left(
\begin{matrix}
\frac{1}{2}\mathsf{a}+1,\mathsf{a},-\mathsf{b},-\mathsf{c},-\mathsf{d}\\
\frac{1}{2}\mathsf{a},\mathsf{a}+\mathsf{b}+1,
\mathsf{a}+\mathsf{c}+1,
\mathsf{a}+\mathsf{d}+1
\end{matrix}\,
;\,1
\right)
&\\
&\hspace{-4cm}
=
\frac{
\Gamma(\mathsf{a}+\mathsf{b}+1)
\Gamma(\mathsf{a}+\mathsf{c}+1)
\Gamma(\mathsf{a}+\mathsf{d}+1)
\Gamma(\mathsf{a}+\mathsf{b}+\mathsf{c}+\mathsf{d}+1)
}{
\Gamma(\mathsf{a}+1)
\Gamma(\mathsf{a}+\mathsf{b}+\mathsf{c}+1)
\Gamma(\mathsf{a}+\mathsf{b}+\mathsf{d}+1)
\Gamma(\mathsf{a}+\mathsf{c}+\mathsf{d}+1)
}.
\notag
\end{align}
Identity \eqref{eq:Dougall's} is  a limit case of another identity for $_7\mathbf F_6$ that was discovered by Dougall (1907) and Ramanujan (1910). For the proof and further historical remarks on Dougall's Theorem, 
we refer the reader to~\cite[Sec. 2.2]{AndrewsAskeyRoy}.

Theorems~\ref{theorem1}, \ref{theorem2}, and~\ref{theorem3} were conjectured using computations that were implemented  by  \textsf{SageMath}. 
Our efforts to prove 
Theorem~\ref{theorem3}  lead us to  Theorems~\ref{theorem4} and~\ref{theorem-identity}. 

\subsection*{Capelli operators in the Deligne category $\mathsf{Rep}(O_t)$}
Recall from Remark~\ref{rmk:DSfunctor} that 
existence of certain monoidal functors between  (rigid symmetric monoidal) 
categories of modules implies that 
the formulas for $f_\la$ should only depend on the superdimension of $V$. 
Indeed it is possible to transcend the construction of the Capelli basis $\{D_\la\}_{\la\in\cI}$ to a
universal categorical framework
where the superdimension can be any complex number!  
More precisely,  in Section~\ref{sec:Deligne}
we show that we can define Capelli operators in  
the inductive completion of the Deligne category 
$\Rep(O_t)$, where $t\in\C$. Then we prove analogues of 
Theorems~\ref{theorem1}--\ref{theorem4} for the corresponding eigenvalue polynomials.
%{\color{blue}
% First we define an algebra object
%$
%\sfPD_{\V_t}
%$
%in the inductive completion of $\Rep(O_t)$. The idea is that $
%\sfPD_{\V_t}
%$
% plays the role of the algebra of polynomial-coefficient differential operators on the generating object $\V_t$. In particular, $\sS(\V_t)$ is a 
%$
%\sfPD_{\V_t}
%$-module object. The Capelli operators are then defined as elements of the algebra  
%$\Hom\left(\yeksf,\sfPD_{\V_t,0}\right)$, where 
%$\sfPD_{\V_t,0}$ is the subalgebra of $\sfPD_{\V_t,0}$ that consists of degree zero elements. 
%}

{
The definition of the categorical Capelli operators 
$\left\{
\bsm D_{t,\la}\right\}_{\la\in\cI}$
in Section~\ref{sec:Deligne} goes as follows.
The category $\Rep(O_t)$ is the  Karoubian rigid symmetric monoidal category generated by the self-dual object $\V_t$ of categorical dimension $t\in\C$. 
%Furthermore, when $\ul k\in\Z^{\geq 0}$ we can define $\ul k$-regular, 
%$\ul k$-quasiregular, and 
%$\ul k$-singular elements of $\cI$ as in 
%Definition~\ref{dfn:reg-qreg-sing}.
We introduce an algebra object 
$
\sfPD_{\V_t}
$ 
in the inductive completion of this category
with a natural action 
\[
\sfPD_{\V_t}\otimes \sfP_{\V_t}\to \sfP_{\V_t},
\]
where
$\sfP_{\V_t}:=\bigoplus_{d\geq 0}\sS^d(\V_t)$.
The algebra object $\sfPD_{\V_t}$ is the categorical analogue of $\sPD(V)$. 
Moreover, $\Hom(\yeksf,\sfPD_{\V_t})$ can be equipped with a canonical algebra structure, and the natural action 
of $\sfPD_{\V_t}$ on $\sfP_{\V_t}$ yields  a  homomorphism 
of algebras $\Hom(\yeksf,\sfPD_{\V_t})\to \End(\sfP_{\V_t})$. 
The categorical Capelli operators 
$\bsm D_{t,\la}$ that we will define in Section~\ref{sec:Deligne}
are elements of the algebra $\Hom(\yeksf,\sfPD_{\V_t})$. 
To define these operators, first we  prove that the indecomposable summands of $\sfP_{\V_t}$ are naturally indexed by elements of $\cI$ if $t\not\in 2\Z^{\leq 0}$, and by elements of 
$\cI'_{\ul k}$ if $t\in 2\Z^{\leq 0}$, where 
\begin{equation}
\label{eq:kbarrr}
\ul k:=-\frac{t}{2}
\quad\text{and}\quad
\cI'_{\ul k}:=\cI_{\ul k,\mathrm{reg}}\cup\cI_{\ul k,\mathrm{qreg}}.
\end{equation}
When $t\not\in 2\Z^{\leq 0}$, for every $\la\in\cI$ the operator $\bsm D_{t,\la}$ corresponds to the co-evaluation morphism
\[
\yeksf\xrightarrow{\ \bsm\epsilon_{\V_{t,\la}}^{}\ } \V_{t,\la}\otimes \V_{t,\la}^*.
\]
When $t\in 2\Z^{\leq 0}$, the definition of $\bsm D_{t,\la}$ is still the same for $\ul k$-regular and $\ul k$-quasiregular $\la$, but 
for $\ul k$-singular $\la$
the operator $\bsm D_{t,\la}$ represents the (unique up to scaling) nilpotent element in $\End(\V_{t,\la^\dagger})$. 
See equation~\eqref{eq:CategCap2} for further  details. 
}

After defining the operator $\bsm D_{t,\la} $, we can  consider 
its restriction
to each indecomposable component $\V_{t,\mu}$ of $\sfP_{\V_t}$ that is indexed by $\mu$.
This yields an element of the algebra 
$\End(\V_{t,\mu})$, of the form $\ul d_{\la,\mu}\yeksf +\mathsf n_{\la,\mu}$ where 
$\ul d_{\la,\mu}\in\C$ and 
$\mathsf n_{\la,\mu}^2=0$. 
Furthermore 
\[
\ul d_{\la,\mu}=\bsm f_\la(\mu_1-\ul k-1,\mu_2),
\] where $\bsm f_\la\in\C[x,y]$ is a symmetric polynomial of degree  $|\la|$. (We remark that the coefficients of $\bsm f_\la$ depend on the value of $t\in\C$.)
Theorems~\ref{thm:A'}--\ref{thm:C'} below are the extensions of Theorems~\ref{theorem1}--\ref{theorem4} to the categorical setting of $\Rep(O_t)$.

\begin{customthm}{A$^\prime$}
\label{thm:A'}

Assume that either  $t\notin 2\mathbb Z^{\leq 0}$, or that $t\in 2\mathbb Z^{\leq 0}$ and $\lambda$ is $\ul k$-regular. Then 
  \[
  \bsm f_{\lambda}=
  \frac{1}{H_\lambda\left(\ul k\right)}
  P^{\ul k}_\lambda.
  \]
\end{customthm}From now on we set  \[
\bsm c_\lambda(t):=(\lambda_1-\lambda_2)(\lambda_1-\lambda_2+t-2)
\quad\text{for $\la\in\cI$.}
\]

\begin{customthm}{B$^\prime$}

\label{thm:B'}  
Assume that  $t\in 2\mathbb Z^{\leq 0}$ and $\lambda$ is $\ul k$-singular. Then 
  \[
  \bsm f_{\lambda}=\lim_{s\to t}
  \frac{\bsm c_{\la^\dagger}(s)-\bsm c_{\la}(s)}{H_{\lambda^\dagger}\left(-\frac{s}{2}\right)}
  P^{-\frac{s}{2}}_{\lambda^\dagger}.
  \]
\end{customthm}

\begin{customthm}{C$^\prime$}
\label{thm:C'}

Assume that  $t\in 2\mathbb Z^{\leq 0}$ and $\lambda$ is $\ul k$-quasiregular. Then 
  \[
  \bsm f_{\lambda}=\lim_{s\to t}
\left(
  \frac{1}{H_\lambda\left(-\frac{s}{2}\right)}
  P^{-\frac{s}{2}}_\lambda
+
  \frac{1}{H_{\lambda^\dagger}\left(-\frac{s}{2}\right)}
  P^{-\frac{s}{2}}_{\lambda^\dagger}
\right)  .
  \]
\end{customthm}

\section{Structure of $\sP(V)$ and $\sPD(V)^{\g g}$}
\label{sec:Capelli}
%Our main goal in this section is to define the Capelli basis $\{D_\la\}_{\la\in\cI}$ of $\sPD(V)^{\g g}$.  To this end, 
Let us begin with the description of the decomposition of
the $\g g$-module $\sP(V)$ as a direct sum of indecomposable submodules. 
As will be seen in Proposition 
\ref{prp:decompositionofS(V)},
the indecomposable components of $\sP(V)$ can be characterized as generalized eigenspaces of the restriction of the Casimir operator to each homogeneous component. 
A proof of 
 Proposition \ref{prp:decompositionofS(V)}
is given  in A. Sherman's PhD thesis~\cite{ShermanPhD} 
(see also~\cite{CoulembierJLie2013}).

 Let ${\g b}^{\mathrm{st}}$ be the Borel subalgebra of 
$\g{osp}(V,\omega)$ corresponding to the
fundamental system
\[
\left\{
\varepsilon_1-\varepsilon_2, \dots,\varepsilon_{m-1}-\varepsilon_m,\varepsilon_m-\delta_1,\dots,\delta_{n-1}-\delta_n,2\delta_n
\right\},
\]
and set $\g b:={\g b}^{\mathrm{st}}\oplus\C z$. Also, let ${\g h}^{\mathrm{st}}\sseq {\g b}^{\mathrm{st}}$ denote the standard Cartan subalgebra of $\g{osp}(V,\omega)$, and set $\g h:={\g h}^{\mathrm{st}}\oplus\C z$. Let  $\zeta\in\g h^*$ be the linear functional defined by $\zeta(z)=1$ and $\zeta\big|_{{\g h}^{\mathrm{st}}}=0$.
For a ${\g b}^{\mathrm{st}}$-dominant ${\g h}^{\mathrm{st}}$-weight $\lambda$, let $V(\lambda)$ 
denote the irreducible finite dimensional $\g{osp}(V,\omega)$-module with highest weight $\lambda$. For any scalar $c\in\C$, we 
can consider $V(\lambda)$ as a $\g g$-module on which $z$ acts by $c\yek_{V(\lambda)}$. We denote the latter $\g g$-module by $V(\lambda+c\zeta)$. 

{
Recall that $C$ denotes the Casimir operator of $\g{gosp}(V,\omega)$. Then $C$ acts on $V(\lambda+c\zeta)$ by the scalar 
\[
c_\la:=
(\la,\la)+2(\la,\rho)
=
(\lambda_2-\lambda_1)(2k+2+\lambda_2-\lambda_1)
,
\] where  $
\rho:=\sum_{i=1}^m (-k-i)\varepsilon_i
+\sum_{i=1}^n(n-i+1)\delta_i$.
For $t\in \C$ let $\sP^d(V,t)$ denote the generalized $t$-eigenspace of the restriction of 
$C$ to $\sP^d(V)$.
Note that 
$c_\la=c_{\la^\dagger}$ for $\la\in\cPq$,  hence 
$\sP^d(V,c_\la)=\sP^d(V,c_{\la^\dagger})$.
For $\lambda\in\cI'_k$, set \[
V_\la:=\sP^{|\la|}(V,c_\la).
\]
The proof of the following proposition can be found in \cite[Sec. 10]{ShermanPhD}.
}

\begin{prp}
\label{prp:decompositionofS(V)}
Let $\la\in\cI'_k$.
 \begin{itemize}
\item[\rm(i)] If $\la\in\cPr$ then 
\[
V_\la\cong V\big((\la_1-\la_2)\eps_1+|\la|\zeta\big).
\]
In particular, $V_\la$ is an irreducible $\g g$-module. 
\item[\rm(ii)] If $\la\in\cPq$ then 
$V_\la$ is an indecomposable $\g g$-module
with a socle filtration of length 3.  
When $m\geq 2$, the successive
 quotients of the socle filtration of 
 $V_\la$  are isomorphic to the modules  
$V(\mu^{(i)}+|\la|\zeta)$, $1\leq i\leq 3$, where 
\[
\mu^{(1)}=\mu^{(3)}=(\la_1-\la_2)\eps_1
\ \ \text{ and }\ \ 
\mu^{(2)}=(2k+2+\la_2-\la_1)\eps_1.
\]
When $m=1$, the successive quotients of the socle filtration of $V_\la$
are isomorphic to 
\[
V(\mu^{(1)}+|\la|\zeta),\ \ V(\mu^{(2)}+|\la|\zeta)\oplus V(\mu^{(3)}+|\la|\zeta),\ \ \text{and}\ \ V(\mu^{(4)}+|\la|\zeta),
\] where
\[
\mu^{(1)}=\mu^{(4)}=(\la_1-\la_2)\eps_1,
\ 
\mu^{(2)}=(2k+2+\la_2-\la_1)\eps_1,\ \text{ and }
\
\mu^{(3)}=-\eps_1+\sum_{i=1}^{\la_1-\la_2+1}\delta_i.
\]

\end{itemize}

\end{prp}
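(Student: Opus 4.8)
The plan is to realize $\sP(V)$ as a module over $\g g\times\g{sl}_2$, where the second factor is the Howe dual of $\g g$. Concretely, let $\omega^*\in\sP^2(V)^{\g g}$ be the invariant quadratic polynomial dual to $\omega$, let $\mathrm{E}\in\sPD(V)^{\g g}$ be multiplication by $\omega^*$, let $\mathrm{F}\in\sD^2(V)^{\g g}$ be the associated constant-coefficient super-Laplacian, and let $\mathrm{H}:=[\mathrm{E},\mathrm{F}]$, which acts on $\sP^d(V)$ by a scalar that is an affine function of $d$ involving $\sdim V$. A direct computation in $\sPD(V)$ shows that $(\mathrm{E},\mathrm{H},\mathrm{F})$ is an $\g{sl}_2$-triple commuting with $\g g$, so $\sP(V)$ becomes a $\g g\times\g{sl}_2$-module. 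Since every superpolynomial is killed by some power of $\mathrm{F}$, the space of superharmonics $\cH=\bigoplus_d\cH^d$, with $\cH^d:=\ker\big(\mathrm{F}|_{\sP^d(V)}\big)$, controls the whole module: in the completely reducible case $\sP(V)=\bigoplus_{j\geq 0}\mathrm{E}^j\cH$, while in general $\sP(V)$ is still obtained from $\cH$ by repeatedly applying $\mathrm{E}$, now with kernels and overlaps. The problem thus reduces to analyzing this oscillator module together with the Casimir operator $C$ of $\g g$.

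Next I would compute the Casimir. On the piece of $\sP^{|\la|}(V)$ obtained by applying $\mathrm{E}^{\la_2}$ to superharmonics of $\g{osp}(V,\omega)$-highest weight $(\la_1-\la_2)\eps_1$, the operator $C$ acts by the scalar $c_\la=(\la_2-\la_1)(2k+2+\la_2-\la_1)$, which as a function of $u:=\la_1-\la_2$ is a quadratic symmetric about $u=k+1$. Hence inside a fixed degree $d$ one has $c_\la=c_\mu$ exactly when $\mu\in\{\la,\la^\dagger\}$, with $\dagger$ as in Remark~\ref{daggerinv}. For $k$-regular $\la$ with $\la^\dagger\notin\cI$ this eigenvalue isolates a single $\g g$-isotypic block; one checks that in this regime the harmonic degree $u=\la_1-\la_2$ avoids the $k$-singular range, so $\cH^u$ is the irreducible $\g{osp}(V,\omega)$-module of highest weight $u\eps_1$, and that $\mathrm{E}^{\la_2}$ is injective on $\cH^u$, giving $V_\la\cong V\big((\la_1-\la_2)\eps_1+|\la|\zeta\big)$, which is part~(i). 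The identification of the index set with $\cI'_k=\cPr\cup\cPq$ then amounts to pairing each $k$-singular label $(\la_1-\la_2+j,\,j)$ with its $k$-quasiregular $\dagger$-partner and keeping one representative per Casimir block.

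The crux is part~(ii), where all the non-semisimplicity is concentrated. Here $\mathrm{E}$ is not injective --- already multiplication by the $V_\ood$-part of $\omega^*$ is nilpotent on $\Lambda(V_\ood^*)$ --- so the oscillator module is not free over $\g{sl}_2$; at the integral $\mathrm{H}$-weights that actually occur one meets the non-semisimple blocks of category $\mathcal{O}$ for $\g{sl}_2$ (a Verma module glued to a finite-dimensional one), and transporting this across Howe duality is what manufactures the indecomposable $V_\la$ and forces $C-c_\la$ to be nonzero but of square zero on it. To pin down the three layers I would (a)~restrict to $\g g_\eev=\g{so}(2m)\oplus\g{sp}(2n)\oplus\C z$, where $\sP(V)$ is completely reducible with a classical decomposition, and read off the $\g g_\eev$-character of $V_\la=\sP^{|\la|}(V,c_\la)$; (b)~match this character against finite-dimensional $\g{osp}(V,\omega)$-modules to see that the composition factors of $V_\la$ are $V(\mu^{(1)}+|\la|\zeta)$ with multiplicity two and $V(\mu^{(2)}+|\la|\zeta)$ with multiplicity one, where $\mu^{(1)}=(\la_1-\la_2)\eps_1$ and $\mu^{(2)}=(2k+2+\la_2-\la_1)\eps_1$; and (c)~use the $\mathrm{Ext}^1$-quiver of the relevant atypicality-one block of $\g{osp}(V,\omega)$ (obtained from Kac modules, or from the structure results of Gruson--Serganova and Coulembier) together with $C-c_\la$ having square zero to conclude that $V_\la$ is uniserial with radical layers $V(\mu^{(1)})$, $V(\mu^{(2)})$, $V(\mu^{(1)})$ (appropriately $\zeta$-twisted). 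I expect step~(c) to be the main obstacle, and in particular the case $m=1$: there $\g{so}(2)$ is abelian, the atypical block of $\g{osp}(2|2n)$ has a genuinely different quiver, and the middle layer acquires an additional direct summand of highest weight $-\eps_1+\sum_{i=1}^{\la_1-\la_2+1}\delta_i$, so $\g{osp}(2|2n)$ must be analyzed separately.
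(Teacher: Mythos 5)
The first thing to note is that the paper does not prove this proposition at all: it is imported wholesale from A.~Sherman's PhD thesis (Sec.~10 is cited) together with Coulembier's work, so there is no in-paper argument to compare yours against. Your outline follows the same broad strategy as those sources — the $\g g\times\g{sl}_2$ Howe-duality picture with $\mathrm{E}$ (multiplication by $\omega^*$), the super-Laplacian $\mathrm{F}$, the harmonic spaces $\cH^u$, the Casimir eigenvalue $c_\la=(\la_2-\la_1)(2k+2+\la_2-\la_1)$ and its $\dagger$-symmetry — and up to that point it is sound as a plan.

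The genuine gap is that the plan stops exactly where the proposition begins. In part (i) you assert, rather than prove, the two facts that carry the statement: that $\cH^u$ is an irreducible $\g{osp}(V,\omega)$-module for $u\leq k+1$ or $u\geq 2k+3$, and that $\mathrm{E}^{\la_2}$ is injective on $\cH^u$ in the regular range. The second is not innocuous: since $\sdim V=-2k\leq 0$, the $\g{sl}_2$-lowest weight attached to degree-$u$ harmonics is $u-k$, which is $\leq 0$ precisely when $u\leq k$ (this covers regular $\la$ with $\la_1\leq k$ as well as all quasiregular $\la$), and this is exactly the regime in which powers of $\mathrm{E}$ can and do have kernels — as you yourself observe when noting the nilpotency of the odd part of $\omega^*$. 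For part (ii), your steps (a)--(c) are a program, not an argument: determining that $V_\la$ is uniserial with layers $V(\mu^{(1)})$, $V(\mu^{(2)})$, $V(\mu^{(1)})$ for $m\geq 2$, and that for $m=1$ the middle layer acquires the extra summand $V\big(-\eps_1+\sum_{i=1}^{\la_1-\la_2+1}\delta_i+|\la|\zeta\big)$, is precisely the content of the proposition, and you defer it to the $\mathrm{Ext}$-quiver/block structure of Gruson--Serganova and Coulembier while calling it ``the main obstacle.'' As it stands, then, the proposal reduces the statement to external structure results of essentially the same depth — which is what the paper itself does by citation — rather than supplying a proof; to close the gap you would need to actually carry out the character computation over $\g g_\eev$, verify the composition multiplicities in the Casimir block, rule out a split or differently glued Loewy structure (here the square-zero nilpotency of $C-c_\la$ alone does not force uniseriality when the middle layer could split off), and do the $m=1$ case separately.
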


%It is not hard to show that a highest weight vector in $S^r(V)$ must be of the form $a\varepsilon_1+r\zeta$, with $a\leq r$ and $r-a\in 2\mathbb N$. Let $V(a\varepsilon_1+r\zeta)$ denote the irreducible highest weight $\g g$-module with highest weight $a\varepsilon_1+r\zeta$. 

%Let $E$ be the Euler operator (given by the action of $z$) 
%and let $C$ be the Casimir operator of $\g{osp}(2m|2n)\subset \g g$. The eigenvalue of the Casimir element on $V(a\varepsilon_1+r\zeta)$
%is equal to 
%\[
%-a(2k+2-a).
%\]
%Let $0\leq a\leq k$. Then we have
%an indecomposable module $I_r(a)$ of length $3$, with socle and head isomorphic to $V\big(a\varepsilon_1+r\zeta\big)$ and the middle constituent isomorphic
%to $V\big(2(k+1)-a)\varepsilon_1+r\zeta\big)$.
% It will be more convenient for our notation below to define \[
%I_r(k+1):=V\big((k+1)\varepsilon_1+r\zeta\big).
%\]

\begin{rmk}
One significant difference between the non-semisimple and semi-simple cases is that in the non-semisimple cases  the spaces of homogeneous harmonic polynomials of any given degree are not necessarily irreducible $\g g$-modules.
However, 
for $d\leq k+1$ and $d>2k+2$
the space of harmonic polynomials of degree $d$ 
%denoted by $H_r$, 
is still an irreducible $\g g$-module, isomorphic to $V( d\eps_1+d\zeta)$. 
%In particular, \[V\big((k+1)(\eps_1+\zeta)\big)\cong H_{k+1}.\]
\end{rmk}

%Recall from Section \ref{sec:introduction} that in order to define the 
%Capelli basis we need to verify that the restriction of the Casimir operator to an indecomposable submodule of $\sP(V)$ is not diagonalizable. 
%we expect two Capelli operators corresponding to an indecomposable $V_\la$ with  $\la\in\cPq$, but  a priori there is no canonical choice for a basis of $\Hom_{\g g}(V_\la,V_\la)$.  
%We use the Jordan decomposition of $C\big|_{V_\la}$ to choose a basis for $\Hom_{\g g}(V_\la,V_\la)$. 
%Let us first express $C$ as 
%\[
%C=\mathcal S+\mathcal N,
%\] where $\mathcal S$ and $\mathcal N$ are linear transformations on $\sP(V)$, such that $\mathcal S$ is diagonalizable and $\mathcal N$ is nilpotent. Then 
%$C\big|_{V_\la}=S_\la+N_\la $, where
%\[
%S_\la:=\mathcal S\big|_{V_\la}\text{ and }
%N_\la:=\mathcal N\big|_{V_\la}.
%\]
%Note that indeed $\mathcal S\big|_{V_\la}=c_\la\yek_{V_\la}$, and $\mathcal N^2=0$. 
% If $N_\la\neq 0$, then  $N$ will be proportional to $N_\la$ and we can use $N_\la$ instead of $N$.  
%
%In order to verify that $N_\la\neq 0$, 
%Indeed we prove a stronger statement below. 
From now on we 
 identify the Casimir operator $C$ with its image in $\sPD(V)^{\g g}$. Let $E\in \sPD(V)^{\g g}$ denote the degree operator (which lies in the image of the center of $\g g$).
\begin{prp}
\label{prp:CandEgen}
The operators $C$ and $E$ generate $\sPD(V)^{\g g}$. Furthermore, for any differential operator  $D\in \sPD(V)^{\g g}$ of order $d$, 
there exists a unique symmetric polynomial $f_D(x,y)$ of degree $d$
such that 
the eigenvalue of $D$ on the indecomposable constituent $V_\mu$ is equal to $f_D(\mu_1-k-1,\mu_2)$.
%For every $\lambda=(\lambda_1,\lambda_2)$, the operator $D_\lambda$ belongs to the algebra generated by the Euler operator $E$ and the Casimir operator $C$. 
\end{prp}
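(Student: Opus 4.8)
The plan is to split the statement into two parts: (a) the operators $C$ and $E$ generate $\sPD(V)^{\g g}$ as an algebra, and (b) for each homogeneous $D\in\sPD(V)^{\g g}$ of order $d$ there is a unique symmetric polynomial $f_D$ of degree $d$ interpolating the eigenvalues $d_{D,\mu}$ at the points $(\mu_1-k-1,\mu_2)$.

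For part (a), I would argue by a dimension/filtration count. By~\eqref{eq:sPD=VmuVla} and~\eqref{eq:dimHom}, the graded piece $\sPD^{(d)}(V)^{\g g}$ of operators of order $d$ — equivalently $\bigoplus_{\la\in{}_d\cI'_k}\Hom_{\g g}(V_\la,V_\la)$ together with lower-order contributions — has dimension equal to $\#\{\la\in\cI'_k : |\la|\le d\}$ plus the number of $k$-quasiregular $\la$ with $|\la|\le d$ (the nilpotent directions). On the other hand, the degree operator $E$ acts on $\sP^{|\la|}(V)$ by the scalar $|\la|$, so the commutative subalgebra $\C[C,E]$ already separates the indecomposable components $V_\la$: the pair $(|\la|, c_\la)$ determines $\la$ up to the involution $\la\mapsto\la^\dagger$, and on the pair $\{V_\la, V_{\la^\dagger}\}$ (which collapses to a single indecomposable $V_\la$ with $\la\in\cPq$) the nilpotent part $N_\la$ of $C|_{V_\la}$ supplies the remaining basis vector. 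Concretely: one shows $\C[C,E]$ has, in each order-$\le d$ filtration level, dimension matching the count above — the $\yek_{V_\la}$ are hit by a suitable polynomial in $C,E$ vanishing on all other components (possible since the scalars $c_\la$ for distinct orbits are distinct, a fact that can be read off the formula $c_\la=(\la_2-\la_1)(2k+2+\la_2-\la_1)$ restricted to $\cI'_k$), and the nilpotent parts $N_\la$ are obtained as the nilpotent part of $(C-c_\la)\cdot(\text{idempotent-like polynomial})$, which lies in $\C[C,E]$ because $C$ does. Hence $\C[C,E]=\sPD(V)^{\g g}$.

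For part (b), given $D$ of order $d$, write $D|_{V_\mu}=d_{D,\mu}\yek_{V_\mu}+d'_{D,\mu}N_\mu$ as in~\eqref{dlamuevonV}. Since $D$ is a polynomial in $C$ and $E$ of total ``degree'' $d$ (meaning $E$ has degree $1$ and $C$ degree $2$ in the natural grading where $c_\la$ is a degree-$2$ symmetric polynomial in $x=\mu_1-k-1$, $y=\mu_2$ — indeed $c_\la = (y-x)(x+y) $-type expression, a symmetric polynomial of degree $2$ in $x,y$ after the shift), substituting $C\mapsto c_\mu$ and $E\mapsto|\mu|=\mu_1+\mu_2 = x+y+(k+1)$ exhibits $d_{D,\mu}$ as the value at $(\mu_1-k-1,\mu_2)$ of a symmetric polynomial $f_D(x,y)$ of degree $\le d$; that it has degree exactly $d$ follows by tracking the leading symbol. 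Uniqueness of $f_D$ follows because the evaluation points $\{(\mu_1-k-1,\mu_2):\mu\in\cI'_k\}$ are Zariski-dense in the sense needed: a symmetric polynomial of degree $\le d$ vanishing at all $(\mu_1-k-1,\mu_2)$ with $|\mu|\le d$ must vanish identically — this is the standard interpolation argument (the relevant points form an ``$n$-variable staircase'' guaranteeing uniqueness, exactly as in the theory underlying Theorem~\ref{thm:KnSa}), and the quasiregular points $\mu\in\cPq$ are included so no loss occurs.

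The main obstacle I anticipate is part (a): proving that $C$ and $E$ genuinely generate, rather than just separate points, requires controlling the nilpotent directions $N_\la$ for $\la\in\cPq$ and checking that they too lie in $\C[C,E]$. The key point making this work is that $N_\la$ is \emph{defined} as the nilpotent part of $C|_{V_\la}$ (by the discussion around~\eqref{eq:dimHom2} and Corollary~\ref{cor:Casimirnilp}), so after isolating $V_\la$ via an interpolating polynomial $p(E)$ that is $1$ on degree $|\la|$ and $0$ elsewhere, and a polynomial $q(C,E)$ that kills the regular components of that degree while being $1$ on $V_\la$, the operator $(C-c_\la)q(C,E)p(E)$ restricts to $N_\la$ on $V_\la$ and to $0$ elsewhere; since $c_\la$ is a scalar this is manifestly in $\C[C,E]$. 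Once the nilpotent directions are accounted for, a dimension count against~\eqref{eq:sPD=VmuVla} closes the argument.
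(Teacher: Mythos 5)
Your part (b) and the overall shape (reduce everything to polynomials in $C$ and $E$, then use Zariski density for uniqueness) match the paper, but your argument for part (a) — generation — has a genuine gap. You propose to exhibit, inside $\C[C,E]$, operators that act as $\yek_{V_\la}$ (resp.\ as $N_\la$) on $V_\la$ and as $0$ ``on all other components,'' built from ``an interpolating polynomial $p(E)$ that is $1$ on degree $|\la|$ and $0$ elsewhere.'' No such operators exist: $\sP(V)$ has infinitely many indecomposable components, so no polynomial in $E$ can vanish on all degrees but one, and more fundamentally no element of $\sPD(V)^{\g g}$ at all (hence certainly none of $\C[C,E]$) can act as the identity on $V_\la$ and as zero on every other $V_\mu$ — the points $(c_\mu,e_\mu)$, like the points $(\mu_1-k-1,\mu_2)$, are Zariski dense, so a polynomial in $C,E$ vanishing on all components but one vanishes identically. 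You appear to be conflating the element of $\sPD(V)^{\g g}$ corresponding to $\yek_{V_\la}$ under the isomorphism~\eqref{eq:sPD=VmuVla} with a projection onto $V_\la$: the Capelli operator $D_\la$ kills components of degree $\leq|\la|$ other than $V_\la$, but acts nontrivially on components of higher degree (computing that action is exactly the content of Theorems~\ref{theorem1}--\ref{theorem4}). The paper's proof avoids this by going in the opposite direction: for \emph{every} symmetric polynomial $h$ of degree $\leq d$ it builds an operator $\sum_{i+2j\leq d}b_{i,j}E^iC^j$ of order $\leq d$ whose semisimple eigenvalue on \emph{every} $V_\mu$ (all degrees at once) is $h(\mu_1-k-1,\mu_2)$, and then compares dimensions of the order-$\leq d$ filtration level, $\dim\mathcal V_d=|\cI(d)|$, with the dimension of symmetric polynomials of degree $\leq d$. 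The order control in that construction is precisely what your projection-style construction cannot supply, and without it the dimension count against~\eqref{eq:sPD=VmuVla} does not close.

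A secondary point: your uniqueness argument overstates what finite interpolation gives. It is false that a symmetric polynomial of degree $\leq d$ vanishing at the points $(\mu_1-k-1,\mu_2)$ with $\mu\in\cI'_k$, $|\mu|\leq d$ must vanish identically: for $d\geq k+2$ these points are strictly fewer than $|\cI(d)|=\dim\{$symmetric polynomials of degree $\leq d\}$, and even including singular $\mu$ one runs into the coincidences $f(\mu_1-k-1,\mu_2)=f(\mu_1^\dagger-k-1,\mu_2^\dagger)$ — this failure of naive interpolation is exactly why the paper introduces the generalized values $\widetilde{\ev}$ with the operator $\square$ in Section~\ref{sec:Main}. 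Uniqueness in the proposition is nevertheless fine, but for the reason the paper gives: one uses the eigenvalues on $V_\mu$ for \emph{all} $\mu\in\cI'_k$ (no degree cutoff), and this set is Zariski dense in $\C^2$.
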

\begin{proof}
For $\la\in\cI'_k$ set $e_\la:=\la_1+\la_2=\big((\lambda_1-(k+1)\big)+\lambda_2)+(k+1)$, and  recall that 
\[
c_\la=(\la_2-\la_1)(2k+2+\la_2-\la_1)
=
\big((\lambda_1-(k+1))-\lambda_2\big)^2-(k+1)^2.
\]
Thus $c_\la$ and $e_\la$ are symmetric polynomials in $\la_1-k-1$ and $\la_2$. 
The restriction to $V_\la$ of any operator of the form
$D:=q(C,E)$, where $q(x,y)\in\C[x,y]$, is of the form
$q(c_\lambda,e_\lambda)\yek_{V_\la}+X$, where $X\in \mathrm{End}(V_\la)$ is nilpotent.\\[1mm]
\noindent\textbf{Step 1.} 
%The action of $E$ is by a differential operator of order 1, and the action of $C$ is by a differential operator of order 2. 
We prove that for every 
symmetric polynomial
$h(x,y)\in\C[x,y]$ 
there exists an operator $D\in \sPD(V)^{\g g}$ of order at most $\deg h$ such that for every $\la\in\cI'_k$ the restriction $D\big|_{V_\lambda}$ is of the form
\[h(\la_1-k-1,\la_2)\yek_{V_\la}+X,
\] where $X$ is nilpotent. 
To prove this claim, we write $h$ as a polynomial in  
$\sfe_1=x+y$ and $\sfe_2=xy$, that is, 
$
h(x,y)=\sum_{i+2j\leq d}a_{i,j}\sfe_1^i\sfe_2^j$,
where $d:=\deg h$. Writing $\sfe_1$ and $\sfe_2$ in terms of $x+y+k+1$ and $(x-y)^2-(k+1)^2$, 
it follows that $h(x,y)$ can also be expressed as
\[
h(x,y)=\sum_{i+2j\leq d}b_{i,j}(x+y+k+1)^i\big((x-y)^2-(k+1)^2\big)^j,
\]
where $b_{i,j}\in\C$. 
It is easy to verify that the 
operator $D:=\sum_{i+2j\leq d}b_{i,j}E^iC^j$ satisfies the claimed properties. \\[1mm]
\noindent\textbf{Step 2.} For $d\geq 0$ set 
$\mathcal V_d:=\left\{D\in\sPD(V)^{\g g}\,:\,\mathrm{ord}(D)\leq d\right\}$, where $\mathrm{ord}(D)$ denotes the order of $D$. 
From  \eqref{eq:sPD=VmuVla} 
and \eqref{eq:dimHom}
it follows that 
$\dim \mathcal V_d=N_d:=|\cI(d)|$.
%The proof is as follows: by the decomposition
%\[
%PD(V)\cong P(V)\otimes D(V),
%\]
%the operators of order $\leq d$ correspond to $P(V)\otimes D^{\leq d}(V)$. By considering the action of $z\in\g g$, it follows that the $\g g$-invariant operators in the latter space belong to
%\[
%\bigoplus_{i=0}^dP^i(V)\otimes D^i(V).
%\]
%Now the statement follows from Sherman's result on the decomposition of $P^i(V)\cong S^i(V)$. \\
The space of symmetric polynomials of degree at most $d$ also has dimension $N_d$. 
Furthermore, operators that correspond by Step~1 to linearly independent polynomials are also linearly independent. Thus Step 1
provides $N_d$ linearly independent elements in $\mathcal V_d\cap\cA$, where $\cA$ is the subalgebra of $\sPD(V)^{\g g}$ that is generated by $C$ and $E$. This yields
$\dim \mathcal V_d\cap \cA\geq \dim\mathcal V_d$, and consequently $\mathcal V_d\sseq\mathcal \cA$. \\[1mm]
\textbf{Step 3.} Let $D\in\sPD(V)^{\g g}$ such that $\mathrm{ord}(D)=d$. By Step~2, there exists a  symmetric polynomial $f_D\in\C[x,y]$ such that $\deg f_D\leq d$ and $D$ is obtained from $f_D$ by the construction of Step~1. From Step 1 it follows that 
$d=\mathrm{ord}(D)\leq \deg f_D\leq d$. Hence $\deg f_D=\mathrm{ord}(D)=d$.  Finally, $f_D$ is unique because $\cI$ is Zariski dense in $\C^2$. 
\end{proof}
%\ref{prp:CandEgen}.

\begin{dfn}
\label{dfn-eigenvalpolyn}
For $D\in\sPD(V)^{\g g}$, the polynomial $f_D(x,y)$ whose existence is guaranteed by Proposition \ref{prp:CandEgen} will be called the \emph{eigenvalue polynomial} of $D$.
\end{dfn}

\begin{cor}
\label{cor:Casimirnilp}
For $\la\in\cPq$, the restriction of
$C$ to $V_\la$ is not diagonalizable.
In particular,  the nilpotent part of the Jordan decomposition of $C\big|_{V_\la}$ is nonzero. 
\end{cor}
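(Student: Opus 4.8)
The plan is to derive non-diagonalizability of $C\big|_{V_\la}$ directly from the structure of $V_\la$ described in Proposition \ref{prp:decompositionofS(V)}(ii), together with the fact that the two irreducible modules $V(\mu^{(1)}+|\la|\zeta)$ and $V(\mu^{(2)}+|\la|\zeta)$ appearing in the socle filtration have the \emph{same} Casimir eigenvalue. Indeed, the Casimir $C$ acts on the irreducible highest-weight module $V(\nu+c\zeta)$ by the scalar $(\nu,\nu)+2(\nu,\rho)$, and a direct computation with $\nu=(\la_1-\la_2)\eps_1$ versus $\nu=(2k+2+\la_2-\la_1)\eps_1$ shows both give $c_\la=(\la_2-\la_1)(2k+2+\la_2-\la_1)$ — this is exactly the equality $c_\la=c_{\la^\dagger}$ noted before the proposition. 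Hence all composition factors of $V_\la$ have the same central character with respect to $C$, so $C$ acts on $V_\la$ as $c_\la\yek_{V_\la}+N$ for some nilpotent $N\in\End_{\g g}(V_\la)$, and the whole content of the corollary is that $N\neq 0$.

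First I would argue that if $C\big|_{V_\la}$ were diagonalizable, then — being a scalar on each generalized eigenspace and having only the single eigenvalue $c_\la$ — it would equal $c_\la\yek_{V_\la}$, i.e. $N=0$. So it suffices to exhibit one element $v\in V_\la$ with $(C-c_\la)v\neq 0$. Equivalently, it suffices to show $V_\la$ is \emph{not} semisimple as a $\g g$-module: a non-split self-extension forces a nontrivial nilpotent in the image of $C$ once one knows (as above) that $C$ has a single eigenvalue. Non-semisimplicity of $V_\la$ for $\la\in\cPq$ is already contained in Proposition \ref{prp:decompositionofS(V)}(ii), which asserts that $V_\la$ is \emph{indecomposable} of Loewy length $3$ while having more than one composition factor; an indecomposable module of Loewy length $>1$ is never semisimple. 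This gives the result immediately, so the only thing to spell out carefully is the reduction "$C$ has a single eigenvalue on $V_\la$ and $V_\la$ is indecomposable non-semisimple $\Rightarrow$ $N\neq 0$," which is elementary linear algebra.

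The one point that deserves a genuine argument, rather than a citation, is that the nilpotent part $N$ of $C\big|_{V_\la}$ does \emph{not} vanish even though $V_\la$ could a priori be a nonsplit extension "invisible" to $C$ — that is, one must rule out the possibility that the nonsplit extension structure of $V_\la$ is realized by some other invariant operator while $C$ itself still acts as a scalar. To handle this cleanly I would invoke Proposition \ref{prp:CandEgen}: $C$ and $E$ generate $\sPD(V)^{\g g}$, and $E$ acts on $V_\la$ by the scalar $|\la|$ (it is the degree operator on the homogeneous piece $\sP^{|\la|}(V)$). If $C\big|_{V_\la}$ were also scalar, then every element of $\sPD(V)^{\g g}$ would act on $V_\la$ by a scalar, so $\End_{\g g}(V_\la)$ would contain no nilpotents arising from invariant operators; but by \eqref{eq:dimHom} we have $\dim\Hom_{\g g}(V_\la,V_\la)=2$ for $\la\in\cPq$, and the $2$-dimensional algebra $\End_{\g g}(V_\la)$ of an indecomposable module is local, hence equals $\C\yek_{V_\la}\oplus\C N_\la$ with $N_\la$ nilpotent and \emph{realized by an invariant differential operator}. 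The surjection $\sPD(V)^{\g g}\to\End_{\g g}(V_\la)$ then forces $C$ and $E$ to span the image, and since $E$ maps to a scalar, $C$ must map to something outside $\C\yek_{V_\la}$, i.e. $N\neq 0$. I expect this last paragraph — pinning down that the $2$-dimensional $\Hom$-space is actually hit by the generators $C,E$ and that $E$ contributes only the scalar — to be the main (though still short) obstacle; everything else is a direct reading of Proposition \ref{prp:decompositionofS(V)}.
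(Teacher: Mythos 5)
Your final argument is correct and is essentially the paper's own proof: the paper likewise argues by contradiction, using Proposition \ref{prp:CandEgen} so that diagonalizability of $C\big|_{V_\la}$ would force every element of $\sPD(V)^{\g g}$ — in particular the invariant operator realizing the nonzero nilpotent element of the two-dimensional $\Hom_{\g g}(V_\la,V_\la)$ via \eqref{eq:sPD=VmuVla} and \eqref{eq:dimHom} — to act diagonalizably on $V_\la$. Just drop the intermediate claim that non-semisimplicity of $V_\la$ ``gives the result immediately'' by ``elementary linear algebra'': as you yourself observe in the last paragraph, the Casimir could a priori act as a scalar on a non-split extension, and it is that final paragraph (not indecomposability per se) that actually carries the proof.
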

\begin{proof}
Otherwise, 
%the restriction to $V_\la$ of every operator of the form $f(C,E)$ for $f\in\C[x,y]$ will be diagonalizable. Thus 
Proposition~\ref{prp:CandEgen} would imply that 
$D\big|_{V_\la}$ is diagonalizable for all 
$D\in \sPD(V)^{\g g}$. In particular, 
$D_{\la^\dagger}\big|_{V_\la}$ would be  diagonalizable, which is a contradiction. 
%This contradicts the existence of a nilpotent element in $\Hom_{\g g}(V_\la,V_\la)$. 
\end{proof}

%Proposition \ref{prp:decompositionofS(V)} implies that
%\begin{equation}
%\label{eq:SdV=bigsumla}
%\sP^d(V)=
%\bigoplus_{
%\la\in\cI',\,|\la|=d
%}
% V_\lambda.
%\end{equation}

\begin{rmk}
As noted in Section~\ref{sec:introduction},
Corollary~\ref{cor:Casimirnilp} is crucial for
being able  to define the basis $\left\{D_\la\right\}_{\la\in\cI}$ of  Capelli operators for $\sPD(V)^{\g g}$. 
\end{rmk}

\section{Vanishing properties and generalized values}

\label{sec:Main}
%For $\la\in\cI$ and $\mu\in\cI'$, the restriction of $D_\la$ to the indecomposable component $V_\mu$ is of the form $c_{\la,\mu}\yek_{V_\mu}^{}+N_{\la,\mu}$, where $N_{\la,\mu}^2=0$. 
Recall from \eqref{dlamuevonV} that $d_{\la,\mu}$ denotes the eigenvalue of $D_\la$ on $V_\mu$. The $d_{\la,\mu}$ satisfy the following vanishing properties which are deduced from elementary representation-theoretic arguments.

\begin{lem}
\label{lem:lamuvalRep}
Let $\la,\mu\in\cI$.
\begin{itemize}
\item[\rm (i)] Assume that $\la\in\cI_k'$. Then $d_{\la,\mu}=0$ for all $\mu\in\cI'_k$ such that $|\mu|\leq|\la|$ and $\mu\neq \la$. Furthermore, $d_{\la,\la}=1$.

\item[\rm (ii)] Assume that $\la\in\cPs$. 
Then $d_{\la,\mu}=0$ for all $\mu\in\cI'_k$ such that $|\mu|\leq|\la|$.
%\item[\rm (iii)] Assume that $\la\in\cPq$. Then $d_{\la,\mu}=0$ 
%for all $\mu\in\cI'_k$ such that $|\mu|\leq|\la|$ and $\mu\neq \la$. Furthermore, $d_{\la,\la}=1$.

\end{itemize}
\end{lem}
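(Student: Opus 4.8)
The plan is to extract both vanishing statements from the description of $D_\la$ as a $\g g$-module morphism together with the block structure of $\sP(V)$ recorded in Proposition~\ref{prp:decompositionofS(V)} and the decomposition \eqref{eq:sPD=VmuVla}. First recall that $D_\la$ is, by definition, the element of $\sPD(V)^{\g g}$ corresponding under \eqref{eq:sPD=VmuVla} to either $\yek_{V_\la}\in\Hom_{\g g}(V_\la,V_\la)$ (when $\la\in\cI'_k$) or to $N_\la$ (when $\la\in\cPs$, where $N_\la\in\Hom_{\g g}(V_{\la^\dagger},V_{\la^\dagger})$ is the nilpotent summand). Under the identification \eqref{eq:sPD=VmuVla}, the restriction $D_\la\big|_{V_\mu}$ is precisely the component of $D_\la$ in the summand $\Hom_{\g g}(V_\mu,V_\mu)$. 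Hence for $\la\in\cI'_k$ and $\mu\neq\la$, the $V_\mu$-component of $\yek_{V_\la}$ is zero, giving $D_\la\big|_{V_\mu}=0$ and in particular $d_{\la,\mu}=0$; and for $\mu=\la$ we get $D_\la\big|_{V_\la}=\yek_{V_\la}$, so $d_{\la,\la}=1$. This already proves (i) without any size restriction at all — the hypothesis $|\mu|\leq|\la|$ is not even needed — so in fact the only content is bookkeeping: one just has to be careful that $D_\la$ is homogeneous of degree zero (it maps $\sP^d(V)$ to $\sP^d(V)$) so that it respects the grading and the decomposition \eqref{eq:Pdco}.

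For part (ii), the point is that $D_\la$ corresponds to $N_{\la^\dagger}\in\Hom_{\g g}(V_{\la^\dagger},V_{\la^\dagger})$, which lives entirely in the single summand indexed by $\la^\dagger$. Therefore $D_\la\big|_{V_\mu}=0$ for every $\mu\neq\la^\dagger$, and in particular $d_{\la,\mu}=0$ for all such $\mu$. It remains to rule out $\mu=\la^\dagger$ among the $\mu$ with $|\mu|\leq|\la|$: by Remark~\ref{daggerinv} the involution $\dagger$ preserves size, so $|\la^\dagger|=|\la|$ and we would need to check that even on $V_{\la^\dagger}$ the eigenvalue $d_{\la,\la^\dagger}$ — the scalar (diagonalizable) part of $N_{\la^\dagger}$ — vanishes. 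But $N_{\la^\dagger}$ is by construction the nilpotent part of the Jordan decomposition of $C\big|_{V_{\la^\dagger}}$, so it is a nilpotent endomorphism; its diagonalizable part is $0$, i.e. $d_{\la,\la^\dagger}=0$ (while $d'_{\la,\la^\dagger}\neq 0$). Thus $d_{\la,\mu}=0$ for all $\mu\in\cI'_k$ with $|\mu|\leq|\la|$, as claimed.

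I do not expect any real obstacle here: both parts are essentially a translation of the definitions of $D_\la$ through the isomorphism \eqref{eq:sPD=VmuVla}, using that distinct $V_\la$ have disjoint composition factors (so the Hom-spaces split as a direct sum over a single index) and that $N_{\la^\dagger}$ is nilpotent. If anything, the mild subtlety is keeping straight that for $\la\in\cPs$ the operator $D_\la$ is supported on the block $\la^\dagger$ rather than on $\la$ (indeed $\la\notin\cI'_k$, so there is no block $V_\la$), and that the size-preservation of $\dagger$ is what makes $|\mu|\le|\la|$ compatible with $\mu=\la^\dagger$. I would state these two observations explicitly and then the proof is immediate. One could alternatively phrase (i) as a consequence of the vanishing characterization of $\yek_{V_\la}$ among morphisms in \eqref{eq:sPD=VmuVla}, but the direct argument above is cleaner and self-contained.
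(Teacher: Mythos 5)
Your argument for both parts rests on the claim that, under the identification \eqref{eq:sPD=VmuVla}, the restriction $D_\la\big|_{V_\mu}$ is the component of $D_\la$ in the summand $\Hom_{\g g}(V_\mu,V_\mu)$, so that $D_\la$ annihilates every block other than the one it is supported on, ``without any size restriction at all.'' That claim is false, and the conclusion you draw from it ($d_{\la,\mu}=0$ for \emph{all} $\mu\neq\la$, resp.\ $\mu\neq\la^\dagger$) contradicts the rest of the paper: the eigenvalue polynomial $f_\la$ is a nonzero symmetric polynomial of degree $|\la|$ (Theorems~\ref{theorem1}--\ref{theorem3} compute it explicitly), and it cannot vanish on the Zariski-dense set of points $(\mu_1-k-1,\mu_2)$ with $\mu\in\cI'_k$, $\mu\neq\la$. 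Concretely, for $\la=(0,0)$ the operator $D_{(0,0)}$ corresponds to $\yek_{V_{(0,0)}}$ and is the constant $1\in\sPD(V)^{\g g}$, which acts as the identity on \emph{every} $V_\mu$, so $d_{(0,0),\mu}=1$. The isomorphism \eqref{eq:sPD=VmuVla} only records where $D_\la$ sits as an element of $\sP(V)\otimes\sD(V)$, namely in $(V_\eta\otimes V_\eta^*)^{\g g}$ with $\eta=\la$ (case (i)) or $\eta=\la^\dagger$ (case (ii)); the action of that element on $\sP^{|\mu|}(V)$ for $|\mu|>|\eta|$ is by multiplication and differentiation and is in general nonzero on blocks other than $V_\eta$. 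This is precisely why the hypothesis $|\mu|\leq|\la|$ is essential and why the lemma has any content.

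The two ingredients you are missing, and which the paper uses, are: (a) if $|\mu|<|\la|$, then $D_\la$ has order $|\la|=|\eta|$, so it annihilates $\sP^{|\mu|}(V)\supseteq V_\mu$; (b) if $|\mu|=|\la|$ and $\mu\neq\eta$, then writing $D_\la$ inside $V_\eta\otimes V_\eta^*$ one sees that its action on $V_\mu$ factors through the $\g g$-invariant pairing $V_\eta^*\otimes V_\mu\to\C$, equivalently a $\g g$-map $V_\eta^*\to V_\mu^*$, which vanishes because $V_\eta$ and $V_\mu$ have disjoint composition factors (Proposition~\ref{prp:decompositionofS(V)}); hence $D_\la\big|_{V_\mu}=0$. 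The two remaining evaluations are handled as you do and as in the paper: $D_\la\big|_{V_\la}=\yek_{V_\la}$ by the very definition of $D_\la$ for $\la\in\cI'_k$, and for $\la\in\cPs$ the restriction $D_\la\big|_{V_{\la^\dagger}}$ is the nilpotent morphism $N_{\la^\dagger}$, whose semisimple part is zero, giving $d_{\la,\la^\dagger}=0$ (here $|\la^\dagger|=|\la|$ by Remark~\ref{daggerinv}, so this case does occur under the hypothesis). With (a) and (b) inserted in place of your ``componentwise action'' claim, the proof goes through.
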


\begin{proof}
From 
%Definition \ref{dfn:reg-qreg-sing} and 
the isomorphism \eqref{eq:sPD=VmuVla}
it follows that
$D_\la\big|_{V_\la}=\yek_{V_\la}$ for $\la\in\cI_k'$ 
. 
For $\la\in\cPs$ we have $d_{\la,\la^\dagger}=0$
because the restriction of $D_{\la}$ to ${V_{\la^\dagger}}$ is nilpotent.
If  $|\mu|<|\la|$ then we have $D_\la\big|_{V_\mu}=0$ because $V_\mu\sseq \sP^{|\mu|}(V)$ and $\mathrm{ord}(D_\la)>|\mu|$.  
If $|\mu|=|\lambda|$, then the action of $D_\lambda$ on $V_\mu$ is obtained by restriction of the $\mathfrak g$-equivariant map 
\[
\sP(V)\otimes \sD(V)\otimes \sP(V)\to \sP(V)\ ,\ 
p\otimes D\otimes q\mapsto pDq,
\]
to a tensor product of the form $V_\eta\otimes V_\eta^*\otimes V_\mu$, where $\eta=\la$ or $\eta=\la^\dagger$ depending on wheter $\la\in\cPq$ or $\la\in\cPs$. 
As $|\mu|=|\la|$, the map $V_\eta^*\otimes V_\mu\to\mathbb C$ corresponds to  a $\g g$-invariant bilinear form 
$V_\eta^*\times V_\mu\to\mathbb C$, hence to a $\g g$-equivariant  linear map $V_\eta^*\to V_\mu^*$. 
Thus, when $V_\mu^*$ and $V_\eta^*$ do not have composition factors in common, we obtain
$D_\la\big|_{V_\mu}=0$ and in particular $d_{\la,\mu}=0$. The above facts are sufficient for verifying the claims of the lemma. 
%If $V_\lambda^*\otimes V_\mu\to\mathbb C$ is nonzero, then the map $V_\lambda^*\to V_\mu^*$ will be nonzero. 
\end{proof}
\begin{rmk}
\label{rmk:nilpJofDla}
The proof of Lemma \ref{lem:lamuvalRep} implies that if $|\mu|\leq |\la|$, then the nilpotent part of the Jordan decomposition of $D_\la\big|_{V_\mu}$ vanishes, 
unless $\la$ is $k$-singular and $\mu=\la^\dagger$.
\end{rmk}

We can now write $f_\la$ as \[
f_\la(x,y)=\sum_{i+j\leq |\la|}a_{i,j}(x^iy^j+x^jy^i),
\] and interpret the constraints $f_\la(\mu_1-k-1,\mu_2)=d_{\la,\mu}$ 
for $|\mu|\leq |\la|$
as a linear system in the coefficients $a_{i,j}$. Unfortunately,  this linear system (which a priori has
the same number of equations and variables)
does \emph{not} determine $f_\la$ uniquely
because of the redundancy that is caused by the coincidences  
\begin{equation}
\label{eq:coinc}
f_\la(\mu_1-k-1,\mu_2)=f_\la(\mu^\dagger_1-k-1,\mu_2^\dagger).
\end{equation}
But we can circumvent this issue by using the Jordan decomposition of  $D_\la\big|_{V_\mu}$ to obtain extra conditions on  $f_\la$. 

%\subsection{Normalization of $D_\lambda$ for quasi-regular $\lambda$} We are now ready to describe the (promised) normalization of the operators 
%$D_\lambda$ for $\lambda$ quasi-regular. The action of the Casimir operator on $S(V)$ can be written as 
%\[
%C=\mathcal S+\mathcal N,
%\]
%where $\mathcal S$ is a semisimple operator and $\mathcal N$ is a nilpotent operator (indeed $\mathcal N^2=0)$. By Corollary \ref{cor:Casimirnilp}, $\mathcal N\big|_{V_\lambda}\neq 0$ for every quasi-regular $\lambda$. Now we normalize $D_\lambda$ such that $D_\lambda\big|_{V_\lambda}=\mathcal N\big|_{V_\lambda}$. 

\begin{prp}
\label{lem:D=Sf+Dsqf}
Let $D\in\sPD(V)^{\g g}$ and assume that $f_D(x,y)$ is the eigenvalue polynomial of $D$. 
Then for $\la\in\cPr\cup\cPq$ we have 
\[
D\big|_{V_\la}=f_D(\la_1-k-1,\la_2)\yek_{V_\la}+\square f_D(\la_1-k-1,\la_2) N_\la,
\]
where $\square f_D$ is defined as in~\eqref{eq:squareoff}.
\end{prp}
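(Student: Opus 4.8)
The plan is to reduce the statement to the structure of $\sPD(V)^{\g g}$ established in Proposition~\ref{prp:CandEgen}. That proposition says $\sPD(V)^{\g g}$ is generated by the Casimir operator $C$ and the degree operator $E$; moreover $C$ and $E$ commute, since both preserve each homogeneous component $\sP^d(V)$ and $E$ acts there by the scalar $d$. Hence any $D\in\sPD(V)^{\g g}$ can be written as $D=q(C,E)$ for a polynomial $q(s,t)\in\C[s,t]$, and it is legitimate to do Taylor calculus in the commuting pair $(C,E)$.

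First I would record how $C$ and $E$ act on a fixed indecomposable summand $V_\la$, $\la\in\cPr\cup\cPq$. Since $V_\la=\sP^{|\la|}(V,c_\la)\sseq\sP^{|\la|}(V)$, we have $E\big|_{V_\la}=|\la|\,\yek_{V_\la}$, and since $V_\la$ is the generalized $c_\la$-eigenspace of $C$, we have $C\big|_{V_\la}=c_\la\yek_{V_\la}+N_\la$, where $N_\la$ is the nilpotent part of $C\big|_{V_\la}$ as fixed in~\eqref{eq:dimHom2} (and $N_\la=0$ for $\la\in\cPr$). Because $\yek_{V_\la}$ and $N_\la$ commute and $N_\la^2=0$, expanding $q$ monomial by monomial gives
\[
D\big|_{V_\la}=q\big(c_\la\yek_{V_\la}+N_\la,\ |\la|\,\yek_{V_\la}\big)=q(c_\la,|\la|)\,\yek_{V_\la}+(\partial_s q)(c_\la,|\la|)\,N_\la,
\]
where $\partial_s q$ denotes the derivative of $q$ in its first slot.

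It then remains to match the two scalars above with $f_D$ and $\square f_D$. Set $\widetilde c(x,y):=(x-y)^2-(k+1)^2$ and $\widetilde e(x,y):=x+y+k+1$; as recorded in the proof of Proposition~\ref{prp:CandEgen}, $c_\mu=\widetilde c(\mu_1-k-1,\mu_2)$ and $|\mu|=\widetilde e(\mu_1-k-1,\mu_2)$ for all $\mu\in\cI'_k$. Since the eigenvalue of $D=q(C,E)$ on $V_\mu$ is $q(c_\mu,|\mu|)$, the polynomials $f_D$ and $q(\widetilde c,\widetilde e)$ agree on the Zariski-dense set $\{(\mu_1-k-1,\mu_2):\mu\in\cI'_k\}$ and hence coincide; in particular $q(c_\la,|\la|)=f_D(\la_1-k-1,\la_2)$. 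Applying $\square$ to $f_D=q(\widetilde c,\widetilde e)$ and using $\partial_x\widetilde c-\partial_y\widetilde c=4(x-y)$ together with $\partial_x\widetilde e-\partial_y\widetilde e=0$, the chain rule gives $\square f_D=(\partial_s q)(\widetilde c,\widetilde e)$, so $(\partial_s q)(c_\la,|\la|)=\square f_D(\la_1-k-1,\la_2)$. Plugging both identities into the display above finishes the proof.

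The argument is mostly bookkeeping; the points that require care are (a) that $D$ is genuinely a polynomial in the commuting pair $(C,E)$, so the monomial-by-monomial expansion is valid, and (b) that the nilpotent operator produced by that expansion of $C\big|_{V_\la}$ is precisely the distinguished $N_\la$ of~\eqref{eq:dimHom2} — true by construction, but it is the crux of the formula. The one genuine computation, the identity $\square\big(q(\widetilde c,\widetilde e)\big)=(\partial_s q)(\widetilde c,\widetilde e)$, is immediate from the chain rule, which is why $\square$ in~\eqref{eq:squareoff} is exactly the operator that appears.
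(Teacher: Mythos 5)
Your proposal is correct and follows essentially the same route as the paper's proof: write $D=p(C,E)$ via Proposition~\ref{prp:CandEgen}, expand on $V_\la$ using $C\big|_{V_\la}=c_\la\yek_{V_\la}+N_\la$ with $N_\la^2=0$, identify $f_D$ with $p\big((x-y)^2-(k+1)^2,\,x+y+k+1\big)$ by Zariski density, and extract $\square f_D=(\partial p/\partial s)\circ H$ by the chain rule. No gaps.
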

\begin{proof}
By Proposition \ref{prp:CandEgen}
we can express $D$ as $D=p(C,E)$ for a polynomial $p(s,t)\in\C[s,t]$. Note that 
$C\big|_{V_\la}=
c_\la\yek_{V_\la}+ N_\la$ where  
$N_\la^2=0$, hence $C^d\big|_{V_\la}=c_\la^d\yek_{V_\la}+ dc_\la^{d-1}N_\la$. It follows that
%\begin{equation*}
%D=p(\mathcal S,E)+\frac{\partial p}{\partial s}(\mathcal S,E)\mathcal N. 
%\end{equation*}
%Consequently, 
\begin{equation}
\label{eq:D=pSE+dpds}
D\big|_{V_\la}=p(c_\la,e_\la)\yek_{V_\la}+
\frac{\partial p}{\partial s}
(c_\la,e_\la)
N_\la.
\end{equation}
%Now the eigenvalue of $D$ on $V_\la$ is
% of the form
%$f(\lambda_1-k-1,\lambda_2)$
%where $f(x,y)\in\C[x,y]$ is a symmetric polynomial. It follows that 
By comparing the eigenvalues on both sides of~\eqref{eq:D=pSE+dpds} and noting that $\cI\sseq \C^2$ is Zariski dense, we obtain \begin{equation}
\label{eq:fx,y}
f_D(x,y)=p\big((x-y)^2-(k+1)^2,x+y+k+1\big).
\end{equation}
Next set 
\[
H(x,y):=\big((x-y)^2-(k+1)^2,x+y+k+1\big).
\]
Then by the chain rule we obtain 
\[
\left\{
\begin{array}{l}
\displaystyle\frac{\partial f_D}{\partial x}(x,y)
=
\frac{\partial p}{\partial s}(H(x,y))(2x-2y)+
\frac{\partial p}{\partial t}
(H(x,y)),\\[2mm]
\displaystyle
\frac{\partial f_D}{\partial y}(x,y)=
\frac{\partial p}{\partial s}(H(x,y))(2y-2x)+
\frac{\partial p}{\partial t}(H(x,y)).
\end{array}
\right.
\]
Taking the difference of the above relations yields
\begin{equation}
\label{eq:D1p-f}
\frac{\partial p}{\partial s}(H(x,y))=
\square f_D(x,y).
\end{equation}
The statement of the lemma follows from \eqref{eq:D=pSE+dpds} and \eqref{eq:D1p-f}.
\end{proof}
%\begin{rmk}
%The above discussion has the following consequence.
%For every symmetric polynomial $f(x,y)$, let $\mathcal S_f$ 
%be the operator that acts on $V_\lambda$ by the scalar 
%\[
%f(\lambda_1-(k+1),\lambda_2).
%\]
%Note that $S_f$ is not necessarily a differential operator.
%Assume that the  eigenvalue of an invariant differential operator $D\in PD(V)^{\g g}$ is given by $f(x,y)$. Then 
%the operator $D$ acts on $V_\lambda$ as $\mathcal S_f+\mathcal S_{\square f}\mathcal N$. 
%\end{rmk}

%\begin{dfn}
%Let $k\in\Z^{\geq 0}$ be given, and let $f(x,y)\in\C[x,y]$. The 
%\underline{box evaluation} of $f$ at a partition $\lambda=(\lambda_1,\lambda_2)$ is defined as follows:
%\begin{itemize}
%\item If $\lambda$ is regular or quasi-regular, then the box evaluation of $f$ at $\lambda$ is equal to $f(\lambda_1-k-1,\lambda_2)$.
%
%\item If $\lambda$ is singular, then the box evaluation of $f$ at $\lambda$ is equal to $\Box f(\lambda_1-k-1,\lambda_2)$.
%
%
%\end{itemize}
%
% 
%\end{dfn}
%
%\begin{rmk}
%Let $h(x,y)\in\C[x,y]^{S_2}$, and let $\lambda=(\lambda_1,\lambda_2)$ be quasi-regular. Then 
%\[
%h(\lambda_1-k-1,\lambda_2)=
%h(\breve\lambda_1-k-1,\breve\lambda_2).
%\]
%\end{rmk}

Using Proposition~\ref{lem:D=Sf+Dsqf}, we obtain the required extra constraints that together  with the vanishing conditions of Lemma~\ref{lem:lamuvalRep} uniquely identify the polynomials $f_\la$. 
In order to give a uniform description of all of these constraints, we use the notion of the \emph{generalized value} of  a symmetric polynomial $f(x,y)$ at $\la\in\cI$, denoted by $\widetilde{\ev}(f,\la)$, 
defined as follows.
\[
\widetilde{\ev}(f,\la):=
\begin{cases}
f(\la_1-k-1,\la_2) & \text{ if }\la\in\cPr\cup\cPq,\\
\square f(\la_1-k-1,\la_2)
& \text{ if }\la\in\cPs.
\end{cases}
\]
%Now for a partition $\lambda=(\lambda_1,\lambda_2)$, let $f_\lambda\in\C[x,y]^{S_2}$  be the symmetric polynomial that gives the eigenvalue of the Capelli operator $D_\lambda$. Thus,
%\[
%D_\lambda=\mathcal S_{f_\lambda}+\mathcal S_{\square f_\lambda}\mathcal N.
%\]
%In the following, we investigate the box-evaluation properties of $f_\lambda$.
Then Lemma \ref{lem:lamuvalRep}
and Remark \ref{rmk:nilpJofDla} imply  the following proposition. 
\begin{prp}
\label{prp:extra-fla}
For $\la,\mu\in\cI$, if  
$|\mu|\leq |\la|$ then $\widetilde{\ev}(f_\la,\mu)=\delta_{\la,\mu}$.
%\begin{itemize}
%\item[\rm (i)] If $\la\in\cPr$, then
%\begin{itemize}
%\item $f_\lambda(\mu_1-k-1,\mu_2)=0$ for all 
%$\mu$ such that 
% $|\mu|\leq |\lambda|$ and $\mu\neq \lambda$,
% 
%\item $f_\lambda(\lambda_1-k-1,\lambda_2)=1$, and 
%
%\item $\square f_\lambda(\mu_1-k-1,\mu_2)=0$ for $\mu\in\cPs$ such that  $|\mu|\leq |\lambda|$.
%
%\end{itemize}
%
%\item If $\lambda\in\cPq$, then 
%
%\begin{itemize}
%\item $f_\lambda(\mu_1-k-1,\mu_2)=0$ for all 
%$\mu$ such that 
% $|\mu|\leq |\lambda|$.
% 
%
%\item $\square f_\lambda(\mu_1-k-1,\mu_2)=0$ for all singular 
%$\mu$ such that $|\mu|\leq |\lambda|$, except 
%$\mu=\breve\lambda$.
%
% \item $\square f_\lambda(\check\lambda_1-k-1,\check\lambda_2)=1$. 
% This condition is equivalent to 
% \[
% \square f_\lambda(\lambda_1-k-1,\lambda_2)=1,
% \]
% however, write it in terms of $\breve\lambda$ because
% we want the evaluation of $\square f$ to be always 
% at singular partitions.
%\end{itemize}
%
%\item If $\lambda$ is \underline{singular}, then
%
%\begin{itemize}
%\item
% $f_\lambda(\mu_1-k-1,\mu_2)=0$ for all $\mu$ such that 
% $|\mu|\leq |\lambda|$ and $\mu\neq\lambda,\breve\lambda$. 
% \item $f_\lambda(\lambda_1-k-1,\lambda_2)=1$. 
%\item $\square f_\lambda(\mu_1-k-1,\mu_2)=0$ for all singular
%$\mu$ such that $|\mu|\leq |\lambda|$.\\
%
%\end{itemize}
%
%
%\end{itemize}

\end{prp}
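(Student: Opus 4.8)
The plan is to verify the identity $\widetilde{\ev}(f_\la,\mu)=\delta_{\la,\mu}$ by splitting into cases according to whether $\mu\in\cPr\cup\cPq$ or $\mu\in\cPs$, and in each case reading off the value of $\widetilde{\ev}(f_\la,\mu)$ from the description of $D_\la\big|_{V_\mu}$ already assembled in the excerpt. The only substantive inputs are Lemma~\ref{lem:lamuvalRep}, Remark~\ref{rmk:nilpJofDla}, and Proposition~\ref{lem:D=Sf+Dsqf}; everything else is bookkeeping about which $\mu$ satisfy $|\mu|\le|\la|$.

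First I would treat $\mu\in\cPr\cup\cPq$. Here $\widetilde{\ev}(f_\la,\mu)=f_\la(\mu_1-k-1,\mu_2)=d_{\la,\mu}$ by the definition of the eigenvalue polynomial (Definition~\ref{dfn-eigenvalpolyn} together with \eqref{dlamuevonV}). If $\la\in\cPr\cup\cPq$, then Lemma~\ref{lem:lamuvalRep}(i) gives $d_{\la,\mu}=0$ whenever $|\mu|\le|\la|$ and $\mu\ne\la$, and $d_{\la,\la}=1$; this is exactly $\delta_{\la,\mu}$. If instead $\la\in\cPs$, then for $\mu\in\cPr\cup\cPq$ with $|\mu|\le|\la|$ Lemma~\ref{lem:lamuvalRep}(ii) gives $d_{\la,\mu}=0$, and since a $k$-singular $\la$ is never itself $k$-regular or $k$-quasiregular, $\mu\ne\la$ automatically, so again $\widetilde{\ev}(f_\la,\mu)=0=\delta_{\la,\mu}$.

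Next I would treat $\mu\in\cPs$, where $\widetilde{\ev}(f_\la,\mu)=\square f_\la(\mu_1-k-1,\mu_2)$. The point is that, by Proposition~\ref{lem:D=Sf+Dsqf} applied to $D=D_\la$ and the partition $\mu^\dagger\in\cPq$ (recall $\mu\mapsto\mu^\dagger$ is the involution \eqref{eq:invol} swapping $k$-singular and $k$-quasiregular partitions of the same size, so $(\mu^\dagger)^\dagger=\mu$ and $|\mu^\dagger|=|\mu|$), we have $D_\la\big|_{V_{\mu^\dagger}}=f_\la(\mu^\dagger_1-k-1,\mu^\dagger_2)\yek+\square f_\la(\mu^\dagger_1-k-1,\mu^\dagger_2)N_{\mu^\dagger}$; and by \eqref{eq:coinc} the coincidence $\square f_\la(\mu_1-k-1,\mu_2)=\square f_\la(\mu^\dagger_1-k-1,\mu^\dagger_2)$ holds, so $\widetilde{\ev}(f_\la,\mu)$ equals the coefficient $d'_{\la,\mu^\dagger}$ of $N_{\mu^\dagger}$ in $D_\la\big|_{V_{\mu^\dagger}}$. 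Now Remark~\ref{rmk:nilpJofDla} says that for $|\mu^\dagger|\le|\la|$ the nilpotent part of $D_\la\big|_{V_{\mu^\dagger}}$ vanishes unless $\la\in\cPs$ and $\mu^\dagger=\la^\dagger$, i.e. $\mu=\la$; so $\widetilde{\ev}(f_\la,\mu)=0$ for $\mu\ne\la$. It remains to check $\widetilde{\ev}(f_\la,\la)=1$ when $\la\in\cPs$: by definition $D_\la\sim N_\la$ corresponds, under \eqref{eq:sPD=VmuVla} and the factorization through $\mathrm{cosocle}(V_{\la^\dagger})\cong\mathrm{socle}(V_{\la^\dagger})$, precisely to the chosen generator $N_{\la^\dagger}$ of the nilpotent line in $\End_{\g g}(V_{\la^\dagger})$, so $d'_{\la,\la^\dagger}=1$, hence $\widetilde{\ev}(f_\la,\la)=\square f_\la(\la^\dagger_1-k-1,\la^\dagger_2)=1$.

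I do not anticipate a genuine obstacle here: the proposition is essentially a repackaging of results already proved, and the main care needed is in the $\mu\in\cPs$ case, making sure the normalization of $N_\la$ in the definition of $D_\la$ for $\la\in\cPs$ is consistent with the normalization of $N_{\la^\dagger}$ appearing in \eqref{eq:dimHom2} and in Proposition~\ref{lem:D=Sf+Dsqf}, so that the coefficient comes out to be exactly $1$ rather than some nonzero scalar. This is a matter of tracing the identification ``$\sim$'' in the definition of the Capelli operators through the isomorphism \eqref{eq:sPD=VmuVla}; once that bookkeeping is confirmed, the four sub-cases combine to give $\widetilde{\ev}(f_\la,\mu)=\delta_{\la,\mu}$ for all $\la,\mu\in\cI$ with $|\mu|\le|\la|$, which is the assertion of the proposition.
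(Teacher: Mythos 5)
Your proposal is correct and follows essentially the same route as the paper: the paper proves this proposition simply by invoking Lemma~\ref{lem:lamuvalRep} and Remark~\ref{rmk:nilpJofDla} (with Proposition~\ref{lem:D=Sf+Dsqf} implicitly translating the vanishing of the nilpotent part into the vanishing of $\square f_\la$), which is exactly the case analysis you spell out, including the normalization check that $D_\la\big|_{V_{\la^\dagger}}$ equals the Jordan nilpotent $N_{\la^\dagger}$ of $C\big|_{V_{\la^\dagger}}$ itself, so that $\widetilde{\ev}(f_\la,\la)=1$ for $\la\in\cPs$. The only small gloss is that the coincidence you cite from~\eqref{eq:coinc} for $\square f_\la$ really follows from the symmetry of $\square f_\la$ (which holds since $f_\la$ is symmetric), but this is immediate and does not affect the argument.
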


In the following corollary,  $\cI(d)$ is defined as in~\eqref{df:id(d)}.

\begin{cor}
\label{cor:ev(f,la)=tla}
Fix  a set of complex numbers $\{z_\la\,:\,\la\in\cI(d)\}$
for some $d\geq 0$. Then there exists a unique symmetric polynomial $f(x,y)$  such that $\deg f\leq d$ and 
$\widetilde{\ev}(f,\la)=z_\la$ for all $\la\in\cI(d)$.
%\item[\rm (ii)] There exists a unique $D\in\sPD(V)^{\g g}$ satisfying $\mathrm{ord}(D)\leq d$ such that 
%\[
%D\big|_{V_\la}=\widetilde{\ev}(f,\la)\yek_{V_\la}\text{ for }
%\la\in\cPr\cap\cI(d),
%\]
%and 
%\[
%D\big|_{V_\la}=\widetilde{\ev}(f,\la^\dagger)\yek_{V_\la}
%+\widetilde{\ev}(\square f,\la)N_{\la}
%\text{ for }
%\la\in\cPq\cap\cI(d).
%\]
%
%\end{itemize}
\end{cor}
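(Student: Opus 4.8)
The plan is to reduce Corollary~\ref{cor:ev(f,la)=tla} to a dimension count plus an injectivity statement for the ``generalized evaluation'' map. Concretely, let $\sP_{\leq d}$ denote the vector space of symmetric polynomials $f(x,y)\in\C[x,y]$ with $\deg f\leq d$, and consider the linear map
\[
\Phi_d\colon \sP_{\leq d}\to \C^{\cI(d)},\qquad \Phi_d(f):=\bigl(\widetilde{\ev}(f,\la)\bigr)_{\la\in\cI(d)}.
\]
First I would note that $\dim\sP_{\leq d}=|\cI(d)|$: a basis of $\sP_{\leq d}$ is given by the monomial symmetric functions $x^iy^j+x^jy^i$ (and $x^iy^i$) with $i\geq j\geq 0$ and $i+j\leq d$, which are indexed precisely by $\cI(d)$. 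Hence $\Phi_d$ is a linear map between spaces of equal finite dimension, so it suffices to prove that $\Phi_d$ is injective; surjectivity and uniqueness then follow at once, and the corollary is exactly the assertion that $\Phi_d$ is a bijection.

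For injectivity, suppose $f\in\sP_{\leq d}$ satisfies $\widetilde{\ev}(f,\la)=0$ for all $\la\in\cI(d)$; I want to conclude $f=0$. The key input is Theorem~\ref{thm:KnSa} together with Proposition~\ref{lem:D=Sf+Dsqf}'s companion identity $\widetilde{\ev}(P_\la^\kappa,\cdot)$. More precisely, I would use that the polynomials $\{R_\mu^{(k)}\}_{\mu\in\cI}$ form a basis of $\C[x,y]^{S_2}$ (Proposition~\ref{prp:Rk00}), and that $\{R_\mu^{(k)}\,:\,\mu\in\cI(d)\}$ is a basis of $\sP_{\leq d}$ since $R_\mu^{(k)}$ has leading term $x^{\mu_1}y^{\mu_2}$ and hence $\deg R_\mu^{(k)}=|\mu|$. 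Write $f=\sum_{\mu\in\cI(d)}c_\mu R_\mu^{(k)}$ and induct on $|\mu|$ (ordering $\cI(d)$ by size, breaking ties arbitrarily) to show all $c_\mu=0$. The base and inductive step rely on the vanishing/normalization behaviour of $\widetilde{\ev}$: one shows that $\widetilde{\ev}(R_\mu^{(k)},\nu)$ is ``upper triangular'' with respect to the size order, i.e. it vanishes when $|\nu|<|\mu|$, and the ``diagonal'' contributions (over all $\nu$ with $|\nu|=|\mu|$, including the pair $\{\mu,\mu^\dagger\}$ when $\mu$ is $k$-singular or $k$-quasiregular) are governed by an invertible matrix. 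Feeding the hypothesis $\widetilde{\ev}(f,\nu)=0$ through this triangular system forces $c_\mu=0$ at each stage.

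The step I expect to be the main obstacle is the careful bookkeeping of the ``diagonal block'' of the generalized evaluation, namely understanding $\widetilde{\ev}(R_\mu^{(k)},\nu)$ for all $\nu$ of the same size as $\mu$. The subtlety is precisely the coincidence~\eqref{eq:coinc}: the ordinary values $f(\mu_1-k-1,\mu_2)$ alone cannot distinguish $\mu$ from $\mu^\dagger$, which is why $\widetilde{\ev}$ uses $\square f$ at $k$-singular partitions. One must verify that replacing the ordinary value by the $\square$-value at $k$-singular partitions exactly repairs this degeneracy, i.e. that the resulting square system of size $|\{\nu:|\nu|=d'\}|$ is nonsingular for each $d'\leq d$. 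I would establish this by computing $\widetilde{\ev}(P_{\mu}^\kappa,\nu)$ and $\square$-values explicitly from~\eqref{KNPsymformu} — the interpolation conditions of Theorem~\ref{thm:KnSa} give $P_\mu^\kappa(\nu_1-\kappa-1,\nu_2)=H_\mu(\kappa)\delta_{\mu,\nu}$ for $|\nu|\leq|\mu|$ — and then passing to the regularized polynomials $R_\mu^{(k)}$ and to $\square$-values, tracking which $H_\mu(k)$ vanish (exactly the $k$-quasiregular and $k$-singular cases). A clean alternative, which avoids some of this computation, is to invoke the already-established Proposition~\ref{prp:extra-fla}: it shows the operators $D_\la$ ($\la\in\cI(d)$) furnish $|\cI(d)|$ polynomials $f_\la$ with $\widetilde{\ev}(f_\la,\mu)=\delta_{\la,\mu}$, hence $\Phi_d$ is already known to be surjective, so it is automatically bijective and the corollary follows immediately with $f=\sum_{\la\in\cI(d)}z_\la f_\la$. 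I would present the argument this second way, citing Proposition~\ref{prp:extra-fla}, and use the first (triangularity) discussion only as motivation.
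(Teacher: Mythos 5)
Your final argument is the paper's own proof: the paper derives the corollary immediately from Proposition~\ref{prp:extra-fla}, exactly as you propose, and your preliminary dimension count $\dim\{f\in\C[x,y]^{S_2}:\deg f\leq d\}=|\cI(d)|$ is the implicit ingredient that makes that deduction work. One correction, though: Proposition~\ref{prp:extra-fla} gives $\widetilde{\ev}(f_\la,\mu)=\delta_{\la,\mu}$ only for $|\mu|\leq|\la|$, not for all $\mu\in\cI(d)$, so your closing formula $f=\sum_{\la\in\cI(d)}z_\la f_\la$ is not correct in general (e.g.\ $f_{(0,0)}=1$ has nonzero generalized values at larger regular partitions, so with $z_{(0,0)}=1$ and all other $z_\mu=0$ this $f$ fails the conditions). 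The conclusion still stands by your own triangularity logic: the matrix $\bigl(\widetilde{\ev}(f_\la,\mu)\bigr)_{\mu,\la\in\cI(d)}$, with $\cI(d)$ ordered by increasing size, is unitriangular by the proposition, hence invertible; combined with the dimension count this shows $\Phi_d$ is bijective, and $f$ is obtained by solving that unitriangular system rather than by the displayed sum. With that adjustment the proof is correct and essentially identical to the paper's.
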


\begin{proof}
Follows immediately from 
 Proposition \ref{prp:extra-fla}.
\end{proof}

\section{Proofs of Theorems \ref{theorem1}, \ref{theorem2}, and \ref{theorem4}}
\label{sec:proofABD}
We now proceed towards the proofs of 
Theorems~\ref{theorem1}--\ref{theorem4}. The next lemma is a key observation.
\begin{lem}
\label{lem:squPbetal}
Let $p(\kappa;x,y)\in \Bbbk[x,y]$ and let $k_\circ\in\R$ be such that
the coefficients of $p(\kappa;x,y)$ do not have poles at $\kappa=k_\circ$.
Further, assume that for $a,b,a',b'\in \R$ we have 
\[
(a-k_\circ-1,b)=(a',b'-k_\circ-1)
 \ \text{  and }\ a-b-k_\circ-1\neq 0. 
\]
Set
\begin{equation}
\label{eq:alphabetaP}
\alpha(\kappa):=p(\kappa;a-\kappa-1,b)\ \text{ and }\ 
\beta(\kappa):=p(\kappa;a',b'-\kappa-1).
\end{equation}
Then
\[
\square p(k_\circ;a-k_\circ-1,b)=
\frac{\beta'(k_\circ)-\alpha'(k_\circ)}{4(a-b-k_\circ-1)}
\]
\end{lem}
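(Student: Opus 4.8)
The plan is to reduce the statement to a direct differentiation of the defining relation between $p$, $\alpha$, $\beta$, and the operator $\square$. First I would observe that the hypothesis $(a-\kappa-1,b)$ and $(a',b'-\kappa-1)$ agreeing at $\kappa=k_\circ$ forces $a'=a-k_\circ-1$ and $b'=b+k_\circ+1$ (treating these as functions of $\kappa$ in a neighborhood of $k_\circ$, with $a,b$ fixed and $a',b'$ being the ``flipped'' coordinates). Consequently, writing $u(\kappa):=a-\kappa-1$, $v(\kappa):=b$ for the first evaluation point and $\widetilde u(\kappa):=a',\widetilde v(\kappa):=b'-\kappa-1$ for the second, both points collapse to $(a-k_\circ-1,b)$ at $\kappa=k_\circ$, but they approach it along different directions: the first moves in the $x$-direction with velocity $-1$, the second moves in the $y$-direction with velocity $-1$.

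Next I would differentiate $\alpha$ and $\beta$ using the chain rule, being careful that $p$ itself depends on $\kappa$. We get
\[
\alpha'(\kappa)=\frac{\partial p}{\partial\kappa}(\kappa;u,v)-\frac{\partial p}{\partial x}(\kappa;u,v),
\qquad
\beta'(\kappa)=\frac{\partial p}{\partial\kappa}(\kappa;\widetilde u,\widetilde v)-\frac{\partial p}{\partial y}(\kappa;\widetilde u,\widetilde v).
\]
Evaluating at $\kappa=k_\circ$, where both $(u,v)$ and $(\widetilde u,\widetilde v)$ equal the common point $(a-k_\circ-1,b)$, the two $\partial p/\partial\kappa$ terms are equal and cancel in the difference $\beta'(k_\circ)-\alpha'(k_\circ)$, leaving
\[
\beta'(k_\circ)-\alpha'(k_\circ)=\frac{\partial p}{\partial x}(k_\circ;a-k_\circ-1,b)-\frac{\partial p}{\partial y}(k_\circ;a-k_\circ-1,b).
\]
Dividing by $4(a-b-k_\circ-1)$ and comparing with the definition~\eqref{eq:squareoff} of $\square$ — noting that the argument $x-y$ there is exactly $(a-k_\circ-1)-b=a-b-k_\circ-1$, which is nonzero by hypothesis — yields the claimed identity $\square p(k_\circ;a-k_\circ-1,b)=\frac{\beta'(k_\circ)-\alpha'(k_\circ)}{4(a-b-k_\circ-1)}$.

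The only subtlety I expect to need care with is the bookkeeping of which variable each evaluation point is ``moving in'' and making sure the cancellation of the $\partial p/\partial\kappa$ terms is legitimate — this requires that the two points genuinely coincide at $\kappa=k_\circ$ and that the coefficients of $p$ are regular there, so that $\partial p/\partial\kappa$ is a well-defined polynomial at $\kappa=k_\circ$; both are given in the hypotheses. There is no real analytic difficulty here; it is a one-line chain-rule computation once the geometric picture (two curves through a common point, tangent to the $x$- and $y$-axes respectively) is made explicit, so I would present it concisely.
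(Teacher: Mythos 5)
Your proposal is correct and is essentially the paper's own argument: differentiate $\alpha$ and $\beta$ via the chain rule, use the coincidence $(a-k_\circ-1,b)=(a',b'-k_\circ-1)$ so the $\partial p/\partial\kappa$ terms cancel in $\beta'(k_\circ)-\alpha'(k_\circ)$, and compare with the definition of $\square$ using $x-y=a-b-k_\circ-1\neq 0$. No substantive difference from the paper's proof.
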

\begin{proof}
Differentiating the equations given in \eqref{eq:alphabetaP} with respect to $\kappa$ at $\kappa=k_\circ$, we obtain
\[
\alpha'(k_\circ)=
\frac{\partial p}{\partial \kappa} (k_\circ;a-k_\circ-1,b)-
\frac{\partial p}{\partial x}(k_\circ;a-k_\circ-1,b)
\]
and 
\[
\beta'(k_\circ)=
\frac{\partial p}{\partial \kappa}(k_\circ;
a',b'-k_\circ-1)
-
\frac{\partial p}{\partial y}(k_\circ;a',b'-k_\circ-1).
\]
Taking the difference of the above relations yields the claim  of the  lemma.
\end{proof}

\begin{lem}
\label{lem:fk0squSingf}
Let $p(\kappa;x,y)\in\Bbbk[x,y]$ and let $k_\circ\in\R$ be such that the coefficients of 
$(\kappa-k_\circ)p(\kappa;x,y)$ do not have poles at $\kappa=k_\circ$.
Further, assume that for 
$a,b,a',b'\in\R$ we have 
\begin{equation}
\label{eq:a-k-1,b))}
(a-k_\circ-1,b)=(a',b'-k_\circ-1)\ \text{ and }
\ 
a-b-k_\circ-1\neq 0
.
\end{equation}
For $\kappa\in\R\bls\{k_\circ\}$ sufficiently close to $k_\circ$, set
\[
\alpha(\kappa):=p(\kappa;a-\kappa-1,b)\text{ and }
\beta(\kappa):=p(\kappa;a',b'-\kappa-1).
\]
Then 
\[
\square\mathrm{Sing}_{k_\circ}(p;a-k_\circ-1,b)=
\frac{1}{4(a-b-k_\circ-1)}\lim_{\kappa\to k_\circ}
\left(
(\beta(\kappa)-\alpha(\kappa))+(\kappa-k_\circ)(\beta'(\kappa)-\alpha'(\kappa))^{}_{}
\right)
.\] 
\end{lem}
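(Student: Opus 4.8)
The plan is to reduce the statement to Lemma~\ref{lem:squPbetal} applied to the \emph{regular part} of $p$ at $\kappa=k_\circ$, and then to track carefully how the singular part contributes to the limit on the right-hand side. Write $s(x,y):=\mathrm{Sing}_{k_\circ}(p;x,y)$ and $r(\kappa;x,y):=\mathrm{Reg}_{k_\circ}(p;x,y)$, so that near $\kappa=k_\circ$ one has the decomposition
\[
p(\kappa;x,y)=\frac{1}{\kappa-k_\circ}\,s(x,y)+r(\kappa;x,y)+O(\kappa-k_\circ),
\]
where $r$ has no pole at $\kappa=k_\circ$ (we may absorb the higher-order terms into $r$, or simply note they do not affect the limits below). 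First I would substitute this decomposition into the definitions of $\alpha(\kappa)$ and $\beta(\kappa)$, splitting each into a ``singular'' piece coming from $s$ and a ``regular'' piece coming from $r$. For the regular pieces, the quantity $(\beta_{\mathrm{reg}}(\kappa)-\alpha_{\mathrm{reg}}(\kappa))+(\kappa-k_\circ)(\beta_{\mathrm{reg}}'(\kappa)-\alpha_{\mathrm{reg}}'(\kappa))$ tends, as $\kappa\to k_\circ$, simply to $\beta_{\mathrm{reg}}(k_\circ)-\alpha_{\mathrm{reg}}(k_\circ)$, since $\beta_{\mathrm{reg}}(k_\circ)=\alpha_{\mathrm{reg}}(k_\circ)$ by the coincidence hypothesis~\eqref{eq:a-k-1,b))} applied to the polynomial $r(k_\circ;\cdot,\cdot)$; the remaining contribution, after dividing by $4(a-b-k_\circ-1)$, is exactly $\square r(k_\circ;a-k_\circ-1,b)$ by Lemma~\ref{lem:squPbetal}.

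The heart of the argument is the singular piece. Here $\alpha_{\mathrm{sing}}(\kappa)=\tfrac{1}{\kappa-k_\circ}s(a-\kappa-1,b)$ and $\beta_{\mathrm{sing}}(\kappa)=\tfrac{1}{\kappa-k_\circ}s(a',b'-\kappa-1)$. Using~\eqref{eq:a-k-1,b))} we have $s(a-k_\circ-1,b)=s(a',b'-k_\circ-1)$, so the na\"ive $\tfrac{1}{\kappa-k_\circ}$ poles in $\beta_{\mathrm{sing}}-\alpha_{\mathrm{sing}}$ cancel to leading order; what survives is a first-order Taylor expansion, giving
\[
\beta_{\mathrm{sing}}(\kappa)-\alpha_{\mathrm{sing}}(\kappa)
=\frac{\partial}{\partial\kappa}\Big|_{\kappa=k_\circ}\!\big(s(a',b'-\kappa-1)-s(a-\kappa-1,b)\big)+O(\kappa-k_\circ),
\]
which in turn equals $\tfrac{\partial s}{\partial x}(a-k_\circ-1,b)-\tfrac{\partial s}{\partial y}(a-k_\circ-1,b)$ up to sign bookkeeping. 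The subtle point is the \emph{second} term, $(\kappa-k_\circ)(\beta_{\mathrm{sing}}'(\kappa)-\alpha_{\mathrm{sing}}'(\kappa))$: differentiating $\tfrac{1}{\kappa-k_\circ}s(\cdots)$ produces a $\tfrac{1}{(\kappa-k_\circ)^2}$ term, which when multiplied by $(\kappa-k_\circ)$ is still $\tfrac{1}{\kappa-k_\circ}$ and needs the coincidence $s(a-k_\circ-1,b)=s(a',b'-k_\circ-1)$ once more to cancel. I would compute $(\kappa-k_\circ)\beta_{\mathrm{sing}}'(\kappa)$ explicitly as $-\tfrac{1}{\kappa-k_\circ}s(a',b'-\kappa-1)-\tfrac{\partial}{\partial\kappa}s(a',b'-\kappa-1)$ (and similarly for $\alpha$), so that the combination $(\beta_{\mathrm{sing}}-\alpha_{\mathrm{sing}})+(\kappa-k_\circ)(\beta_{\mathrm{sing}}'-\alpha_{\mathrm{sing}}')$ has the leading $\tfrac{1}{\kappa-k_\circ}$ poles cancel and reduces, in the limit, to a clean expression in the first partials of $s$. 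The expected outcome is that the singular contribution to the right-hand side is precisely $\square s(a-k_\circ-1,b)=\square\,\mathrm{Sing}_{k_\circ}(p;a-k_\circ-1,b)$.

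Finally I would assemble the pieces: the regular part contributes $\square r(k_\circ;a-k_\circ-1,b)$ and the singular part contributes $\square s(a-k_\circ-1,b)$, but the claimed identity asserts only the singular contribution. So the last step is to observe that the regular contribution must vanish in this particular combination — equivalently, that the left-hand side $\square\,\mathrm{Sing}_{k_\circ}(p;a-k_\circ-1,b)$ is exactly the singular part and the construction of $\beta(\kappa)-\alpha(\kappa)$ together with the correction term $(\kappa-k_\circ)(\beta'-\alpha')$ is precisely engineered to extract the singular part while killing the regular part. Concretely: for the regular piece, $(\beta_{\mathrm{reg}}-\alpha_{\mathrm{reg}})\to 0$ and $(\kappa-k_\circ)(\beta_{\mathrm{reg}}'-\alpha_{\mathrm{reg}}')\to 0$ as well, so its total contribution to the limit is $0$. (This is the asymmetry with Lemma~\ref{lem:squPbetal}, where one divides by $4(a-b-k_\circ-1)$ the difference $\beta'-\alpha'$ of \emph{derivatives} rather than the combination appearing here.) I expect the main obstacle to be the careful second-order Taylor bookkeeping in the singular piece — in particular, making sure every $\tfrac{1}{(\kappa-k_\circ)^2}$ and $\tfrac{1}{\kappa-k_\circ}$ term is correctly paired and cancelled using the coincidence hypothesis, and that the surviving finite part reassembles into $\tfrac{\partial s}{\partial x}-\tfrac{\partial s}{\partial y}$ with the correct factor $\tfrac{1}{4(a-b-k_\circ-1)}$ matching the definition of $\square$ in~\eqref{eq:squareoff}.
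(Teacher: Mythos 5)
Your proposal is correct in substance and, once the bookkeeping is straightened out, proves the lemma; it is a mild variant of the paper's argument rather than a different idea. The paper never splits $p$ into singular and regular parts: it sets $h(\kappa;x,y):=(\kappa-k_\circ)p(\kappa;x,y)$, differentiates the identities $(\kappa-k_\circ)\alpha(\kappa)=h(\kappa;a-\kappa-1,b)$ and $(\kappa-k_\circ)\beta(\kappa)=h(\kappa;a',b'-\kappa-1)$ in $\kappa$, and lets the two $\partial h/\partial\kappa$ terms cancel in the limit because the evaluation points coincide at $\kappa=k_\circ$; what survives is $(\kappa-k_\circ)(p_x-p_y)$ at $(a-k_\circ-1,b)$, whose limit is $\partial_x\mathrm{Sing}_{k_\circ}(p)-\partial_y\mathrm{Sing}_{k_\circ}(p)$ there, since taking singular parts commutes with $\partial_x,\partial_y$. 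Your decomposition $p=\frac{s}{\kappa-k_\circ}+r$ makes the mechanism more transparent (the singular piece is an exact product-rule identity, and the regular piece is killed outright by the factor $\kappa-k_\circ$), at the price of carrying two cases; the paper's version is more compact because it treats both at once through the smoothness of $h$. Two slips to repair when you write this up. First, the claim in your opening paragraph that the regular piece contributes $\square r(k_\circ;a-k_\circ-1,b)$ contradicts your own (correct) final accounting: since $\beta_{\mathrm{reg}}(k_\circ)=\alpha_{\mathrm{reg}}(k_\circ)$ and $(\kappa-k_\circ)(\beta_{\mathrm{reg}}'-\alpha_{\mathrm{reg}}')\to 0$, the regular contribution is $0$, and Lemma~\ref{lem:squPbetal} plays no role there. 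Second, the sign in your expansion of $(\kappa-k_\circ)\beta_{\mathrm{sing}}'(\kappa)$ is off (the term $\frac{\partial}{\partial\kappa}s(a',b'-\kappa-1)$ should enter with a plus sign); as written, your singular contribution would come out as $-\square s$. The cleanest fix avoids expanding derivatives of $\frac{1}{\kappa-k_\circ}$ altogether:
\[
\alpha_{\mathrm{sing}}(\kappa)+(\kappa-k_\circ)\alpha_{\mathrm{sing}}'(\kappa)
=\frac{d}{d\kappa}\bigl((\kappa-k_\circ)\alpha_{\mathrm{sing}}(\kappa)\bigr)
=\frac{d}{d\kappa}\,s(a-\kappa-1,b)
=-\frac{\partial s}{\partial x}(a-\kappa-1,b),
\]
and similarly $\beta_{\mathrm{sing}}(\kappa)+(\kappa-k_\circ)\beta_{\mathrm{sing}}'(\kappa)=-\frac{\partial s}{\partial y}(a',b'-\kappa-1)$. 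Hence the singular contribution to the bracket equals $\frac{\partial s}{\partial x}(a-\kappa-1,b)-\frac{\partial s}{\partial y}(a',b'-\kappa-1)$ identically (no poles, no cancellation bookkeeping needed), which by the coincidence hypothesis tends to $\frac{\partial s}{\partial x}-\frac{\partial s}{\partial y}$ at $(a-k_\circ-1,b)$, and division by $4(a-b-k_\circ-1)$ gives exactly $\square\,\mathrm{Sing}_{k_\circ}(p;a-k_\circ-1,b)$, as required.
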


%Further, assume that 
%\begin{equation}
%\label{eq:vanoff6.4}
%f(\kappa,a-\kappa-1,b)=f(\kappa,\breve a,\breve b-\kappa-1)=0,
%\end{equation}

\begin{proof}
Set $p_x:=\frac{\partial p}{\partial x}$ 
and 
$p_y:=\frac{\partial p}{\partial y}$.
Since taking the singular part commutes with partial differentiation with respect to 
$x$ and $y$, 
%By Remark \ref{rmk:propSing}, 
it suffices to prove that 
\begin{align}
\notag 
\label{eq:limpxpy=jfd}
\lim_{\kappa\to k_\circ} 
\Big(
(\kappa-k_\circ)
\big(
p_x(\kappa;a&-k_\circ-1,b)-p_y(\kappa;a-k_\circ-1,b)\big)
\Big)\\
&
=
\lim_{\kappa\to k_\circ}
\left(
(\beta(\kappa)-\alpha(\kappa))+(\kappa-k_\circ)(\beta'(\kappa)-\alpha'(\kappa))^{}_{}
\right)
.
\end{align}
Set \[
h(\kappa;x,y):=(\kappa-k_\circ)p(\kappa;x,y)\in\Bbbk[x,y].
\] Note that  $h(\kappa;x,y)$ is a smooth map 
in a neighborhood of any point of the form 
$(k_\circ,x_\circ,y_\circ)\in\R^3$.
By differentiating the relation $(\kappa-k_\circ)\alpha(\kappa)=h(\kappa;a-\kappa-1,b)$ with respect to $\kappa$ at $\kappa:=k_1$, with $k_1\neq k_\circ$ and $k_1$ sufficiently close to $k_\circ$,  we obtain
\begin{equation}
\label{eq:alpha'1}
(k_1-k_\circ)\alpha(k_1)+\alpha'(k_1)
=
\frac{\partial h}{\partial \kappa}
(k_1,a-k_1-1,b)
-
(k_1-k_\circ)
p_x
(k_1,a-k_1-1,b).
\end{equation}
Similarly, by differentiating the relation 
$(\kappa-k_\circ)\beta(\kappa)=
h(\kappa,a',b'-\kappa-1)$ with respect to $\kappa$  
we obtain 
\begin{equation}
\label{eq:beta'1}
(k_1-k_\circ)\beta(k_1)
+
\beta'(k_1)
=\frac{\partial h}{\partial\kappa} 
(k_1,a',b'-k_1-1)-
(k_1-k_\circ)
p_y
(k_1,a',b'-k_1-1).
\end{equation}
By taking the difference of \eqref{eq:alpha'1} and \eqref{eq:beta'1}
we obtain
\begin{align}
\label{Eq:k1-k0f}
\notag 
(k_1-k_\circ)
(
p_x&(k_1,a-k_1-1,b)-p_y(k_1,a',b'-k_1-1)
)\\
&=
\frac{\partial h}{\partial \kappa}
(k_1,a-k_1-1,b)-
\frac{\partial h}{\partial \kappa}
(k_1,a',b'-k_1-1)
+
\phi(k_1),\end{align}
where 
\[
\phi(k_1):=(k_1-k_\circ)\left(\beta(k_1)-\alpha(k_1)\right)
+
\left(\beta'(k_1)-\alpha'(k_1)\right)
.
\]
Note that $\lim_{k_1\to k_\circ} \phi(k_1)$ exists because \[\phi(k_1)=\frac{d}{d\kappa}
\left(
h(\kappa;a-\kappa-1)-h(\kappa;a',b'-\kappa-1)
\right)\big|_{\kappa=k_\circ},
\] and  $h(\kappa;x,y)$ is differentiable near  $(k_\circ,a-k_\circ-1,b)\in\R^3$.
Since $h(\kappa;x,y)$ is smooth in a neighborhood of the point $(k_\circ,a-k_\circ-1,b)\in\R^3$,
from \eqref{eq:a-k-1,b))} it follows that
\begin{equation}
\label{eq:limk1kcirch}
\lim_{k_1\to k_\circ}\frac{\partial h}{\partial \kappa}(k_1,a-k_1-1,b)=
\lim_{k_1\to k_\circ}\frac{\partial h}{\partial \kappa}(k_1,a',b'-k_1-1).
\end{equation}
Finally the equations \eqref{Eq:k1-k0f} and \eqref{eq:limk1kcirch} imply \eqref{eq:limpxpy=jfd}.
\end{proof}

For $\la\in\cPs$ set
\begin{equation}
\label{eq:defofrlam}
r_\la:=-\frac{H_\la(k)}{H'_{\la^\dagger}(k)}.
\end{equation}

\begin{prp}

\label{lem:sing=rlaPla}
Let $\la\in\cPs$. Then 
$\mathrm{Sing}_{k}(P_\la^\kappa)=r_{\la} P_{\la^\dagger}^k$ where 
$r_{\la}$ is defined in \eqref{eq:defofrlam}.

\end{prp}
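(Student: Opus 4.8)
The plan is to pin down both $S:=\mathrm{Sing}_k(P_\la^\kappa)$ and $r_\la P_{\la^\dagger}^k$ by computing their \emph{generalized values} $\widetilde{\ev}(\cdot,\mu)$ at every $\mu\in\cI(|\la|)$, and then to invoke the uniqueness statement of Corollary~\ref{cor:ev(f,la)=tla}. First some preliminaries. Since $\la\in\cPs$, the coefficients of $P_\la^\kappa$ have at most simple poles at $\kappa=k$ by Proposition~\ref{prp:polesequiv}, so $S=\lim_{\kappa\to k}(\kappa-k)P_\la^\kappa$ is a bona fide symmetric polynomial with $\deg S\le|\la|$. By Remark~\ref{daggerinv}, $\la^\dagger\in\cPq$ and $|\la^\dagger|=|\la|$, hence the coefficients of $P_{\la^\dagger}^\kappa$ are regular at $\kappa=k$ and $P_{\la^\dagger}^k$ is defined, with $\deg P_{\la^\dagger}^k\le|\la|$; moreover exactly one linear factor of the falling factorial in \eqref{eq:Hlak} vanishes at $\kappa=k$ (the one of index $\la_2$, legitimate since $\la_1-\la_2\ge k+2$), so $H_{\la^\dagger}$ has a simple zero there, giving $H_{\la^\dagger}(k)=0$ and $H'_{\la^\dagger}(k)\ne0$, so that $r_\la$ in \eqref{eq:defofrlam} is well-defined. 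As both $S$ and $r_\la P_{\la^\dagger}^k$ are symmetric of degree $\le|\la|$, Corollary~\ref{cor:ev(f,la)=tla} reduces the proposition to checking $\widetilde{\ev}(S,\mu)=r_\la\,\widetilde{\ev}(P_{\la^\dagger}^k,\mu)$ for all $\mu\in\cI(|\la|)$.

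Next I would compute $\widetilde{\ev}(P_{\la^\dagger}^k,\mu)$. If $\mu$ is $k$-regular or $k$-quasiregular, then $\widetilde{\ev}(P_{\la^\dagger}^k,\mu)=P_{\la^\dagger}^k(\mu_1-k-1,\mu_2)$; since the coefficients of $P_{\la^\dagger}^\kappa$ are regular at $\kappa=k$ and $|\mu|\le|\la^\dagger|$, this is the specialization at $\kappa=k$ of $P_{\la^\dagger}^\kappa(\mu_1-\kappa-1,\mu_2)$, which is $0$ for $\mu\ne\la^\dagger$ by Theorem~\ref{thm:KnSa}(i) and equals $H_{\la^\dagger}(k)=0$ for $\mu=\la^\dagger$. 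If $\mu$ is $k$-singular, then $\widetilde{\ev}(P_{\la^\dagger}^k,\mu)=\square P_{\la^\dagger}^k(\mu_1-k-1,\mu_2)$, which I would evaluate with Lemma~\ref{lem:squPbetal} applied to $p=P_{\la^\dagger}^\kappa$ at $(a,b)=(\mu_1,\mu_2)$ and $(a',b')=(\mu_1-k-1,\mu_2+k+1)$; the hypothesis $a-b-k-1=\mu_1-\mu_2-k-1\ge1\ne0$ holds because $\mu$ is $k$-singular. Using the symmetry of $P_{\la^\dagger}^\kappa$, the associated functions are $\alpha(\kappa)=P_{\la^\dagger}^\kappa(\mu_1-\kappa-1,\mu_2)$ and $\beta(\kappa)=P_{\la^\dagger}^\kappa(\mu^\dagger_1-\kappa-1,\mu^\dagger_2)$. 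For $\mu\ne\la$, both $\mu$ and $\mu^\dagger$ differ from $\la^\dagger$ and have size $\le|\la^\dagger|$, so $\alpha\equiv\beta\equiv0$ by Theorem~\ref{thm:KnSa}(i), whence $\widetilde{\ev}(P_{\la^\dagger}^k,\mu)=0$; for $\mu=\la$ we get $\alpha\equiv0$ and $\beta(\kappa)=P_{\la^\dagger}^\kappa(\la^\dagger_1-\kappa-1,\la^\dagger_2)=H_{\la^\dagger}(\kappa)$, so Lemma~\ref{lem:squPbetal} yields $\widetilde{\ev}(P_{\la^\dagger}^k,\la)=\frac{H'_{\la^\dagger}(k)}{4(\la_1-\la_2-k-1)}$.

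The computation of $\widetilde{\ev}(S,\mu)$ is parallel. For $\mu$ $k$-regular or $k$-quasiregular, $\widetilde{\ev}(S,\mu)=S(\mu_1-k-1,\mu_2)=\lim_{\kappa\to k}(\kappa-k)P_\la^\kappa(\mu_1-\kappa-1,\mu_2)$, and the inner expression is identically $0$ for $\mu\ne\la$ by Theorem~\ref{thm:KnSa}(i) and equals $(\kappa-k)H_\la(\kappa)\to0$ for $\mu=\la$; hence $\widetilde{\ev}(S,\mu)=0$. For $\mu$ $k$-singular, $\widetilde{\ev}(S,\mu)=\square\,\mathrm{Sing}_k(P_\la^\kappa)(\mu_1-k-1,\mu_2)$ is computed by Lemma~\ref{lem:fk0squSingf} with the same $(a,b)=(\mu_1,\mu_2)$ and $(a',b')=(\mu_1-k-1,\mu_2+k+1)$; the functions are now $\alpha(\kappa)=P_\la^\kappa(\mu_1-\kappa-1,\mu_2)$ and $\beta(\kappa)=P_\la^\kappa(\mu^\dagger_1-\kappa-1,\mu^\dagger_2)$. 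For $\mu\ne\la$ both vanish identically by Theorem~\ref{thm:KnSa}(i), so $\widetilde{\ev}(S,\mu)=0$; for $\mu=\la$ we have $\beta\equiv0$ and $\alpha(\kappa)=H_\la(\kappa)$, so Lemma~\ref{lem:fk0squSingf} gives $\widetilde{\ev}(S,\la)=\frac{1}{4(\la_1-\la_2-k-1)}\lim_{\kappa\to k}\bigl(-H_\la(\kappa)-(\kappa-k)H'_\la(\kappa)\bigr)=\frac{-H_\la(k)}{4(\la_1-\la_2-k-1)}$.

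Finally, comparing the two computations: both $\widetilde{\ev}(S,\cdot)$ and $\widetilde{\ev}(P_{\la^\dagger}^k,\cdot)$ vanish on $\cI(|\la|)\setminus\{\la\}$, while at $\mu=\la$ their ratio is $-H_\la(k)/H'_{\la^\dagger}(k)=r_\la$; thus $\widetilde{\ev}(S-r_\la P_{\la^\dagger}^k,\mu)=0$ for all $\mu\in\cI(|\la|)$, and Corollary~\ref{cor:ev(f,la)=tla} forces $S=r_\la P_{\la^\dagger}^k$, as asserted. The one place to tread carefully — and the likeliest source of sign and index slips — is matching the hypotheses of Lemmas~\ref{lem:squPbetal} and~\ref{lem:fk0squSingf}: choosing $a,b,a',b'$ so that $(a-k-1,b)=(a',b'-k-1)$, using the symmetry of the Knop--Sahi polynomials to recognize $\beta(\kappa)$ as the Knop--Sahi specialization attached to $\mu^\dagger$ (so that Theorem~\ref{thm:KnSa} applies to it), and checking $a-b-k-1\ne0$, which is exactly where $k$-singularity of the relevant partition enters.
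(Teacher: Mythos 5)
Your proposal is correct, and its skeleton coincides with the paper's: both arguments establish that $\mathrm{Sing}_k(P_\la^\kappa)$ has vanishing generalized values $\widetilde{\ev}(\cdot,\mu)$ at every $\mu\in\cI(|\la|)$ with $\mu\neq\la$ (using exactly the same choices $(a,b)=(\mu_1,\mu_2)$, $(a',b')=(\mu_2^\dagger,\mu_1^\dagger)$ in Lemmas~\ref{lem:squPbetal} and~\ref{lem:fk0squSingf}, together with Theorem~\ref{thm:KnSa} and the smoothness of $(\kappa-k)P_\la^\kappa$), and then invoke the uniqueness in Corollary~\ref{cor:ev(f,la)=tla} to get proportionality to $P_{\la^\dagger}^k$. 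Where you genuinely diverge is in pinning down the constant: the paper's Step~2 extracts the coefficient of the monomial $x^{\ul{\la_1^\dagger}}y^{\ul{\la_2^\dagger}}$ directly from the explicit sum \eqref{KNPsymformu} (it is $1$ in $P_{\la^\dagger}^k$ and, after a residue computation of the term $i=k+1$, $j=\la_1-\la_2-k-1$, equals $r_\la$ in the singular part), whereas you instead evaluate the generalized value at $\mu=\la$ on both sides — $\widetilde{\ev}(\mathrm{Sing}_k(P_\la^\kappa),\la)=\tfrac{-H_\la(k)}{4(\la_1-\la_2-k-1)}$ via Lemma~\ref{lem:fk0squSingf} and $\widetilde{\ev}(P_{\la^\dagger}^k,\la)=\tfrac{H'_{\la^\dagger}(k)}{4(\la_1-\la_2-k-1)}$ via Lemma~\ref{lem:squPbetal} — and take the ratio. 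Your route buys the constant in the form $-H_\la(k)/H'_{\la^\dagger}(k)$, i.e.\ literally the definition \eqref{eq:defofrlam}, at the cost of two extra lemma applications at $\mu=\la$ (and a check that $H'_{\la^\dagger}(k)\neq0$, which you carry out); the paper's route requires the explicit coefficient manipulation but yields as a by-product the closed product formula for $r_\la$ recorded at the end of its proof. Both are complete; your only implicit step is the continuity argument allowing the fixed evaluation point $(\mu_1-k-1,\mu_2)$ to be replaced by the $\kappa$-dependent point $(\mu_1-\kappa-1,\mu_2)$ inside the limits, which the paper states explicitly and which is immediate from smoothness of $(\kappa-k)P_\la^\kappa$ in $(\kappa,x,y)$.
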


\begin{proof}
Set $p(x,y):=\mathrm{Sing}_k(P_\la^\kappa;x,y)$. By Proposition
\ref{prp:polesequiv} we have $p(x,y)\neq 0$. 

\noindent\textbf{Step 1.} 
We prove that 
$p=sP_{\la^\dagger}^k$ for a scalar $s\neq 0$. 
To this end, by Corollary \ref{cor:ev(f,la)=tla} we need to verify that 
\[
\widetilde{\ev}(p,\mu)=0\ \ \text{ when }
|\mu|\leq |\la|\text{ and }\mu\neq\la.
\]
If $\mu$ is $k$-regular or $k$-quasiregular, then   
\begin{align*}
p(\mu_1-k_\circ-1,\mu_2)&=
\lim_{\kappa\to k_\circ} 
(\kappa-k_\circ)P_{\la}^{\kappa}(\mu_1-k_\circ-1,\mu_2)\\
&=
\lim_{\kappa\to k_\circ} 
(\kappa-k_\circ)P_{\la}^{\kappa}(\mu_1-\kappa-1,\mu_2),
\end{align*}
where for 
the second  equality we use the fact that 
$h(\kappa,x,y):=(\kappa-k_\circ)P_{\la}^{\kappa}(x,y)$ is a smooth function in a neighborhood of $(k_\circ,\mu_1-k_\circ-1,\mu_2)$. Therefore Theorem \ref{thm:KnSa}(i) implies that \[
\widetilde{\ev}(p,\mu)=p(\mu_1-k_\circ-1,\mu_2)=0.
\]
If  $\mu$ is $k$-singular, then  Lemma \ref{lem:fk0squSingf} for  
$(a,b):=(\mu_1,\mu_2)$,
$(a',b'):=(\mu_2^\dagger,\mu_1^\dagger)$, and $p(\kappa;x,y):=P_{\la^\dagger}^\kappa(x,y)$ implies that $\widetilde{\ev}(p,\mu)=0$.  \\

\noindent\textbf{Step 2.} To determine the value of $s$, we compare the coefficient of $x^{\ul{t_1}}y^{\ul{t_2}}$ in $P_{\la^\dagger}^k$
and $p(x,y)$, where
$t_1:=\la_1^\dagger$ and $t_2:=\la_2^\dagger$. 
From \eqref{KNPsymformu} it is clear that
$x^{\ul{t_1}}y^{\ul{t_2}}$
 is the leading monomial of $P_{\la^\dagger}^k$ and therefore its coefficient is equal to 1. Furthermore, 
in the formula 
\eqref{KNPsymformu} for $P_\la^\kappa$, the monomial $x^{\ul{t_1}}y^{\ul{t_2}}$ corresponds to the term indexed by $i:=k+1$ and $j:=\la_1-\la_2-k-1$.
It is straightforward to show that the coefficient of the corresponding term in  $p(x,y)$ is equal to $r_{\la}$.
Note that by a direct calculation we have
\[
r_\la=\frac{
(-1)^{(k+|\la|)}
(\la_1-\la_2)^{\ul{k+1}}
}
{
(2k+2-\la_1+\la_2)!
(\la_1-\la_2-k-2)!
}
.
\qedhere\]
\end{proof}

%We now set
%\[
%S_{\breve\la}:=
%\left(
%\text{Singular part of }
%P_{\breve\la}^{-\kappa-1}
%\text{ at }\kappa=k_\circ
%\right),
%\]
%and
%\[
%s_{\breve\lambda}:=\square S_{\breve\la}
%(\breve\la-k_\circ-1,\breve\la_2).
%\] 
%\begin{rmk}
%We will see later how to find a formula for $s_{\breve\la}$.
%\end{rmk}
%
%
%
%
%We define
%\[
%c_{\la}:=-
%\frac{4(\la_1-\la_2-k_\circ-1)s_{\breve\la}}{H'_\la(k_\circ)}.
%\]
%
%
%%\begin{lem}
%%\label
%%$S_{\breve\la}=
%%c_{\la} P_{\la}^{-k_\circ-1}$. 
%%
%%\end{lem}
%
%
%
%\begin{rmk}
%Here is another way to compute $c_{\breve\la}$, which will yield a new formula for  $s_{\breve\la}$: 
%Since $S_{\breve\la}=c_\la P_\la^{-k_\circ-1}$, we can obtain $c_{\breve\la}$ by computing the
%coefficient of the factorial monomial $x^{\underline{\la_1}}y^{\underline{\la_2}}$ on both sides. Of course in $P_\la^{-k_\circ-1}$, this coefficient is equal to 1.
%Next note that
%\[
%P_{\breve\la}^{-\kappa-1}(x,y)=
%(\breve\la_1-\breve\la_2)!
%\sum_{i+j\leq \breve\la_1-\breve\la_2}
%\frac{
%(\kappa+1)^{\underline{\breve\la_1-\breve\la_2-i}}
%(\kappa+1)^{\underline{\breve\la_1-\breve\la_2-j}}
%}{
%i!j!(\breve\la_1-\breve\la_2-i-j)!(\kappa+1)^{\underline{\breve\la_1-\breve\la_2}}
%}x^{\underline{\breve\la_2+i}}y^{\underline{\breve\la_2+j}},
%\]
%and the coefficient of $x^{\underline{\la_1}}x^{\underline{\la_2}}$ is obtained for $i=k_\circ+1$, $j=k_\circ+1-(\la_1-\la_2)$. 
%Therefore we obtain
%\[
%c_\la=(2k_\circ+2-(\la_1-\la_2))!
%\frac{(-1)^{(k_\circ-(\la_1-\la_2))}
%(k_\circ+1)^{\underline{k_\circ+1-(\la_1-\la_2)}}
%}
%{(k_\circ+1)!(k_\circ+1-(\la_1-\la_2))!(k_\circ-(\la_1-\la_2))!}
%.\]
%
%\end{rmk}

For $\la\in\cPs$ set
\[
Q_{\la}^{\kappa}(x,y):=
P_{\la}^{\kappa}
-
\frac{r_\la}{\kappa-k}P_{\la^\dagger}^{\kappa}.
\]
\begin{lem}
\label{lem:formulaTlac}
For $\la\in\cPs$, 
the coefficients of $Q_{\la}^{\kappa}(x,y)$ do not have poles at 
$\kappa=k$. Furthermore, the polynomial
$Q_\la(x,y)\in\Q[x,y]$ defined by
\begin{equation}
\label{eq:Qla=lim}
Q_{\la}:=\lim_{\kappa\to k}
\left(
P_{\la}^{\kappa}-
\frac{r_{\la}}{\kappa-k}
P_{\la^\dagger}^{\kappa}
\right)
\end{equation}
satisfies \begin{equation}
\label{eq:Qla=R-la-rlaaa}
Q_{\la}(x,y)
=
R_{\la}^{(k)}(x,y)-r_\la 
\frac{\partial}{\partial\kappa}P_{\la^\dagger}^{k}(x,y).
\end{equation}
\end{lem}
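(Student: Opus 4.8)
The plan is to expand both $P_\la^\kappa$ and $P_{\la^\dagger}^\kappa$ as Laurent series in $\kappa-k$ and to match coefficients; the cancellation of the polar part of $Q_\la^\kappa$ is then exactly the content of Proposition~\ref{lem:sing=rlaPla}.

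First I would recall that, as noted right after Theorem~\ref{thm:KnSa}, whenever the coefficients of $P_\la^\kappa$ have poles at a non-negative integer those poles are simple, and since $\la\in\cPs$, Proposition~\ref{prp:polesequiv} guarantees that such poles do occur at $\kappa=k$. Hence each coefficient of $P_\la^\kappa$, viewed as a rational function of $\kappa$, has at $\kappa=k$ a Laurent expansion with a single polar term, namely
\[
P_\la^\kappa(x,y)=\frac{\mathrm{Sing}_k(P_\la^\kappa;x,y)}{\kappa-k}+R_\la^{(k)}(x,y)+(\kappa-k)\,u(\kappa;x,y),
\]
where the coefficients of $u(\kappa;x,y)$ are regular at $\kappa=k$; this is simply the definition of $\mathrm{Sing}_k$ together with $R_\la^{(k)}=\mathrm{Reg}_k(P_\la^\kappa)$. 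On the other hand, by Remark~\ref{daggerinv} we have $\la^\dagger\in\cPq$, so Proposition~\ref{prp:polesequiv} shows that the coefficients of $P_{\la^\dagger}^\kappa$ are regular at $\kappa=k$, which yields the Taylor expansion
\[
P_{\la^\dagger}^\kappa(x,y)=P_{\la^\dagger}^k(x,y)+(\kappa-k)\,\frac{\partial}{\partial\kappa}P_{\la^\dagger}^k(x,y)+(\kappa-k)^2\,v(\kappa;x,y)
\]
with $v(\kappa;x,y)$ regular at $\kappa=k$.

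Substituting both expansions into $Q_\la^\kappa=P_\la^\kappa-\frac{r_\la}{\kappa-k}P_{\la^\dagger}^\kappa$ and collecting powers of $\kappa-k$, the coefficient of $(\kappa-k)^{-1}$ equals $\mathrm{Sing}_k(P_\la^\kappa;x,y)-r_\la P_{\la^\dagger}^k(x,y)$, which vanishes by Proposition~\ref{lem:sing=rlaPla}. This proves the first assertion, that the coefficients of $Q_\la^\kappa$ have no pole at $\kappa=k$. The coefficient of $(\kappa-k)^0$ in the same expansion is $R_\la^{(k)}(x,y)-r_\la\frac{\partial}{\partial\kappa}P_{\la^\dagger}^k(x,y)$, so passing to the limit $\kappa\to k$ in the definition \eqref{eq:Qla=lim} of $Q_\la$ gives \eqref{eq:Qla=R-la-rlaaa}.

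I do not expect a genuine obstacle here: the arithmetic content is entirely carried by Proposition~\ref{lem:sing=rlaPla}, which has already pinned down the singular part. The only point meriting a line of justification is that the poles at $\kappa=k$ of the coefficients of $P_\la^\kappa$ are simple rather than of higher order, so that the Laurent expansion above has the displayed shape; but this is precisely the standing hypothesis under which $\mathrm{Sing}_k$ and $\mathrm{Reg}_k$ were defined, and it is immediate from the explicit formula \eqref{KNPsymformu}.
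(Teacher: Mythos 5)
Your proposal is correct and follows essentially the same route as the paper: both arguments hinge on Proposition~\ref{lem:sing=rlaPla} to cancel the $(\kappa-k)^{-1}$ term and on the regularity of $P_{\la^\dagger}^\kappa$ at $\kappa=k$ (since $\la^\dagger\in\cPq$) to identify the limit with $R_\la^{(k)}-r_\la\frac{\partial}{\partial\kappa}P_{\la^\dagger}^{k}$. The paper merely packages the same computation as the algebraic rewriting \eqref{eq:inproofQla=E} instead of an explicit Laurent/Taylor expansion, so there is nothing missing in your argument.
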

\begin{proof}
By Proposition \ref{lem:sing=rlaPla} the coefficients of 
$P_\la^\kappa-\frac{r_\la}{\kappa-k}P_{\la^\dagger}^k$ do not have poles at $\kappa=k$. Furthermore, since the coefficients of $P_{\la^\dagger}^\kappa$ do not have poles at $\kappa=k$,
it follows that 
each coefficient of 
the polynomial $P_{\la^\dagger}^\kappa-P_{\la^\dagger}^k$ is of the form $(\kappa-k)\varphi(\kappa)$ where $\varphi(\kappa)\in\Q(\kappa)$ does not have a pole at  $\kappa=k$. 
Now \begin{equation}
\label{eq:inproofQla=E}
Q_\la^\kappa=P_\la^\kappa-\frac{r_\la}{\kappa-k}P_{\la^\dagger}^{k}
-
\frac{r_\la}{\kappa-k}\left(P_{\la^\dagger}^\kappa-R_{\la^\dagger}^{(k)}\right),\end{equation}
and therefore the coefficients of $Q_\la^\kappa$ do not have poles at $\kappa=k$. 
%This implies that the coefficients of 
%$P_{\la}^{\kappa}$ do not have poles at $\kappa=k_\circ$. 
Equality \eqref{eq:Qla=R-la-rlaaa} follows from taking the limit $\kappa\to k$ in \eqref{eq:inproofQla=E}.
\end{proof}
%Lemma~\ref{lem:formulaTlac} implies that the limit
%\begin{equation}
%\label{eq:Qla=lim}
%Q_{\la}:=\lim_{\kappa\to k}
%\left(
%P_{\la}^{\kappa}-
%\frac{r_{\la}}{\kappa-k}
%P_{\la^\dagger}^{\kappa}
%\right)
%\end{equation}
%exists as a polynomial in $\Q[x,y]$.
For $\la\in\cPs$ set
\begin{equation}
\label{eq:formulaalabla}
\alpha_\la(\kappa):=
-\frac{r_\la}{\kappa-k}H_{\la^\dagger}(\kappa)
\quad\text{ and }\quad\beta_\la(\kappa):=
H_{\la}(\kappa).
\end{equation}
\begin{prp}
\label{prp:Tcheckvals}
Let $\la\in \cPs$ and let $Q_\la\in\Q[x,y]$ be defined as in~\eqref{eq:Qla=lim}. Set
\begin{equation}
\label{eq:1t2dfff}
t_1:=\beta_{\la}(k)
\quad\text{and}\quad
t_2:=\frac{\beta_\la'(k)-\alpha_\la'(k)}{4(k+1-\la_1+\la_2)},
\end{equation}
where 
$\alpha_\la(\kappa)$ and $\beta_\la(\kappa)$ are defined in 
\eqref{eq:formulaalabla}.
Then 
\[
\widetilde{\ev}(Q_\la,\mu)=t_1\delta_{\la^\dagger,\mu}+t_2\delta_{\la,\mu}
\quad\text{for all 
$\mu\in\cI$ satisfying $|\mu|\leq |\la|$.}
\]
\end{prp}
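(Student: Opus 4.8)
The plan is to verify the asserted generalized-value formula for $Q_\la$ by checking it against every $\mu\in\cI$ with $|\mu|\leq|\la|$, splitting the argument according to whether $\mu$ is $k$-regular, $k$-quasiregular, or $k$-singular, and then using the uniqueness statement of Corollary~\ref{cor:ev(f,la)=tla}. Recall $Q_\la^\kappa=P_\la^\kappa-\frac{r_\la}{\kappa-k}P_{\la^\dagger}^\kappa$, and that by Lemma~\ref{lem:formulaTlac} the coefficients of $Q_\la^\kappa$ are regular at $\kappa=k$ with $Q_\la=\lim_{\kappa\to k}Q_\la^\kappa$. The key idea is that $\widetilde{\ev}(Q_\la,\mu)$ can be computed as a limit of the corresponding quantities built from $Q_\la^\kappa$, and that the two summands $P_\la^\kappa$ and $-\frac{r_\la}{\kappa-k}P_{\la^\dagger}^\kappa$ each have known vanishing/interpolation behaviour coming from Theorem~\ref{thm:KnSa}.

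First I would handle the case $\mu\in\cPr\cup\cPq$, where $\widetilde{\ev}(Q_\la,\mu)=Q_\la(\mu_1-k-1,\mu_2)$. Since $h(\kappa;x,y):=(\kappa-k)P_{\la^\dagger}^\kappa(x,y)$ and $P_\la^\kappa(x,y)$ are smooth near $(k,\mu_1-k-1,\mu_2)$, I can replace the fixed evaluation point $\mu_1-k-1$ by the moving point $\mu_1-\kappa-1$ at the cost of terms vanishing in the limit, exactly as in Step~1 of the proof of Proposition~\ref{lem:sing=rlaPla}. This reduces the computation to $\lim_{\kappa\to k}\big(P_\la^\kappa(\mu_1-\kappa-1,\mu_2)-\frac{r_\la}{\kappa-k}P_{\la^\dagger}^\kappa(\mu_1-\kappa-1,\mu_2)\big)$. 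Now Theorem~\ref{thm:KnSa}(i) forces $P_\la^\kappa(\mu_1-\kappa-1,\mu_2)$ and $P_{\la^\dagger}^\kappa(\mu_1-\kappa-1,\mu_2)$ to vanish for $|\mu|\leq|\la|$, $\mu\neq\la,\la^\dagger$ (note $|\la^\dagger|=|\la|$), so both terms in the limit vanish — except we must be careful, since $\frac{r_\la}{\kappa-k}$ blows up, a simple zero of $P_{\la^\dagger}^\kappa$ at $\kappa=k$ is needed. For $\mu\notin\{\la,\la^\dagger\}$ the interpolation zero persists for all $\kappa$ near $k$, killing the pole; for $\mu=\la^\dagger$ one gets $\lim_{\kappa\to k}\big(P_\la^\kappa(\la_1^\dagger-\kappa-1,\la_2^\dagger)-\frac{r_\la}{\kappa-k}P_{\la^\dagger}^\kappa(\la_1^\dagger-\kappa-1,\la_2^\dagger)\big)$, whose second term, by Theorem~\ref{thm:KnSa}(ii), is $-\frac{r_\la}{\kappa-k}H_{\la^\dagger}(\kappa)=\alpha_\la(\kappa)\to\alpha_\la(k)$, while the first term vanishes since $\la^\dagger\neq\la$ with $|\la^\dagger|=|\la|$; this yields $\widetilde{\ev}(Q_\la,\la^\dagger)=\alpha_\la(k)$. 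A short computation with the explicit formula for $r_\la$ from Proposition~\ref{lem:sing=rlaPla} then identifies $\alpha_\la(k)=\beta_\la(k)=t_1$, using $\alpha_\la(\kappa)\beta_\la(\kappa)^{-1}\to 1$, i.e.\ the defining relation $r_\la=-H_\la(k)/H_{\la^\dagger}'(k)$.

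Next I would treat $\mu\in\cPs$, where $\widetilde{\ev}(Q_\la,\mu)=\square Q_\la(\mu_1-k-1,\mu_2)$. This is precisely the setting of Lemma~\ref{lem:fk0squSingf} with $k_\circ=k$, $(a,b)=(\mu_1,\mu_2)$, $(a',b')=(\mu_2^\dagger,\mu_1^\dagger)$, applied to each of the two summands of $Q_\la^\kappa$ — except $-\frac{r_\la}{\kappa-k}P_{\la^\dagger}^\kappa$ has a simple pole, so I invoke Lemma~\ref{lem:fk0squSingf} in the form stated there (coefficients of $(\kappa-k)\cdot(\text{that summand})$ regular). Concretely, set $\alpha(\kappa)=Q_\la^\kappa(\mu_1-\kappa-1,\mu_2)$ and $\beta(\kappa)=Q_\la^\kappa(\mu_2^\dagger,\mu_1^\dagger-\kappa-1)$; then $\square Q_\la(\mu_1-k-1,\mu_2)=\lim_{\kappa\to k}\frac{\beta(\kappa)-\alpha(\kappa)+(\kappa-k)(\beta'(\kappa)-\alpha'(\kappa))}{4(\mu_1-\mu_2-k-1)}$ (the case of Lemma~\ref{lem:fk0squSingf} actually needed is that $\square$ of the regular part equals $\square Q_\la$ at the point; I'd combine Lemma~\ref{lem:squPbetal} for the regular summand and Lemma~\ref{lem:fk0squSingf} for the singular summand). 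For $\mu\neq\la$ of size $|\la|$, Theorem~\ref{thm:KnSa}(i) gives $P_\la^\kappa$ and $P_{\la^\dagger}^\kappa$ a zero at the relevant evaluation point for all $\kappa$ near $k$, and one checks the contributions cancel, giving $\widetilde{\ev}(Q_\la,\mu)=0$; for $\mu=\la$, only the $P_\la^\kappa$ summand contributes through Theorem~\ref{thm:KnSa}(ii) with value $H_\la(\kappa)=\beta_\la(\kappa)$, and Lemma~\ref{lem:squPbetal}/\ref{lem:fk0squSingf} assemble the derivatives into exactly $t_2=\frac{\beta_\la'(k)-\alpha_\la'(k)}{4(k+1-\la_1+\la_2)}$ — here the $\alpha_\la'$ appears because the $P_{\la^\dagger}^\kappa$ summand, although it vanishes in value at the $\mu=\la$ singular point, contributes to the $\square$ (= difference of derivatives) through the pole, and its contribution is governed by $\alpha_\la(\kappa)=-\frac{r_\la}{\kappa-k}H_{\la^\dagger}(\kappa)$ evaluated and differentiated via Theorem~\ref{thm:KnSa}(ii) for $P_{\la^\dagger}^\kappa$ at its own distinguished point.

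The main obstacle I expect is the bookkeeping in the singular case $\mu=\la$: one must carefully separate, for the pole-carrying summand $-\frac{r_\la}{\kappa-k}P_{\la^\dagger}^\kappa$, the part of its $\square$-value at the point $(\la_1-k-1,\la_2)$ that survives the limit. Since $\la\in\cPs$ has $\la^\dagger\in\cPq$ and the dagger-point of $\la$ (in the sense of the coincidence $f(\mu_1-k-1,\mu_2)=f(\mu_1^\dagger-k-1,\mu_2^\dagger)$) is exactly where $P_{\la^\dagger}^\kappa$ has its normalization value $H_{\la^\dagger}(\kappa)$, this is the source of the $\alpha_\la'(k)$ term. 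Getting the sign and the factor $4(k+1-\la_1+\la_2)$ right requires matching conventions between Lemma~\ref{lem:squPbetal}, Lemma~\ref{lem:fk0squSingf} and the involution formula~\eqref{eq:invol}; I would do this by first recording the explicit evaluation points $(\mu_1-k-1,\mu_2)$ versus $(\mu_2^\dagger,\mu_1^\dagger-k-1)$ for $\mu=\la$ and for $\mu=\la^\dagger$, checking they agree as required by the hypotheses of those lemmas, and only then substituting the two interpolation normalizations. Once all values $\widetilde{\ev}(Q_\la,\mu)$ for $|\mu|\leq|\la|$ are computed and shown to equal $t_1\delta_{\la^\dagger,\mu}+t_2\delta_{\la,\mu}$, the proposition is proved (no appeal to Corollary~\ref{cor:ev(f,la)=tla} is even needed here, since we are only asked for the generalized values, not uniqueness — though that corollary will be what makes $Q_\la$ useful downstream).
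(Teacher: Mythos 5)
Your treatment of the $k$-regular/$k$-quasiregular evaluations and of $\mu=\la^\dagger$ is fine and close to the paper's (the paper gets $\widetilde{\ev}(Q_\la,\la^\dagger)=H_\la(k)$ slightly more directly, by symmetry of $Q_\la$ reducing to the $\la$-point, whereas you evaluate $\alpha_\la$ at its removable singularity and invoke $r_\la=-H_\la(k)/H'_{\la^\dagger}(k)$; both work). The genuine problem is in the $k$-singular case, and in particular at $\mu=\la$, which is exactly where the value $t_2$ must come out. The ``concrete'' formula you write down,
$\square Q_\la(\mu_1-k-1,\mu_2)=\lim_{\kappa\to k}\frac{(\beta(\kappa)-\alpha(\kappa))+(\kappa-k)(\beta'(\kappa)-\alpha'(\kappa))}{4(\mu_1-\mu_2-k-1)}$ with $\alpha,\beta$ built from $Q_\la^\kappa$, is the conclusion of Lemma~\ref{lem:fk0squSingf}, which computes $\square\mathrm{Sing}_{k}$ of its input; but $Q_\la^\kappa$ has \emph{no} singular part at $\kappa=k$ (that is the content of Lemma~\ref{lem:formulaTlac}), so that limit is identically zero: $\beta(k)=\alpha(k)$ by the symmetry/dagger coincidence, and the $(\kappa-k)(\beta'-\alpha')$ term dies because $\alpha,\beta$ extend smoothly through $\kappa=k$. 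Taken literally, your formula gives $\widetilde{\ev}(Q_\la,\la)=0$, not $t_2$. The correct tool here is Lemma~\ref{lem:squPbetal} applied to the \emph{whole} $Q_\la^\kappa$ (admissible precisely because $Q_\la^\kappa$ is regular at $\kappa=k$), with $\alpha(\kappa)=Q_\la^\kappa(\la_1^\dagger-\kappa-1,\la_2^\dagger)=\alpha_\la(\kappa)$ and $\beta(\kappa)=Q_\la^\kappa(\la_1-\kappa-1,\la_2)=\beta_\la(\kappa)$ (computed via Theorem~\ref{thm:KnSa}(i)--(ii)); the lemma then yields $\frac{\beta_\la'(k)-\alpha_\la'(k)}{4(k+1-\la_1+\la_2)}=t_2$ in one stroke, and with $\alpha\equiv\beta\equiv 0$ it also disposes of the singular $\mu\neq\la$ case, where your ``one checks the contributions cancel'' is left unargued.

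Your fallback plan --- ``Lemma~\ref{lem:squPbetal} for the regular summand $P_\la^\kappa$ and Lemma~\ref{lem:fk0squSingf} for the singular summand $-\frac{r_\la}{\kappa-k}P_{\la^\dagger}^\kappa$'' --- rests on a false premise: since $\la\in\cPs$, the coefficients of $P_\la^\kappa$ themselves have poles at $\kappa=k$ (Proposition~\ref{prp:polesequiv}), so $P_\la^\kappa$ is not a regular summand and Lemma~\ref{lem:squPbetal} does not apply to it; moreover Lemma~\ref{lem:fk0squSingf} outputs $\square$ of a singular part, which is not the quantity you need (there is no stated lemma computing $\square$ of a regular part summand by summand, and the two limits do not exist separately in your decomposition). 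So while you have identified the right ingredients ($\alpha_\la$, $\beta_\la$, the dagger bookkeeping, the factor $4(k+1-\la_1+\la_2)$), the assembly as written would fail at the decisive computation; replacing it by a direct application of Lemma~\ref{lem:squPbetal} to $Q_\la^\kappa$, as the paper does, closes the gap.
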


\begin{proof}
First assume that either $\mu$ is $k$-regular,
or $\mu$ is $k$-quasiregular and $\mu\neq \la^\dagger$. By Theorem 
\ref{thm:KnSa},
for $\kappa$ chosen sufficiently close (but not equal) to $k$ we have
\begin{equation}
\label{eq:Tmu-k=0}
Q_{\la}^{\kappa}(\mu_1-\kappa-1,\mu_2)=
P_{\la}^{\kappa}(\mu_1-\kappa-1,\mu_2)-
\frac{r_\la}{\kappa-k}P_{\la^\dagger}^{\kappa}
(\mu_1-\kappa-1,\mu_2)=0.
\end{equation}
By~\eqref{eq:Qla=lim} we have  $Q_{\la}(x,y)=\lim_{\kappa\to k}Q_{\la}^\kappa(x,y)$, so that  
$\widetilde{\ev}(Q_\la,\mu)=0$.

Next assume that $\mu$ is $k$-singular
 and $\mu\neq \la$.
We use Lemma \ref{lem:squPbetal}
for $p(\kappa;x,y):=Q_\la^\kappa(x,y)$, $(a,b)=(\mu_1,\mu_2)$, 
and $(a',b')=(\mu_2^\dagger,\mu_1^\dagger)$. Note that 
Theorem \ref{thm:KnSa} implies $\alpha(\kappa)=\beta(\kappa)=0$, from which it follows that $\widetilde{\ev}(Q_\la,\mu)=0$.

Next assume that 
$\mu=\la^\dagger$.
Then symmetry of $Q_\la(x,y)$ implies\begin{align*}
\widetilde{\ev}(Q_\la,\mu)&=Q_\la(\la_1^\dagger-k-1,\la_2^\dagger)=
Q_\la(\la_2^\dagger,\la_1^\dagger-k-1)\\
&=Q_\la(\la_1-k-1,\la_2)
=\lim_{\kappa\to k}Q_\la^\kappa(\la_1-\kappa-1,\la_2)=\lim_{\kappa\to k}H_\la(\kappa)=
H_\la(k).
\end{align*}
%and
%$Q_\la^\kappa(\mu_1-\kappa-1,\mu_2)=H_\la(\kappa)$ so that 
%\[
%\widetilde{\ev}(Q_\la,\mu)
%.\]
Finally, assume that 
$\mu=\la$. Then 
\[
Q_\la^\kappa(\mu_1-\kappa-1,\mu_2)=
H_\la(\kappa)
\quad\text{and}
\quad
Q_\la^\kappa(\mu_1^\dagger-\kappa-1,\mu_2^\dagger)=
-\frac{r_\la}{\kappa-k}H_{\la^\dagger}(\kappa)
.
\]
Lemma~\ref{lem:squPbetal} for $p(\kappa;x,y):=Q_\la^\kappa$, $(a,b):=(\la_1^\dagger,\la_2^\dagger)$, and $(a',b'):=(\la_2,\la_1)$ yields
$\widetilde{\ev}(Q_\la,\mu)=t_2$.
\end{proof}

We are now ready to complete the proofs of Theorems~\ref{theorem1},~\ref{theorem2}, and~\ref{theorem4}.
\subsection*{Proof of Theorem~\ref{theorem1}}
Fix $\la\in\cPr$ and set $d:=|\la|$. By Corollary  \ref{cor:ev(f,la)=tla} it suffices to show that 
\[
\widetilde{\ev}\left(
\frac{1}{H_\la(k)}R_\la^{(k)},\mu\right)=\delta_{\la,\mu}
\quad\text{ for $\mu\in\cI(d)$.}
\] Note that
$R_\la^{(k)}=P_\la^k$
. If $\mu$ is 
$k$-regular or $k$-quasiregular, 
this follows from 
taking the limit $\kappa\to k$
in Theorem~\ref{thm:KnSa}.
If $\mu$ is $k$-singular, we   
set $k_\circ:=k$, $(a,b):=(\mu_1,\mu_2)$ and $(a',b'):=(\mu^\dagger_2,\mu^\dagger_1)$ in 
Lemma~\ref{lem:squPbetal}, and note that 
Theorem~\ref{thm:KnSa} implies  $\alpha'(k)=\beta'(k)=0$.

\subsection*{Proof of Theorem~\ref{theorem2}}
By Corollary~\ref{cor:ev(f,la)=tla} 
 it suffices to prove that
\[
\widetilde{\ev}
\left(
\frac{4(\la_1-\la_2-k-1)}{H'_{\la^\dagger}(k)}R_{\la^\dagger}^{(k)}
,\mu\right)=\delta_{\la,\mu}\text{ for }\mu\in\cI,\ |\mu|\leq |\la|.
\]
The  argument is based on Lemma \ref{lem:squPbetal} and is 
similar to the proof of Theorem \ref{theorem1}.

\subsection*{Proof of Theorem \ref{theorem4}}
Set $h(\kappa;x,y):=\frac{1}{H_\la(\kappa)}P_\la^\kappa+\frac{1}{H_{\la^\dagger}(\kappa)}P_{\la^\dagger}^\kappa$, so that the right hand side of~\eqref{eq:tm3lim} is equal to 
$\lim_{\kappa\to k}h(\kappa)$. The latter limit exists because 
by~\eqref{eq:defofrlam} we have
\[
h(\kappa;x,y)=
\frac{1}{H_{\la^\dagger}(\kappa)}Q_{\la^\dagger}^\kappa(x,y)+\frac{u(\kappa)}{H_{\la^\dagger}(\kappa)}
P_{\la}^\kappa(x,y)\quad
\text{where}
\quad
u(\kappa):=\left(
\frac{H_{\la^\dagger}(\kappa)}{H_{\la}(\kappa)}
-
\frac{H_{\la^\dagger}(k)}{(\kappa-k)H'_{\la}(k)}
\right),
\]
and \[
\lim_{\kappa\to k}u(\kappa)=\frac{d}{d\kappa}\left(
\frac{H_{\la^\dagger}(\kappa)(\kappa-k)}{H_{\la}(\kappa)}\right)\Big|_{\kappa=k}.
\]
Next we set $h_1(x,y):=\lim_{\kappa\to k} h(\kappa;x,y)$.
Since $\deg  f_\la=|\la|$,
by Corollary~\ref{cor:ev(f,la)=tla}
in order to prove that $f_\la=h_1$ it suffices to verify that 
$\mathrm{ev}(h_1,\mu)=\delta_{\la,\mu}$ for all $\mu\in\cI$ such that $|\mu|\leq |\la|$. If $\mu\neq\la^\dagger$, this follows from 
Proposition~\ref{prp:Tcheckvals} and Theorem~\ref{theorem2}. If $\mu=\la^\dagger$, this follows from Lemma~\ref{lem:squPbetal} because
\[
h(\kappa;\la_1^\dagger-\kappa-1,\la^\dagger_2)=h(\kappa;\la_2,\la_1-\kappa-1)=1.
\]

\section{Proof of Theorem~\ref{theorem3}}
We begin the proof of Theorem~\ref{theorem3} by the following proposition which is a variation of Theorem~\ref{theorem4}.

\begin{prp}
\label{prp:variationThmC}
For $\la\in\cPq$, we have 
\begin{equation}
\label{eqf:HlaQla}
\displaystyle f_{\la}=\frac{1}{H_{\la^\dagger}(k)}\left(Q_{\la^\dagger}+\frac{\beta'_{\la^\dagger}(k)-\alpha'_{\la^\dagger}(k)}{H'_{\la}(k)}R_{\la}^{(k)}\right),
\end{equation}
where $Q_{\la^\dagger}$ is defined as in Lemma~\ref{lem:formulaTlac}.
\end{prp}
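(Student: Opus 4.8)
The plan is to deduce Proposition~\ref{prp:variationThmC} directly from Theorem~\ref{theorem4}, by feeding into its limit formula the principal-part expansion of $P_{\la^\dagger}^\kappa$ at $\kappa=k$. Fix $\la\in\cPq$; then $\la^\dagger$ is $k$-singular, so Proposition~\ref{lem:sing=rlaPla} and Lemma~\ref{lem:formulaTlac}, applied with $\la^\dagger$ in place of $\la$ and using $(\la^\dagger)^\dagger=\la$, give
\[
P_{\la^\dagger}^\kappa \;=\; \frac{r_{\la^\dagger}}{\kappa-k}\,P_{\la}^\kappa \;+\; Q_{\la^\dagger}^\kappa,
\qquad
\lim_{\kappa\to k}Q_{\la^\dagger}^\kappa = Q_{\la^\dagger},
\]
where $Q_{\la^\dagger}^\kappa$ has no pole at $\kappa=k$; here one uses that $P_\la^\kappa$ itself is regular at $\kappa=k$ because $\la$ is not $k$-singular (Proposition~\ref{prp:polesequiv}). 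Substituting this into $f_\la=\lim_{\kappa\to k}\bigl(H_\la(\kappa)^{-1}P_\la^\kappa+H_{\la^\dagger}(\kappa)^{-1}P_{\la^\dagger}^\kappa\bigr)$ and regrouping so that the $Q_{\la^\dagger}$-part separates off, one obtains
\[
f_\la \;=\; \lim_{\kappa\to k}\frac{Q_{\la^\dagger}^\kappa}{H_{\la^\dagger}(\kappa)}
\;+\;
\lim_{\kappa\to k}\,P_\la^\kappa\cdot g(\kappa),
\qquad
g(\kappa):=\frac{1}{H_\la(\kappa)}+\frac{r_{\la^\dagger}}{(\kappa-k)\,H_{\la^\dagger}(\kappa)}.
\]

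A short inspection of \eqref{eq:Hlak} shows that for $\la\in\cPq$ the function $H_\la(\kappa)$ vanishes at $\kappa=k$ with a \emph{simple} zero (exactly one factor of the falling factorial $(\la_1-1-\kappa)^{\ul{\la_2}}$ is zero there), while $H_{\la^\dagger}(k)\neq 0$. Hence the first limit equals $H_{\la^\dagger}(k)^{-1}Q_{\la^\dagger}$, and in the second limit $P_\la^\kappa\to P_\la^k=R_\la^{(k)}$, so the proof reduces to showing that $g$ is regular at $\kappa=k$ and evaluating $g(k)$. For that I would write $g(\kappa)=\Phi(\kappa)\big/\bigl((\kappa-k)H_\la(\kappa)H_{\la^\dagger}(\kappa)\bigr)$ with $\Phi(\kappa):=(\kappa-k)H_{\la^\dagger}(\kappa)+r_{\la^\dagger}H_\la(\kappa)$. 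Since $r_{\la^\dagger}=-H_{\la^\dagger}(k)/H'_\la(k)$ by \eqref{eq:defofrlam} (applied to $\la^\dagger$), one checks $\Phi(k)=0$ \emph{and} $\Phi'(k)=H_{\la^\dagger}(k)+r_{\la^\dagger}H'_\la(k)=0$, so $\Phi$ has a double zero at $\kappa=k$ --- matching the double zero of $(\kappa-k)H_\la(\kappa)H_{\la^\dagger}(\kappa)$ --- and a second-order Taylor expansion of numerator and denominator yields
\[
g(k)=\frac{\tfrac12\Phi''(k)}{H'_\la(k)\,H_{\la^\dagger}(k)},
\qquad
\tfrac12\Phi''(k)=H'_{\la^\dagger}(k)+\tfrac12 r_{\la^\dagger}H''_\la(k).
\]
To finish, one identifies $\tfrac12\Phi''(k)$ with $\beta'_{\la^\dagger}(k)-\alpha'_{\la^\dagger}(k)$: from \eqref{eq:formulaalabla} applied to $\la^\dagger$ one has $\beta_{\la^\dagger}(\kappa)=H_{\la^\dagger}(\kappa)$, hence $\beta'_{\la^\dagger}(k)=H'_{\la^\dagger}(k)$, and $\alpha_{\la^\dagger}(\kappa)=-r_{\la^\dagger}(\kappa-k)^{-1}H_\la(\kappa)$, which extends smoothly across $\kappa=k$ (because $H_\la(k)=0$) with $\alpha'_{\la^\dagger}(k)=-\tfrac12 r_{\la^\dagger}H''_\la(k)$. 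Collecting the two limits then gives exactly \eqref{eqf:HlaQla}.

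The only real difficulty is bookkeeping: one must track the cancellation of the simple poles of the two summands of $g$, confirm that $\Phi$ acquires a \emph{double} (not merely simple) zero at $\kappa=k$, and match $\tfrac12\Phi''(k)$ with $\beta'_{\la^\dagger}(k)-\alpha'_{\la^\dagger}(k)$; none of this is deep, but the Taylor-expansion manipulations must be carried out carefully. An alternative route, closer in spirit to the proofs of Theorems~\ref{theorem1}--\ref{theorem2}, is to invoke Corollary~\ref{cor:ev(f,la)=tla} and check directly that the right-hand side of \eqref{eqf:HlaQla} has generalized value $\delta_{\la,\mu}$ at every $\mu\in\cI(|\la|)$, using Proposition~\ref{prp:Tcheckvals} for the $Q_{\la^\dagger}$ term and the evaluations of $R_\la^{(k)}=P_\la^k$ furnished by Theorem~\ref{thm:KnSa} together with Lemma~\ref{lem:squPbetal} at $k$-singular $\mu$; this replaces the Taylor expansions by a handful of case distinctions.
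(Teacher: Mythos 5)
Your proof is correct, but it follows a genuinely different route from the paper's. The paper's own argument is very short: since $\deg Q_{\la^\dagger}=|\la|$, Proposition~\ref{prp:Tcheckvals} combined with the uniqueness statement of Corollary~\ref{cor:ev(f,la)=tla} forces $Q_{\la^\dagger}=t_1f_\la+t_2f_{\la^\dagger}$ with $t_1,t_2$ as in \eqref{eq:1t2dfff}; solving for $f_\la$ and inserting Theorem~\ref{theorem2} for $f_{\la^\dagger}$ (the sign working out because $\la_1^\dagger-\la_2^\dagger-k-1=-(k+1-\la_1^\dagger+\la_2^\dagger)$) gives \eqref{eqf:HlaQla}. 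You instead take Theorem~\ref{theorem4} as the starting point --- legitimate and non-circular, since that theorem is proved in Section~\ref{sec:proofABD} independently of the present proposition --- insert the decomposition $P_{\la^\dagger}^\kappa=\frac{r_{\la^\dagger}}{\kappa-k}P_\la^\kappa+Q_{\la^\dagger}^\kappa$ coming from Proposition~\ref{lem:sing=rlaPla} and Lemma~\ref{lem:formulaTlac}, and compute the regular value at $\kappa=k$ of $g(\kappa)=H_\la(\kappa)^{-1}+r_{\la^\dagger}\bigl((\kappa-k)H_{\la^\dagger}(\kappa)\bigr)^{-1}$ by Taylor expansion. Your bookkeeping checks out: for $\la\in\cPq$ exactly one linear factor of $(\la_1-1-\kappa)^{\ul{\la_2}}$ vanishes at $\kappa=k$, so $H_\la$ has a simple zero while $H_{\la^\dagger}(k)\neq 0$; the numerator $\Phi(\kappa)=(\kappa-k)H_{\la^\dagger}(\kappa)+r_{\la^\dagger}H_\la(\kappa)$ indeed has a double zero because $r_{\la^\dagger}=-H_{\la^\dagger}(k)/H'_\la(k)$; and your identifications $\beta'_{\la^\dagger}(k)=H'_{\la^\dagger}(k)$ and $\alpha'_{\la^\dagger}(k)=-\tfrac12 r_{\la^\dagger}H''_\la(k)$ agree with \eqref{eq:formulaalabla} understood via the smooth extension across $\kappa=k$, exactly as the paper itself uses it in the proof of Proposition~\ref{prp:easyformulalsing}. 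What the paper's route buys is brevity and no expansion arguments, since the coefficient of $R_\la^{(k)}$ drops out of the interpolation data $t_2/t_1$; what yours buys is an explicit derivation of the proposition from Theorem~\ref{theorem4} (which the paper only describes informally as a ``variation'' of it), exhibiting that coefficient as the regularized value of $g$ at $\kappa=k$. Your alternative sketch at the end, verifying the generalized values of the right-hand side via Proposition~\ref{prp:Tcheckvals} and Corollary~\ref{cor:ev(f,la)=tla}, is essentially the paper's own argument.
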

\begin{proof}
Note that $R_{\la}^{(k)}=P_{\la}^k$.
Since $\deg(Q_{\la^\dagger})=\deg(f_{\la^\dagger})=\deg(f_{\la})=|\la|$, from Proposition~\ref{prp:Tcheckvals}  and Corollary \ref{cor:ev(f,la)=tla} 
it follows that $Q_{\la^\dagger}=t_1f_{\la}+t_2f_{\la^\dagger}$, so that 
\[
f_{\la}=\frac{1}{t_1}\left(Q_{\la^\dagger}-t_2f_{\la^\dagger}\right),
\]
where $t_1$ and $t_2$ are defined in \eqref{eq:1t2dfff}.
The claim now follows from Theorem~\ref{theorem2}. 
\end{proof}

From now on we assume that $\la\in\cPq$. Then we can express $\la$ and $\la^\dagger$ as 
\[
\la=(d+k+1,d+\ell+1)\quad\text{and}\quad
\la^\dagger=(d+k+\ell+2,d),
\] 
where $d\geq 0$ and $0\leq \ell\leq k$. Note that $\ell=\ell_{\la^\dagger}$, where $\ell_{\la^\dagger}$ is defined in~\eqref{eq:llamb}.
\begin{prp}
\label{prp:easyformulalsing}
Suppose that
$
P_{\la}^{\kappa}=\sum \alpha_{m,n}(\kappa)x^{\underline{m}}y^{\underline{n}}
$,
where $\alpha_{m,n}(\kappa)\in\Bbbk$. 
Then
\begin{align}
\label{eq:flambda=1/i1/l+1}
f_{\la}&=
\frac{(\ell+1)!}{d!(d+\ell+1)!(k+\ell+2)!}
R_{\la^\dagger}^{(k)}+
\frac{(-1)^{\ell+1}(k+1)^{\underline{\ell+1}}}{d!(d+\ell+1)!(k+1)!\ell!}
\sum \alpha_{m,n}'(k)x^{\underline{m}}y^{\underline{n}}
\\
&+
\frac{(-1)^\ell}{d!(d+\ell+1)!(k-\ell)!\ell!}
\left(
\sum_{i=d+1}^{d+\ell+1}\frac{1}{i}-\frac{1}{\ell+1}
\right)R_{\la}^{(k)}.
\notag
\end{align}
\end{prp}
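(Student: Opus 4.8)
The plan is to evaluate $f_\la$ via Proposition~\ref{prp:variationThmC}, which expresses $f_\la$ in terms of $Q_{\la^\dagger}$, $R_\la^{(k)}$, and the scalars coming from $\alpha_{\la^\dagger},\beta_{\la^\dagger}$. Since $\la\in\cPq$, its dual $\la^\dagger\in\cPs$, so Lemma~\ref{lem:formulaTlac} applies to $Q_{\la^\dagger}$, giving
\[
Q_{\la^\dagger}=R_{\la^\dagger}^{(k)}-r_{\la^\dagger}\frac{\partial}{\partial\kappa}P_{\la}^{k},
\]
where I am using $(\la^\dagger)^\dagger=\la$ and $R_\la^{(k)}=P_\la^k$ (valid since $\la$ is $k$-quasiregular, hence not $k$-singular, by Proposition~\ref{prp:polesequiv}). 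Writing $P_\la^\kappa=\sum\alpha_{m,n}(\kappa)x^{\ul m}y^{\ul n}$, the $\kappa$-derivative term becomes $\sum\alpha_{m,n}'(k)x^{\ul m}y^{\ul n}$, which is exactly the middle sum appearing in the claimed formula~\eqref{eq:flambda=1/i1/l+1}. So the first step is to substitute this expression for $Q_{\la^\dagger}$ into~\eqref{eqf:HlaQla} and collect the three resulting terms: one proportional to $R_{\la^\dagger}^{(k)}$, one proportional to $\sum\alpha_{m,n}'(k)x^{\ul m}y^{\ul n}$, and one proportional to $R_\la^{(k)}$.

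The second step is the explicit computation of all the scalar prefactors. Using the parametrization $\la=(d+k+1,\,d+\ell+1)$, $\la^\dagger=(d+k+\ell+2,\,d)$, I would compute in closed form: $H_{\la^\dagger}(k)$ from~\eqref{eq:Hlak}, the derivative $H'_\la(k)$ (noting $H_\la(k)=0$ since $\la$ is quasiregular, so this derivative is the relevant nonvanishing quantity), the constant $r_{\la^\dagger}=-H_{\la^\dagger}(k)/H'_\la(k)$ from~\eqref{eq:defofrlam}, and the quantities $\alpha_{\la^\dagger}(\kappa),\beta_{\la^\dagger}(\kappa)$ from~\eqref{eq:formulaalabla} together with their derivatives at $\kappa=k$ (here $\beta_{\la^\dagger}=H_{\la^\dagger}$ and $\alpha_{\la^\dagger}(\kappa)=-\frac{r_{\la^\dagger}}{\kappa-k}H_\la(\kappa)$, whose derivative at $k$ involves $H_\la''(k)$ via a Taylor expansion since $H_\la(k)=0$). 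Each of these is a product of falling factorials and factorials in $d,k,\ell$, so each reduces to an elementary manipulation; the coefficient $\frac{(\ell+1)!}{d!(d+\ell+1)!(k+\ell+2)!}$ of $R_{\la^\dagger}^{(k)}$ should come out of $\frac{1}{H_{\la^\dagger}(k)}$ directly, the coefficient of the $\alpha'$-sum out of $-\frac{r_{\la^\dagger}}{H_{\la^\dagger}(k)}$, and the coefficient of $R_\la^{(k)}$ out of $\frac{1}{H_{\la^\dagger}(k)}\cdot\frac{\beta_{\la^\dagger}'(k)-\alpha_{\la^\dagger}'(k)}{H'_\la(k)}$.

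The main obstacle I expect is the last coefficient, the one multiplying $R_\la^{(k)}$, namely $\frac{(-1)^\ell}{d!(d+\ell+1)!(k-\ell)!\ell!}\bigl(\sum_{i=d+1}^{d+\ell+1}\tfrac1i-\tfrac1{\ell+1}\bigr)$. The harmonic-sum factor $\sum_{i=d+1}^{d+\ell+1}\frac1i$ is precisely the kind of term produced by a logarithmic derivative of a ratio of factorials (compare~\eqref{eq:j=0i}), and it will emerge from $\alpha_{\la^\dagger}'(k)$, which requires carefully differentiating $-\frac{r_{\la^\dagger}}{\kappa-k}H_\la(\kappa)$ at the point where $H_\la$ vanishes — i.e. from the second-order Taylor coefficient $H_\la''(k)/2$, and $H_\la''(k)$ is where the logarithmic-derivative structure of the falling factorial $(\la_1-1-\kappa)^{\ul{\la_2}}$ enters. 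Getting the bookkeeping of signs and the range of the harmonic sum exactly right, and confirming that the spurious $\frac{1}{\ell+1}$ correction term appears with the correct sign, is the delicate part; everything else is routine factorial algebra. Once all three prefactors are verified to match, comparing with~\eqref{eqf:HlaQla} completes the proof.
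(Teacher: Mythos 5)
Your proposal follows essentially the same route as the paper's proof: it starts from Proposition~\ref{prp:variationThmC}, substitutes $Q_{\la^\dagger}=R_{\la^\dagger}^{(k)}-r_{\la^\dagger}\partial_\kappa P_\la^{k}$ from Lemma~\ref{lem:formulaTlac}, identifies the derivative term with $\sum\alpha'_{m,n}(k)x^{\ul m}y^{\ul n}$, and then evaluates the same scalar prefactors $H_{\la^\dagger}(k)$, $H'_\la(k)$, $r_{\la^\dagger}$, $\alpha'_{\la^\dagger}(k)$, $\beta'_{\la^\dagger}(k)$, and your predicted sources of the three coefficients (including the harmonic sums) all check out against the paper's computation. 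The only cosmetic difference is that you extract $\alpha'_{\la^\dagger}(k)$ from the second Taylor coefficient of $H_\la$ at $\kappa=k$, whereas the paper applies the logarithmic-derivative identity $f'(k)=-f(k)\sum_i\frac{1}{a_i-k}$ to the polynomial $H_\la(\kappa)/(\kappa-k)$; these are equivalent routine calculations.
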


\begin{proof}
From Proposition~\ref{prp:variationThmC} and \eqref{eq:Qla=R-la-rlaaa} we obtain
\begin{equation}
\label{eq:fla-finalformula1}
f_{\la}
=
\frac{1}{H_{\la^\dagger}(k)}
\left(
R_{\la^\dagger}^{(k)}-r_{\la^\dagger}\frac{\partial P_{\la}^\kappa}{\partial \kappa}
+
\frac{\beta'_{\la^\dagger}(k)-\alpha'_{\la^\dagger}(k)}{H'_{\la}(k)}
R_{\la}^{(k)}\right).
\end{equation}
By straightforward calculations we can verify that
\[
r_{\la^\dagger}=\frac{(-1)^\ell(k+\ell+2)^{\ul{k+1}}}{(k-\ell)!\ell!},
\quad
H_{\la^\dagger}(k)=\frac{(k+\ell+2)!d!(d+\ell+1)!}{(\ell+1)!},
\quad\text{and}\quad
\alpha_{\la^\dagger}(k)=\beta_{\la^\dagger}(k)=H_{\la^\dagger}(k).
\]
Next note that for a polynomial $f(\kappa):=c\prod_i(a_i-\kappa)$ we have $f'(k)=-f(k)\sum_i\frac{1}{a_i-k}$. 
In particular
\[
\alpha_{\la^\dagger}'(k)=-H_{\la^\dagger}(k)\sum_{i=\ell+1}^d\frac{1}{i}
\quad
\text{and}\quad
\beta_{\la^\dagger}'(k)=
-H_{\la^\dagger}(k)\sum_{i=\ell+2}^{d+\ell+1}\frac{1}{i}.
\]
Finally, $H'_{\la}(k)=
\lim_{\kappa\to k}\left(\frac{H_{\la}(\kappa)}{\kappa-k}\right)=(-1)^{\ell+1}(k-\ell)!(d+\ell+1)!d!\ell!$, and the claim of the proposition follows by making  substitutions in \eqref{eq:fla-finalformula1}.
\end{proof}

\subsection*{Proof of Theorem \ref{theorem3}}
By equating the right hand sides of  
\eqref{eq:flambda=1/i1/l+1} 
and
\eqref{eq:flaformula-sing}, and then reparametrizing the summation in 
\eqref{eq:flambda=1/i1/l+1}
in terms of $a:=k-\ell-\mu_1$ and $b:=\mu_2$, 
it follows that Theorem~\ref{theorem3} is equivalent to the equation
\begin{equation}
\label{eq:alpha'=a+b+1}
\frac{\partial P_{\la}^\kappa}{\partial \kappa}
=\sum_{\mu\in\cI^*(k-\ell)}\frac{(-1)^{k-\ell+|\mu|+1}(\ell+1)!(\ell+k-\ell-\mu_1)!{k-\ell-\mu_1\choose \mu_2}(k-\ell)!}{(\mu_1-\mu_2)!(k-\ell-\mu_1)(\ell+\mu_2+1)!(k-\mu_1+\mu_2)!}P_{(\mu_1+\ell+1,\mu_2+\ell+1)}^{\kappa},
\end{equation}
where $\cI^*(k-\ell):=\cI(k-\ell)\bls\{(k-\ell,0)\}$.
Now set $N:=k-\ell$. From \eqref{KNPsymformu} or \cite[Cor. 2.3]{KSDiff}
it follows that
\[
P^\kappa_{(\mu_1+\ell+1,\mu_2+\ell_1)}=x^{\ul{\ell+1}}y^{\ul{\ell+1}}
P_\mu^\kappa(x-\ell-1,y-\ell-1).
\] 
Therefore after dividing both sides of 
\eqref{eq:alpha'=a+b+1} by $x^{\ul{\ell+1}}y^{\ul{\ell+1}}$ and making the substitution \[
(x,y)\mapsto (x+\ell+1,y+\ell+1)
,\]
we obtain that proving~\eqref{eq:alpha'=a+b+1} reduces to verifying
\begin{equation}
\label{eq:ddKapaa}
\frac{\partial}{\partial\kappa}
P_{(N,0)}^{\kappa}=
\sum_{\mu\in\cI^*(N)}\frac{(-1)^{N-\mu_1-\mu_2+1}(\ell+1)!(\ell+N-\mu_1)!{N-\mu_1\choose \mu_2}N!}{(\mu_1-\mu_2)!(N-\mu_1)(\ell+\mu_2+1)!(\ell+N-\mu_1+\mu_2)!}P_\mu^{\kappa}.
\end{equation}
To prove \eqref{eq:ddKapaa}, it suffices to verify that the coefficients of the terms 
$x^{\ul{i}}y^{\ul{j}}$ with $0\leq i+j\leq N$ on both sides are equal. 
These coefficients can be computed explicitly using the formula \eqref{KNPsymformu}. After some routine algebraic computations, it follows that the equality of the coefficients of
$x^{\ul{i}}y^{\ul{j}}$ on both sides of 
\eqref{eq:ddKapaa} is equivalent to the identity
\begin{align}
\label{eq:identityddk}
\frac{d}{d\kappa}
&\left(
\frac{\kappa^{\underline{N-i}}\,\kappa^{\underline{N-j}}}{\kappa^{\underline{N}}}
\right)
=\sum_{\mu}
\frac{(-1)^{N-\mu_1-\mu_2+1}{N-\mu_1\choose \mu_2}i^{\ul{\mu_2}}j^{\ul{\mu_2}}(N-i-j)^{\ul{N-\mu_1-\mu_2}}}{(N-\mu_1)\kappa^{\ul{\mu_1-\mu_2}}
(\kappa-\mu_1+\mu_2-1)^{{\ul{\mu_2}}}
(\kappa-N+\mu_2)^{\ul{\mu_2}}}
\kappa^{\ul{\mu_1-i}}\kappa^{\ul{\mu_1-j}},
\end{align}
where the summation is on all partitions 
 $\mu:=(\mu_1,\mu_2)\neq (N,0)$ that satisfy
\[
N\geq \mu_1+\mu_2\geq i+j
\ \text{ and }\ \mu_1\geq i\geq j\geq \mu_2.
\]
Note that 
\eqref{eq:identityddk} is a one-variable identity in a free parameter $\kappa$. By the above discussion, Theorem \ref{theorem3} follows from \eqref{eq:identityddk}. 
Note that 
 \eqref{eq:identityddk} is equivalent to 
Theorem~\ref{theorem-identity}
after the substitutions $x:=\kappa$ and 
$(p,q):=(\mu_1,\mu_2)$.
We will prove Theorem~\ref{theorem-identity} in the next section.

\section{Proof of Theorem~\ref{theorem-identity}}

In this section we prove Theorem~\ref{theorem-identity}, which completes the proof of Theorem~\ref{theorem3}. 
%We begin with the following elementary lemma.
%\begin{lem}
%\label{lem:partial-fraction}
%The following identities hold as
%rational functions in a variable $y$.
%\begin{itemize}
%\item[\rm (i)]
%$\displaystyle\frac{1}{\prod_{t=1}^j(y+t)}
%=
%\frac{1}{(j-1)!}\sum_{t=1}^j\frac{(-1)^{t-1}{j-1\choose t-1}}{y+t}$.
%\item[\rm (ii)]
%$
%\displaystyle
%\frac{1
%}{\prod_{t=1}^j(y+t)}
%\left(
%\sum_{t=1}^j
%\frac{1}{y+t}
%\right)
%=
%\frac{1}{(j-1)!}
%\sum_{t=1}^j\frac{(-1)^{t-1}{j-1\choose t-1}}{(y+t)^2}.
%$\end{itemize}
%\end{lem}
%\begin{proof}(i) is well known and (ii) is obtained by differentiating (i) with respect to $y$.\end{proof}
Set $d:=N-i$
and 
$
\psi_L(x):=
\frac{x^{\ul{d}}}{(x-N+1)\cdots (x-N+j)}
$.
Then identity \eqref{eq:huge-identity} is equivalent to the relation
\begin{equation}\label{eq:mainpsLRR}
\frac{d}{d x}\psi_L(x)=\psi_R(x),
\end{equation}
 where
%\label{eq:identityN=n-i}
\begin{equation*}
\psi_R(x):=\sum_{q=0}^{j}
\
\sum_{p=N-d+j-q}^{\min\{N-q,N-1\}}
\frac{
(-1)^{N+p+q+1}
(N-p)^{\ul{q}}\,(N-d)^{\ul{q}}\,j^{\ul{q}}\,
(d-j)^{\ul{N-p-q}}\,
x^{\ul{p+d-N}}\,x^{\ul{p-j}}(x-p+q)
}{
(N-p)\,q!\,x^{\ul{p+1}}
(x-N+q)^{\ul{q}}
}.
\end{equation*}
Our strategy is to prove a \emph{two}-variable identity that 
implies~\eqref{eq:mainpsLRR} as a special case.
For integers $q,r\geq 0$ such that $d\geq r\geq q\geq 0$, let $E(q,r)$ be the rational function in variables $x,y$ defined by
%\label{Ejqry}
\begin{align*}
E(q,r):=
(-1)^{r+q+1}
r^{\ul{q}}
(x-y-d)^{\ul{q}}j^{\ul{q}}(d-j)^{\ul{r-q}}
\left(
\frac{
x^{\ul{d-r}}(y+r+q)
}
{
r q!(y+r+j)^{\ul{j+r}}
}
\right)
\left(
\frac{(y+r-1)^{\ul{r-q}}}{y+q}
\right).
\end{align*}
\begin{lem}
\label{lem:mys1}
Set \[
\psi_1(x,y):=\sum_{q=0}^j\sum_{r=\max\{1,q\}}^{d-j+q} E(q,r).
\] Then $\psi_R(x)=\psi_1(x,x-N)$.
\end{lem}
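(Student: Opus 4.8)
The plan is to prove Lemma~\ref{lem:mys1} by a completely explicit term-by-term comparison: the substitution $y=x-N$ together with the reindexing $p:=N-r$ should transform the double sum defining $\psi_1(x,x-N)$ into the double sum defining $\psi_R(x)$. First I would check that the summation ranges match under $p=N-r$: as $r$ runs over $\max\{1,q\}\le r\le d-j+q$, the variable $p$ runs over $N-d+j-q\le p\le\min\{N-q,N-1\}$ (using $d=N-i$, so that $N-\max\{1,q\}=\min\{N-1,N-q\}$), which is precisely the inner range appearing in $\psi_R$. Hence it suffices to show that, for each $q$ and each $r$ with $p=N-r$, the summand $E(q,r)\big|_{y=x-N}$ equals the corresponding summand of $\psi_R(x)$.

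Next I would dispatch the ``scalar'' factors. Setting $y=x-N$ gives $x-y-d=N-d$, hence $(x-y-d)^{\ul q}=(N-d)^{\ul q}$; moreover $r^{\ul q}=(N-p)^{\ul q}$, $(d-j)^{\ul{r-q}}=(d-j)^{\ul{N-p-q}}$, $y+r+q=x-p+q$, $1/r=1/(N-p)$, and $(-1)^{r+q+1}=(-1)^{N-p+q+1}=(-1)^{N+p+q+1}$ since $(-1)^{2p}=1$. Together with the common factor $j^{\ul q}/q!$, these reproduce exactly the prefactor $\frac{(-1)^{N+p+q+1}(N-p)^{\ul q}(N-d)^{\ul q}j^{\ul q}(d-j)^{\ul{N-p-q}}(x-p+q)}{(N-p)\,q!}$ of the $\psi_R$-summand.

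The only genuine computation is the reduction of the remaining rational factor
\[
\frac{x^{\ul{d-r}}}{(y+r+j)^{\ul{j+r}}}\cdot\frac{(y+r-1)^{\ul{r-q}}}{y+q}\ \Big|_{\,y=x-N,\ r=N-p}
\]
to $\dfrac{x^{\ul{p+d-N}}\,x^{\ul{p-j}}}{x^{\ul{p+1}}\,(x-N+q)^{\ul q}}$. I would write each falling factorial as an explicit descending chain of consecutive integers: $(y+r+j)^{\ul{j+r}}\big|=(x-p+j)(x-p+j-1)\cdots(x-N+1)$, which splits as $\bigl(x^{\ul{p+1}}/x^{\ul{p-j}}\bigr)\cdot(x-p-1)(x-p-2)\cdots(x-N+1)$, and $(y+r-1)^{\ul{r-q}}\big|=(x-p-1)(x-p-2)\cdots(x-N+q)$. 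Cancelling the common chain $(x-p-1)\cdots(x-N+q)$ and then absorbing the stray factor $1/(y+q)=1/(x-N+q)$ collapses the remainder to $1/(x-N+q)^{\ul q}$, while $x^{\ul{d-r}}=x^{\ul{p-i}}=x^{\ul{p+d-N}}$. Combining the two steps gives the desired equality of summands, and summing over $q$ and $r$ proves the lemma.

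I do not anticipate any real obstacle in Lemma~\ref{lem:mys1} itself; the only points requiring care are the bookkeeping of the endpoints of the Pochhammer chains (in particular the degenerate cases $q=0$ and $r=q$, where a chain is empty) and confirming that the extreme values of the summation index correspond correctly under $p\mapsto N-p$. The genuine difficulty of Theorem~\ref{theorem-identity} will lie beyond this lemma: namely, in producing the two-variable identity for $\psi_1(x,y)$ whose specialization $y=x-N$ yields \eqref{eq:mainpsLRR}, and in reducing that two-variable identity to Dougall's theorem \eqref{eq:Dougall's}.
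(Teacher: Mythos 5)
Your proposal is correct and is essentially the paper's proof: the paper disposes of Lemma~\ref{lem:mys1} as a ``straightforward computation'' with the single hint $r=N-p$, which is exactly the substitution $y=x-N$, $p=N-r$ that you carry out, and your term-by-term matching of ranges, signs, and falling-factorial chains (including the degenerate $q=0$ case) checks out.
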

\begin{proof}
This is a straightforward computation. Note that $r=N-p$, where $p$ is as in the definition of $\psi_R(x)$. 
\end{proof}
\begin{lem}
\label{lem:mys2}
Set \[
\psi_2(x,y):=-\frac{x^{\ul{d}}}{\prod_{t=1}^j(y+t)}
\left(
-\sum_{t=1}^{j}
\frac{1}{y+t}
\right)
+
\frac{\partial x^{\ul{d}}}{\partial x}
\left(\frac{1}{\prod_{t=1}^j(y+t)}\right).
\] Then $\frac{d}{d x}\psi_L(x)=\psi_2(x,x-N)$. 
\end{lem}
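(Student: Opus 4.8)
The plan is to prove Lemma~\ref{lem:mys2} by a direct application of the product rule together with logarithmic differentiation; the content is elementary one-variable calculus, and the two-variable bookkeeping only serves to make it match up later with Lemma~\ref{lem:mys1}.

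First I would factor $\psi_L(x)=u(x)\,v(x)$, where $u(x):=x^{\ul{d}}$ and $v(x):=\bigl(\prod_{t=1}^{j}(x-N+t)\bigr)^{-1}$, and record that after the substitution $y:=x-N$ one has $v(x)=\bigl(\prod_{t=1}^{j}(y+t)\bigr)^{-1}$ and $u'(x)=\dfrac{\partial x^{\ul{d}}}{\partial x}$. By the product rule, $\dfrac{d}{dx}\psi_L(x)=u'(x)\,v(x)+u(x)\,v'(x)$. After the substitution the first term is exactly the summand $\dfrac{\partial x^{\ul{d}}}{\partial x}\bigl(\prod_{t=1}^{j}(y+t)\bigr)^{-1}$ of $\psi_2(x,x-N)$. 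For the second term, logarithmic differentiation of $v$ gives $v'(x)=v(x)\bigl(-\sum_{t=1}^{j}(x-N+t)^{-1}\bigr)$, so that $u(x)\,v'(x)$ becomes the remaining summand of $\psi_2(x,x-N)$. Adding the two contributions yields the asserted identity; since both sides are rational functions of $x$, the pointwise equality away from the poles is already the full statement, so no Zariski-density argument is needed.

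I do not expect any genuine obstacle here: Lemma~\ref{lem:mys2} is the easy half of the verification of \eqref{eq:mainpsLRR} (equivalently, Theorem~\ref{theorem-identity}), its only purpose being to express $\dfrac{d}{dx}\psi_L(x)$ as the specialization $y=x-N$ of an explicit two-variable rational function. The real difficulty is deferred to establishing $\psi_1(x,y)=\psi_2(x,y)$ as rational functions in two variables, which is where Dougall's theorem \eqref{eq:Dougall's} will enter; Lemmas~\ref{lem:mys1} and~\ref{lem:mys2} then merely supply the two sides of that forthcoming identity.
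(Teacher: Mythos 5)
Your proof is correct and is essentially the paper's own argument: the paper disposes of this lemma with a one-line appeal to the Leibniz rule, which is exactly your product-rule computation with $u(x)=x^{\ul d}$, $v(x)=\bigl(\prod_{t=1}^j(x-N+t)\bigr)^{-1}$, logarithmic differentiation for $v'$, and the substitution $y=x-N$. One remark: your $u(x)v'(x)$ carries a single net minus sign, which is the sign actually used later in \eqref{eq:simplied2} and its precursor identity, so your reading is the intended one; the double negative in the displayed definition of $\psi_2$ is evidently a typographical slip rather than a defect in your argument.
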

\begin{proof}
This follows from computing $\frac{d}{d x}\psi_L(x)$ using the Leibniz rule. 
\end{proof}
Lemma~\ref{lem:mys1} and Lemma~\ref{lem:mys2} imply that 
in order to verify~\eqref{eq:mainpsLRR}, it suffices to prove that
\begin{equation}
\label{eq:identity-with-y}
\psi_1(x,y)=\psi_2(x,y).
\end{equation} 
 % 
%It is straightforward to verify 
%that
% follows  from setting $y=x-N$ in the following proposition.
%\begin{prp}
%\label{prp:xyidentity}
%For integers $d\geq j\geq0$, the following identity in variables $x,y$ holds:
%\begin{equation}
%\label{eq:identity-with-y}
%-\frac{x^{\ul{d}}}{\prod_{t=1}^j(y+t)}
%\left(
%-\sum_{t=1}^{j}
%\frac{1}{y+t}
%\right)
%+
%\frac{\partial x^{\ul{d}}}{\partial x}
%\left(\frac{1}{\prod_{t=1}^j(y+t)}\right)
%=
%\sum_{q=0}^j\sum_{r=\max\{1,q\}}^{d-j+q} E(q,r).
%\end{equation}
%\end{prp}
%
The rest of this section is devoted to the proof of \eqref{eq:identity-with-y}. 
Set 
\[
\check E(q,r):=(y+1)\cdots(y+j)E(q,r).
\]
Then 
\eqref{eq:identity-with-y} is equivalent to
\begin{equation}
\label{eq:simplified1}
-\left(\sum_{t=1}^j\frac{1}{y+t}
\right)
x^{\ul{d}}
+\frac{\partial x^{\ul{d}}}{\partial x}
=
\sum_{q=0}^j\sum_{r=\max\{1,q\}}^{d-j+q} \check E(q,r).
\end{equation}
%It is straightforward to verify that for $r\geq q$, 
%\[
%(y+j)^{\ul{j}}
%\frac{
%(y+r-1)^{\ul{r-q}}
%}
%{
%(y+r+j)^{\ul{j+r}}(y+q)
%}
%=(y+j)^{\ul{j-q}}.
%\]
Next set $\ell:=d-j$ and $s:=r-q$. 
For $q$ and $r$ in the range of indices on the right hand side of
\eqref{eq:simplified1} we have 
$0\leq s\leq \ell$ when $q\geq 1$, and  $1\leq s\leq \ell$ when $q=0$. Thus, the right hand side of~\eqref{eq:simplified1} can be written as a double sum over the indices $(q,s)\in\mathcal T$, where 
\[
\mathcal T:=\left\{(a_1,a_2)\in\Z^2\ :\ 0\leq a_1\leq j,\ 0\leq a_2\leq \ell,\ (a_1,a_2)\neq (0,0)\right\}.
\]
Now define $\delta_{s,0}:=1$ if $s=0$, and $\delta_{s,0}:=0$ if $s\geq 1$. 
After  
substituting $x$ by $x+j+\ell$ and dividing both sides of~\eqref{eq:simplified1}
by $(j+\ell)!$, it follows that~\eqref{eq:simplified1} is equivalent to the identity
\begin{align}
\label{eq:simplied2}
-\frac{(x+j+\ell)^{\ul{j+\ell}}}{(j+\ell)!}\left(\sum_{t=1}^j\frac{1}{y+t}
\right)
+
\frac{1}{(j+\ell)!}
\left(\frac{\partial}{\partial x}(x+j+\ell)^{\ul{j+\ell}}
\right)
=\sum_{s=0}^\ell
(-1)^{s+1}
F(s)
,
\end{align}
where
\[
F(s):=\sum_{q=\delta_{s,0}}^j
(y+2q+s)
{j\choose q}
{\ell\choose s}
\left(
\frac{(q+s-1)!}{(j+\ell)!}
\right)\frac{(y+j)^{\ul{j-q}}}{(y+q+s+j)^{\ul{j+1}}}
(x-y)^{\ul{q}}(x+j+\ell)^{\ul{j+\ell-q-s}}.
\]
Consequently, to complete the proof of Theorem~\ref{theorem-identity}, it suffices to verify~\eqref{eq:simplied2}. We will prove~\eqref{eq:simplied2} after the proof of Proposition~\ref{prp:F0Fss} below, which yields explicit formulas for $F(s)$.  
\begin{prp}
\label{prp:F0Fss}

Let $F(s)$ be defined as above. Then
\begin{equation}
\label{eq:conj1-id}
F(s)=\frac{(x+j+\ell)^{\ul{\ell-s}}(x+j)^{\ul{j}}}
{s(\ell-s)!(j+\ell)^{\ul{j}}}
\qquad
\text{ for $1\leq s\leq \ell$},
\end{equation}
and
\begin{equation}
\label{eq:conj2-id}
F(0)=
\frac
{(x+j+\ell)^{\ul{j+\ell}}}
{(j+\ell)!
}\sum_{t=1}^j\left(\frac{1}{y+t}
-\frac{1}{x+t}\right).
\end{equation}
\end{prp}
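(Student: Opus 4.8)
The plan is to recognize the sum defining $F(s)$ as a terminating very-well-poised ${}_5\mathbf F_4$ evaluated at $1$, sum it by Dougall's theorem (identity~\eqref{eq:Dougall's}), and then deduce~\eqref{eq:conj2-id} from~\eqref{eq:conj1-id} by a single differentiation in the parameter $s$. First I would peel off the two $q$-independent factors: writing $(q+s-1)!=(s-1)!\,(s)^{\oline{q}}$ with $(s)^{\oline{q}}=s(s+1)\cdots(s+q-1)$, set
\[
W_q(s):=(y+2q+s)\binom{j}{q}(s)^{\oline{q}}\,\frac{(y+j)^{\ul{j-q}}}{(y+q+s+j)^{\ul{j+1}}}\,(x-y)^{\ul{q}}\,(x+j+\ell)^{\ul{j+\ell-q-s}},\qquad \Psi(s):=\sum_{q=0}^{j}W_q(s),
\]
so that $F(s)=\dfrac{\binom{\ell}{s}\,(s-1)!}{(j+\ell)!}\,\Psi(s)$ for $s\ge 1$, while $\Psi(s)$ is by itself a rational function of $s$ for generic $x,y$ (the sum still terminates at $q=j$ because of $\binom{j}{q}$, whatever the sign of $s$). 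The computation that makes everything work is the term ratio: a direct check gives
\[
\frac{W_{q+1}(s)}{W_q(s)}=\frac{\bigl(\tfrac{a}{2}+1+q\bigr)(a+q)(-j+q)(s+q)(y-x+q)}{\bigl(\tfrac{a}{2}+q\bigr)(a+j+1+q)(y+1+q)(x+s+1+q)(1+q)},\qquad a:=y+s,
\]
where the two sign flips coming from $\binom{j}{q}$ and from $(x-y)^{\ul{q}}$ cancel. Hence $\Psi(s)=W_0(s)\cdot{}_5\mathbf F_4$ with exactly the very-well-poised shape of the left-hand side of~\eqref{eq:Dougall's} for $\mathsf a=y+s$, $\mathsf b=j$, $\mathsf c=-s$, $\mathsf d=x-y$, and with $z=1$.

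Since the series terminates, Dougall's identity applies here as an equality of rational functions in $s$ (both sides are rational in $s$ and agree on the Zariski-dense set of parameters satisfying the convergence hypothesis $\Re(\mathsf a+\mathsf b+\mathsf c+\mathsf d+1)>0$). Substituting
\[
W_0(s)=\frac{\Gamma(y+s+1)\,\Gamma(y+j+1)\,\Gamma(x+j+\ell+1)}{\Gamma(y+1)\,\Gamma(y+s+j+1)\,\Gamma(x+s+1)}
\]
into the right-hand side of~\eqref{eq:Dougall's}, every $y$-dependent Gamma factor cancels and one is left with the compact, $y$-free formula
\[
\Psi(s)=(x+j)^{\ul{j}}\,\frac{\Gamma(x+j+\ell+1)}{\Gamma(x+j+s+1)}.
\]
For an integer $s$ with $1\le s\le\ell$ the last factor is $(x+j+\ell)^{\ul{\ell-s}}$, and simplifying the prefactor via $\binom{\ell}{s}(s-1)!/(j+\ell)!=\bigl(s\,(\ell-s)!\,(j+\ell)^{\ul{j}}\bigr)^{-1}$ yields~\eqref{eq:conj1-id} at once.

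For~\eqref{eq:conj2-id} I would differentiate the closed form for $\Psi(s)$ at $s=0$. Because $(s)^{\oline{q}}=s\,(s+1)^{\oline{q-1}}$ vanishes at $s=0$ for every $q\ge 1$, each $W_q$ with $q\ge 1$ has a simple zero at $s=0$ with $W_q'(0)=(q-1)!\,\bigl[\text{rest of }W_q\bigr]_{s=0}$, which is precisely the $q$-th summand of $(j+\ell)!\,F(0)$; therefore $\Psi'(0)=W_0'(0)+(j+\ell)!\,F(0)$, i.e. $F(0)=\bigl(\Psi'(0)-W_0'(0)\bigr)/(j+\ell)!$. Computing $\Psi'(0)$ and $W_0'(0)$ by logarithmic differentiation of the two expressions above (using the digamma function $\Gamma'/\Gamma$, together with $W_0(0)=\Psi(0)=(x+j+\ell)^{\ul{j+\ell}}$) gives
\[
\Psi'(0)=-(x+j+\ell)^{\ul{j+\ell}}\,\frac{\Gamma'}{\Gamma}(x+j+1),\qquad W_0'(0)=(x+j+\ell)^{\ul{j+\ell}}\Bigl(\frac{\Gamma'}{\Gamma}(y+1)-\frac{\Gamma'}{\Gamma}(y+j+1)-\frac{\Gamma'}{\Gamma}(x+1)\Bigr),
\]
so that
\[
F(0)=\frac{(x+j+\ell)^{\ul{j+\ell}}}{(j+\ell)!}\Bigl[\Bigl(\frac{\Gamma'}{\Gamma}(y+j+1)-\frac{\Gamma'}{\Gamma}(y+1)\Bigr)-\Bigl(\frac{\Gamma'}{\Gamma}(x+j+1)-\frac{\Gamma'}{\Gamma}(x+1)\Bigr)\Bigr];
\]
the telescoping identity $\dfrac{\Gamma'}{\Gamma}(z+j+1)-\dfrac{\Gamma'}{\Gamma}(z+1)=\sum_{t=1}^{j}\dfrac{1}{z+t}$ then turns this into~\eqref{eq:conj2-id}.

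The main obstacle is the first step: verifying cleanly that the term ratio has the very-well-poised form displayed above, matching the five numerator and four denominator parameters against Dougall's template — in particular checking that the accumulated sign is $+1$, so that the series is evaluated at $1$ and not at $-1$ — and then carrying out the Gamma-function bookkeeping that makes the $y$-dependence of $W_0(s)$ cancel against that of the Dougall quotient. Once the closed form for $\Psi(s)$ is in hand, extracting~\eqref{eq:conj1-id}, and via one differentiation in $s$ also~\eqref{eq:conj2-id}, is routine.
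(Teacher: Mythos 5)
Your proof is correct and takes essentially the same route as the paper's: after a (slightly different) normalization you identify the sum as the same terminating very-well-poised ${}_5\mathbf F_4$ at unit argument and apply Dougall's theorem with exactly the parameters $\mathsf a=y+s$, $\mathsf b=j$, $\mathsf c=-s$, $\mathsf d=x-y$, and your treatment of $s=0$ via $\bigl(\Psi'(0)-W_0'(0)\bigr)/(j+\ell)!$ is the same limiting computation the paper carries out as $\lim_{s\to 0^+}\bigl(H(s)-E_1(0,s)\bigr)$, just phrased with digamma functions. The only (harmless) slip is the remark that $\Psi(s)$ is rational in $s$ — it is merely meromorphic, since it carries the factor $\Gamma(x+j+\ell+1)/\Gamma(x+j+s+1)$ — so the rational-continuation argument should be applied to the terminating ${}_5\mathbf F_4$ identity itself, after which everything goes through as you say.
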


\begin{proof}
Set \[
H(s):=
\frac
{(\ell-s)!(j+\ell)^{\ul{j}}
}
{(x+j+\ell)^{\ul{j+\ell-s}}}
F(s),
\]
so that $H(s)=\sum_{q=\delta_{s,0}}^jE_1(q,s)$, where
\[
E_1(q,s):=\frac{1}{s!}
{j\choose q}
(y+2q+s)(q+s-1)!
\frac{(y+j)^{\ul{j-q}}}
{(y+q+s+j)^{\ul{j+1}}}
(x-y)^{\ul{q}}
\frac{(x+j+s)^{\ul{s}}}
{(x+q+s)^{\ul{q+s}}}.
\]
It suffices to prove that 
\begin{equation}
\label{eq:H(s)}
\displaystyle
H(s)=\frac{1}{s}\text{ for }s\in\Z\text{ such that }1\leq s\leq \ell
,\end{equation}
and
\begin{equation}
\label{eq:H(0)}
H(0)=
\sum_{k=1}^j
\frac{1}{y+k}
-
\sum_{k=1}^j
\frac{1}{x+k}
.\end{equation}
Our strategy is to relate $H(s)$ to Dougall's Theorem. First note that 
\begin{equation}
\label{eq:EPQ}
\frac{E_1(q+1,s)}{E_1(q,s)}=
\frac{h_1(q)}{h_2(q)}\ \ \text{for }0\leq q\leq j\ \text{ and }\ E_1(q,s)=0\ \text{ for }q>j,
\end{equation}
where
\[
h_1(q)=
\left(q+\frac12 y+\frac12 s+1\right)(q+y+s)(q+y+s)(q-j)(q+s)(q+y-x),
\]
and
\[
h_2(q)=
\left(q+\frac{1}{2}y+\frac{1}{2}s\right)(q+y+s+j+1)(q+y+1)(q+x+s+1)(q+1).
\]
Furthermore,
\begin{equation}
\label{eq:E1q}
E_1(0,s)=
\frac{
(y+j)^{\ul{j}}(x+j+s)^{\ul{s}}
}
{s{(y+s+j)^{\ul{j}}(x+s)^{\ul{s}}}}
\quad\text{ for }s\in\Z^+.
\end{equation}
We can write
\eqref{eq:E1q} as 
$E_1(0,s)=\frac{1}{s}\phi(s)$, where
\[
\phi(s):=
\frac{
(y+j)^{\ul{j}}\Gamma(x+j+s+1)\Gamma(x+1)
}{
(y+s+j)^{\ul{j}}\Gamma(x+s+1)\Gamma(x+j+1)
}
.\]
Thus we can extend $E_1(0,s)$ to a meromorphic function of $s$ for any choice of $x,y\in\C$. Note that if $x,y>0$ then $E_1(0,s)$ does not have any poles for $s\in\R^+$. 
Using \eqref{eq:EPQ} we can 
extend $E_1(q,s)$ for $1\leq q\leq j$ to a continuous function for $s\geq 0$ as long as $x,y,x-y-j>0$. 
In particular, under the same conditions on $x$ and $y$, we can extend 
$H(s)$ to a continuous function of the parameter $s\in \R^+$ by setting $H(s):=\sum_{q=0}^j E_1(q,s)$. 
From \eqref{eq:EPQ} 
it follows that 
{
\begin{equation}
\label{eq:prodsum}
H(s)=E_1(0,s)\left(1+\sum_{q=1}^j
\left(\prod_{0\leq q'\leq q-1}\frac{h_1(q')}{h_2(q')}
\right)
\right).
\end{equation}
The products that appear in the summands of~\eqref{eq:prodsum} simplify, and by comparing with~\eqref{eq:pFFq} we obtain

\begin{equation}
\label{eq:H(s)=FFFF}
H(s)=
E_1(0,s)
\left[
{_5\mathbf F_4}
\left(
\begin{matrix}
\frac{1}{2}y+\frac{1}{2}s+1\,,\,
y+s\,,\,
-j\,,\, 
s \,,\, 
y-x\\
\frac{1}{2}y+\frac{1}{2}s\,,\,
  y+s+j+1\,,\,
y+1 \,,\,
 x+s+1
\end{matrix}
;1\right)
\right]
\quad\text{for $s\in\R^+$}.
\end{equation}
Note that in the hypergeometric series on the right hand side of~\eqref{eq:H(s)=FFFF}, only the first $j+1$ terms are nonzero (because of the $-j$ appearing in the top row of parameters).
}
From Dougall's Theorem for
$
\textsf a=y+s$, $\textsf b=j$, 
$\textsf c=-s$, and 
$\textsf d=x-y$,
we obtain  
\begin{align*}
H(s)&
=E_1(0,s)
\frac{
\Gamma(y+s+j+1)
\Gamma(y+1)
\Gamma(x+s+1)
\Gamma(x+j+1)
}
{
\Gamma(y+s+1)
\Gamma(y+j+1)
\Gamma(x+1)
\Gamma(x+j+s+1)
}
\\
&
=E_1(0,s)
\frac{(x+j)^{\ul{j}}}{(y+j)^{\ul{j}}}
\frac{(y+s+j)^{\ul{j}}}{(x+s+j)^{\ul{j}}}.
\end{align*}
If $s\in\Z$ and $1\leq s\leq \ell$, then from \eqref{eq:E1q} it follows that
\[
H(s)=
\frac{1}{s}
\left(
\frac{
(y+j)^{\ul{j}}(x+j+s)^{\ul{s}}
}
{{(y+s+j)^{\ul{j}}(x+s)^{\ul{s}}}}
\right)
\frac{(x+j)^{\ul{j}}}{(y+j)^{\ul{j}}}
\frac{(y+s+j)^{\ul{j}}}{(x+s+j)^{\ul{j}}}=\frac{1}{s}.
\]
This completes the proof of~\eqref{eq:H(s)}.
For~\eqref{eq:H(0)}, 
set \[\psi_3(s):=
\frac{(x+j)^{\ul{j}}}{(y+j)^{\ul{j}}}
\frac{(y+s+j)^{\ul{j}}}{(x+s+j)^{\ul{j}}},
\] so that $H(s)=E_1(0,s)\psi_3(s)$. 
Then
\begin{align*}
H(0)=\sum_{q=1}^j
E_1(q,0)&=\lim_{s\to 0^+}
\sum_{q=1}^j
E_1(q,s)=\lim_{s\to 0^+}\left(H(s)-E_1(0,s)\right)\\
&=\lim_{s\to 0}
E_1(0,s)\left(
\psi_3(s)-1\right)=\lim_{s\to 0^+}
\phi(s)\left(\frac{1}{s}(\psi_3(s)-1)\right)=\psi_3'(0)\lim_{s\to 0^+}\phi(s).
\end{align*}
It is straightforward to check that $\lim_{s\to 0^+}\phi(s)=1$ and $\psi_3'(0)=\sum_{k=1}^j\frac{1}{y+k}-
\sum_{k=1}^j\frac{1}{x+k}.$ 
\end{proof}

We now return to the proof of~\eqref{eq:simplied2}. Using the Leibniz rule and \eqref{eq:j=0i} we have
\begin{align*}
\label{eq:ddxcal*}
\frac{\partial}{\partial x}(x+j+\ell)^{\ul{j+\ell}}
&=
\frac{\partial}{\partial x}
\left(
(x+j+\ell)^{\ul{\ell}}(x+j)^{\ul{j}}\right)\\
&=
(x+j)^{\ul{j}}\frac{\partial}{\partial x}
(x+j+\ell)^{\ul{\ell}}
+
(x+j+\ell)^{\ul{\ell}}
\frac{\partial}{\partial x}
(x+j)^{\ul{j}}
\\
&=(x+j)^{\ul{j}}
\left(
\sum_{t=1}^\ell(-1)^{t+1}\frac{\ell^{\ul{t}}}{t}(x+j+\ell)^{\ul{\ell-t}}
\right)+
(x+j+\ell)^{\ul{j+\ell}}
\left(
\sum_{t=1}^j\frac{1}{x+t}\right).
\end{align*}
Identity~\eqref{eq:simplied2} follows from substituting the latter formula  in its left hand side, and rewriting its right hand side using Proposition~\ref{prp:F0Fss}.

%%%%%%%%%%%%%%%%%%%%%%%%%%%%%%%%%%%%%%%%%%%%%%%%%%%%%%%%%%
%%%%%%%%%%%%%%%%%%%%%%%%%%%%%%%%%%%%%%%%%%%%%%%%%%%%%%%%%%

\section{Capelli operators in Deligne's Category $\Rep(O_t)$}
\label{sec:Deligne}

In this section, we define the categorical Capelli operators 
$\bsm D_{t,\la}$ and prove Theorems~\ref{thm:A'}--\ref{thm:C'}.
We begin by defining general categorical analogues of the algebras 
$\sP(V)$ and $\sPD(V)$. 
Let $\mathsf C$ be a Karoubian $\mathbb F$-linear symmetric monoidal category, where $\mathbb F$ is a field of characteristic zero.
Given an object 
$\sfX$ of $\mathsf C$, set 
$\sfP_\sfX^d:=\sS^d(\sfX)$ for $d\geq 0$ and 
$\sfP_\sfX:=\bigoplus_{d\geq 0}\sfP_\sfX^d$, where  we consider $\sfP_\sfX^d$ as an object of the inductive completion of $\mathsf C$.
 Then
$\sfP_\sfX$ is a commutative algebra object when equipped
with the multiplication morphism $\bsm\mu_\sfX:\sfP_\sfX\otimes \sfP_\sfX\to \sfP_\sfX$ that is induced from the monoidal structure of $\mathsf C$.
If $\sfX$ is left rigid and $\sfX^*$ denotes the left dual
of $\sfX$, then we set \[
\sfPD_\sfX:=\sfP_\sfX\otimes \sfP_{\sfX^*}\cong \bigoplus_{p,q\geq 0} \sS^p(\sfX)\otimes \sS^q(\sfX^*).
\]
For $q\geq p\geq 0$ the evaluation morphism
$\bsm \epsilon_{\sS^p(\sfX)}^{}:\sS^p(\sfX^*)\otimes \sS^p(\sfX)\to\yeksf$ yields a morphism 
\[
\mathbf{tr}_{p,q}:\sS^p(\sfX^*)\otimes\sS^q(\sfX)\to\sS^{q-p}(\sfX),
\]
and we set \[
\bsm\gamma_{p,q}:\sfP_\sfX\otimes \sS^p(\sfX^*)\otimes \sS^q(\sfX)\to\sfP_\sfX
\ \ ,\ \ 
\bsm \gamma_{p,q}:=
\bsm\mu_\sfX\circ (1\otimes\mathbf{tr}_{p,q}).
\]
For $p>q\geq 0$ we set  
$\bsm \gamma_{p,q}:=0$. Then 
$\bsm\gamma:=\oplus_{p,q\geq 0}\bsm\gamma_{p,q}$ is a morphism $\bsm\gamma:\sfPD_\sfX\otimes \sfP_\sfX\to \sfP_\sfX$. Moreover, there exists a unique morphism 
$
\tilde{\bsm\mu}:\sfPD_\sfX\otimes\sfPD_\sfX\to\sfPD_\sfX
$
satisfying  
\[
\bsm\gamma(\tilde{\bsm\mu}\otimes 1)=\bsm\gamma (1\otimes\bsm\gamma).
\] Thus $\sfPD_\sfX$ is an associative algebra object and $\sfP_\sfX$ is a $\sfPD_\sfX$-module 
in the
inductive completion
of~$\mathsf C$. The ``order'' filtration of $\sfPD_\sfX$ is given by setting 
\[
\sfPD_\sfX^i:=\sfP_\sfX\otimes \sS^i(\sfX^*)
\ \text{ for }i\geq 0.
\]
There is also a  $\mathbb Z$-grading on $\sfPD_\sfX$ given by 
\[
\sfPD_{\sfX,i}:=\bigoplus_{p-q=i}\sS^p(\sfX)\otimes \sS^q(\sfX^*),
\]
so that 
$\sfPD_\sfX\cong\oplus_{i\in\Z}\sfPD_{\sfX,i}$.
Note that $\sfPD_{\sfX,0}$ is a subalgebra object of $\sfPD_{\sfX}$. 
If $\bsm\iota_\sfX:\yeksf\to\sfX\otimes \sfX^*$ is the co-evaluation of $\sfX$ then clearly $\bsm\iota_X\in\Hom(\yeksf,\sfPD_0)$. 

Next suppose that there exists an isomorphism $\sfX\xrightarrow{\bsm\beta}\sfX^*$, and set 
\begin{equation}
\label{eq:oemgass}
\bsm \omega_\sfX:=(1\otimes \bsm\beta^{-1})\bsm\iota_\sfX
\quad\text{and}\quad 
 \bsm \omega_\sfX^*:=(\bsm\beta\otimes 1)\bsm\iota_\sfX.
 \end{equation} It is straightforward to verify that $\bsm\omega_\sfX\in\Hom(\yeksf,\sfPD_2)$ and 
 $\bsm\omega_\sfX^*\in\Hom(\yeksf,\sfPD_{-2})$.

We now return to the Deligne category 
$\Rep(O_t)$.
Recall that $\Rep(O_t)$ is the  Karoubian $\C$-linear rigid symmetric monoidal category generated by the self-dual  object $\V_t$ of categorical dimension $t\in\C$. 
We denote the identity object
of $\Rep(O_t)$
 by $\yeksf$ and the braiding 
of $\Rep(O_t)$ 
 by \[
 \bsm\sigma: \sfM\otimes \sfN\to \sfN\otimes \sfM.
 \]
Since $\V_t$ is self-dual, we have evaluation and 
co-evaluation morphisms 
\[
\bsm\epsilon: \V_t\otimes \V_t\to\yeksf\quad\text{and}\quad\bsm\iota:\yeksf\to \V_t\otimes \V_t,
\] that satisfy the usual duality axioms. Furthermore, these morphisms satisfy the relations 
\[
\bsm\sigma\bsm \iota=\bsm\iota\quad,\quad
\bsm \epsilon\bsm\sigma=\bsm\epsilon\quad,\quad 
\bsm\epsilon\bsm\iota=t
.\]
By definition, for  $d\geq 0$ the $\C$-algebra $\mathrm{End}(\V_t^{\otimes d})$ is generated by the morphisms
$1^{\otimes(i-1)}\otimes\bsm\sigma\otimes 1^{\otimes (d-i-1)}$ and $\bsm\iota\bsm\epsilon\otimes 1^{\otimes (d-2)}$.
The category  $\Rep(O_t)$ 
satisfies the following properties
(see~\cite{DeligneT,DeligneTensorielle,LehrerZhang}). 
%and~\cite[Cor. 5.7]{LehrerZhang}).
\begin{prp}\label{category}
The following statements hold in 
the category $\Rep(O_t)$.
  \begin{itemize}
  \item[\rm(i)] For $d\geq 0$, the algebra $\End(\V_t^{\otimes d})$ is isomorphic to the Brauer algebra $Br_d(t)$.
    \item[\rm(ii)]  Every indecomposable object of $\Rep(O_t)$ is isomorphic to
      the image of a primitive idempotent in $\End(\V_t^{\otimes d})$ for some $d\geq 0$.
      
    \item[\rm(iii)] 
    $\Hom(\V_t^{\otimes p}, \V_t^{\otimes q})=\{0\}$ if $p-q$ is odd. If $p-q$ is even, then 
    $\Hom(\V_t^{\otimes p}, \V_t^{\otimes q})$     
 is generated    as a 
$(Br_q(t),Br_p(t))$-bimodule    
 by $\bsm\epsilon^{\otimes \frac{p-q}{2}}\otimes 1^{\otimes q}$  if $p>q$, and by 
    $\bsm\iota^{\otimes\frac{q-p}{2}}\otimes 1^{\otimes p}$ if $q>p$. 
    
    \item[\rm(iv)]  If $t\notin\mathbb Z$, then $\Rep(O_t)$ is an abelian semisimple tensor category and in particular $Br_d(t)$ is a semisimple algebra.
    \item[\rm(vi)] If $t\in\mathbb Z$ and 
    $p,q\geq \Z^{\geq 0}$ such that $p-2q=t$, then there exists a symmetric monoidal full
      functor 
      $\mathrm{F}_{p|q}:\Rep(O_t)\to\Rep(\g{osp}(p|2q))$ such that 
      $\mathrm{F}_{p|q}(\V_t)=\C^{p|2q}$, where $\C^{p|2q}$ is the defining representation of $\g{osp}(p|2q)$.
    \end{itemize}
  \end{prp}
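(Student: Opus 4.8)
The plan is to deduce Proposition~\ref{category} from the standard construction of $\Rep(O_t)$ together with known results of Deligne and of Lehrer--Zhang, so that the only genuinely new work is an elementary Krull--Schmidt argument for part~(ii). Recall that $\Rep(O_t)$ is the Karoubian (pseudo-abelian) envelope of the additive envelope of a skeletal category whose objects are symbols $[d]$, $d\geq 0$, and in which $\Hom([p],[q])$ is the $\C$-span of Brauer diagrams (perfect matchings of the $p+q$ boundary points), with composition given by diagram concatenation and each closed loop evaluated to the scalar $t$. Part~(i) is then immediate from this description together with the classical fact, due to Brauer, that the abstract algebra $Br_d(t)$ presented by generators and relations is isomorphic to the diagram algebra $\Hom([d],[d])$. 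Part~(iii) is likewise read off the diagram calculus: if $p-q$ is odd there is no matching of $p+q$ points, so the $\Hom$-space vanishes, while if $p-q$ is even (say $p>q$) every Brauer diagram from $[p]$ to $[q]$ lies in the $(Br_q(t),Br_p(t))$-bimodule generated by $\bsm\epsilon^{\otimes\frac{p-q}{2}}\otimes 1^{\otimes q}$ --- one contracts $\frac{p-q}{2}$ pairs of the bottom strands into caps and reorganises the remainder by elements of $Br_p(t)$ and $Br_q(t)$ --- and symmetrically for $q>p$ with $\bsm\iota^{\otimes\frac{q-p}{2}}\otimes 1^{\otimes p}$. See~\cite{DeligneTensorielle,LehrerZhang}.

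For part~(ii): by construction every object of $\Rep(O_t)$ is a direct summand of a finite direct sum $\bigoplus_{i=1}^r\V_t^{\otimes d_i}$. Since each $\End(\V_t^{\otimes d})\cong Br_d(t)$ is a finite-dimensional $\C$-algebra, $\Rep(O_t)$ is a Krull--Schmidt category, so an indecomposable object is isomorphic to an indecomposable summand of one of the $\V_t^{\otimes d_i}$, hence to the image of a primitive idempotent in $\End(\V_t^{\otimes d_i})$.

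Part~(iv) is Deligne's semisimplicity criterion: for $t\notin\Z$ the Markov-type trace form on each $Br_d(t)$ is nondegenerate (equivalently, the Gram matrices of the cell modules of $Br_d(t)$ are invertible), so $Br_d(t)$ is semisimple for every $d$ and $\Rep(O_t)$ is an abelian semisimple tensor category; see~\cite{DeligneTensorielle}. For the last item, the defining representation $\C^{p|2q}$ of $\g{osp}(p|2q)$ is self-dual of superdimension $p-2q=t$, with evaluation and co-evaluation morphisms furnished by the invariant even supersymmetric form and its inverse; these satisfy precisely the relations imposed on $\bsm\epsilon$ and $\bsm\iota$ in $\Rep(O_t)$ (invariance under the braiding, and $\bsm\epsilon\bsm\iota=\sdim\C^{p|2q}=t$). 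The universal property of $\Rep(O_t)$ --- it is the universal $\C$-linear Karoubian symmetric monoidal category generated by a self-dual object equipped with such a symmetric pairing of dimension $t$ --- then produces a unique symmetric monoidal functor $\mathrm F_{p|q}$ with $\mathrm F_{p|q}(\V_t)=\C^{p|2q}$ carrying $\bsm\epsilon,\bsm\iota$ to the form and its inverse. Fullness of $\mathrm F_{p|q}$ is the surjectivity of $Br_d(t)\to\End_{\g{osp}(p|2q)}((\C^{p|2q})^{\otimes d})$ for all $d$, i.e. the first fundamental theorem of invariant theory for the orthosymplectic supergroup, due to Sergeev and, in the form used here, to Lehrer--Zhang~\cite{LehrerZhang}.

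I expect the substantive inputs to be these two invariant-theoretic facts --- nondegeneracy of the Brauer trace form at non-integer $t$ behind part~(iv), and the orthosymplectic first fundamental theorem behind the fullness assertion --- but both are available in the cited literature, so at this level of detail the argument amounts to assembling those results and adding the short Karoubi-envelope observation for part~(ii).
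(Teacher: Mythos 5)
Your proposal is correct and matches the paper's treatment: the paper offers no proof of this proposition, simply citing Deligne and Lehrer--Zhang, and your argument assembles exactly those inputs (the diagrammatic description of $\Hom$-spaces for (i) and (iii), semisimplicity of the Brauer algebras for non-integral $t$ for (iv), and the universal property plus the orthosymplectic first fundamental theorem for the full functor in (vi)). The only genuinely new content, the Krull--Schmidt argument for (ii), is sound and at the level of detail the paper implicitly assumes.
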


Our next goal is to define 
categorical analogues of invariant differential operators, and in particular the Euler and Casimir operators.  
%Capelli operators that act on objects of $\Rep(O_t)$. 
To this end,  
we 
set 
%$\sfP:=\sfP_{\V_t}$, $\sfPD:=\sfPD_{\V_t}$, $\sfPD^i:=\sfPD_{\V_t}^i$, and 
%$\sfPD_i:=\sfPD_{\V_t,i}$.
%Next set 
\[
\sfA_t:=\Hom\left(\yeksf,\sfPD_{\V_t,0}\right)
\quad\text{and}\quad
\sfB_t:=\Hom\left(\yeksf,\sfPD_{\V_t}\right).
\]
Then $\sfA_t$ and $\sfB_t$ are algebras with the products defined by 
\[
\bsm\alpha_1\otimes \bsm\alpha_2\mapsto
\tilde{\bsm\mu}\circ (\bsm\alpha_1\otimes \bsm\alpha_2)\circ\bsm\iota_{\circ},
\]
where $\bsm\iota_{\circ}:\yeksf\to \yeksf\otimes \yeksf$ is the co-evaluation of $\yeksf$.
One can interpret $\sfB_t$ as
the algebra of $O_t$-invariant differential operators acting on $\sfP_{\V_t}$. Similarly, $\sfA_t$ can be interpreted as the algebra of $\mathit{GO}_t$-invariant differential operators on $\sfP_{\V_t}$.

The morphism $\bsm\gamma:\sfPD_{\V_t}\otimes\sfP_{\V_t}\to\sfP_{\V_t}$ induces 
homomorphisms of associative algebras
\[
\gamma_{\sfA_t}:{\sfA_t}\to\End(\sfP_{\V_t})\quad\text{and}\quad\gamma_{\sfB_t}:\sfB_t\to\End(\sfP_{\V_t}).
\]
Then $\bsm E_t:=\gamma_{\sfA_t}(\bsm\iota)$  acts by the scalar $d$ on $\sfP^d_{\V_t}$.
Set $\boldsymbol\Delta_t:=\frac{1}{2}\gamma_{\sfB_t}^{}\left(\bsm\omega_{\V_t}^{}\right)$ and $\boldsymbol\Theta_t:=\frac{1}{2}\gamma_{\sfB_t}^{}\left(\bsm\omega_{\V_t}^*\right)$,
where $\bsm\omega_{\V_t}^{}$ and $\bsm\omega_{\V_t}^*$ are defined as in  \eqref{eq:oemgass}, with $\bsm\beta:=1_{\V_t}$. 
It is straightforward to verify the relations
\begin{equation}\label{sl2}
[\mathbf E_t,\bsm\Delta_t]=-2\bsm\Delta_t,\ [\bsm E_t,\bsm\Theta_t]=2\bsm \Theta_t,\ [\bsm\Delta_t,\bsm\Theta_t]=\bsm E_t+\frac{t}{2}.
\end{equation}
Now set
\begin{equation}\label{eq:Casimir}
  \bsm C_t:=
  (\bsm E_t+\frac{t}{2})^2-2\bsm \Theta_t\bsm\Delta_t
  -2\bsm\Delta_t\bsm \Theta_t-\frac{t^2}{4}+t=\bsm E_t^2+t\bsm E_t-4\bsm \Theta_t\bsm\Delta_t-2\bsm E_t.
  \end{equation}
  One can check that $\bsm C_t$ is indeed the Casimir element for the Lie algebra object $\g g_t\simeq\Lambda^2(\V_t)$. 
  
\begin{prp}\label{AB} Let $t\in\C$ and let $\sfA_t$, $\sfB_t$, 
$\gamma_{\sfA_t}$, and $\gamma_{\sfB_t}$ be as above. Then the following statements hold.
  \begin{itemize}
  \item[\rm (i)] 
  $\gamma_{\sfB_t}(\sfB_t)$ is generated by 
  $\bsm\Delta_t$ and $\bsm\Theta_t$.
    \item[\rm (ii)] $\gamma_{\sfB_t}(\sfB_t)$ is isomorphic to the universal enveloping algebra $U(\g{sl}_2)$.
    \item[\rm (iii)] $\gamma_{\sfA_t}(\sfA_t)$ is generated by $\bsm C_t$ and $\bsm E_t$.
\end{itemize}
\end{prp}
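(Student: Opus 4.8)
The plan is to reduce all three statements to a single combinatorial description of the algebras $\sfA_t$ and $\sfB_t$, and then to feed that description into the commutation relations \eqref{sl2} and the formula \eqref{eq:Casimir}. Statements (i) and (iii) will then be formal, and the only substantial point will be the injectivity of a surjection $U(\g{sl}_2)\twoheadrightarrow\gamma_{\sfB_t}(\sfB_t)$ needed for (ii).

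\textbf{Step 1 (a diagrammatic lemma).} First I would establish that $\sfB_t$ is generated, as an algebra, by $\bsm\iota_{\V_t}$, $\bsm\omega_{\V_t}$, $\bsm\omega_{\V_t}^*$, and that $\sfA_t$ is generated by $\bsm\iota_{\V_t}$ and $\bsm\omega_{\V_t}\bsm\omega_{\V_t}^*$. By Proposition~\ref{category}(iii) and self-duality of $\V_t$, the space $\Hom(\yeksf,\sS^p(\V_t)\otimes\sS^q(\V_t^*))$ (which is nonzero only when $p\equiv q\bmod 2$) is spanned by the images of perfect-matching Brauer diagrams on $p+q$ strands, symmetrized over the first $p$ and over the last $q$ strands. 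Each such diagram is a disjoint union of three kinds of edges — $\V_t$ to $\V_t$, $\V_t^*$ to $\V_t^*$, and $\V_t$ to $\V_t^*$ — and a disjoint union of diagrams equals, up to terms of strictly lower order in the filtration $\{\sfPD_{\V_t}^i\}$, the corresponding product in $\sfPD_{\V_t}$ of copies of $\bsm\omega_{\V_t}$, $\bsm\omega_{\V_t}^*$, $\bsm\iota_{\V_t}$. An induction on order then yields the lemma, together with its degree-$0$ version for $\sfA_t$ (using that $\sfPD_{\V_t,0}$ is a subalgebra object).

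\textbf{Step 2 (statements (i) and (iii)).} Since $\gamma_{\sfB_t}$ is an algebra homomorphism restricting to $\gamma_{\sfA_t}$ on $\sfA_t$, and since by definition $\gamma_{\sfB_t}(\bsm\iota_{\V_t})=\bsm E_t$, $\gamma_{\sfB_t}(\bsm\omega_{\V_t})=2\bsm\Delta_t$, $\gamma_{\sfB_t}(\bsm\omega_{\V_t}^*)=2\bsm\Theta_t$, Step~1 gives $\gamma_{\sfB_t}(\sfB_t)=\langle\bsm E_t,\bsm\Delta_t,\bsm\Theta_t\rangle$ and $\gamma_{\sfA_t}(\sfA_t)=\langle\bsm E_t,\bsm\Delta_t\bsm\Theta_t\rangle$. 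By \eqref{sl2} we have $\bsm E_t=[\bsm\Delta_t,\bsm\Theta_t]-\tfrac t2$, so $\bsm E_t$ lies in the subalgebra generated by $\bsm\Delta_t,\bsm\Theta_t$; this is (i). Again by \eqref{sl2}, $\bsm\Delta_t\bsm\Theta_t$ and $\bsm\Theta_t\bsm\Delta_t$ differ by an element of $\langle\bsm E_t\rangle$, and by \eqref{eq:Casimir} we have $4\bsm\Theta_t\bsm\Delta_t=\bsm E_t^2+(t-2)\bsm E_t-\bsm C_t$; hence $\langle\bsm E_t,\bsm\Delta_t\bsm\Theta_t\rangle=\langle\bsm E_t,\bsm C_t\rangle$, which is (iii).

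\textbf{Step 3 (statement (ii)) and the main obstacle.} The relations \eqref{sl2} exhibit $\bsm\Delta_t,\bsm\Theta_t,\bsm E_t+\tfrac t2$ as a copy of $\g{sl}_2$ inside $\End(\sfP_{\V_t})$, so by (i) there is a surjection $\pi\colon U(\g{sl}_2)\twoheadrightarrow\gamma_{\sfB_t}(\sfB_t)$ carrying the $\g{sl}_2$-Casimir to a degree-one polynomial in $\bsm C_t$ (this is the content of \eqref{eq:Casimir}, with the constants arranged so that $\bsm C_t$ is the Casimir of $\g g_t\simeq\Lambda^2\V_t$). To prove $\pi$ is injective, I would argue as follows: if $\ker\pi\neq 0$, then — using the classical fact that every nonzero two-sided ideal of $U(\g{sl}_2)$ meets the center $\C[C]$ nontrivially — $\ker\pi$ contains a nonzero polynomial in $C$, so $\bsm C_t$ satisfies a nontrivial polynomial relation on $\sfP_{\V_t}$; but $\bsm C_t$ acts on the indecomposable summand $\V_{t,\la}$ with the single generalized eigenvalue $(\la_1-\la_2)(\la_1-\la_2+t-2)$, and these are infinitely many distinct numbers, a contradiction. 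The hard part of the whole proposition is precisely this last step: it rests on two inputs that must be secured carefully. The first is that \eqref{eq:Casimir} genuinely computes the image of the $\g{sl}_2$-Casimir, which is a short but delicate calculation with the braiding and (co)evaluation morphisms that I would verify on the low-degree components of $\sfP_{\V_t}$. The second is the statement that $\bsm C_t$ has infinitely many distinct eigenvalues; if the decomposition $\sfP_{\V_t}=\bigoplus_\la\V_{t,\la}$ is not yet available at this stage, I would instead restrict $\bsm C_t$ to the harmonic subspaces $\ker(\bsm\Delta_t)\cap\sS^d(\V_t)$, where \eqref{sl2} and \eqref{eq:Casimir} force it to act by $d(d+t-2)$, or else transport the question through the functor $\mathrm F_{p|q}$ of Proposition~\ref{category}(vi) for large $p,q$ and invoke the classical eigenvalue formula for the $\g{osp}(p|2q)$-Casimir on superpolynomials. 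The ring-theoretic fact about two-sided ideals of $U(\g{sl}_2)$ is standard (see, e.g., Dixmier's book on enveloping algebras) and may simply be cited.
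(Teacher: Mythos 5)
Your proposal reaches all three statements by a route that is genuinely different from the paper's, and in outline it works. For (i) the paper argues inside $\End(\sfP_{\V_t})$: it cuts an operator into its components in $\Hom(\sfP^p_{\V_t},\sfP^q_{\V_t})$ using polynomials in $\bsm E_t$ and then invokes Proposition~\ref{category}(iii) to express each component through $\bsm\Theta_t$ and $\bsm\Delta_t$; your symbol/filtration argument with Brauer matchings proves the same generation one step earlier, in $\sfB_t$ itself, and is fine modulo the routine check that $\tilde{\bsm\mu}$ multiplies leading symbols. For (iii) the paper does not argue directly: it identifies $\gamma_{\sfA_t}(\sfA_t)$ with the centralizer of $\bsm E_t$ in $\gamma_{\sfB_t}(\sfB_t)$ and deduces (iii) from (ii) plus the classical description of the centralizer of the Cartan in $U(\g{sl}_2)$; your direct derivation from the degree-zero part of the diagrammatic lemma together with \eqref{sl2} and \eqref{eq:Casimir} is a legitimate shortcut that makes (iii) independent of (ii). For the injectivity in (ii), the paper proves faithfulness of the $U(\g{sl}_2)$-module $\sfM=\bigoplus_\sfX\Hom(\sfX,\sfP_{\V_t})$ by exhibiting Verma submodules with infinitely many distinct lowest weights (using that $\bsm\Theta_t$ is a monomorphism and that the categorical dimension $\dim\sfP^d_{\V_t}=t(t+1)\cdots(t+d-1)/d!$ distinguishes $\sfP^d_{\V_t}$ from $\sfP^{d-2}_{\V_t}$) and citing Dixmier for the triviality of the intersection of their annihilators, treating $t\in2\Z$ by transport through $\mathrm F_{m|n}$. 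Your reduction via the standard fact that every nonzero two-sided ideal of $U(\g{sl}_2)$ meets $\C[C]$, combined with the absence of a polynomial identity for $\bsm C_t$, is a correct alternative, and your use of $\mathrm F_{p|q}$ for integer $t$ parallels the paper's; you are also right that invoking the summands $\V_{t,\la}$ at this stage would be circular, since Lemma~\ref{CD} and Corollary~\ref{Ddecomposition} are established later using Proposition~\ref{AB}.

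The one step you leave unsecured is the non-integer case of the spectral input: your harmonic-subspace argument needs $\ker(\bsm\Delta_t)\cap\sS^d(\V_t)$ to exist and to be nonzero for infinitely many $d$ (note this branch only makes sense for $t\notin\Z$, where $\Rep(O_t)$ is abelian semisimple; for $t\in\Z$ kernels need not exist, so there you must use the $\mathrm F_{p|q}$ branch). Nonvanishing is exactly the point the paper settles: if the harmonic part of $\sS^d(\V_t)$ were zero, then $\bsm\Delta_t$ would be a split monomorphism $\sS^d(\V_t)\to\sS^{d-2}(\V_t)$, which together with the (split) monomorphism $\bsm\Theta_t:\sS^{d-2}(\V_t)\to\sS^d(\V_t)$ would force $\sS^d(\V_t)\cong\sS^{d-2}(\V_t)$, contradicting the categorical dimension formula for all but finitely many $d$. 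So the gap is fillable with the same ingredients the paper uses (the monomorphism property of $\bsm\Theta_t$ and the dimension count), but as written your sketch simply asserts that the eigenvalues $d(d+t-2)$ are realized; you should add this comparison, or route the generic case through the known decomposition of symmetric powers in semisimple $\Rep(O_t)$. With that supplement, and citing the two-sided-ideal fact (which is standard), your argument goes through.
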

\begin{proof} 

(i)
Fix $b\in\gamma_{\sfB_t}(\sfB_t)$ and choose $d\in\N$ sufficiently large such that the restriction of  $b$ on 
$\sfP^{\leq d}_{\V_t}:=
  \bigoplus_{p\leq d}\sfP^{p}_{\V_t}$ uniquely determines $b$ among elements of $\gamma_{\sfB_t}(\sfB_t)$. 
  Since projections from 
$\sfP^{\leq d}_{\V_t}$ onto $\sfP^{p}_{\V_t}$ for $0\leq p\leq d$ can be expressed as polynomials in $\bsm E_t$, we can write $b$ as 
\[
b=\sum_{0\leq p,q\leq d} f_q(\bsm E_t)bg_p(\bsm E_t),
\] where 
$f_i,g_i\in\C[x]$. Each summand $f_q(\bsm E_t)bg_p(\bsm E_t)$ can be identified with an element of $\Hom(\sfP^{p}_{\V_t},\sfP^{q}_{\V_t})$ for some $p,q\geq 0$. To complete the proof of (i), it suffices to express element of $\Hom(\sfP^{p}_{\V_t},\sfP^{q}_{\V_t})$ in terms of 
$\bsm\Delta_t$ and $\bsm\Theta_t$.
   To prove the latter claim, first assume $p=q$. Recall that the algebra
$\End(\V_t^{\otimes p})$ is generated by the symmetric group $S_p$ and the morphism $\bsm\iota\bsm\epsilon\otimes 1^{\otimes (p-2)}$. Since 
$\sfP_{\V_t}^p=\sS^p(\V_t)$ is a direct summand of $\V_t^{\otimes p}$, the canonical restriction $\End(\V_t^{\otimes p})\to \End(\sfP_{\V_t}^p)$ is a surjection. 
But the action of $S_p$
  on $\sS^p(\V_t)$ is trivial, hence $\End(\sfP_{\V_t}^p)=\End(\sS^p(\V_t))$ is generated by $\bsm\Theta_t\bsm\Delta_t$. 
Next assume that $p\neq q$.   Then by Proposition~\ref{category}(iii), for $q>p$ the homomorphism 
  \[
  \C[\bsm\Theta_t\bsm\Delta_t]
  \bsm\Theta_t^{\frac{q-p}{2}}
  \C[\bsm\Theta_t\bsm\Delta_t]\to\Hom(\sS^p(\V_t), \sS^q(\V_t))
  \] is surjective. Similarly,  for  $p>q$ the homomorphism
  \[
  \C[\bsm\Theta_t\bsm\Delta_t]\bsm\Delta_t^{\frac{p-q}{2}}\C[\bsm\Theta_t\bsm\Delta_t]\to\Hom(\sS^p(V_t), \sS^q(V_t))
  \]
  is surjective.\\[1mm]
  \noindent (ii) Since 
$\bsm\Delta_t,\bsm E_t+\frac{t}{2},-\bsm\Theta_t$ form a standard $\mathfrak{sl}_2$-triple, we obtain a surjection $
U(\g{sl}_2)\to \gamma_{\sfB_t}(\sfB_t)
$. Next we prove that the latter homomorphism is injective.  

First, we assume that $t\notin 2\mathbb Z$. For every simple object $\sfX$ of $\Rep(O_t)$
the space
$\sfM_\sfX:=\Hom(\sfX,\sfP_{\V_t})$ is a $\gamma_{\sfB_t}(\sfB_t)$-module and hence  a $U(\g{sl}_2)$-module.
It suffices to show that $\sfM:=\oplus_\sfX \sfM_\sfX$ is a faithful $U(\g{sl}_2)$-module, where the direct sum is taken over isomorphism classes of simple objects of $\Rep(O_t)$.
Note that each $\sfM_\sfX$ is a weight module with weights in $\Z^{\geq 0}+\frac{t}{2}$. From the theory of Verma modules for  $\g{sl}_2$ it follows  that if $d$ is such that 
$
\dim\Hom(\sfX,\sfP_{\V_t}^{d-2})<\dim\Hom(\sfX,\sfP_{\V_t}^{d})
$, then  
$\sfM$ contains a Verma module
with lowest weight $d+\frac{t}{2}$ as a subrepresentation. Next we show that for all but finitely many $d\geq 0$, the latter inequality holds for some $\sfX$. 
Indeed since $\bsm\Theta_t$ induces a monomorphism $\sfP_{\V_t}^{d-2}\to \sfP_{\V_t}^d$, it suffices to show that $\sfP_{\V_t}^{d-2}$ and $\sfP_{\V_t}^d$ are not isomorphic objects. The latter follows from comparing the categorical dimensions, which is given by the formula
\[
\dim \sfP^d_{\V_t}=\frac{t(t+1)\dots (t+d-1)}{d!}.
\]
Hence $\sfM$ contains $\g{sl}_2$-submodules which are Verma modules with lowest weights $d+\frac{t}{2}$ for all but finitely many $d\in\mathbb N$. 
The intersections of the annihilators of these Verma modules is the trivial ideal of $U(\g{sl}_2)$ (see \cite[Sec. 8.4]{Dixmier}), hence $\sfM$ is  a faithful $U(\g{sl}_2)$-module.

If  $t\in 2\mathbb Z$ the result follows from the analogous result for $\g{osp}(2m|2n)$ with $2m-2n=t$, where $m,n\in\N$ (see for instance~\cite{ShermanPhD}) using the functor
$\mathrm F_{m|n}$ 
defined in Proposition~\ref{category}(vi).\\[1mm]
\noindent (iii) Note that $\gamma_{\sfA_t}(\sfA_t)$ is the centralizer of $\bsm E_t$ inside $\gamma_{\sfB_t}(\sfB_t)$. Thus (ii) implies that (iii) is equivalent to the well-known fact that the centralizer of the Cartan subalgebra in  $U(\mathfrak{sl}_2)$ is generated by the Cartan subalgebra and the Casimir operator.
  \end{proof}

\begin{lem}\label{CD} Let $d\in\Z^{\geq 0}$ and  let $\bsm C_{t,d}$ 
be the image of $\bsm C_t$ in $\End(\sfP_{\V_t}^d)$.
Let $p_t^d(x)\in\C[x]$ be the minimal degree monic polynomial such that $p^d_t(\bsm C_{t,d})=0$.
  Then
  \[
  p^d_t(x)=\prod_{\tiny\begin{array}{l}0\leq a\leq d\\
  d\equiv a\ \mathrm{mod}\ 2\end{array}} (x-a(a+t-2))
  .\]
\end{lem}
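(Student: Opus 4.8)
\emph{Proof proposal.} The plan is to reduce the statement to the description of $\End(\sfP^d_{\V_t})$ given in Proposition~\ref{AB}. Since $\bsm E_t$ acts on $\sfP^d_{\V_t}$ by the scalar $d$, formula~\eqref{eq:Casimir} gives, for every $d'\ge 0$,
\[
\bsm C_{t,d'}=d'(d'+t-2)-4\,\bsm\Theta_t\bsm\Delta_t\big|_{\sfP^{d'}_{\V_t}}.
\]
By (the proof of) Proposition~\ref{AB}(ii) the algebra $\End(\sfP^d_{\V_t})$ is commutative and generated by $\bsm\Theta_t\bsm\Delta_t|_{\sfP^d_{\V_t}}$, hence — since the latter is an invertible affine function of $\bsm C_{t,d}$ by the displayed identity — it is generated by $\bsm C_{t,d}$. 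Therefore $\End(\sfP^d_{\V_t})\cong\C[x]/\bigl(p^d_t(x)\bigr)$ and $\dim_\C\End(\sfP^d_{\V_t})=\deg p^d_t$. So it suffices to prove: (a) $p^d_t(x)$ divides $q(x):=\prod_{k=0}^{\lfloor d/2\rfloor}\bigl(x-(d-2k)(d-2k+t-2)\bigr)$, and (b) $\dim_\C\End(\sfP^d_{\V_t})=\lfloor d/2\rfloor+1$. Granting both, $p^d_t$ and $q$ are monic of the same degree with $p^d_t\mid q$, so $p^d_t=q$, which is the asserted formula after the substitution $a=d-2k$.

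For (a), set $c_k:=(d-2k)(d-2k+t-2)$. As $\bsm C_t$ is the Casimir of $\g g_t\simeq\Lambda^2(\V_t)$ it is central in $\gamma_{\sfB_t}(\sfB_t)$ (which is immediate from~\eqref{sl2} and~\eqref{eq:Casimir}), so $\bsm C_t$ commutes with $\bsm\Theta_t$. Combining this with the displayed identity for $d'=d-2k$ yields
\begin{align*}
(\bsm C_{t,d}-c_k)\circ\bsm\Theta_t^{\,k}
&=\bsm\Theta_t^{\,k}\circ(\bsm C_{t,d-2k}-c_k)\\
&=-4\,\bsm\Theta_t^{\,k+1}\circ\bsm\Delta_t|_{\sfP^{d-2k}_{\V_t}},
\end{align*}
so $(\bsm C_{t,d}-c_k)$, precomposed with $\bsm\Theta_t^{\,k}$, factors through $\bsm\Theta_t^{\,k+1}$. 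Applying the (commuting) factors of $q(\bsm C_{t,d})$ in the order $k=0,1,\dots,\lfloor d/2\rfloor$ and using that $\bsm\Delta_t$ vanishes on $\sfP^0_{\V_t}$ and $\sfP^1_{\V_t}$ (its target being the zero object) shows $q(\bsm C_{t,d})=0$; hence $p^d_t\mid q$.

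For (b), I would show $\dim_\C\End(\sfP^d_{\V_t})$ is independent of $t$. By Proposition~\ref{category}(i), $\End(\sfP^d_{\V_t})=e\,Br_d(t)\,e$, where $Br_d(t)=\End(\V_t^{\otimes d})$ and $e=\tfrac1{d!}\sum_{\sigma\in S_d}\sigma$ is the symmetrizing idempotent with image $\sfP^d_{\V_t}=\sS^d(\V_t)$; the expansion of $e$ in the Brauer-diagram basis of $Br_d(t)$ has coefficients not involving $t$. For any Brauer diagram $D$ one has $e\,D\,e=\tfrac1{(d!)^2}\sum_{\sigma,\tau\in S_d}\sigma D\tau$, and each product $\sigma D\tau$ is a single Brauer diagram with coefficient $1$, since composing a diagram with permutations on either side never produces a closed loop and hence never a power of $t$. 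Thus, inside the $t$-independent vector space with basis the Brauer diagrams, the subspace $e\,Br_d(t)\,e$ — and in particular its dimension — is the same for all $t$. Evaluating at any $t\notin\Z$, where $\Rep(O_t)$ is semisimple by Proposition~\ref{category}(iv) and $\sS^d(\V_t)$ decomposes multiplicity-freely into the $\lfloor d/2\rfloor+1$ simple ``spherical harmonic'' objects of degrees $d,d-2,\dots$ (the classical harmonic decomposition; cf.~\cite{GoodmanWallach}), we obtain $\dim_\C\End(\sfP^d_{\V_t})=\lfloor d/2\rfloor+1$ for all $t$. With (a) this completes the proof.

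The delicate point is (b). At the non-semisimple values $t\in 2\Z^{\le 0}$ the polynomial $q$ has repeated roots — precisely at the coincidences $(d-2k)(d-2k+t-2)=(d-2k')(d-2k'+t-2)$, i.e.\ $2(d-k-k')+t=2$ — so the minimal polynomial $p^d_t$ genuinely acquires higher-multiplicity roots there, and what must be excluded is that $p^d_t$ is a \emph{proper} divisor of $q$; this is exactly the content of the dimension count. The ``diagram-independence'' argument above seems the most economical route; alternatively one may read the dimension off the classification of indecomposable objects of $\Rep(O_t)$, or transport the computation to $\g{osp}(p|2q)$ via the functor $\mathrm{F}_{p|q}$ of Proposition~\ref{category}(vi) (choosing $p,q$ so that $\mathrm{F}_{p|q}$ is faithful on $\sS^d(\V_t)$) and invoke Proposition~\ref{prp:decompositionofS(V)}.
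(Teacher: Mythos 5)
Your proposal is correct, but it takes a genuinely different route from the paper's, most visibly at the delicate values $t\in2\Z^{\leq 0}$. The paper argues in two regimes: for $t\notin\Z$ it runs an induction on $d$, using semisimplicity of $\Rep(O_t)$ and surjectivity of $\bsm\Delta_t$ to split $\sfP^d_{\V_t}\cong\sfP^{d-2}_{\V_t}\oplus\ker\bsm\Delta_t$ and conclude $p^d_t(x)=(x-d(d+t-2))\,p^{d-2}_t(x)$; for $t\in\Z$ it transfers the problem through the full functor $\mathrm F_{a|b}$ of Proposition~\ref{category}(vi), shows that $\End(\sfP^d_{\V_t})\to\End(\sP^d(V))$ is an isomorphism via the same dimension bound $1+\lfloor d/2\rfloor$ that you use, and then reads off the minimal polynomial from the known $\g{gosp}$-module structure of $\sP^d(V)$ (Proposition~\ref{prp:decompositionofS(V)}, i.e.\ the input from \cite{ShermanPhD}). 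You instead stay entirely inside the diagram category: the chained identity $\prod_{k=0}^{j}(\bsm C_{t,d}-c_k)=(-4)^{j+1}\bsm\Theta_t^{j+1}\bsm\Delta_t^{j+1}$ on $\sfP^d_{\V_t}$, with $c_k:=(d-2k)(d-2k+t-2)$, gives $q(\bsm C_{t,d})=0$ uniformly in $t$, and the observation that $e\,Br_d(t)\,e$ is spanned by the $t$-independent elements $eDe$ shows that $\dim\End(\sfP^d_{\V_t})=\deg p^d_t$ is constant in $t$, so the generic case forces $p^d_t=q$ even at the degenerate values, where $q$ has double roots. What each approach buys: the paper's argument also produces the isomorphism $\End(\sfP^d_{\V_t})\cong\End(\sP^d(V))$, at the cost of relying on fullness of $\mathrm F_{a|b}$ and on the orthosymplectic input; yours avoids the supergroup side for this lemma and exhibits an explicit annihilating polynomial valid for every $t$, but must import the generic-$t$ fact that $\sS^d(\V_t)$ is multiplicity free with $1+\lfloor d/2\rfloor$ simple summands. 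Two small repairs: the generation of $\End(\sfP^d_{\V_t})$ by $\bsm\Theta_t\bsm\Delta_t$ comes from the proof of Proposition~\ref{AB}(i), not (ii); and \cite{GoodmanWallach} concerns $O(N)$ rather than non-integral $t$, so either quote the standard multiplicity-free decomposition of $\sS^d(\V_t)$ in $\Rep(O_t)$ for $t\notin\Z$ (which the paper in effect reproves by its induction), or use your own $t$-independence to evaluate the dimension at a large integer $t=N\geq d$, where $Br_d(N)$ acts faithfully on $(\C^N)^{\otimes d}$ and the classical harmonic decomposition applies verbatim.
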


\begin{proof} For $d\leq 1$ the statement is trivial since $\bsm C_{t,0}=0$ and $\bsm C_{t,1}=(t-1)1_{\V_t}$. We will prove the statement by induction on $d$.

 First we assume
 that $t\notin\Z$. We claim that \[\sfP_{\V_t}^d\cong \sfP_{\V_t}^{d-2}\oplus\ker \bsm\Delta_t\big|_{\sfP^d_{\V_t}}.\] Indeed, from representation theory of $\g{sl}_2$ (see the proof of Proposition~\ref{AB}(ii)) it follows that $\bsm\Delta_t$ is surjective. Semisimplicity of $\Rep(O_t)$ implies the claim. 
  
  By \eqref{eq:Casimir}, the operator $\bsm C_{t,d}$ acts on $\ker \bsm\Delta_t$ 
  by the scalar  $d(d+t-2)$.
  Since $d(d+t-2)$ is not a root of  
  $p^{d-2}_t(x)$, we have   $p^d_t(x)=(x-d(d+t-2))p^{d-2}_t(x)$. The statement now follows by induction.

  Next assume that $t\in \Z$.  Choose positive integers $a,b$ such that $a-2b=t$.
  Then from Proposition~\ref{prp:decompositionofS(V)}
  it follows that
  $p^r_t(x)$ is the minimal polynomial for $\mathrm F_{a|b}(\bsm C_{t,d})$,
where $\mathrm F_{a|b}$ is the functor given in Proposition~\ref{category}(iv).
The homomorphism $\mathrm F_{a|b}: \End(\sfP^d)\to \End ( \sP^d(V))$ is surjective since $\mathrm F_{a|b}$ is full. On the other hand,
$ \End(\sfP^d)$ is spanned by $\left\{\bsm \Theta_t^p\bsm\Delta_t^p\,:\,0\leq p\leq \lfloor{\frac{d}{2}}\rfloor\right\}$ (see the proof of Proposition~\ref{AB}(i)). Hence
\[
\dim \End(\sfP_{\V_t}^d)\leq
1+\Big\lfloor{\frac{d}{2}}\Big\rfloor
=\dim\End ( \sP^d(V)),
\]
and thus $\mathrm F_{a|b}$ is an isomorphism. Consequently,  the minimal polynomials of $\mathrm F_{a|b}(\bsm C_{t,d})$ and  $\bsm C_{t,d}$ are identical.
The statement now follows from the decomposition of $\sP^d(V)$ as a $\g{gosp}(a|2b)$-module (see
Proposition~\ref{prp:decompositionofS(V)} and~\cite[Sec. 10]{ShermanPhD}).
\end{proof}

\begin{rmk}
\label{rem:CD} If $t\notin 2\mathbb Z^{\leq 0}$ then $p^d_t(x)$ does not have multiple roots. If $t\in   2\mathbb Z^{\leq 0}$ then the multiplicity of
each root of  $p^d_t(x)$ is at most $2$.
\end{rmk}

\begin{lem} 
\label{lem:Wuis}
Let 
$p_t^d(x)$ be as in Lemma~\ref{CD}. Let $u_i$, $1\leq i\leq e$, be the distinct roots of $p^d_t(x)$ with corresponding multiplicities $m_{u_i}\in\{1,2\}$. 
Set $\mathsf W_{u_i}:=\ker\left( (\bsm C_{t,d}-u_i)^{m_{u_i}}\right)$. Then
$\sfP_{\V_t}^d\cong\bigoplus_{i=1}^e 
\mathsf W_{u_i}$ and 
 every $\mathsf W_{u_i}$ is an indecomposable object of $\Rep(O_t)$.
\end{lem}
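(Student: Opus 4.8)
The plan is to reduce everything to linear algebra over the endomorphism algebra $\End(\sfP^d_{\V_t})$, which I claim is the \emph{cyclic} algebra $\C[\bsm C_{t,d}]$, hence isomorphic to $\C[x]/\big(p^d_t(x)\big)$. First I would record that $\End(\sfP^d_{\V_t})=\C[\bsm C_{t,d}]$: it is shown in the proof of Proposition~\ref{AB}(i) that $\End(\sfP^d_{\V_t})$ is generated by $\bsm\Theta_t\bsm\Delta_t$, and since $\bsm E_t$ acts on $\sfP^d_{\V_t}$ by the scalar $d$, formula~\eqref{eq:Casimir} gives $\bsm C_{t,d}=d(d+t-2)\,1_{\sfP^d_{\V_t}}-4\,\bsm\Theta_t\bsm\Delta_t$, so that $\C[\bsm C_{t,d}]=\C[\bsm\Theta_t\bsm\Delta_t]=\End(\sfP^d_{\V_t})$. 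Because $p^d_t(x)$ is by definition the minimal polynomial of $\bsm C_{t,d}$ (Lemma~\ref{CD}), evaluation at $\bsm C_{t,d}$ produces an isomorphism of $\C$-algebras $\C[x]/\big(p^d_t(x)\big)\xrightarrow{\,\sim\,}\End(\sfP^d_{\V_t})$.

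Next I would apply the Chinese Remainder Theorem. Writing $p^d_t(x)=\prod_{i=1}^e (x-u_i)^{m_{u_i}}$, we get $\C[x]/\big(p^d_t(x)\big)\cong\prod_{i=1}^e\C[x]/\big((x-u_i)^{m_{u_i}}\big)$; choosing polynomials $e_i\in\C[x]$ representing the primitive idempotents of the factors, the elements $e_i(\bsm C_{t,d})$ are orthogonal idempotents of $\End(\sfP^d_{\V_t})$ summing to $1_{\sfP^d_{\V_t}}$. Since $\Rep(O_t)$ is Karoubian, this produces a direct sum decomposition $\sfP^d_{\V_t}\cong\bigoplus_{i=1}^e\im\big(e_i(\bsm C_{t,d})\big)$. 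I would then identify $\im\big(e_i(\bsm C_{t,d})\big)$ with $\mathsf W_{u_i}$: as the $u_i$ are distinct, the class of $(x-u_i)^{m_{u_i}}$ is zero in the $i$-th factor and a unit in all of the others, so $(\bsm C_{t,d}-u_i)^{m_{u_i}}$ vanishes on $\im\big(e_i(\bsm C_{t,d})\big)$ and is an automorphism of $\bigoplus_{j\neq i}\im\big(e_j(\bsm C_{t,d})\big)$; hence $\im\big(e_i(\bsm C_{t,d})\big)$ is precisely the kernel $\mathsf W_{u_i}=\ker\big((\bsm C_{t,d}-u_i)^{m_{u_i}}\big)$ (in particular this kernel exists as a subobject, even though $\Rep(O_t)$ need not be abelian), which gives $\sfP^d_{\V_t}\cong\bigoplus_{i=1}^e\mathsf W_{u_i}$.

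For the indecomposability of each $\mathsf W_{u_i}$ I would use that, for an idempotent $f\in\End(X)$ in a Karoubian category, $\End(\im f)\cong f\,\End(X)\,f$; applied to $f=e_i(\bsm C_{t,d})$ and combined with the isomorphism of the first paragraph this gives $\End(\mathsf W_{u_i})\cong\C[x]/\big((x-u_i)^{m_{u_i}}\big)$. The latter ring is local: its unique maximal ideal is generated by the class of $x-u_i$, which is nilpotent. An object whose endomorphism ring is local admits no idempotent endomorphisms other than $0$ and $1$, hence is indecomposable; therefore every $\mathsf W_{u_i}$ is indecomposable, completing the argument. The only point that is genuinely delicate rather than purely formal is the interpretation of ``$\ker$'' in the statement when $t\in\Z$, where $\Rep(O_t)$ fails to be abelian; this is resolved exactly by realizing (equivalently, defining) $\mathsf W_{u_i}$ as the image of the spectral idempotent $e_i(\bsm C_{t,d})$, after which everything reduces to bookkeeping with the finite-dimensional commutative ring $\C[x]/\big(p^d_t(x)\big)$, and I do not expect any computational obstacle beyond this conceptual one.
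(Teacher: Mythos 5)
Your proof is correct and follows essentially the same route as the paper: your Chinese Remainder idempotents are exactly the paper's primary-decomposition projections $q_i(\bsm C_{t,d})g_i(\bsm C_{t,d})$, and indecomposability is obtained in both cases by identifying $\End(\mathsf W_{u_i})$ with the local ring $\C[x]/\big((x-u_i)^{m_{u_i}}\big)$ via the fact that $\bsm C_{t,d}$ generates $\End(\sfP^d_{\V_t})$. Your extra remarks (realizing the kernel as the image of the spectral idempotent in the non-abelian setting, and the explicit reduction $\bsm C_{t,d}=d(d+t-2)-4\bsm\Theta_t\bsm\Delta_t$ on $\sfP^d_{\V_t}$) are accurate refinements of what the paper leaves implicit.
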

\begin{proof} 
The proof of the decomposition 
$\sfP^d_{\V_t}\cong\bigoplus_{i=1}^e 
\mathsf W_{u_i}$
is similar to that of the Primary Decomposition Theorem in linear algebra.  Set
$q_i(x):=\frac{p_t^d(x)}{(x-u_i)^{m_{u_i}}}$
and choose
$g_i,h_i\in\C[x]$  such that $
q_i(x)g_i(x)+(x-u_i)^{m_{u_i}}h_i(x)=1$.
Then the morphisms 
$\bsm\pi_i:=q_i(\bsm C_{t,d})g_i(\bsm C_{t,d})$ are the projections onto the $\mathsf W_{u_i}$.  

Next we show that each $\mathsf W_{u_i}$ is indecomposable. 
Recall that $\bsm C_{t,d}$ generates the algebra 
$\End(\sfP^d_{\V_t})$ (see the proof of Proposition~\ref{AB}(i)). Since $\mathsf W_{u_i}$ is a direct summand, it follows that $\End(\mathsf W_{u_i})$ is also generated by the restriction of $\bsm C_{t,d}$.
 Consequently, $\End(\mathsf W_{u_i})\cong\C[x]/\lag(x-u_i)^{m_{u_i}}\rag $, hence 
$\mathsf W_{u_i}$ is indecomposable.
  \end{proof}

Recall 
from~\eqref{eq:kbarrr} that
$\ul k:=-\frac{t}{2}$.
For $\la\in\cI$ and $t\in\C$ set
\[
\bsm c_\lambda(t):=(\lambda_1-\lambda_2)(\lambda_1-\lambda_2+t-2)
=(\la_2-\la_1)(\la_2-\la_1+2\ul k+2). 
\]
Now let $d\in\Z^{\geq 0}$, and let $p_t^d(x)$ be as in Lemma~\ref{CD}. Given $\la\in\cI$ such that $|\la|=d$, we  denote the multiplicity of the root $\bsm c_\la(t)$ of 
$p_t^d(x)$ by $m_\la$. 
Lemma~\ref{lem:Wuis} immediately implies the following corollary, which is the categorical analogue of~\eqref{eq:Pdco}.  Recall that 
$\cI'_{\ul k}$ is defined as in~\eqref{eq:kbarrr}.

\begin{cor}\label{Ddecomposition} 
For $\la\in\cI$  
set $\V_{t,\la}:=\ker\left((\bsm C_{t,d}-\bsm c_\la(t))^{m_\la}\right)$ where $d:=|\la|$. Then the following statements hold.   
\begin{itemize}
\item[\rm(i)] If $t\not\in 2\Z^{\leq 0}$ then  $\sfP^d_{\V_t}\cong
\bigoplus_{\lambda\in\cI,|\lambda|=d}\V_{t,\la}$.
\item[\rm (ii)] If $t\in 2\mathbb Z^{\leq 0}$ then $\sfP^d_{\V_t}\cong
\bigoplus_{\lambda\in _d^{}\cI'_{\ul k}}\V_{t,\la}$.
\end{itemize}
\end{cor}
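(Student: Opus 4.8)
The plan is to read the statement off Lemma~\ref{lem:Wuis} once the \emph{distinct} roots of $p^d_t(x)$ and their multiplicities have been matched with partitions of size $d$. Lemma~\ref{lem:Wuis} already gives $\sfP^d_{\V_t}\cong\bigoplus_{i=1}^e\mathsf W_{u_i}$ with each $\mathsf W_{u_i}=\ker\big((\bsm C_{t,d}-u_i)^{m_{u_i}}\big)$ indecomposable, where $u_1,\dots,u_e$ are the distinct roots of $p^d_t(x)$ and $m_{u_i}\in\{1,2\}$ their multiplicities (Remark~\ref{rem:CD}). By Lemma~\ref{CD}, the roots of $p^d_t(x)$, counted with multiplicity, form the multiset $\{a(a+t-2):a\in S_d\}$ with $S_d:=\{a\in\Z^{\geq 0}:a\leq d,\ a\equiv d\bmod 2\}$. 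Since $\bsm c_\la(t)=(\la_1-\la_2)(\la_1-\la_2+t-2)$ depends on $\la$ only through $a:=\la_1-\la_2$, and $\la\mapsto\la_1-\la_2$ restricts to a bijection between $\{\la\in\cI:|\la|=d\}$ and $S_d$ (with inverse $a\mapsto\big(\tfrac{d+a}{2},\tfrac{d-a}{2}\big)$), the whole statement reduces to analysing the fibres of $a\mapsto a(a+t-2)$ on $S_d$. The governing elementary identity is
\[
a(a+t-2)=a'(a'+t-2)\iff a=a'\ \text{ or }\ a+a'=2-t .
\]

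When $t\notin 2\Z^{\leq0}$ the second alternative is vacuous on $S_d$: the number $a+a'$ is a non-negative even integer, so $a+a'=2-t$ would force $t$ to be an even integer $\leq2$, and the only such $t$ outside $2\Z^{\leq0}$ is $t=2$, where $a+a'=0$ already yields $a=a'$. Hence $a\mapsto a(a+t-2)$ is injective on $S_d$, every root of $p^d_t(x)$ is simple, $m_\la=1$ for all $\la$, and $\la\mapsto\bsm c_\la(t)$ is a bijection from $\{\la\in\cI:|\la|=d\}$ onto $\{u_1,\dots,u_e\}$; thus $\V_{t,\la}=\ker(\bsm C_{t,d}-\bsm c_\la(t))=\mathsf W_{\bsm c_\la(t)}$ and (i) follows. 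When $t\in 2\Z^{\leq0}$, set $\ul k=-t/2\in\Z^{\geq 0}$, so that $2-t=2\ul k+2$ and the partner of $a$ is $a':=2\ul k+2-a$. I would then run through Definition~\ref{dfn:reg-qreg-sing} with $k_\circ=\ul k$, translating each condition into an inequality in the single variable $a$ via $\la_1=\tfrac{d+a}{2}$ and $0\le a\le d$, and verify: (a) if $\la$ is $\ul k$-regular then either $a'=a=\ul k+1$ or $a'\notin S_d$, so the fibre through $\la$ is $\{\la\}$ and $\bsm c_\la(t)$ is a simple root; (b) if $\la$ is $\ul k$-quasiregular then $a'\in S_d$, $a'\neq a$, and the other point of the fibre is exactly $\la^\dagger$, which is $\ul k$-singular (Remark~\ref{daggerinv} with $\ul k$ in place of $k$), so $\bsm c_\la(t)=\bsm c_{\la^\dagger}(t)$ is a double root; and (c) every root of $p^d_t(x)$ equals $\bsm c_\la(t)$ for a unique $\la\in{}_d^{}\cI'_{\ul k}$ (if $\mu=\big(\tfrac{d+a}{2},\tfrac{d-a}{2}\big)$ is $\ul k$-singular, replace it by $\mu^\dagger\in{}_d^{}\cI'_{\ul k}$). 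Points (a)--(b) show that the root-multiplicity of $\bsm c_\la(t)$ is precisely the $m_\la$ of the corollary, and point (c) identifies $\{u_1,\dots,u_e\}$ with $\{\bsm c_\la(t):\la\in{}_d^{}\cI'_{\ul k}\}$; hence $\V_{t,\la}=\mathsf W_{\bsm c_\la(t)}$ for $\la\in{}_d^{}\cI'_{\ul k}$, and (ii) follows from Lemma~\ref{lem:Wuis}.

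All the representation-theoretic substance is already packaged into Lemmas~\ref{CD} and~\ref{lem:Wuis} and Remark~\ref{rem:CD}, so the only real work is the bookkeeping of the previous paragraph: confirming that the trichotomy regular/quasiregular/singular of Definition~\ref{dfn:reg-qreg-sing} matches exactly the three behaviours of the pair $(a,a')=(a,2\ul k+2-a)$ relative to $S_d$, and that in the two-element case the partner is indeed $\la^\dagger$. This is elementary but is the step most prone to off-by-one slips; carrying it out entirely in terms of $a=\la_1-\la_2$ subject to $0\le a\le d$ and $a\equiv d\bmod 2$ makes all the relevant inequalities transparent and is, I expect, the safest route.
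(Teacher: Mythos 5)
Your proposal is correct and follows exactly the route the paper intends: the paper states the corollary as an immediate consequence of Lemma~\ref{lem:Wuis} (with Lemma~\ref{CD} and Remark~\ref{rem:CD} supplying the roots and multiplicities), and your argument simply makes explicit the bookkeeping identifying the fibres of $a\mapsto a(a+t-2)$ on $\{a\in\Z^{\geq 0}: a\leq d,\ a\equiv d\ \mathrm{mod}\ 2\}$ with the regular/quasiregular/singular trichotomy and the involution $\la\mapsto\la^\dagger$. The case analysis, including the $t=2$ edge case and the verification that the quasiregular partner is exactly $\la^\dagger$, checks out.
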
  
We are now going to define the \emph{eigenvalue polynomial} $\bsm f_{\bsm D}$ for an element $\bsm D\in\sf A_t$.  
From now on, for $\bsm D\in\sfA_t$ we denote the restriction of $\bsm \gamma_{\sfA_t}(\bsm D)$ to $\V_{t,\mu}$ by $\bsm D\big|_{\V_{t,\mu}}$. 
\begin{prp}
\label{prp:Delieig}Let $\bsm D\in\sfA_t$. Set $\mathcal S:=\cI$ if $t\not\in2\Z^{\leq0}$ and $\mathcal S:=\cI_{\ul k}'$ otherwise. Then there exists a unique symmetric polynomial $\bsm f_D(x,y)$ such that for every $\la\in\mathcal S$, we have
\[
\bsm D\big|_{\V_{t,\la}}=
\bsm f_{\bsm D}(\la_1-\ul k-1,\la_2)\cdot 1_{\V_{t,\la}}+\bsm \eta_\la,
\]  
where $\bsm \eta_\la\in\End(\V_{t,\la})$ satisfies $\bsm \eta_\la^2=0$. 
\end{prp}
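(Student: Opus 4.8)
The plan is to transport the argument of Proposition~\ref{prp:CandEgen} to the categorical setting, with Proposition~\ref{AB}(iii) playing the role of the statement that $C$ and $E$ generate $\sPD(V)^{\g g}$, and with Corollary~\ref{Ddecomposition} and Remark~\ref{rem:CD} replacing the decomposition~\eqref{eq:Pdco}. First I would invoke Proposition~\ref{AB}(iii): the algebra $\gamma_{\sfA_t}(\sfA_t)\subseteq\End(\sfP_{\V_t})$ is generated by the two commuting operators $\bsm C_t$ and $\bsm E_t$, so there is a polynomial $q(x,y)\in\C[x,y]$ with $\gamma_{\sfA_t}(\bsm D)=q(\bsm C_t,\bsm E_t)$.

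Next, fix $\la\in\mathcal S$ and set $d:=|\la|$. By Corollary~\ref{Ddecomposition} the object $\V_{t,\la}$ is a direct summand of $\sfP^d_{\V_t}$, on which $\bsm E_t$ acts by the scalar $d$, and by construction $\V_{t,\la}=\ker\bigl((\bsm C_{t,d}-\bsm c_\la(t))^{m_\la}\bigr)$ with $m_\la\le 2$ by Remark~\ref{rem:CD}. Hence, writing $N_\la:=\bsm C_{t,d}\big|_{\V_{t,\la}}-\bsm c_\la(t)\cdot 1_{\V_{t,\la}}$, we have $N_\la^2=0$. Substituting these into $q$ and expanding (all terms involving $N_\la^2$ vanish) gives
\[
\bsm D\big|_{\V_{t,\la}}
=q\bigl(\bsm c_\la(t)\cdot 1_{\V_{t,\la}}+N_\la,\ d\cdot 1_{\V_{t,\la}}\bigr)
=q(\bsm c_\la(t),d)\cdot 1_{\V_{t,\la}}+\tfrac{\partial q}{\partial x}(\bsm c_\la(t),d)\,N_\la,
\]
so with $\bsm\eta_\la:=\tfrac{\partial q}{\partial x}(\bsm c_\la(t),d)\,N_\la$ we get $\bsm\eta_\la^2=0$; this is the categorical counterpart of Proposition~\ref{lem:D=Sf+Dsqf}.

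It then remains to exhibit the symmetric polynomial and argue uniqueness. Here I would observe that $\bsm c_\la(t)=\bigl((\la_1-\ul k-1)-\la_2\bigr)^2-(\ul k+1)^2$ and $d=(\la_1-\ul k-1)+\la_2+(\ul k+1)$ are both symmetric functions of the pair $(\la_1-\ul k-1,\la_2)$, so
\[
\bsm f_{\bsm D}(x,y):=q\bigl((x-y)^2-(\ul k+1)^2,\ x+y+\ul k+1\bigr)
\]
is symmetric in $x,y$ and satisfies $\bsm f_{\bsm D}(\la_1-\ul k-1,\la_2)=q(\bsm c_\la(t),d)$ for every $\la\in\mathcal S$, which is the scalar part of $\bsm D\big|_{\V_{t,\la}}$. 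For uniqueness, if $g$ is a second such symmetric polynomial then $h:=\bsm f_{\bsm D}-g$ vanishes at $(\la_1-\ul k-1,\la_2)$ for all $\la\in\mathcal S$; for each fixed $\la_2\in\Z^{\ge 0}$ all sufficiently large $\la_1$ are admissible (only finitely many are excluded when $\mathcal S=\cI'_{\ul k}$), and the resulting points lie on the affine line $\{e_2=\la_2(e_1-\la_2)\}$ in the coordinates $e_1=x+y,\ e_2=xy$, so $h$ vanishes on infinitely many distinct lines and is therefore identically zero. This is the same Zariski-density argument used at the end of the proof of Proposition~\ref{prp:CandEgen}.

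The argument is almost entirely formal; the only genuine input is the inequality $m_\la\le 2$, i.e.\ $N_\la^2=0$, which is Remark~\ref{rem:CD} and ultimately rests on Lemma~\ref{CD} and the classification in Proposition~\ref{prp:decompositionofS(V)}. The one point to be careful about is that in the first step the polynomial $q$ is a polynomial in the \emph{images} $\bsm C_t,\bsm E_t\in\End(\sfP_{\V_t})$ and not in abstract generators of $\sfA_t$ (which need not inject into $\End(\sfP_{\V_t})$); this is exactly what Proposition~\ref{AB}(iii) provides, so it causes no real difficulty.
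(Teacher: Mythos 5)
Your proof is correct and takes essentially the same approach as the paper's: existence from Proposition~\ref{AB}(iii), Corollary~\ref{Ddecomposition}, and Remark~\ref{rem:CD}, following the template of Propositions~\ref{prp:CandEgen} and~\ref{lem:D=Sf+Dsqf}, with uniqueness by a Zariski-density argument. The only cosmetic difference is in the uniqueness step, where the paper passes from $\mathcal S$ to all of $\cI$ via the coincidence relation~\eqref{eq:coinc} and then uses density of $\cI$, whereas you argue directly that the points indexed by $\mathcal S$ already force a symmetric polynomial to vanish; both versions are fine.
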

\begin{proof}
Existence follows from Corollary~\ref{Ddecomposition} and Proposition~\ref{AB}(iii), and the argument is similar to the proof of Proposition~\ref{prp:CandEgen}. Uniqueness follows from the fact that $\mathcal P$ is Zariski dense in $\C^2$. Note that when $t\in2\Z^{\leq 0}$, the coincidence relation~\eqref{eq:coinc} implies that two symmetric polynomials that agree on $\mathcal S$ also agree on $\cI$. 
\end{proof} 

\begin{dfn}
\label{dfn-evalpolp}
Let $\bsm D\in\sfA_t$. The \emph{eigenvalue polynomial} of $\bsm D$ is the polynomial
$\bsm f_{\bsm D}(x,y)\in\C[x,y]$ that is given in Proposition~\ref{prp:Delieig}.
\end{dfn}

{
\subsection*{The construction of the Capelli operators in $\Rep(O_t)$}
Our next task is to define the Capelli operators $\{\bsm D_{t,\la}\}_{\la\in\cI}$. 
\begin{itemize}
\item If $t\notin 2\mathbb Z^{\leq 0}$ then for all $\la\in\cI$ we define $\bsm D_{t,\lambda}\in\sfA_t$ as the element corresponding to the co-evaluation morphism
\begin{equation}
\label{eq:CategCap}
\yeksf\xrightarrow{\ \bsm\epsilon_{\V_{t,\la}}^{}\ } \V_{t,\la}\otimes \V_{t,\la}^*.
\end{equation}
\item 
If $t\in 2\mathbb Z_{\leq 0}$ 
%(so that $\ul k\in\Z^{\geq 0}$. 
and $\lambda$ 
is $\ul k$-regular or $\ul k$-quasiregular, we define $\bsm D_{t,\lambda}$ as 
in~\eqref{eq:CategCap}. 

\item
If $t\in 2\mathbb Z_{\leq 0}$ and 
$\lambda$ is $\ul k$-singular, 
we
define 
$\bsm D_{t,\la}$ as the element of $\sfA_t$ corresponding to the morphism
\begin{equation}
\label{eq:CategCap2}
\yeksf\xrightarrow{\,\bsm C_{t,|\la|}-\bsm c_\lambda(t)\,} \V_{t,\la^\dagger}\otimes \V_{t,\lambda^\dagger}^*,
\end{equation}
Using the fact that $(\bsm C_{t,|\la|}
-\bsm c_\lambda(t))
\in \End(\V_{t,\la^\dagger})\cong \Hom(1,\V_{t,\la^\dagger}\otimes \V_{t,\la^\dagger}^*)$.
Here and in the rest of the paper $\la^\dagger$ is defined as in 
Remark~\ref{daggerinv}, but with $k$ replaced by $\ul k$. 
Lemma~\ref{lem:Wuis} implies that 
$\bsm C_{t,|\la|}-\bsm c_\lambda(t)$ is a nilpotent element of order two in $\End(\V_{t,\la^\dagger})$.

\end{itemize}
}

For $d\geq 0$ let $\sfJ_t^{d}$  denote the annihilator of $\sfP^{\leq d}_{\V_t}:=\bigoplus_{p\leq d}\sfP^p_{\V_t}$ in $\sfA_t$.
Since $\sfA_t$ is commutative, $\sfJ^d$ is a 
two-sided ideal of $\sfA_t$. Moreover, we have a decomposition
\[
\sfA_t=\sfA_t^d\oplus\sfJ_t^d,
\]
where $\sfA_t^d:=\sfPD_{\V_t}^d\cap\sfA_t$.
Let \[
\bsm\pi_{t,d}: \sfA_t\to\sfA_t^d
\] denote the projection with kernel $\sfJ_t^d$.
Our next task is to show that as $t$ varies, the projections $\bsm\pi_{t,d}$ deform with polynomials coefficients. 
From Proposition~\ref{AB} it follows that $\bsm\gamma_{\sfA_t}$ is an injection. Thus, from now on we identify $\bsm C_t$ and $\bsm E_t$ with their images under $\bsm \gamma_{\sfA_t}$.

\begin{lem}
\label{lem:polynompitd}
For $i,j,d\geq 0$, there exist polynomials
$\phi_{i,j,d,i',j'}\in\C[x]$ such that 
\[
\bsm \pi_{t,d}(\bsm C_t^i\bsm E_t^j)=\sum_{2i'+j'\leq d}
\phi_{i,j,d,i',j'}(t)\bsm C_t^{i'}\bsm E_t^{j'}\ \text{ for all }t\in\C.
\]
\end{lem}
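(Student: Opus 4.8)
The plan is to express the projection $\bsm\pi_{t,d}(\bsm C_t^i\bsm E_t^j)$ in a fixed basis of $\sfA_t^d$, record the conditions that determine it as a square linear system over $\C[t]$, and then exploit the fact that this system is invertible for \emph{every} $t\in\C$ to conclude that its solution depends polynomially on $t$.

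First I would pin down the structure of $\sfA_t^d$. Since $\gamma_{\sfA_t}$ is injective (Proposition~\ref{AB}) and, as differential operators, $\bsm C_t$ has order $\le 2$ and $\bsm E_t$ order $\le 1$ (both lying in the degree-preserving part $\sfPD_{\V_t,0}$), each monomial $\bsm C_t^{i'}\bsm E_t^{j'}$ with $2i'+j'\le d$ belongs to $\sfA_t^d=\sfPD^d_{\V_t}\cap\sfA_t$, and there are exactly $N_d=|\cI(d)|$ of them. On the other hand $\sfA_t^d\cong\sfA_t/\sfJ_t^d$ acts faithfully on $\sfP^{\le d}_{\V_t}$, and since it preserves each $\sfP^p_{\V_t}$ its image lies in $\bigoplus_{p=0}^d\End(\sfP^p_{\V_t})$; because $\End(\sfP^p_{\V_t})=\C[\bsm C_{t,p}]$ (proof of Proposition~\ref{AB}(i)) has dimension $\deg p^p_t(x)=\lfloor p/2\rfloor+1$ by Lemma~\ref{CD}, and $\bsm E_t$ separates the summands $\sfP^p_{\V_t}$, a Lagrange interpolation argument identifies $\sfA_t^d$ with $\bigoplus_{p=0}^d\End(\sfP^p_{\V_t})$, of total dimension $\sum_{p=0}^d(\lfloor p/2\rfloor+1)=N_d$. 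Hence, once one knows that the $N_d$ monomials above are linearly independent, they form a basis of $\sfA_t^d$; this I would prove uniformly in $t$ by writing a hypothetical relation as $\sum_{i'}h_{i'}(\bsm E_t)\bsm C_t^{i'}=0$ with $h_{i'}\in\C[u]$ of degree $\le d-2i'$, restricting to $\sfP^p_{\V_t}$ to get $\sum_{i'}h_{i'}(p)\bsm C_{t,p}^{i'}=0$ in $\End(\sfP^p_{\V_t})$ for $0\le p\le d$, and then using that $1,\bsm C_{t,p},\dots,\bsm C_{t,p}^{\lfloor p/2\rfloor}$ are linearly independent (the minimal polynomial of $\bsm C_{t,p}$ has degree $\lfloor p/2\rfloor+1$) to run a Vandermonde-type argument over $p=d,d-1,\dots,0$ that, combined with the degree bounds $\deg h_{i'}\le d-2i'$, forces each $h_{i'}$ to vanish at $p=2i',\dots,d$ and hence identically.

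Now I would solve for the coefficients. For each $p\le d$ the operator $\bsm\pi_{t,d}(\bsm C_t^i\bsm E_t^j)$ must act on $\sfP^p_{\V_t}$ as $p^j\bsm C_{t,p}^i$; reducing this, as well as each $p^{j'}\bsm C_{t,p}^{i'}$, modulo the monic polynomial $p^p_t(x)$ — whose coefficients are polynomials in $t$ — rewrites everything as a $\C[t]$-combination of $1,\bsm C_{t,p},\dots,\bsm C_{t,p}^{\lfloor p/2\rfloor}$. Equating the coefficients of $\bsm C_{t,p}^a$ over all $p\le d$ and $0\le a\le\lfloor p/2\rfloor$ yields a square system $M(t)\,\boldsymbol\phi=\boldsymbol v(t)$ of size $N_d$ with entries in $\C[t]$, where $\boldsymbol\phi$ collects the sought coefficients $\phi_{i,j,d,i',j'}$. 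By the previous paragraph $M(t)$ is invertible for every $t\in\C$, so $\det M(t)\in\C[t]$ vanishes nowhere and is therefore a nonzero constant; consequently $M(t)^{-1}=\det M(t)^{-1}\operatorname{adj} M(t)$ has polynomial entries and $\boldsymbol\phi(t)=M(t)^{-1}\boldsymbol v(t)$ is polynomial in $t$, as required.

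The step I expect to be the main obstacle is the uniform-in-$t$ linear independence of the monomials $\bsm C_t^{i'}\bsm E_t^{j'}$. For $t\notin 2\Z^{\le 0}$ it is easy, since then $\bsm C_{t,p}$ is semisimple with simple spectrum $\{\bsm c_\la(t):|\la|=p\}$ and one can appeal to the Knop--Sahi interpolation basis $\{P^{\ul k}_\la\}$; but at the exceptional values $t\in 2\Z^{\le 0}$, where $\bsm C_{t,p}$ may carry Jordan blocks, one is forced to argue directly with minimal polynomials as sketched above. Once that independence is secured, the passage from the a priori rational solution $M(t)^{-1}\boldsymbol v(t)$ to a genuinely polynomial one is immediate from the observation that a nowhere-vanishing complex polynomial is constant.
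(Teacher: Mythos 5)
Your proposal is correct, but it proves the lemma by a genuinely different route than the paper. The paper's argument is a recursive rewriting procedure: using the relations \eqref{sl2} and \eqref{eq:Casimir} it expands $\bsm C_t^i$ in monomials $\bsm E_t^p\bsm\Theta_t^q\bsm\Delta_t^q$ with coefficients visibly polynomial in $t$, discards the terms with $q>\lfloor d/2\rfloor$ because $\bsm\Delta_t^q\big|_{\sS^d(\V_t)}=0$, converts back to powers of $\bsm C_t$ and $\bsm E_t$, and finishes with one-variable Lagrange interpolation in $\bsm E_t$ together with an induction on the exponents; polynomiality of the coefficients is read off from each rewriting step. You instead work globally: you identify $\sfA_t^d$ with $\bigoplus_{p=0}^d\End(\sfP^p_{\V_t})$ by restriction, show that the $N_d$ monomials $\bsm C_t^{i'}\bsm E_t^{j'}$ with $2i'+j'\le d$ form a basis of $\sfA_t^d$ \emph{uniformly in $t$} (your downward induction on $p$, counting roots of the $h_{i'}$ against the degree bound $\deg h_{i'}\le d-2i'$, is sound precisely because Lemma~\ref{CD} guarantees the minimal polynomial of $\bsm C_{t,p}$ has degree $\lfloor p/2\rfloor+1$ even at the degenerate values $t\in2\Z^{\leq 0}$), and then solve a square linear system over $\C[t]$ whose matrix is invertible for every $t$, so its determinant is a nonzero constant and its inverse has polynomial entries. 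What each approach buys: the paper's proof is constructive and stays entirely inside the $\g{sl}_2$-relations, whereas yours is less computational and yields as a by-product the structural statement that the monomials with $2i'+j'\le d$ are a $t$-independent basis of $\sfA_t^d$, which the paper leaves implicit. Two points you should make explicit if you write this up: first, the fact that $\bsm C_t^{i'}\bsm E_t^{j'}$ lies in $\sfA_t^d$ when $2i'+j'\le d$ requires the multiplicativity of the order filtration on $\sfPD_{\V_t}$ (this is also tacitly used by the paper, since the statement of the lemma presupposes that the right-hand side lies in $\sfA_t^d$); second, surjectivity of the restriction map $\sfA_t^d\to\bigoplus_{p\le d}\End(\sfP^p_{\V_t})$ should be phrased via Proposition~\ref{AB}(iii) (any polynomial in $\bsm C_t,\bsm E_t$ comes from $\sfA_t$) followed by applying $\bsm\pi_{t,d}$, since the interpolating element you construct need not itself lie in $\sfA_t^d$. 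Neither point is a gap, only a detail to record.
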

\begin{proof}
We will describe a recursive procedure for finding the $\phi_{i,j,d,i',j'}$ with the desired properties. \\[1mm]
\noindent\textbf{Step 1}. Using one-variable interpolation, for every $j\geq 0$ we can find scalars $a_p$ for $0\leq p\leq d$  such that $\bsm E_t^j-\sum_{p=0}^da_p\bsm E_t^p\in\sfJ_t^d$. This proves the statement for the special case $i=0$. \\[1mm]
\noindent\textbf{Step 2}. We show that for every $i\geq 0$ there exist polynomials $\psi_{p,q}(t)$ for $1\leq p\leq N_i$ and $0\leq q\leq \lfloor\frac{d}{2}\rfloor$, where $N_i\in\N$, such that the element $\bsm L\in\sfA_t$ defined by 
\begin{equation}
\label{eq:Lqqq}
\bsm L:=\sum_{p=0}^{\lfloor\frac{d}{2}\rfloor}
\sum_{q=1}^{N_i}
\psi_{p,q}(t)\bsm C_t^p
\bsm E_t^q,
\end{equation}
satisfies $\bsm C_t^i-\bsm L\in\sfJ_t^d$.
Indeed by~\eqref{sl2} and~\eqref{eq:Casimir} we can write 
$\bsm C_t^i$ as a linear combination of monomials of the form $\bsm E_t^p\bsm\Theta_t^q\bsm\Delta_t^q$ with coefficients that are polynomial in $t$. Furthermore we can discard the monomials for which $q>\lfloor \frac{d}{2}\rfloor$, because 
${\bsm\Delta}_t^q\big|_{\sS^d(\V_t)}=0$. 
Next we use~\eqref{sl2} and~\eqref{eq:Casimir} again to rewrite
${\bsm\Theta}_t^q	\bsm\Delta_t^q$ first in terms of powers of $\bsm \Theta_t\bsm\Delta_t$ and then in terms of powers of $\bsm C_t$. The latter process  will only add extra powers of $\bsm E_t$ and coefficients that are polynomial in $t$. This completes the proof of existence of $\bsm L$.\\[1mm]
\noindent\textbf{Step 3}. Fix a pair $(i,j)$ of exponents. Assume that the statement of the lemma holds for all $\bsm C_t^{i'}\bsm E_t^{j'}$ such that either $i'<i$, or $i'=i$ and $2i'+j'<2i+j$. We now verify the lemma  for
$\bsm C_t^{i}\bsm E_t^j$. If $2i+j\leq d$ there is nothing to prove, and therefore we assume that $2i+j\geq d+1$. 
If $j=0$ then using Step 2 we can reduce the problem to monomials of the form 
$\bsm C_t^{i'}\bsm E_t^{j'}$ where $i'<i$. 
If $j>0$ then set
$
\bsm L':=
\bsm C_t^i\prod_{r=2i}^{2i+j-1}(\bsm E_t-r)
$. Note that 
$\bsm C_t-\bsm L'$ is a linear combination of monomials 
$\bsm C_t^i\bsm E_t^{j'}$ satsifying $2i+j'<2i+j$.
Furthermore, 
by Step 2 the restriction of $\bsm C_t^i$ to 
$\bigoplus_{p=0}^{2i-1}\sS^p(\V_t)$ is equal to a linear combination  of  monomials $\bsm C_t^p\bsm E_t^q$ where $p\leq i-1$, with polynomial coefficients. 
It follows that the restriction of 
$\bsm L'$ to $\bigoplus_{p=0}^{d}\sS^p(\V_t)$
is also equal to a linear combination
 of monomials $\bsm C_t^p\bsm E_t^q$ where $p\leq i-1$, with polynomial coefficients. 
\end{proof}

The next lemma is the categorical incarnation  of 
Lemma~\ref{lem:lamuvalRep}.

\begin{lem}\label{vanishing} Let
$\la\in\cI$ and set  $d:=|\la|$.
\begin{itemize}
\item[\rm (i)]  If $t\not\in2\Z^{\leq0}$, 
then 
$\bsm D_{t,\la}$ is the unique element of 
$\sfA_t^d$ such that
$\bsm D_{t,\la}\big|_{\V_{t,\la}}=1$
and   $\bsm D_{t,\la}\big|_{\V_{t,\mu}}=0$ for all $\mu\in\cI$ satisfying
  $|\mu|\leq |\la|$ and $\mu\neq\lambda$.

\item[\rm (ii)]
If
$t\in2\Z^{\leq0}$ and $\la$ is $\ul k$-regular,
then 
$\bsm D_{t,\la}$ is the unique element of 
$\sfA_t^d$ such that
$\bsm D_{t,\la}\big|_{\V_{t,\la}}=1$
and   $\bsm D_{t,\la}\big|_{\V_{t,\mu}}=0$ for all $\mu\in\cI'_{\ul k}$ satisfying
  $|\mu|\leq |\la|$ and $\mu\neq\lambda$.

\item[\rm (iii)]
If $t\in2\Z^{\leq0}$ and $\la$ is 
$\ul k$-singular, 
then $\bsm D_{t,\la}$ is the unique element of 
$\sfA_t^d$ such that
\[
\bsm D_{t,\lambda}\big|_{\V_{t,\lambda^\dagger}}=
\bsm C_{t,d}-\bsm c_\lambda(t)
,\]
and 
 $\bsm D_{t,\lambda}\big|_{\V_{t,\mu}}=0$ for all $\mu\in\cI'_{\ul k}$ satisfying $|\mu|\leq |\la|$ 
and $\mu\neq\lambda^\dagger$.

\item[\rm (iv)] 
If $t\in2\Z^{\leq0}$ and $\la$ is $\ul k$-quasiregular,  then 
$\bsm D_{t,\la}$ is the unique element of 
$\sfA_t^d$ such that
\[
\bsm D_{t,\la}\big|_{\V_{t,\la}}=1,
\]
and   $\bsm D_{t,\la}\big|_{\V_{t,\mu}}=0$ for all $\mu\in\cI'_{\ul k}$ satisfying
  $|\mu|\leq |\la|$ 
and $\mu\neq\lambda$.

\end{itemize}
\end{lem}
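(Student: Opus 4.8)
The plan is to verify, uniformly across the four cases, that $\bsm D_{t,\la}$ meets the stated conditions and that those conditions pin it down inside $\sfA_t^d$; throughout write $d:=|\la|$. I would first record that $\bsm D_{t,\la}$ is, by construction, the element of $\sfA_t$ represented by a morphism $\yeksf\to\sS^d(\V_t)\otimes\sS^d(\V_t^*)$, obtained by post-composing a morphism $\yeksf\to\V_{t,\eta}\otimes\V_{t,\eta}^*$ with the structural inclusions, where $\eta=\la$ in cases (i), (ii), (iv) and $\eta=\la^\dagger$ in case (iii). Since $\sS^d(\V_t)\otimes\sS^d(\V_t^*)\subseteq\sfPD_{\V_t,0}\cap\sfPD_{\V_t}^{d}$, this places $\bsm D_{t,\la}$ in $\sfPD_{\V_t}^d\cap\sfA_t=\sfA_t^d$. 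Moreover, as $\bsm\gamma_{p,q}=0$ whenever $p>q$, the operator $\bsm D_{t,\la}$ annihilates $\sfP^e_{\V_t}$ for every $e<d$; in particular $\bsm D_{t,\la}\big|_{\V_{t,\mu}}=0$ whenever $|\mu|<d$.

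The crux will be computing $\bsm D_{t,\la}\big|_{\sfP^d_{\V_t}}$. Here I would unwind the definition of $\bsm\gamma$ — the composite of the symmetrized trace $\mathbf{tr}_{d,d}$ with the multiplication $\bsm\mu_{\V_t}$ — against the evaluation/co-evaluation calculus of the rigid category $\Rep(O_t)$, to identify this restriction with the endomorphism of $\sS^d(\V_t)$ that corresponds, under the canonical isomorphism $\End(\sS^d(\V_t))\cong\Hom(\yeksf,\sS^d(\V_t)\otimes\sS^d(\V_t^*))$, to the defining morphism. In cases (i), (ii), (iv) this is the idempotent projection of $\sS^d(\V_t)$ onto the summand $\V_{t,\la}$ furnished by Corollary~\ref{Ddecomposition}, which acts as $1_{\V_{t,\la}}$ on $\V_{t,\la}$ and as $0$ on every other degree-$d$ summand; in case (iii) it is $(\bsm C_{t,d}-\bsm c_\la(t))$ followed by the projection onto $\V_{t,\la^\dagger}$, which restricts on $\V_{t,\la^\dagger}$ to the order-two nilpotent $\bsm C_{t,d}\big|_{\V_{t,\la^\dagger}}-\bsm c_\la(t)$ (here $\bsm c_\la(t)=\bsm c_{\la^\dagger}(t)$) and vanishes on all other summands. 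Together with the order bound from the first paragraph, and recalling from Remark~\ref{daggerinv} that $\la^\dagger$ is $\ul k$-quasiregular — hence lies in $\cI'_{\ul k}$ — when $\la$ is $\ul k$-singular, this shows that $\bsm D_{t,\la}$ satisfies every condition listed in the relevant item.

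For uniqueness I would argue as follows. By Corollary~\ref{Ddecomposition}, $\sfP^{\leq d}_{\V_t}=\bigoplus_{\mu}\V_{t,\mu}$, the sum running over $\mu\in\mathcal S$ with $|\mu|\leq d$, where $\mathcal S=\cI$ if $t\notin 2\Z^{\leq 0}$ and $\mathcal S=\cI'_{\ul k}$ otherwise. In each of the four cases the listed constraints prescribe the restriction of the operator to every such summand — as the identity, as zero, or (in case (iii), on $\V_{t,\la^\dagger}$) as the prescribed nilpotent element — and hence determine $\bsm D_{t,\la}\big|_{\sfP^{\leq d}_{\V_t}}$ completely; note that ``$=0$'', ``$=1$'', and ``$=\bsm C_{t,d}-\bsm c_\la(t)$'' are conditions on the full element of $\End(\V_{t,\mu})$, so no residual freedom is left even when $\End(\V_{t,\mu})$ is two-dimensional. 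Since $\sfA_t=\sfA_t^d\oplus\sfJ_t^d$ with $\sfJ_t^d$ the annihilator of $\sfP^{\leq d}_{\V_t}$, the restriction map $\sfA_t^d\to\End(\sfP^{\leq d}_{\V_t})$ is injective, so $\bsm D_{t,\la}$ is the unique element of $\sfA_t^d$ satisfying those conditions.

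The step I expect to be the main obstacle is the identification in the second paragraph: matching the categorical formula for $\bsm\gamma$ (through $\mathbf{tr}_{p,q}$ and $\bsm\mu_{\V_t}$) with the elementary rigid-category calculus, and checking that the co-evaluation of the summand $\V_{t,\la}$ inside $\V_{t,\la}\otimes\V_{t,\la}^*$ pulls back, along the inclusions $\V_{t,\la}\hookrightarrow\sS^d(\V_t)$ and $\V_{t,\la}^*\hookrightarrow\sS^d(\V_t^*)$, to the idempotent cutting out $\V_{t,\la}$ in $\End(\sS^d(\V_t))$. Once that is in place, the remaining steps are formal bookkeeping resting on Corollary~\ref{Ddecomposition}, the splitting $\sfA_t=\sfA_t^d\oplus\sfJ_t^d$, and the injectivity of $\bsm\gamma_{\sfA_t}$ furnished by Proposition~\ref{AB}.
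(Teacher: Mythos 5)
Your proposal is correct and follows the same route as the paper, whose proof simply notes that the stated properties are immediate from the definition of $\bsm D_{t,\la}$ and that uniqueness follows because any element of $\sfA_t^d$ is determined by its restriction to $\sfP^{\leq d}_{\V_t}$ (via the splitting $\sfA_t=\sfA_t^d\oplus\sfJ_t^d$). Your write-up merely fills in the bookkeeping — the order bound killing degrees below $d$, the identification of the degree-$d$ restriction with the projection (or the nilpotent in case (iii)), and the injectivity of the restriction map — all of which the paper treats as straightforward.
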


\begin{proof} The stated properties of $\bsm D_{t,\la}$ are straightforward from the definition. Uniqueness  follows from the fact that any element of $\sfA_t^d$ is uniquely determined by its restriction to a morphism of~$\sfP^{\leq d}_{\V_t}$.
  \end{proof}

Let $\la\in\cI$ and 
set $d:=|\la|$. 
For $s\in\C$ such that $s\not\in 2\Z^{\leq0}$
we define ${\bsm L}_{s,t,\lambda}\in\sfA_t$ by
\begin{equation}
\label{eq:Dtildsla}
{\bsm L}_{s,t,\lambda}:=\frac
{
\prod_{i=0}^{d-1}(\bsm E_t-i)\prod_{|\nu|=d, \nu\neq\lambda}
\left(\bsm C_{t}-\bsm c_\nu(s)\right)}{d!\prod_{|\nu|=d, \nu\neq\lambda}(\bsm c_\lambda(s)-\bsm c_\nu(s))}.
\end{equation}
We remark that
$\bsm L_{s,t,\la}$ is well-defined because
the factors $(\bsm c_\la(s)-\bsm c_\nu(s))$ in the denominator of \eqref{eq:Dtildsla} vanish 
only if $s\in2\Z^{\leq 0}$ and  $\la$ and $\nu$ are a pair of
$k_s$-quasiregular and $k_s$-singular partitions, where $k_s:=-\frac{s}{2}$.
We can now expand the right hand side of~\eqref{eq:Dtildsla} and express $\bsm L_{s,t,\la}$ as 
\begin{equation}
\label{eq:Llasssas}
{\bsm L}_{s,t,\la}=\sum_{i,j\geq 0}\eta_{\la,i,j}(s)\bsm C_t^i\bsm E_t^j,
\end{equation}
where the $\eta_{\la,i,j}$ are rational functions of $s$. Note that the $\eta_{\la,i,j}$
are independent of $t$ and do not have  poles in $\C$ 
outside the set $2\Z^{\leq 0}$.

\begin{dfn}

\label{defDlsst}
For $s\in\C$ such that $s\not\in 2\Z^{\leq 0}$,
we define $\bsm D_{s,t,\la}\in\sfA_t$ as follows.
\begin{itemize}
\item[(i)] If either $t\not\in 2\Z^{\leq 0}$,  
or $t\in 2\Z^{\leq 0}$ and $\la$ is $\ul k$-regular, then we set
$\bsm D_{s,t,\la}:={\bsm L}_{s,t,\lambda}$.
\item[(ii)] 
If $t\in2\Z^{\leq 0}$ and $\la$ is 
$\ul k$-singular, then 
we set 
$\bsm D_{s,t,\la}:=(\bsm c_{\la^\dagger}(s)-\bsm c_{\la}(s)){\bsm L}_{s,t,\lambda^\dagger}$.

\item[(iii)]
 If $t\in2\Z^{\leq 0}$ and $\la$ is 
$\ul k$-quasiregular, then 
we set 
$\bsm D_{s,t,\la}:=
{\bsm L}_{s,t,\lambda}+{\bsm L}_{s,t,\lambda^\dagger}$.

\end{itemize}

\end{dfn}
Using~\eqref{eq:Llasssas} we can express $\bsm D_{s,t,\lambda}$ as 
\begin{equation}
\label{eq:coefDstla}
\bsm D_{s,t,\lambda}=\sum_{i,j}\eta_{i,j}(s)\bsm C_t^i\bsm E_t^j,
\end{equation} where $\eta_{i,j}(s)$ is equal to $\eta_{\la,i,j}(s)$ or
$(\bsm c_{\la^\dagger}(s)-\bsm c_{\la}(s))\eta_{\la^\dagger,i,j}(s)$ or $\eta_{\la,i,j}(s)+\eta_{\la^\dagger,i,j}(s)$ 
in cases (i), (ii), and (iii) of Definition~\ref{defDlsst}, respectively.

For $\ul k$-quasiregular $\la$  we define
\[
\tilde\phi_{1,\la}(s):=\prod_{|\nu|=|\la|,\nu\neq\la,\la^\dagger}(\bsm c_\la(s)-\bsm c_\nu(s))^{-1}
\quad\text{and}\quad
\tilde\phi_{2,\la}(s):=\prod_{|\nu|=|\la|,\nu\neq\la,\la^\dagger}(\bsm c_{\la^\dagger}(s)-\bsm c_\nu(s))^{-1}.
\]
The next proposition is a key step in the proofs of Theorems~\ref{thm:A'}--\ref{thm:C'}.

\begin{prp}
\label{prp:Dstlanoplies}
The rational functions 
$\eta_{i,j}(s)$
in~\eqref{eq:coefDstla} do not have any poles at $s=t$.
\end{prp}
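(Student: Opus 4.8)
The plan is to reduce the statement to a bookkeeping problem about which scalar denominators in \eqref{eq:Dtildsla} actually vanish at $s=t$, and then to exhibit, for each of the three cases of Definition~\ref{defDlsst}, a numerator factor that kills the offending pole. Since $\gamma_{\sfA_t}$ is injective by Proposition~\ref{AB}, I will identify $\sfA_t$ with the polynomial algebra $\C[\bsm C_t,\bsm E_t]$; then $\bsm D_{s,t,\la}$ is an element of $\sfA_t\otimes_\C\C(s)$, and the assertion that the $\eta_{i,j}(s)$ of \eqref{eq:coefDstla} have no pole at $s=t$ is exactly the assertion that $\bsm D_{s,t,\la}$ lies in the localization of $\sfA_t\otimes_\C\C(s)$ at $(s-t)$. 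The only scalars occurring in denominators in \eqref{eq:Dtildsla} are the $\bsm c_\la(s)-\bsm c_\nu(s)$, so I would start by recording the elementary factorization $\bsm c_\mu(s)-\bsm c_\nu(s)=(a-b)(a+b+s-2)$ with $a:=\mu_1-\mu_2$, $b:=\nu_1-\nu_2$. Hence, for $t\in2\Z^{\leq0}$ and $\ul k:=-t/2$, this polynomial vanishes at $s=t$ if and only if $a\neq b$ and $a+b=2\ul k+2$, which (for $\mu,\nu$ of equal size) happens if and only if $\mu\neq\nu$ and $\nu=\mu^\dagger$, where $\dagger$ is the involution of Remark~\ref{daggerinv}; since $\dagger$ is an involution, each $\mu$ of given size has at most one such partner.

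The two easy cases come first. If $t\notin2\Z^{\leq0}$, or $t\in2\Z^{\leq0}$ and $\la$ is $\ul k$-regular (so that $\la^\dagger\notin\cI$ or $\la^\dagger=\la$ by Remark~\ref{daggerinv}), no factor $\bsm c_\la(s)-\bsm c_\nu(s)$ with $\nu\neq\la$ and $|\nu|=|\la|$ vanishes at $s=t$; thus the denominator of $\bsm L_{s,t,\la}$ is a nonzero scalar at $s=t$ and $\bsm D_{s,t,\la}=\bsm L_{s,t,\la}$ is regular there. In case~(ii), $\la$ is $\ul k$-singular, so $\la^\dagger$ is $\ul k$-quasiregular and $(\la^\dagger)^\dagger=\la$; among the factors $\bsm c_{\la^\dagger}(s)-\bsm c_\nu(s)$ of the denominator of $\bsm L_{s,t,\la^\dagger}$, exactly one vanishes at $s=t$ — the one indexed by $\nu=\la$, namely $\bsm c_{\la^\dagger}(s)-\bsm c_\la(s)$ — and only to first order. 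The prefactor $\bsm c_{\la^\dagger}(s)-\bsm c_\la(s)$ in Definition~\ref{defDlsst}(ii) cancels it verbatim, leaving a quotient whose numerator is polynomial in $s$ with coefficients in $\sfA_t$ and whose denominator is nonzero at $s=t$; hence $\bsm D_{s,t,\la}$ is regular at $s=t$.

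The heart of the matter, and the step I expect to be the main obstacle, is case~(iii): $t\in2\Z^{\leq0}$ and $\la$ is $\ul k$-quasiregular, so $\la^\dagger$ is $\ul k$-singular and $(\la^\dagger)^\dagger=\la$. Here \emph{both} $\bsm L_{s,t,\la}$ and $\bsm L_{s,t,\la^\dagger}$ have genuine simple poles at $s=t$, coming from the factors $\bsm c_\la(s)-\bsm c_{\la^\dagger}(s)$ and $\bsm c_{\la^\dagger}(s)-\bsm c_\la(s)$ respectively, so the cancellation is not termwise and must come from the sum. I would set $P:=\prod_{i=0}^{d-1}(\bsm E_t-i)$ and $R(s):=\prod_{|\nu|=d,\,\nu\neq\la,\la^\dagger}(\bsm C_t-\bsm c_\nu(s))$ — both polynomial in $s$ with $\sfA_t$-coefficients — together with $g(s):=\bsm c_\la(s)-\bsm c_{\la^\dagger}(s)$, a degree-one polynomial with a simple zero at $s=t$ since $\la\neq\la^\dagger$. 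Splitting off in \eqref{eq:Dtildsla} the factors indexed by $\nu=\la^\dagger$ and by $\nu=\la$, and using the functions $\tilde\phi_{1,\la},\tilde\phi_{2,\la}$ introduced just before the proposition, I obtain
\[
\bsm D_{s,t,\la}=\bsm L_{s,t,\la}+\bsm L_{s,t,\la^\dagger}
=\frac{P\,R(s)}{d!\,g(s)}\Bigl[(\bsm C_t-\bsm c_{\la^\dagger}(s))\,\tilde\phi_{1,\la}(s)-(\bsm C_t-\bsm c_\la(s))\,\tilde\phi_{2,\la}(s)\Bigr].
\]
The key observations are then that $\bsm c_\la(t)=\bsm c_{\la^\dagger}(t)$, and that $\tilde\phi_{1,\la}(t)=\tilde\phi_{2,\la}(t)$ is a finite nonzero scalar — because the only partitions $\nu$ of size $d$ with $\bsm c_\nu(t)=\bsm c_\la(t)$ are $\la$ and $\la^\dagger$ themselves, so the products $\prod_{\nu\neq\la,\la^\dagger}(\bsm c_\la(t)-\bsm c_\nu(t))$ and $\prod_{\nu\neq\la,\la^\dagger}(\bsm c_{\la^\dagger}(t)-\bsm c_\nu(t))$ coincide (and are nonzero). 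Consequently the bracket vanishes at $s=t$, hence is divisible by $(s-t)$; dividing by $g(s)$, which has only a simple zero there, yields a rational function regular at $s=t$, and multiplying by the polynomial $P\,R(s)/d!$ preserves regularity. This shows $\bsm D_{s,t,\la}$ — and therefore every $\eta_{i,j}(s)$ — has no pole at $s=t$, in all three cases. The only non-routine point is the identification of the two symmetries $\bsm c_\la(t)=\bsm c_{\la^\dagger}(t)$ and $\tilde\phi_{1,\la}(t)=\tilde\phi_{2,\la}(t)$ that force the cancellation in case~(iii); everything else is direct bookkeeping with the self-inverse property of the dagger involution.
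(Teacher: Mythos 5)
Your argument is correct and takes essentially the same route as the paper's proof: the same splitting of \eqref{eq:Dtildsla} into $\prod_{|\nu|=d,\nu\neq\la,\la^\dagger}(\bsm C_t-\bsm c_\nu(s))\cdot\frac{1}{d!}\prod_{i=0}^{d-1}(\bsm E_t-i)$ times a bracket divided by $\bsm c_\la(s)-\bsm c_{\la^\dagger}(s)$ (the paper's \eqref{eq:expDstla} with $\bsm D'$), with the pole cancellation in the quasiregular case forced by exactly the two identities $\bsm c_\la(t)=\bsm c_{\la^\dagger}(t)$ and $\tilde\phi_{1,\la}(t)=\tilde\phi_{2,\la}(t)$. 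The only cosmetic difference is that you justify the vanishing criterion for $\bsm c_\mu(s)-\bsm c_\nu(s)$ via the explicit factorization $(a-b)(a+b+s-2)$, which the paper simply asserts.
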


\begin{proof}
For 
$\bsm D_{s,t,\la}$ as in
Definition~\ref{defDlsst}(i)-(ii) this follows
from the fact that for $\la,\nu\in\cI$ such that $|\la|=|\nu|$, we have $\bsm c_\la(s)=\bsm c_\nu(s)$ if and  only if $s\in2\Z^{\leq 0}$ and  $\la$ and $\nu$ are a pair of
$(-\frac{s}{2})$-quasiregular and $(-\frac{s}{2})$-singular partitions.
For $\bsm D_{s,t,\la}$ as in Definition~\ref{defDlsst}(iii)
note that
\begin{align}
\label{eq:expDstla}
\bsm D_{s,t,\la}
&=
\prod_{|\nu|=d, \nu\neq\lambda,\la^\dagger}
\left(\bsm C_{t}-\bsm c_\nu(s)\right)
\frac
{
\prod_{i=0}^{d-1}(\bsm E_t-i)
}{d!}
\bsm D',
\end{align}
where $d:=|\la|$ and  
\[
\bsm D':=\left(
\tilde\phi_{1,\la}(s)
\frac{\bsm C_t-\bsm c_{\la^\dagger}(s)}
{\bsm c_\la(s)-\bsm c_{\la^\dagger}(s)
}
-\tilde\phi_{2,\la}(s)\frac{\bsm C_t-\bsm c_{\la}(s)}
{\bsm c_\la(s)-\bsm c_{\la^\dagger}(s)}
\right)
.
\]
Then $\bsm D'=\gamma_0(s)+\gamma_1(s)\bsm C_t$ where 
\[
\gamma_0(s)=-\frac{\tilde \phi_{1,\la}(s)\bsm c_{\la^\dagger}(s)-\tilde\phi_{2,\la}(s)\bsm c_\la(s)}
{\bsm c_\la(s)-\bsm c_{\la^\dagger}(s)}
\quad
\text{and}
\quad
\gamma_1(s)=\frac{\tilde \phi_{1,\la}(s)-\tilde\phi_{2,\la}(s)}
{\bsm c_\la(s)-\bsm c_{\la^\dagger}(s)}
.\]
From 
the remark about vanishing of the differences $(\bsm c_\la(s)-\bsm c_\nu(s))$ it follows that 
$\tilde\phi_{1,\la}(s)$
and
$\tilde\phi_{2,\la}(s)$
do not have poles at $s=t$. 
Furthermore, from $\bsm c_\la(t)=\bsm c_{\la^\dagger}(t)$ it follows that
$\tilde\phi_{1,\la}(t)=\tilde\phi_{2,\la}(t)$. This implies that $\gamma_0(s)$ and $\gamma_1(s)$ do not have poles at $s=t$. Hence the coefficients $\eta_{i,j}(s)$ of $\bsm D_{s,t,\la}$ are also regular at $s=t$. 
\end{proof}
Because of Proposition~\ref{prp:Dstlanoplies},
for $t\in 2\Z^{\leq 0}$ and $\la\in\cI$ we can define 
\begin{equation}
\label{DstDtt}
\bsm D_{t,t,\la}:=\sum_{i,j}\eta_{i,j}(t)\bsm C_t^i\bsm E_t^j=\lim_{s\to t}\bsm D_{s,t,\la}.
\end{equation}

\begin{prp}
\label{prp:DtlaandDtt}
$\bsm D_{t,\la}=\bsm\pi_{t,d}(\bsm D_{t,t,\la})$.
\end{prp}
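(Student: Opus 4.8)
The plan is to deduce the equality from the uniqueness statement in Lemma~\ref{vanishing}. Write $d:=|\la|$. Because $\bsm\pi_{t,d}$ is the projection onto $\sfA_t^d$ along the annihilator $\sfJ_t^{d}$ of $\sfP^{\le d}_{\V_t}$, the operators $\bsm D_{t,t,\la}$ and $\bsm\pi_{t,d}(\bsm D_{t,t,\la})$ induce the same endomorphism of $\sfP^{\le d}_{\V_t}$, and $\bsm\pi_{t,d}(\bsm D_{t,t,\la})\in\sfA_t^d$. Hence it suffices to show that the restriction of $\bsm D_{t,t,\la}$ to each indecomposable summand $\V_{t,\mu}$ of $\sfP^{\le d}_{\V_t}$ agrees with the value prescribed in Lemma~\ref{vanishing}: $1_{\V_{t,\la}}$ unless $t\in2\Z^{\le 0}$ and $\la$ is $\ul k$-singular, in which case instead $\bsm D_{t,t,\la}\big|_{\V_{t,\la^\dagger}}=\bsm C_{t,d}-\bsm c_\la(t)$, and $0$ on all remaining summands.

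By \eqref{DstDtt} and Proposition~\ref{prp:Dstlanoplies} we may write $\bsm D_{t,t,\la}=g_\la(\bsm C_t,\bsm E_t)$, where $g_\la\in\C[C,E]$ is obtained by setting $s=t$ in the polynomial $g_{\la,s}$ determined by Definition~\ref{defDlsst}; in each of its three cases $g_{\la,s}$ carries the common factor $\prod_{i=0}^{d-1}(E-i)$. On a summand $\V_{t,\mu}$ of $\sfP^{\le d}_{\V_t}$ we have $\bsm E_t=|\mu|$, and $\bsm C_t=\bsm c_\mu(t)+\bsm n_\mu$ with $\bsm n_\mu^2=0$ (Lemma~\ref{lem:Wuis}), where $\bsm n_\mu\ne0$ exactly when $m_\mu=2$, i.e.\ when $\mu$ is $\ul k$-quasiregular. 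Thus
\[
\bsm D_{t,t,\la}\big|_{\V_{t,\mu}}=g_\la\bigl(\bsm c_\mu(t),|\mu|\bigr)\,1_{\V_{t,\mu}}+\frac{\partial g_\la}{\partial C}\bigl(\bsm c_\mu(t),|\mu|\bigr)\,\bsm n_\mu .
\]
When $|\mu|<d$ the factor $\prod_{i=0}^{d-1}(E-i)$ kills both coefficients at $E=|\mu|$, so the restriction is $0$. When $|\mu|=d$ we set $E=d$, and $g_{\la,s}\big|_{E=d}$ becomes a multiple of a Lagrange-type interpolation polynomial in $C$ whose nodes are the numbers $\bsm c_\sigma(s)$, $|\sigma|=d$. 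The decisive arithmetic input, extracted from Lemma~\ref{CD}, is that distinct partitions $\sigma,\sigma'$ of $d$ satisfy $\bsm c_\sigma(t)=\bsm c_{\sigma'}(t)$ only if $t\in2\Z^{\le 0}$ and $\{\sigma,\sigma'\}=\{\sigma,\sigma^\dagger\}$ is a $\ul k$-quasiregular/$\ul k$-singular pair.

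It remains to evaluate $g_\la\big|_{E=d}$ and its $C$-derivative at the nodes $\bsm c_\mu(t)$. In case~(i) of Definition~\ref{defDlsst} the polynomial $g_{\la,s}\big|_{E=d}$ is regular at $s=t$, and $g_\la\big|_{E=d}=\ell_\la$ is the Lagrange basis polynomial of the node $\bsm c_\la(t)$, so $\ell_\la(\bsm c_\la(t))=1$ and $\ell_\la(\bsm c_\mu(t))=0$ for every partition $\mu\ne\la$ of $d$; moreover $\bsm n_\la=0$ in this case, and whenever $\mu\ne\la$ has $\bsm n_\mu\ne0$ its $\dagger$-partner supplies a second node with value $\bsm c_{\mu^\dagger}(t)=\bsm c_\mu(t)$, forcing $\ell_\la'(\bsm c_\mu(t))=0$. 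This yields the values in Lemma~\ref{vanishing}(i),(ii). In cases~(ii) and~(iii) the prefactor $\bsm c_{\la^\dagger}(s)-\bsm c_\la(s)$, respectively the sum of the two Lagrange basis polynomials attached to the colliding nodes $\bsm c_\la(s)$ and $\bsm c_{\la^\dagger}(s)$, cancels the pole at $s=t$, so $g_\la\big|_{E=d}$ is a Hermite-type interpolant: in case~(iii) it has value $1$ and derivative $0$ at $\bsm c_\la(t)$ and vanishes (to order $2$ at $\dagger$-paired nodes) at all other nodes, and in case~(ii) it equals $(C-\bsm c_\la(t))$ times the analogous polynomial taking the value $1$ at $\bsm c_{\la^\dagger}(t)=\bsm c_\la(t)$. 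Substituting $\bsm C_t=\bsm c_\mu(t)+\bsm n_\mu$ and using $\bsm n_\mu^2=0$ then gives $\bsm D_{t,t,\la}\big|_{\V_{t,\la}}=1$ in case~(iii), $\bsm D_{t,t,\la}\big|_{\V_{t,\la^\dagger}}=\bsm C_{t,d}-\bsm c_\la(t)$ in case~(ii), and $0$ on the remaining summands, as required. This completes the plan.

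The step I expect to be hardest is the $\ul k$-quasiregular case~(iii): one must verify that $\lim_{s\to t}\bigl(\bsm L_{s,t,\la}+\bsm L_{s,t,\la^\dagger}\bigr)\big|_{E=d}$ is indeed the Hermite interpolant with a \emph{double} node at $\bsm c_\la(t)$, and in particular that its derivative there vanishes (a mean-value argument applied to the two colliding simple nodes $\bsm c_\la(s)\ne\bsm c_{\la^\dagger}(s)$). That vanishing is precisely what makes $\bsm D_{t,\la}$ act on $\V_{t,\la}$ as $1_{\V_{t,\la}}$ rather than $1_{\V_{t,\la}}$ plus a nonzero nilpotent, and it is the categorical shadow of the pole cancellation underlying Theorem~\ref{thm:C'}.
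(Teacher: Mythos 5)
Your argument is correct, and its skeleton is the same as the paper's: reduce via the uniqueness statement in Lemma~\ref{vanishing} to computing the restriction of $\bsm D_{t,t,\la}$ to each summand $\V_{t,\mu}$ with $|\mu|\le d$ (the projection $\bsm\pi_{t,d}$ does not change these restrictions), and evaluate the polynomial $g_\la(\bsm C_t,\bsm E_t)$ at $\bsm C_t\big|_{\V_{t,\mu}}=\bsm c_\mu(t)+\bsm n_\mu$ using $\bsm n_\mu^2=0$ and the explicit form~\eqref{eq:Dtildsla}. Where you genuinely diverge is at the decisive point, namely the vanishing of the nilpotent coefficient on $\V_{t,\la}$ when $t\in2\Z^{\le0}$ and $\la$ is $\ul k$-quasiregular (the paper's $\lim_{s\to t}\gamma_1(s)=0$ in~\eqref{eq:idempo}). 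The paper proves this by an explicit computation: it writes $\gamma_1(s)$ in terms of $\tilde\phi_{1,\la},\tilde\phi_{2,\la}$ and cancels using the logarithmic-derivative identities~\eqref{eq:ddsofphit1}--\eqref{eq:ddsofphit2}. You instead argue softly: for real $s$ near $t$ the polynomial $g_s:=\bigl(\bsm L_{s,t,\la}+\bsm L_{s,t,\la^\dagger}\bigr)\big|_{E=d}$, viewed as a real polynomial in $C$, takes the value $1$ at the two distinct real nodes $\bsm c_\la(s)\ne\bsm c_{\la^\dagger}(s)$, so Rolle's theorem gives a critical point between them; letting $s\to t$ and using that the coefficients $\eta_{i,j}(s)$ are pole-free at $s=t$ (Proposition~\ref{prp:Dstlanoplies}), so that $g_s\to g_t$ and $g_s'\to g_t'$ locally uniformly (bounded degree), you obtain $g_t'(\bsm c_\la(t))=0$ while $g_t(\bsm c_\la(t))=1$. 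This is valid: the nodes are real for real $s$, and their difference $(s-t)\bigl(2(\la_1-\la_2)+t-2\bigr)$ is nonzero for $s\ne t$ since $\la_1-\la_2\le\ul k$. The same Rolle device (or the factorization of the sum through $(C-\bsm c_\nu(s))(C-\bsm c_{\nu^\dagger}(s))$, which is how the paper treats it) also justifies your claim that $g_t$ vanishes to order two at the other $\dagger$-paired nodes, a point you only sketch. The trade-off: the paper's computation is purely algebraic and yields explicit limit formulas, while your argument is shorter and more conceptual (Lagrange interpolation degenerating to Hermite interpolation at colliding nodes) and is close in spirit to the paper's subsequent remark, which deduces the absence of a nilpotent part from idempotency of $\mathrm{ev}_{\xi=t}(\bsm D_{\xi,\la})$ via semisimplicity of $\mathsf{Rep}(O_\xi)$.
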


\begin{proof}
It suffices to check that $\bsm D_{t,t,\la}$ satisfies the  
vanishing properties given in Lemma~\ref{vanishing}. 
If $t\not\in 2\Z^{\leq 0}$ then $\mathsf{Rep}(O_t)$ is semisimple, and in particular the $\V_{t,\nu}$ are simple objects. It is then straightforward to check the action of $\bsm D_{t,t,\la}=\bsm L_{t,t,\la}$ on each
 $\V_{t,\nu}$ 
using~\eqref{eq:Dtildsla}.
If $t\in 2\Z^{\leq 0}$ then from
Lemma~\ref{lem:polynompitd} and Proposition~\ref{prp:Dstlanoplies} it follows that 
\[
\bsm\pi_{t,d}(\bsm D_{t,t,\la})=\lim_{s\to t}\bsm\pi_{t,d}
(\bsm D_{s,t,\la}),
\] and again we can compute the action of 
$\bsm D_{s,t,\la}$ using~\eqref{eq:Dtildsla}.
The argument is by a case by case consideration, and we will only give the details for
the most difficult case, i.e., when $\la$ is $\ul k$-quasiregular. 
In this case $\bsm D_{s,t,\la}=\bsm L_{s,t,\la}+\bsm L_{s,t,\la^\dagger}$ for $s$ sufficiently close but not equal to $t$. Now choose $\nu\in\cI'_{\ul k}$. If $|\nu|<|\la|$ then both $\bsm L_{s,t,\la}$ and 
$\bsm L_{s,t,\la^\dagger}$
vanish on $\V_{t,\nu}$ because they contain the factor $(\bsm E_t-|\nu|)$. This implies that $\bsm D_{s,t,\la}\big|_{\V_{t,\nu}}=0$, hence 
$\bsm D_{t,t,\la}\big|_{\V_{t,\nu}}=0$. 
Next assume that $|\nu|=|\la|$ and set $d:=|\la|$. If $\nu$ is $\ul k$-regular, then 
$\V_{t,\nu}$ is a simple object 
and  both 
$\bsm L_{s,t,\la}$ and 
$\bsm L_{s,t,\la^\dagger}$ contain the factor
$(\bsm C_t-\bsm c_\nu(s))$, which acts on 
$\V_{t,\nu}$ by $(\bsm c_\nu(t)-\bsm c_\nu(s))1_{\V_{t,\nu}}$. Since $\lim_{s\to t}(\bsm c_\nu(t)-\bsm c_\nu(s))=0$, we obtain $\bsm\pi_{t,d}(\bsm D_{t,t,\la})\big|_{\V_{t,\nu}}=0$.
If 
$\nu$~is $\ul k$-quasiregular and $\nu\neq\la$, then
from~\eqref{eq:expDstla} it follows that
\[
\bsm D_{s,t,\la}=
(\bsm C_t-\bsm c_\nu(s))
(\bsm C_t-\bsm c_{\nu^\dagger}(s))
\bsm D
\] for some $\bsm D\in \sfA_t$ of the form
$\bsm D=\sum_{i,j}\psi_{i,j}(s)\bsm C_t^i\bsm E_t^j$, 
where the $\psi_{i,j}$ are rational functions without poles at $s=t$. 
Now set $\bsm N:=(\bsm C_t-\bsm c_{\nu^\dagger}(t))\big|_{\V_{t,\nu}}$. Then 
\[
(\bsm C_t-\bsm c_\nu(s))
(\bsm C_t-\bsm c_{\nu^\dagger}(s))\big|_{\V_{t,\nu}}
=
(\bsm c_{\nu^\dagger}(t)-\bsm c_{\nu}(s)+\bsm N)
(\bsm c_{\nu^\dagger}(t)-\bsm c_{\nu^\dagger}(s)+\bsm N)
.
\] 
As $\bsm N^2=0$ and 
$
\lim_{s\to t}(\bsm c_{\nu^\dagger}(t)-\bsm c_{\nu^\dagger}(s))=
\lim_{s\to t}(\bsm c_{\nu^\dagger}(t)-\bsm c_{\nu}(s))=0
$, we obtain $\lim_{s\to t}\bsm D_{s,t,\la}\big|_{\V_{t,\nu}}=0$. 
Finally, if  
$\nu=\la$ then 
from~\eqref{eq:expDstla} it follows that
$\bsm D_{s,t,\la}\big|_{\V_{t,\nu}}
=\bsm D^{(1)}\bsm D^{(2)}$ for
\[
\bsm D^{(1)}
:=\prod_{|\eta|=d,\eta\neq\la,\la^\dagger}
(\bsm c_\la(t)-\bsm c_\eta(s)+\bsm N)
\]
and 
\[\bsm D^{(2)}
:=\left(
\frac{\tilde\phi_{1,\la}(s)}{\bsm c_\la(s)-\bsm c_{\la^\dagger}(s)}
(\bsm c_\la(t)-\bsm c_{\la^\dagger}(s)+\bsm N)
-
\frac{\tilde\phi_{2,\la}(s)}{\bsm c_\la(s)-\bsm c_{\la^\dagger}(s)}
(\bsm c_\la(t)-\bsm c_{\la}(s)+\bsm N)
\right),
\]
Since $\bsm N^2=0$,
we have
$\bsm D^{(i)}\big|_{\V_{t,\nu}}=\gamma_0^{(i)}(s)+\gamma_1^{(i)}(s)\bsm N$
for $i\in\{1,2\}$, so that   
\begin{equation}
\label{eq:idempo}
\bsm D_{s,t,\la}\big|_{\V_{t,\nu}}=\gamma_0(s)+\gamma_1(s)\bsm N,
\end{equation}
where $\gamma_0(s)
=\gamma_0^{(1)}(s)
\gamma_0^{(2)}(s)$ 
and 
$\gamma_1(s)=\gamma_0^{(1)}(s)\gamma_1^{(2)}(s)+\gamma_1^{(1)}(s)\gamma_0^{(2)}(s)$.
To complete the proof we need to verify that 
 $\lim_{s\to t}\gamma_0(s)=1$ and $\lim_{s\to t}\gamma_1(s)=0$. 

To prove
 $\lim_{s\to t}\gamma_0(s)=1$
first note that 
$\lim_{s\to t}\gamma_0^{(1)}(s)=\tilde\phi_{1,\la}(t)^{-1}$.
Furthermore, 
\[
\gamma_0^{(2)}(s)=\tilde\phi_{1,\la}(s)
\frac
{\bsm c_\la(t)-\bsm c_{\la^\dagger}(s)}
{\bsm c_\la(s)-\bsm c_{\la^\dagger}(s)}
-
\tilde\phi_{2,\la}(s)
\frac
{\bsm c_\la(t)-\bsm c_{\la}(s)}
{\bsm c_\la(s)-\bsm c_{\la^\dagger}(s)},
\]
and from $\tilde\phi_{1,\la}(t)=\tilde\phi_{2,\la}(t)$ it follows that 
$\lim_{s\to t}
\gamma_0^{(2)}(s)=
\tilde\phi_{1,\la}(t)$.

To prove  $\lim_{s\to t}\gamma_1(s)=0$, note that 
\[
\gamma_1(s)=
\tilde\phi_{1,\la}(s)
\tilde\phi_{2,\la}(s)\left(
\prod_{|\nu|=d\,,\,\nu\neq \la,\la^\dagger}(\bsm c_\la(t)-\bsm c_\nu(s))
\right)
\left(
\tilde\phi_3(s)+
\tilde\phi_4(s)\right)
\]
where $\tilde\phi_3(s):=\frac{\tilde\phi_{2,\la}(s)^{-1}-\tilde \phi_{1,\la}(s)^{-1}}{\bsm c_\la(s)-\bsm c_{\la^\dagger}(s)}
$ and 
\begin{align*}
\tilde\phi_4(s)&:
=
\left(\sum_{|\nu|=d\,,\,\nu\neq \la,\la^\dagger}
\frac{1}{\bsm c_\la(t)-\bsm c_\nu(s)}
\right)
\left(
\frac{
\tilde\phi_{2,\la}(s)^{-1}
(\bsm c_\la(t)-\bsm c_{\la^\dagger}(s))
-
\tilde\phi_{1,\la}(s)^{-1}
(\bsm c_\la(t)-\bsm c_{\la}(s))
}
{\bsm c_\la(s)-\bsm c_{\la^\dagger}(s)}
\right).
\end{align*}
From $\tilde\phi_{1,\la}(t)=\tilde\phi_{2,\la}(t)$
it follows that
\begin{equation}
\label{eq:phise3tild}
\lim_{s\to t}\tilde\phi_4(s)=
\tilde\phi_{1,\la}(t)^{-1}
\sum_{|\nu|=d\,,\,\nu\neq \la,\la^\dagger}
\frac{1}{\bsm c_\la(t)-\bsm c_\nu(t)}
.
\end{equation}
Furthermore
$\bsm c_\la(s)-\bsm c_{\la^\dagger}(s)=(s-t)\left(2(\la_1-\la_2)+t-2\right)$, so that 
\begin{align*}
\lim_{s\to 0}
\tilde\phi_3(s)&=
\lim_{s\to t}
\frac{
(
\tilde\phi_{2,\la}(s)^{-1}-\tilde\phi_{2,\la}(t)^{-1}
)-
(
\tilde \phi_{1,\la}(s)^{-1}-\tilde\phi_{1,\la}^{-1}(t))}{\bsm c_\la(s)-\bsm c_{\la^\dagger}(s)}\\
&=
\frac{1}{2(\la_1-\la_2)+(t-2)}
\left(
\frac{d}{ds}\tilde\phi_{2,\la}(s)^{-1}\big|_{s=t}
-
\frac{d}{ds}\tilde\phi_{1,\la}(s)^{-1}\big|_{s=t}\right)
.\end{align*}
Now  
\begin{equation}
\label{eq:ddsofphit1}
\frac{d}{ds}\tilde\phi_{1,\la}(s)^{-1}\big|_{s=t}=
\tilde\phi_{1,\la}(t)^{-1}
\sum_{|\nu|=d\,,\,\nu\neq\la,\la^\dagger}
\frac{(\la_1-\la_2)-(\nu_1-\nu_2)}{\bsm c_\la(t)-\bsm c_\nu(t)}
\end{equation}
and
\begin{equation}
\label{eq:ddsofphit2}
\frac{d}{ds}\tilde\phi_{2,\la}(s)^{-1}\big|_{s=t}=
\tilde\phi_{2,\la}(t)^{-1}
\sum_{|\nu|=d\,,\,\nu\neq\la,\la^\dagger}
\frac{(-\la_1+\la_2-t+2)-(\nu_1-\nu_2)}{\bsm c_\la(t)-\bsm c_\nu(t)}.
\end{equation}
From~\eqref{eq:phise3tild},~\eqref{eq:ddsofphit1} and~\eqref{eq:ddsofphit2} it follows that $\lim_{s\to t}(\tilde\phi_3(s)+\tilde\phi_4(s))=0$, hence 
$\lim_{s\to t}\gamma_1(s)=0$.
\end{proof}

\begin{rmk}
There is  a more conceptual argument for proving
$\lim_{s\to t}\gamma_1(s)=0$ in~\eqref{eq:idempo} as follows. The construction of $\mathsf{Rep}(O_t)$ is valid over the field $\C(\xi)$ of rational functions in a parameter $\xi$, yielding a Karoubian rigid symmetric monoidal category 
generated by a self-dual object $\V_\xi$ of dimension $\xi$. Let us denote the latter category by $\mathsf{Rep}(O_\xi)$. The algebra $\sfA_t$ and the operators $\bsm C_t$, $\bsm E_t$, and $\bsm D_{t,\la}$ have 
counterparts $\sfA_\xi$, $\bsm C_\xi$, $\bsm E_\xi$, and $\bsm D_{\xi,\la}$
 in the inductive completion of $\mathsf{Rep}(O_\xi)$. For $t\in\C$, let $\mathscr O_t\sseq \C(\xi)$ denote the local ring of rational functions without a pole at $\xi=t$, and let 
 $\oline \sfA_\xi\sseq \sfA_\xi$ be the $\mathscr O_t$-subalgebra of $\sfA_\xi$ generated by $\bsm C_\xi$ and $\bsm E_\xi$. Further, let  $\mathrm{ev}_{\xi=t}:\oline \sfA_\xi\to \sfA_t$ be the ring homomorphism obtained by naturally extending  
$\mathrm{ev}_{\xi=t}(\bsm C_\xi):=\bsm C_t$ and $\mathrm{ev}_{\xi=t}(\bsm E_\xi):=\bsm E_t$. One can show that $\mathsf{Rep}(O_\xi)$ is semisimple, and it follows that   the restriction of 
$\bsm D_{\xi,\la}$ to $\sfP^{\leq d}_{\V_{\xi}}$ is an idempotent morphism. One can then use the fact that $\mathrm{ev}_{\xi=t}$ is a ring homomorphism to prove that  $\mathrm{ev}_{\xi=t}(\bsm D_{\xi,\la})$ is an idempotent when restricted to $\sfP^{\leq d}_{\V_t}$, and therefore it does not have a nonzero nilpotent part. 
 \end{rmk}

\begin{lem}
\label{lem:limitarg}
Assume that $t\in 2\Z^{\leq 0}$. Let $U_t\sseq \C$ be an open set such that $U_t\cap \Z=\{t\}$. For $s\in U_t$ let $\bsm L_s\in\sfA_s$ be defined by 
\begin{equation}\label{eq:Ls==}
\bsm L_s:=\sum_{i,j=0}^p \psi_{i,j}(s)\bsm C_s^i\bsm E_s^j,
\end{equation}
where the $\psi_{i,j}$ are rational functions without poles in $U_t$. Let
$\bsm f_{\bsm L_s}$ be defined as in Definition~\ref{dfn-evalpolp} for $s\in U_t$. 
Then $\bsm f_{\bsm L_t}=\lim_{s\to t}\bsm f_{\bsm L_s}$ as elements of $\C[x,y]$.

\end{lem}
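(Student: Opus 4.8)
The plan is to produce an explicit closed form for $\bsm f_{\bsm L_s}$ as a symmetric polynomial in $x,y$ whose coefficients are rational functions of $s$ that are regular on $U_t$, and then to pass to the limit coefficientwise. For $s\in U_t$ put $\ul k_s:=-\tfrac{s}{2}$, so that $\ul k_s=\ul k$ when $s=t$, and define
\[
g_s(x,y):=\sum_{i,j=0}^{p}\psi_{i,j}(s)\bigl((x-y)^2-(\ul k_s+1)^2\bigr)^{i}(x+y+\ul k_s+1)^{j}\ \in\C[x,y].
\]
First I would check that $\bsm f_{\bsm L_s}=g_s$ for every $s\in U_t$. By Corollary~\ref{Ddecomposition} together with Lemma~\ref{lem:Wuis} and Remark~\ref{rem:CD}, for every $\la\in\cI$ the operator $\bsm E_s$ acts on $\V_{s,\la}$ by the scalar $|\la|=\la_1+\la_2$ and $\bsm C_s$ acts by $\bsm c_\la(s)\cdot 1_{\V_{s,\la}}+\bsm\eta_\la$ with $\bsm\eta_\la^{2}=0$. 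Expanding $\bsm C_s^{i}$ binomially and using $\bsm\eta_\la^{2}=0$ yields
\[
\bsm L_s\big|_{\V_{s,\la}}=\Bigl(\sum_{i,j}\psi_{i,j}(s)\,\bsm c_\la(s)^{i}\,|\la|^{j}\Bigr)1_{\V_{s,\la}}+(\text{a square-zero nilpotent}).
\]
Exactly as in the proof of Proposition~\ref{prp:CandEgen} (see~\eqref{eq:fx,y}), one has $\bsm c_\la(s)=\bigl((\la_1-\ul k_s-1)-\la_2\bigr)^{2}-(\ul k_s+1)^{2}$ and $|\la|=(\la_1-\ul k_s-1)+\la_2+(\ul k_s+1)$, so the scalar above equals $g_s(\la_1-\ul k_s-1,\la_2)$. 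Since $g_s$ is symmetric in $x,y$, the uniqueness statement in Proposition~\ref{prp:Delieig} (with $t$ replaced by $s$) then forces $\bsm f_{\bsm L_s}=g_s$.

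It remains to let $s\to t$. Collecting the monomials $x^{a}y^{b}$ in $g_s$, each coefficient of $\bsm f_{\bsm L_s}=g_s$ is a fixed polynomial expression in the quantities $\psi_{i,j}(s)$ and $\ul k_s+1=1-\tfrac{s}{2}$. Since the $\psi_{i,j}$ have no poles in $U_t$ and $1-\tfrac{s}{2}$ is a polynomial in $s$, every coefficient of $\bsm f_{\bsm L_s}$ is a rational function of $s$ with no pole at $s=t$, whence
\[
\lim_{s\to t}\bsm f_{\bsm L_s}=\lim_{s\to t}g_s=g_t=\bsm f_{\bsm L_t}
\]
as elements of $\C[x,y]$, the limit being taken coefficientwise. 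This is the assertion of the lemma.

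The one step that is not purely formal — and the point I expect to require the most care — is the identification $\bsm f_{\bsm L_s}=g_s$ at the non-semisimple value $s=t\in 2\Z^{\leq 0}$: one has to verify that the square-zero nilpotent parts $\bsm\eta_\la$ of $\bsm C_t|_{\V_{t,\la}}$ contribute nothing to the scalar part of $\bsm L_t|_{\V_{t,\la}}$, which is immediate from $\bsm C_t^{i}|_{\V_{t,\la}}=\bsm c_\la(t)^{i}\,1_{\V_{t,\la}}+i\,\bsm c_\la(t)^{i-1}\bsm\eta_\la$, and that the symmetric polynomial so obtained is precisely the eigenvalue polynomial furnished by Proposition~\ref{prp:Delieig} — which is exactly the uniqueness built into that proposition. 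No ingredient beyond Proposition~\ref{prp:Delieig} and this binomial expansion is required.
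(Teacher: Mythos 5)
Your proof is correct and follows essentially the same route as the paper: both compute the action of $\bsm C_s^i\bsm E_s^j$ on each $\V_{s,\mu}$ as a scalar plus a square-zero nilpotent, identify $\bsm f_{\bsm L_s}$ with an explicit symmetric polynomial whose coefficients are rational in $s$ and regular at $s=t$, and then take the limit coefficientwise. The only difference is packaging: you invoke the uniqueness clause of Proposition~\ref{prp:Delieig} to identify $\bsm f_{\bsm L_t}=g_t$ directly, whereas the paper evaluates $\bsm f_{\bsm L_t}$ on the lattice points indexed by $\cI'_{\ul k}$ and concludes via the coincidence relation and Zariski density --- which is the same argument underlying that uniqueness.
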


\begin{proof}
Let $s\in U_t\bls \{t\}$. Then the category $\Rep(O_s)$ is semisimple and by~\eqref{eq:Ls==} we have
\[
\textstyle
\bsm f_{\bsm L_s}(\mu_1+\frac{s}{2}-1,\mu_2)
=
\sum_{i,j=0}^p
\psi_{i,j}(s)\bsm c_\mu(s)^i(\mu_1+\mu_2)^j
\quad\text{for $\mu\in \cI$.}
\] Since $\cI$ is Zariski dense in $\C^2$, 
it follows that 
\[
\textstyle
\bsm f_{\bsm L_s}(x,y)
=
\sum_{i,j=0}^p
\psi_{i,j}(s)
((x-y)^2-(\frac{s}{2}-1)^2)
(x+y)^j.
\]
In particular, the coefficients of $\bsm f_{\bsm L_s}(x,y)$ are rational functions without poles in $U_t$. Thus, the limit $\lim_{s\to t}\bsm f_{\bsm L_s}$ exists.

The action of $\bsm L_t$ on $\V_{t,\mu}$, 
where $\mu\in\cI'_{\ul k}$, 
is equal to 
$\sum_{i,j=0}^p \psi_{i,j}(t)(\bsm c_\mu(t)+\bsm N)^i(\mu_1+\mu_2)^j$, where $\bsm N$ is the nilpotent part of  $\bsm C_t\big|_{\V_{t,\mu}}$ (recall that $\bsm N^2=0$). Thus %the eigenvalue of $\bsm L_t$ on $\V_{t,\mu}$ is equal to the constant term of the latter expression, which is 
\[
\bsm f_{\bsm L_t}\left(\mu_1-\ul k-1,\mu_2\right)=\sum_{i,j=0}^p \psi_{i,j}(t)\bsm c_\mu(t)^i(\mu_1+\mu_2)^j.
\]
Consequently for all $\mu\in\cI'_{\ul k}$,
\begin{equation}
\label{eq:fbsljd[}
\bsm f_{\bsm L_t}\left(\mu_1-\ul k-1,\mu_2\right)=
\lim_{s\to t}
\bsm f_{\bsm L_s}\left(\mu_1+\frac{s}{2}-1,\mu_2\right)
=
\lim_{s\to t}
\bsm f_{\bsm L_s}\left(\mu_1-\ul k-1,\mu_2\right), 
\end{equation}
where for the second equality we use the fact that the coefficients of $\bsm f_{\bsm L_s}(x,y)$ do not have poles in $U_t$. But then~\eqref{eq:fbsljd[} also holds for all $\mu\in\cI$
since 
both $\bsm f_{\bsm L_t}$ and  
$\lim_{s\to t}\bsm f_{\bsm L_s}$ are symmetric polynomials. Since $\cI$ is Zariski dense in $\C^2$, we obtain $\bsm f_{\bsm L_t}=\lim_{s\to t}\bsm f_{\bsm L_s}$.
%
%
%Similarly, since $\Rep(O_s)$ is semisimple for $s\in U_t$, the action of  $\bsm L_s$ on $\V_{s,\mu}$ is by the scalar 
%\[
%\bsm f_{\bsm L_s}\left(\mu_1+\frac{s}{2}-1,\mu_2\right)=\sum_{i,j=0}^p \psi_{i,j}(s)\bsm c_\mu(s)^i(\mu_1+\mu_2)^j.
%\] The statement now follows by taking the limits. 
\end{proof}

We are now ready to prove Theorems~\ref{thm:A'}--\ref{thm:C'}. Recall that $\bsm f_\la:=\bsm f_{\bsm D_{t,\la}}$, where the right hand side is defined as in Definition~\ref{dfn-evalpolp}. Since $\bsm D_{t,\la}\in\sfA_t^d$, Lemma~\ref{lem:polynompitd}
implies that $\deg \bsm f_\la\leq |\la|$.

\subsection*{Proof of Theorems~\ref{thm:A'}--\ref{thm:C'}}

First assume that $t\not\in 2\Z^{\leq 0}$. Then 
by Lemma~\ref{vanishing}(i) 
 the polynomial 
$\bsm f_{\lambda}$ satisfies vanishing conditions analogous to the hypotheses  of 
Theorem~\ref{thm:KnSa}, and the claim follows. 
Next assume that $t\in 2\Z^{\leq 0}$. Our strategy is to reduce this case to the case $t\not\in 2\Z^{\leq 0}$. 
% and $\la$ is $\ul k$-regular. 
Let $\eta_{i,j}(s)$ be as in~\eqref{eq:coefDstla}.
Set $U_t:=\{t\}\cup (\C\bls \Z)$. By Lemma~\ref{lem:polynompitd}, 
\[
\bsm\pi_{t,d}(\bsm D_{s,t,\la})=
\sum_{i,j,i',j'}\eta_{i,j}(s)
\phi_{i,j,d,i',j'}(t)
\bsm C_s^i\bsm E_s^j
\quad\text{for $s\in U_t$}.
\]
Using 
Proposition~\ref{prp:DtlaandDtt} and 
Proposition~\ref{prp:Dstlanoplies}
 we obtain 
\begin{align*}
\bsm D_{t,\la}&=
\bsm\pi_{t,d}(\bsm D_{t,t,\la})
=\lim_{s\to t}\bsm\pi_{t,d}(\bsm D_{s,t,\la})\\
&=\lim_{s\to t}\sum_{i,j,i',j'}\eta_{i,j}(s)
\phi_{i,j,d,i',j'}(t)
\bsm C_t^i\bsm E_t^j
=\lim_{s\to t}\sum_{i,j,i',j'}\eta_{i,j}(s)
\phi_{i,j,d,i',j'}(s)
\bsm C_t^i\bsm E_t^j,
\end{align*}
where in the last step we use 
Proposition~\ref{prp:Dstlanoplies}.
Set \[
\bsm L_s:=
\sum_{i,j,i',j'}
\eta_{i,j}(s)
\phi_{i,j,d,i',j'}(s)
\bsm C_s^i\bsm E_s^j,
\] so that $\bsm L_t=\bsm D_{t,\la}$.
Next fix $s\in U_t\bls \{t\}$. We use the special case of 
Theorem~\ref{thm:A'} that was proved above to compute $\bsm f_{\bsm L_s}$. 
If $\la$ is $\ul k$-regular, then $\bsm L_s=\bsm\pi_{s,d}(\bsm L_{s,s,\la})$ and thus $
\bsm f_{\bsm L_s}=\frac{1}{H_\lambda\left(-\frac{s}{2}\right)}
  P^{-\frac{s}{2}}_\lambda$.
If $\la$ is $\ul k$-singular, then $\bsm L_s=
(\bsm c_{\la^\dagger}(s)-\bsm c_{\la}(s))\bsm\pi_{s,d}(\bsm L_{s,s,\la^\dagger})$ and thus 
\[
\bsm f_{\bsm L_s}=\frac{\bsm c_{\la^\dagger}(s)-\bsm c_{\la}(s)}{H_\lambda\left(-\frac{s}{2}\right)}
  P^{-\frac{s}{2}}_{\lambda^\dagger}.
  \]
Finally, if $\la$ is $\ul k$-quasiregular, then 
 $\bsm L_s=\bsm\pi_{s,d}(\bsm L_{s,s,\la}+\bsm L_{s,s,\la^\dagger})$ and thus 
 \[
 \bsm f_{\bsm L_s}=\frac{1}{H_\lambda\left(-\frac{s}{2}\right)}
  P^{-\frac{s}{2}}_\lambda
  +
\frac{1}{H_{\lambda^\dagger}
\left(-\frac{s}{2}\right)}
  P^{-\frac{s}{2}}_{\lambda^\dagger}  
  .
  \]
Now Lemma~\ref{lem:limitarg} implies Theorems~\ref{thm:A'}--\ref{thm:C'}.

\bibliographystyle{plain}
\bibliography{SingularCapelli}

\end{document}